\DeclareMathAlphabet{\mathpzc}{T1}{pzc}{mb}{n}
\newtheorem{theorem}{Theorem}
\newtheorem{lemma}{Lemma}
\newtheoremstyle{myplain}    
  {2pt}                      
  {2pt}                      
  {}                         
  {}                         
  {\bfseries}               
  {.}                        
  {.5em}                     
  {}                         
\theoremstyle{myplain}
\newtheorem{remark}{Remark}
\newcommand{\norm}[1]{\left\|#1\right\|}
\title{\textbf{A discontinuous Galerkin pressure correction scheme for the Oldroyd model of order one}}
\author{Pratyay Mondal\thanks{\tt m.pratyay@iitg.ac.in} and Rajen Kumar Sinha\thanks{{\tt rajen@iitg.ac.in} \\ Department of Mathematics, Indian Institute of Technology Guwahati, Guwahati-781039, India.}}
\date{}
\begin{document}

\maketitle

\textbf{Abstract:}
We develop and analyze a discontinuous Galerkin pressure correction scheme for the Oldroyd model of order one. The existence and uniqueness of the discrete solution as well as the consistency of the scheme are proved. The stability of the discrete velocity and pressure are established. We derive optimal \textit{a priori} error bounds for the fully discrete velocity in the discontinuous discrete space. In addition, an improved error estimate for the velocity is derived in the $L^2$ norm which is optimal with respect to space and time. Furthermore, the error bound for the pressure is obtained via the estimates of discrete time derivative of the velocity. Finally, numerical experiments confirm the optimal convergence rates.

\vspace{0.2cm}
\textbf{Mathematics Subject Classification.} 65M12, 65M15, 65M60

\vspace{0.2cm}
\textbf{Key words.} Oldroyd model of order one, discontinuous Galerkin method, pressure correction scheme, error estimates

\section{Introduction}
We consider the time-dependent Oldroyd flow of order one, which is described by the following nonlinear integro-differential equations
\begin{equation}
\frac{\partial \mathbf{u}}{\partial t} - \mu \Delta \mathbf{u} + \mathbf{u} \cdot \nabla \mathbf{u} - \int_0^t \beta(t - s) \Delta \mathbf{u}(x, s) ds + \nabla p = \mathbf{f}(x, t), \quad x \in \mathcal{O}, \, t > 0, \label{eq3.1}
\end{equation}
along with the continuity equation
\begin{equation}
\nabla \cdot \mathbf{u} = 0, \quad x \in \mathcal{O}, \, t > 0, \label{eq3.2}
\end{equation}
and the initial-boundary conditions
\begin{equation}
\mathbf{u}(x, 0) = \mathbf{u}_0 \quad \text{in } \mathcal{O}, \qquad \mathbf{u} = \mathbf{0} \quad \text{on } \partial \mathcal{O}, \, t \geq 0, \label{eq:3.3}
\end{equation}
along with a zero average constraint for the pressure
\begin{equation}
\int_\mathcal{O} p = 0,   \label{eq3.4}
\end{equation}
where $\mathcal{O}$ is a convex polyhedral domain in $\mathbb{R}^{d}$, $d\in\{2,3\}$, which is open and bounded, $\partial \mathcal{O}$ is the boundary of the domain $\mathcal{O}$. In the above, $\mathbf{u}=\mathbf{u}(x,t)$ represents the velocity of the fluid, $p=p(x,t)$ denotes the pressure, and $\mathbf{f}=\mathbf{f}(x,t)$ stands for the external force. Here $\beta(t)=\gamma \exp(-\eta t)$ with $\gamma =2 \lambda^{-1}(\mu-\kappa \lambda^{-1})>0$ and $\eta=\lambda^{-1}>0$, where $\lambda>0$ is the relaxation time, $\kappa>0$ is the retardation time, and $\mu=2\kappa \lambda^{-1}>0$ is the kinetic coefficient of viscosity.

The model \eqref{eq3.1} originally considered as an integral perturbation of the Navier–Stokes equations (NSEs), represents linear viscoelastic fluids, with the integral term accounting for the flow's history. It is widely applied in the modeling of dilute polymeric fluids, polymeric suspensions, biological flows like blood circulation, and industrial processes such as inkjet printing and lubrication. The model is also valuable in oil recovery and microfluidics. It is derived under the assumption that the material can be treated as a single stationary macroscopic element that experiences low stress and strain rates. For more details on the physical background and its mathematical modeling, we refer to previous studies \cite{joseph1990fluid,oldroyd1956non}.

Early studies on the model and its finite element approximations are available in previous research \cite{goswami2011priori,pani2005semidiscrete}. For time-discrete schemes, we refer to other works \cite{bir2022backward,guo2022crank,pani2006linearized} and the references listed therein. Previous studies have examined various numerical approaches for this model, including the stability of time-discrete schemes \cite{guo2018euler,zhang2018stability}, projection methods \cite{liu2019incremental,zhao2018stability}, stabilized techniques \cite{wang2012stabilized}, the characteristics finite element method \cite{yang2020unconditional}, the penalty method \cite{bir2022finite}, the two-grid method \cite{bir2021three} and the discontinuous Galerkin (dG) method \cite{ray2024discontinuous}. To address complex computational challenges, the development of splitting schemes focuses on separating the non-linearity in the convection term from the pressure term. An in-depth analysis of these methods can be found in \cite{glowinski2003finite} and \cite{guermond2006overview}. This study specifically focuses on the pressure correction schemes. Chorin and Temam first introduced the concept of a non-incremental pressure correction scheme over time in \cite{chorin1968numerical,temam1969approximation}. Over the years, this approach has been refined by several researchers, resulting in two prominent variants: (1) the incremental scheme, which incorporates the previous pressure gradient value \cite{goda1979multistep,van1986second}, and (2) the rotational scheme, which employs the rotational form of the Laplacian to manage non-physical boundary conditions for pressure \cite{timmermans1996approximate}. The semi-discrete analysis of pressure correction schemes has been extensively studied, as seen in the work by Shen et al. \cite{guermond2004error,shen1996error}. \textit{A priori} error estimates for the finite element approximation of the pressure correction scheme can be found in the works of Guermond et al. \cite{guermond1998stability,guermond1998approximation}, and Nochetto et al. \cite{nochetto2005gauge}.

More recently, several research articles have explored the use of dG spatial approximations combined with pressure correction formulations to solve the incompressible NSEs. These dG methods offer key advantages, such as high-order accuracy and local conservation. In the following, we present some relevant articles in this field. Botti et al. \cite{botti2011pressure}, for instance, employ a dG approximation for the velocity, coupled with a continuous Galerkin method for the pressure. Liu et al. apply an interior penalty dG method combined with a pressure correction approach \cite{liu2019interior}. Piatkowski et al. use a modified upwind scheme, followed by postprocessing of the projected velocity to ensure discrete divergence-free properties \cite{piatkowski2018stable}. Fehn et al. \cite{fehn2017stability} investigate the stability of pressure correction and velocity correction dG methods numerically for small time steps. Masri et al. establish the stability and convergence of a pressure correction scheme combined with dG approximation to solve the incompressible NSEs \cite{masri2022discontinuous,masri2023improved}.

We study here an \textit{a priori} error analysis for a pressure-correction dG scheme applied to the first-order Oldroyd model, which is missing from the existing literature to the best of our knowledge. Besides the advantages of dG methods, this splitting method stabilizes the solution by ensuring that the pressure and velocity fields are consistent with each other. Also, this pressure correction method addresses the coupling between velocity and pressure by introducing a correction step that updates the pressure field to ensure that the continuity equation is satisfied, which helps the solver converge to a more accurate solution. The techniques involved in the analysis of this model problem differ from the derivation techniques of NSEs due to the additional integral term, and it is worth mentioning here that a certain positivity property of this integral term plays a crucial role in the analysis. For the fully discrete scheme, the backward Euler method is employed, and we have chosen the right rectangle rule to approximate the integral term. Then, we discuss the existence of unique discrete solutions and the consistency of the scheme. The discrete velocities are approximated by polynomials of degree $r_1$ and the discrete potential and pressure by polynomials of degree $r_2$. Introducing an auxiliary function appropriately, we obtain the stability of the scheme under the constraint $r_1-1 \leq r_2 \leq r_1+1$. The convergence results of the scheme are obtained for the case $r_2=r_1-1$ in order to use the approximation properties. The error estimate for the velocity in the dG norm is obtained, which is optimal in space but suboptimal in time. Then, we derive an improved error estimate for the velocity by carefully considering the dual Stokes problem and assuming the domain to be convex. Optimal error estimates for the velocity in the $L^2$ norm are obtained in both time and space without any condition for linear polynomials and with a reverse CFL condition for polynomials of degree two or higher. Error estimates for pressure are obtained by obtaining the stability and error estimates for discrete time derivative of the velocity, and with the help of the discrete inf-sup condition. These pressure estimates are optimal in space and suboptimal in time. The proofs are technical and rely on several tools, including special lift operators. Finally, we perform numerical computations to validate our results.

The paper is arranged as follows. In Section 2, the pressure correction scheme in time is introduced for the model problem, and it also contains the notation and discrete function spaces. In Section 3, we present the fully discrete dG scheme and show the existence, uniqueness of solutions, and consistency of the scheme. The stability result is derived in Section 4. The standard approximation properties, along with the optimal energy norm error estimate of the discrete velocity, are presented in Section 5. The optimal $L^2(L^2)$-norm error estimates for the velocity are derived in Section 6. We present the bounds for the discrete time derivative of the velocity in Section 7. The error estimates for the pressure are provided in Section 8. Section 9 presents the numerical experiments and results, while Section 10 concludes the article with a summary of the findings.

\section{Pressure correction scheme}
We apply a uniform partition of the time interval $(0,T]$ into $N$ subintervals with $\tau$ denoting the time step size. We denote $\bm\varphi^n = \bm\varphi(t_n)$ and $q^n = q(t_n)$, where $t_n = n\tau$ for given functions $\bm\varphi$ and $q$. Formulation of this splitting scheme is inspired by the established literature \cite{masri2022discontinuous} and references therein. The method is based on the splitting of the operators which introduce intermediate velocity $\widetilde{\mathbf{u}}^n$ and potential $v^n$. The pressure correction scheme for the model problem \eqref{eq3.1}-\eqref{eq3.4} is stated as follows. Set $p^0 = 0$, for $n = 1, \ldots, N$, given $\mathbf{u}^{n-1}$ and $p^{n-1}$, compute an intermediate velocity $\widetilde{\mathbf{u}}^n$ such that
\begin{equation}
\widetilde{\mathbf{u}}^n - \tau \mu \Delta \widetilde{\mathbf{u}}^n+\tau \mathbf{u}^{n-1} \cdot \nabla \widetilde{\mathbf{u}}^{n} - \tau \int_0^{t_{n}} \beta(t_{n} - s) \Delta \widetilde{\mathbf{u}}^{n}(x, s) ds + \tau \nabla p^{n-1} = \mathbf{u}^{n-1} + \tau \mathbf{f}^n \quad \text{in } \mathcal{O}, \label{eq3.5}
\end{equation}
\begin{equation}
\widetilde{\mathbf{u}}^n = \mathbf{0} \quad \text{on } \partial \mathcal{O}.
\end{equation}
For given $\widetilde{\mathbf{u}}^n$, compute the potential $v^n$ such that,
\begin{align}
- \Delta v^n &= -\frac{1}{\tau} \nabla \cdot \widetilde{\mathbf{u}}^n \quad \text{in } \mathcal{O}, \\
\nabla v^n \cdot \mathbf{n} &= 0 \quad \text{on } \partial \mathcal{O}, \\
\int_\mathcal{O} v^n &= 0.
\end{align}
Given $(\widetilde{\mathbf{u}}^n, p^{n-1}, v^n)$, update the pressure $p^n$ and the velocity $\mathbf{u}^n$ by solving
\begin{equation}
p^n = p^{n-1} + v^n - \delta \mu \nabla \cdot \widetilde{\mathbf{u}}^n - \delta \int_0^{t_{n}} \beta(t_{n} - s) \nabla \cdot \widetilde{\mathbf{u}}^{n}(x, s) ds, \label{eq:3.1}
\end{equation}
\begin{equation}
\mathbf{u}^n = \widetilde{\mathbf{u}}^n - \tau \nabla v^n. \label{eq3.11}
\end{equation}
The vector $\mathbf{n}$ denotes the unit normal vector pointing outward to $\partial \mathcal{O}$. In \eqref{eq:3.1}, $\delta$ is a positive parameter that will be specified later in the paper.

Throughout this paper, $C, \tilde{C}$ and $\tilde{C}_T$ are positive constants. The constant $C$ is independent of $\mu, \gamma, h$ and $\tau$, the constant $\tilde{C}$ depend on $\mu, \gamma $, but independent of $h$ and $\tau$, whereas the constant $\tilde{C}_T$ is independent of $h$ and $\tau$ but depends on $\mu, \gamma$ and $T$.
\subsection{Notation and preliminaries}
For a domain $\mathcal{O} \subset \mathbb{R}^d$ and for a given integer $s \geq 0$ and a real number $k \geq 1$, we consider the standard Sobolev space $W^{s,k}(\mathcal{O})$ (cf. \cite{adams2003sobolev}) equipped with the usual norm denoted by $\| \cdot \|_{W^{s,k}(\mathcal{O})}$ and the semi-norm by $| \cdot |_{W^{s,k}(\mathcal{O})}$. If $k = 2$, we denote the Hilbert space $H^s(\mathcal{O}) = W^{s,2}(\mathcal{O})$, the norm and the semi-norm are denoted by $\| \cdot \|_{H^s(\mathcal{O})}$ and $| \cdot |_{H^s(\mathcal{O})}$, respectively. The norm and the inner-product on $L^2(\mathcal{O})$ is denoted by $\|\cdot\|$ and $(\cdot, \cdot)$, respectively.

Let $\mathcal{K}_h = \{ K \}$ denote a family of shape-regular and uniform partition of the domain $\mathcal{O}$ \cite{ciarlet2002finite}. That means, there is a parameter $\rho > 0$, independent of $h$, such that
\[
    \frac{h_K}{\rho_K} \leq \rho, \quad \forall K \in \mathcal{K}_h,
\]
where $\rho_K$ is the radius of the largest ball inscribed in $K$, $h_K = \text{diam}(K)$, and the mesh-size $h=\max_{K \in \mathcal{K}_h} h_K$.
We introduce the following broken Sobolev spaces:
\begin{align*}
    \mathbf{M} &= \{ \bm\varphi \in (L^2(\mathcal{O}))^d \mid \forall K \in \mathcal{K}_h, \quad \bm\varphi|_K \in (W^{2,2d/(d+1)}(K))^d \}, \\
    P &= \{ g \in L^2(\mathcal{O}) \mid \forall K \in \mathcal{K}_h, \quad g|_K \in W^{1,2d/(d+1)}(K) \}.
\end{align*}

Let $\mathcal{F}_h^i$ and $\mathcal{F}_h^b$ denote the set of all interior faces (or edges) and set of all boundary faces (or edges), respectively and $\mathcal{F}_h$ is the union of all the faces (or edges) of $\mathcal{K}_h$. For $F \in \mathcal{F}_h^i$, we associate a unit normal vector $\mathbf{n}_F$ and define $K^1_F$ and $K^2_F$ as the two elements sharing $F$, ensuring that $\mathbf{n}_F$ directs from $K^1_F$ to $K^2_F$. The average and jump of a function $\mathbf{w} \in \mathbf{M}$ are defined as:
\[
\{\mathbf{w}\} = \frac{1}{2} (\mathbf{w}|_{K^1_F} + \mathbf{w}|_{K^2_F}),
\quad 
[\mathbf{w}] = \mathbf{w}|_{K^1_F} - \mathbf{w}|_{K^2_F}, \quad \forall F = \partial K^1_F \cap \partial K^2_F.
\]
For a boundary face $F \in \mathcal{F}_h^b$, the normal vector $\mathbf{n}_F$ is chosen as the outward unit normal to $\mathcal{F}_h^b$. In this case, the definitions of average and jump extend to:
\[
\{\mathbf{w}\} = [\mathbf{w}] = \mathbf{w}|_{K_F}, \quad \forall F \in \mathcal{F}_h^b.
\]
Similar definitions hold for scalar-valued functions $g \in P$. 

Let $\mathbf{n}_K$ represent the normal pointing outward to $K$. The notations $\bm\varphi^{\text{int}}$ and $\bm\varphi^{\text{ext}}$ denote the trace of a function $\bm\varphi$ on the boundary of $K$ coming from the interior and exterior, respectively. For boundary faces $F$ ($F \subset \mathcal{F}_h^b$), we take $\bm\varphi^{\text{ext}}|_F = 0$. \\
To formulate the dG method for the spatial discretization of \eqref{eq3.5}–\eqref{eq3.11}, we follow the same discretization as in \cite{girault2005discontinuous} for the convection term. Using above notations, we now define $\forall \bm{\theta}, \mathbf{z}, \bm\varphi, \mathbf{w} \in \mathbf{M}$,
\begin{align*}
\mathcal{A}_c(\bm{\theta}; \mathbf{z}, \bm\varphi, \mathbf{w}) = \sum_{K \in \mathcal{K}_h} &\left( \int_K (\mathbf{z} \cdot \nabla \bm\varphi) \cdot \mathbf{w} + \frac{1}{2} \int_K (\nabla \cdot \mathbf{z}) \bm\varphi \cdot \mathbf{w} \right) \\
&         - \frac{1}{2} \sum_{F \in \mathcal{F}_h} \int_F [\mathbf{z}] \cdot \mathbf{n}_F \{ \bm\varphi \cdot \mathbf{w} \} 
+ \sum_{K \in \mathcal{K}_h} \int_{\partial K^{\bm{\theta}}_{-}} |\{ \mathbf{z} \} \cdot \mathbf{n}_K| (\bm\varphi^{\text{int}} - \bm\varphi^{\text{ext}}) \cdot \mathbf{w}^{\text{int}},
\end{align*}
where the inflow boundary of $K$ with respect to a function $\bm{\theta} \in \mathbf{M}$ is defined as:
\[
\partial K^{\bm{\theta}}_{-} = \{ x \in \partial K \mid \{\bm{\theta}(x)\} \cdot \mathbf{n}_K < 0 \}.
\]
$\mathcal{A}_c$ satisfies the following positivity property:
\begin{equation}
\mathcal{A}_c(\mathbf{z}; \mathbf{z}, \bm\varphi, \bm\varphi) \geq 0, \quad \forall \mathbf{z}, \bm\varphi \in \mathbf{M}. \label{eq4.12}
\end{equation}
To facilitate the analysis, it is beneficial to define the following for $\bm{\theta}, \mathbf{z}, \bm\varphi, \mathbf{w} \in \mathbf{M}$:
\begin{equation*}
\mathscr{C}(\mathbf{z}, \bm\varphi, \mathbf{w}) = \sum_{K \in \mathcal{K}_h} \left( \int_K (\mathbf{z} \cdot \nabla \bm\varphi) \cdot \mathbf{w} + \frac{1}{2} \int_K (\nabla \cdot \mathbf{z}) \bm\varphi \cdot \mathbf{w} \right) 
- \frac{1}{2} \sum_{F \in \mathcal{F}_h} \int_F [\mathbf{z}] \cdot \mathbf{n}_F \{ \bm\varphi \cdot \mathbf{w} \}.
\end{equation*}
\begin{equation*}
\mathscr{U}(\bm{\theta}; \mathbf{z}, \bm\varphi, \mathbf{w}) = \sum_{K \in \mathcal{K}_h} \int_{\partial K_-^{\bm{\theta}}} \{ \mathbf{z} \} \cdot \mathbf{n}_K (\bm\varphi^{\text{int}} - \bm\varphi^{\text{ext}}) \cdot \mathbf{w}^{\text{int}}.
\end{equation*}
Consequently, this leads to:
\begin{equation*}
\mathcal{A}_c(\mathbf{z}; \mathbf{z}, \bm\varphi, \mathbf{w}) = \mathscr{C}(\mathbf{z}, \bm\varphi, \mathbf{w}) - \mathscr{U}(\mathbf{z}; \mathbf{z}, \bm\varphi, \mathbf{w}).
\end{equation*}
For the discretization of the elliptic operator $-\Delta \widetilde{\mathbf{u}}$, we define for $\widetilde{\mathbf{u}}, \mathbf{w} \in \mathbf{M}$:
\begin{align*}
    \mathcal{A}_\epsilon(\widetilde{\mathbf{u}}, \mathbf{w}) = \sum_{K \in \mathcal{K}_h}& \int_K \nabla \widetilde{\mathbf{u}} \cdot \nabla \mathbf{w} 
    - \sum_{F \in \mathcal{F}_h} \int_F \{\nabla \widetilde{\mathbf{u}}\} \mathbf{n}_F \cdot [\mathbf{w}] \\
    + &\epsilon \sum_{F \in \mathcal{F}_h} \int_F \{\nabla \mathbf{w}\} \mathbf{n}_F \cdot [\widetilde{\mathbf{u}}] 
    + \sum_{F \in \mathcal{F}_h}\frac{\sigma}{h_F} \int_F [\widetilde{\mathbf{u}}] \cdot [\mathbf{w}].
\end{align*}
In the above expression, $h_F = |F|^{1/(d-1)}$, $\epsilon \in \{-1, 0, 1\}$, and $\sigma > 0$ is a user-defined penalty parameter.
For $\epsilon = -1$, the above bilinear form reduces to the symmetric interior penalty Galerkin scheme. For the elliptic operator $-\Delta \widetilde{\mathbf{u}}$, we define that bilinear form as 
\begin{equation}
\begin{aligned}
    \mathcal{A}_d(\widetilde{\mathbf{u}}, \mathbf{w}) = \sum_{K \in \mathcal{K}_h}& \int_K \nabla \widetilde{\mathbf{u}} \cdot \nabla \mathbf{w} 
    - \sum_{F \in \mathcal{F}_h} \int_F \{\nabla \widetilde{\mathbf{u}}\} \mathbf{n}_F \cdot [\mathbf{w}] \\
    - & \sum_{F \in \mathcal{F}_h} \int_F \{\nabla \mathbf{w}\} \mathbf{n}_F \cdot [\widetilde{\mathbf{u}}] 
    + \sum_{F \in \mathcal{F}_h}\frac{\sigma}{h_F} \int_F [\widetilde{\mathbf{u}}] \cdot [\mathbf{w}].
    \label{18}
\end{aligned}
\end{equation}
\\
The pressure gradient term $-\nabla p$ is discretized as follows. For $\mathbf{w} \in \mathbf{M}$ and $g \in P$, define:
\begin{equation}
    \mathcal{b}(\mathbf{w}, g) = \sum_{K \in \mathcal{K}_h} \int_K (\nabla \cdot \mathbf{w}) g 
    - \sum_{F \in \mathcal{F}_h} \int_F \{g\} \mathbf{n}_F \cdot \mathbf{w}.
    \label{eq4.13}
\end{equation}
The above bilinear form $\mathcal{b}(\mathbf{w},g)$ can be written in  the following equivalent form \cite{masri2022discontinuous}:
\begin{equation}
    \mathcal{b}(\mathbf{w}, g) = -\sum_{K \in \mathcal{K}_h} \int_K \mathbf{w} \cdot \nabla g + \sum_{F \in \mathcal{F}_h^i} \int_F \{\mathbf{w}\} \cdot \mathbf{n}_F [g], \quad \forall (\mathbf{w}, g) \in \mathbf{M} \times P. \label{eq:4.24}
\end{equation}
The following expression discretizes the elliptic operator $-\Delta v$. For $v, g \in P$:
\begin{align*}
    \mathcal{A}_{\text{sip}}(v, g) = \sum_{K \in \mathcal{K}_h} &\int_K \nabla v \cdot \nabla g
    - \sum_{F \in \mathcal{F}_h^i} \int_F \{\nabla v\} \cdot \mathbf{n}_F [g] \\
    &- \sum_{F \in \mathcal{F}_h^i} \int_F \{\nabla g\} \cdot \mathbf{n}_F [v] 
    + \sum_{F \in \mathcal{F}_h^i}  \frac{\tilde{\sigma}}{h_F} \int_F [v] [g],
\end{align*}
where $\tilde{\sigma} > 0$ is a penalty parameter. For $\mathbf{w} \in \mathbf{M}$, the energy norm is defined as:
\begin{equation*}
    \|\mathbf{w}\|^2_\text{dG} = \sum_{K \in \mathcal{K}_h} \|\nabla \mathbf{w}\|^2_{L^2(K)}
    + \sum_{F \in \mathcal{F}_h}\frac{\sigma}{h_F} \|[\mathbf{w}]\|^2_{L^2(F)}.
\end{equation*}
For $g \in P$, the energy semi-norm is defined as:
\begin{equation*}
    |g|^2_\text{dG} = \sum_{K \in \mathcal{K}_h} \|\nabla g\|^2_{L^2(K)}
    + \sum_{F \in \mathcal{F}_h^i}  \frac{\tilde{\sigma}}{h_F} \|[g]\|^2_{L^2(F)}.
\end{equation*}
We define the following discontinuous discrete spaces $\mathbf{M}_h \subset \mathbf{M}$ and $P_h^0 \subset P_h \subset P$ to approximate velocity and pressure, respectively. For any integers $r_1 \geq 1$, $r_2 \geq 0$, we define:
\begin{equation*}
    \mathbf{M}_h = \{ \bm\varphi_h \in (L^2(\mathcal{O}))^d \mid \forall K \in \mathcal{K}_h, \ \bm\varphi_h|_K \in (\mathbb{P}_{r_1}(K))^d \},
\end{equation*}
\begin{equation*}
    P_h = \{ g_h \in L^2(\mathcal{O}) \mid \forall K \in \mathcal{K}_h, \ g_h|_K \in \mathbb{P}_{r_2}(K) \},
\end{equation*}
\begin{equation*}
    P_h^0 = \{ g_h \in P_h \mid \int_\mathcal{O} g_h = 0 \},
\end{equation*}
where $\mathbb{P}_r(K)$ represents the space of polynomials of degree at most $r$, for $r \in \mathbb{N}$. We assume that $r_1 - 1 \leq r_2 \leq r_1 + 1$. \\
Observing the fact $|\cdot|_\text{dG}$ is a norm on the space $P_h^0$, the following coercivity properties hold:
\begin{align}
    \mathcal{A}_\epsilon(\mathbf{w}_h, \mathbf{w}_h) &\geq \omega \|\mathbf{w}_h\|^2_\text{dG}, \quad \forall \mathbf{w}_h \in \mathbf{M}_h, \label{eq3.20}
\\
    \mathcal{A}_{\text{sip}}(g_h, g_h) &\geq \frac{1}{2} |g_h|^2_\text{dG}, \quad \forall g_h \in P_h. \label{eq3.21}
\end{align}
It is established that \eqref{eq3.20} holds with $\omega = 1$ if $\epsilon = 1$ and with $\omega = 1/2$ if $\epsilon \in \{-1,0\}$ and $\sigma$ is sufficiently large. Property \eqref{eq3.21} holds for sufficiently large $\tilde{\sigma}$. Hence, we assume that \eqref{eq3.20} and \eqref{eq3.21} are satisfied (cf. \cite{riviere2008discontinuous}). \\
The bilinear forms $\mathcal{A}_\epsilon$ and $\mathcal{A}_{\text{sip}}$ are continuous on $\mathbf{M}_h \times \mathbf{M}_h$ and $P_h \times P_h$, respectively (cf. \cite{riviere2008discontinuous}), i.e., we have 
\begin{align}
    \mathcal{A}_\epsilon(\mathbf{w}_h, \bm{\theta}_h) &\leq C \|\mathbf{w}_h\|_\text{dG} \|\bm{\theta}_h\|_\text{dG}, \quad \forall \mathbf{w}_h, \bm{\theta}_h \in \mathbf{M}_h,
    \label{eq4.23}
\\
    \mathcal{A}_{\text{sip}}(g_h, \xi_h) &\leq C |g_h|_\text{dG} |\xi_h|_\text{dG}, \quad \forall g_h, \xi_h \in P_h. \label{eq4.24}
\end{align}
Recalling from \cite{di2010discrete,masri2022discontinuous}, we define for any $F \in \mathcal{F}_h$, the lift operator $r_F: (L^2(F))^d \to P_h$, by:

\begin{equation*}
    \int_\mathcal{O} r_F(\bm\varsigma) g_h = \int_F \{g_h\} \bm\varsigma \cdot \mathbf{n}_F, \quad \forall g_h \in P_h,
\end{equation*}
and for any
$F \in \mathcal{F}^i_h$, the lift operator $\mathbf{g}_F: L^2(F) \to \mathbf{M}_h$ by:
\begin{equation*}
    \int_\mathcal{O} \mathbf{g}_F(\varsigma) \cdot \mathbf{w}_h = \int_F \{\mathbf{w}_h\} \cdot \mathbf{n}_F \varsigma, \quad \forall \mathbf{w}_h \in \mathbf{M}_h.
\end{equation*}
Using the above definitions, we define the linear operators $R_h: \mathbf{M}_h \to P_h$ and $\mathbf{G}_h: P_h \to \mathbf{M}_h$ by
\begin{equation}
    R_h([\mathbf{w}_h]) = \sum_{F \in \mathcal{F}_h}r_F([\mathbf{w}_h]), \quad \mathbf{w}_h \in \mathbf{M}_h, \label{eqn20}
\end{equation}
\begin{equation}
    \mathbf{G}_h([\beta_h]) = \sum_{F \in \mathcal{F}_h^i}  \mathbf{g}_F([\beta_h]), \quad \beta_h \in P_h. \label{eqn21}
\end{equation}
Furthermore, these lift operators are bounded.
There exist constants $B_{r_2}, \tilde{B}_{r_1} > 0$, depending on the polynomial degrees $r_2$ and $r_1$ respectively, but independent of $h$ such that:
\begin{align}
    \| R_h([\mathbf{w}_h]) \| &\leq B_{r_2} \left( \sum_{F \in \mathcal{F}_h}h_F^{-1} \|[\mathbf{w}_h]\|_{L^2(F)}^2 \right)^{1/2}, \quad \forall \mathbf{w}_h \in \mathbf{M}_h, \label{eq4.29} \\
    \| \mathbf{G}_h([\beta_h]) \| &\leq \tilde{B}_{r_1} \left( \sum_{F \in \mathcal{F}_h^i}  h_F^{-1} \|[\beta_h]\|_{L^2(F)}^2 \right)^{1/2}, \quad \forall \beta_h \in P_h. \label{eq4.30}
\end{align}
Let $\nabla_h$ and $\nabla_h \cdot$ be the broken gradient and broken divergence operators, respectively. Then, with the definitions of the lift operators \eqref{eqn20} and \eqref{eqn21}, we can write \eqref{eq4.13} and \eqref{eq:4.24} in the equivalent forms given by:
\begin{align}
    \mathcal{b}(\mathbf{w}_h, g_h) &= (\nabla_h \cdot \mathbf{w}_h, g_h) - (R_h([\mathbf{w}_h]), g_h), \quad \forall \mathbf{w}_h \in \mathbf{M}_h, \ \forall g_h \in P_h, \label{eq4.31} \\
    \mathcal{b}(\mathbf{w}_h, g_h) &= -(\nabla_h g_h, \mathbf{w}_h) + (\mathbf{G}_h([g_h]), \mathbf{w}_h), \quad \forall \mathbf{w}_h \in \mathbf{M}_h, \ \forall g_h \in P_h. \label{eq4.32}
\end{align}
Accordingly, with the help of the Cauchy-Schwarz's inequality (the CS inequality) and \eqref{eq4.32}, it can be easily shown that
\begin{equation}
    \mathcal{b}(\mathbf{w}_h,g_h) \leq C\|\mathbf{w}_h\| |g_h|_\text{dG}, \quad \forall \mathbf{w}_h \in \mathbf{M}_h, \forall g_h \in P_h. \label{eq4.37}
\end{equation}

\section{Fully discrete scheme}
Since the backward Euler method is a first-order difference
scheme, the right rectangle rule is chosen here to approximate the integral term
\begin{equation}
    q_{r}^{n}(\bm\varphi) = \tau \sum_{j=1}^{n} \beta_{n-j} \bm\varphi^{j} \approx \int_{0}^{t_{n}} \beta(t_{n} - s) \bm\varphi(s) \, ds, \label{40}
\end{equation}
where \(\beta_{n-j} = \beta(t_{n} - t_{j})\). For $\bm\varphi \in C^1[0,t_n]$, the error associated with the rule \eqref{40} is given by
\begin{equation}
\left| \int_{0}^{t_n} \beta(t_n - s) \bm\varphi(s) ds - q^n_r (\bm\varphi) \right|
\leq C \tau \sum_{j=1}^{n} \int_{t_{j-1}}^{t_j} \left| \frac{\partial}{\partial s} (\beta(t_n - s) \bm\varphi(s)) \right| ds. \label{41}
\end{equation}
For large $\sigma$ and for arbitrary $\alpha_0 > 0$, the following positivity property holds for the symmetric form $\mathcal{A}_d(\cdot, \cdot)$ (cf. \cite{ray2024discontinuous}),
\begin{equation}
\tau \sum_{n=1}^m \tau \sum_{i=1}^n e^{-\alpha_0 (t_n - t_i)} \mathcal{A}_d(\bm\varphi_h^i, \bm\varphi_h^i) \geq 0, \quad \forall \bm\varphi_h \in \mathbf{M}_h. \label{5.38}
\end{equation}
Initially, we set $p_h^0 = v_h^0 = 0$. Let $\mathbf{u}_h^0$ be the local $L^2$ projection of $\mathbf{u}^0$ onto $\mathbf{M}_h$.
\begin{equation*}
    \int_K (\mathbf{u}_h^0 - \mathbf{u}^0) \cdot \mathbf{w}_h = 0, \quad \forall \mathbf{w}_h \in (\mathbb{P}_r(K))^d, \quad \forall K \in \mathcal{K}_h.
\end{equation*}
For $n = 1, \ldots, N$, given $(\mathbf{u}_h^{n-1}, p_h^{n-1}) \in \mathbf{M}_h \times P_h$, compute $\widetilde{\mathbf{u}}_h^n \in \mathbf{M}_h$ such that $\forall \mathbf{w}_h \in \mathbf{M}_h$,
\begin{equation}
\begin{aligned}
    (\widetilde{\mathbf{u}}_h^n, \mathbf{w}_h) + \tau \mathcal{A}_c(\mathbf{u}_h^{n-1};\mathbf{u}_h^{n-1}, \widetilde{\mathbf{u}}_h^n, \mathbf{w}_h) + \tau \mu \mathcal{A}_\epsilon(\widetilde{\mathbf{u}}_h^n, \mathbf{w}_h) + \tau \mathcal{A}_\epsilon(q_r^n (\widetilde{\mathbf{u}}_{h}),\mathbf{w}_h) \\
    =  (\mathbf{u}_h^{n-1}, \mathbf{w}_h) + \tau \mathcal{b}(\mathbf{w}_h, p_h^{n-1}) + \tau (\mathbf{f}^n, \mathbf{w}_h). \label{eq5.36}
\end{aligned}
\end{equation}
Next, compute $v_h^n \in P_h^0$ such that $\forall g_h \in P_h^0$,
\begin{equation}
    \mathcal{A}_{\text{sip}}(v_h^n, g_h) = -\frac{1}{\tau} \mathcal{b}(\widetilde{\mathbf{u}}_h^n, g_h). \label{eq:5.41}
\end{equation}
At the last step, compute $p_h^n \in P_h$ and $\mathbf{u}_h^n \in \mathbf{M}_h$ such that  $\forall g_h \in P_h$ and $\forall \mathbf{w}_h \in \mathbf{M}_h$,
\begin{equation}
    (p_h^n, g_h) = (p_h^{n-1}, g_h) + (v_h^n, g_h) - \delta \mu \mathcal{b}(\widetilde{\mathbf{u}}_h^n, g_h) - \delta \mathcal{b}(q_r^n (\widetilde{\mathbf{u}}_{h}), g_h), \label{eq5.38}
\end{equation}
\begin{equation}
    (\mathbf{u}_h^n, \mathbf{w}_h) = (\widetilde{\mathbf{u}}_h^n, \mathbf{w}_h) + \tau \mathcal{b}(\mathbf{w}_h, v_h^n). \label{eq5.39}
\end{equation}
Using \eqref{eq4.31} and \eqref{eq4.32}, equations \eqref{eq5.38} and \eqref{eq5.39} can be expressed as follows, $\forall g_h \in P_h$ and $\mathbf{w}_h \in \mathbf{M}_h$,
\begin{equation}
(p_h^n, g_h) = (p_h^{n-1}, g_h) + (v_h^n, g_h) - \delta \mu (\nabla_h \cdot \widetilde{\mathbf{u}}_h^n - R_h([\widetilde{\mathbf{u}}_h^n]), g_h) - \delta (\nabla_h \cdot q_r^n(\widetilde{\mathbf{u}}_h) - R_h([q_r^n(\widetilde{\mathbf{u}}_h)]), g_h), \label{eq5.40}
\end{equation}
\begin{equation}
(\mathbf{u}_h^n, \mathbf{w}_h) = (\widetilde{\mathbf{u}}_h^n, \mathbf{w}_h) - \tau (\nabla_h v_h^n - \mathbf{G}_h([v_h^n]), \mathbf{w}_h).
\label{eq5.41}
\end{equation}
\begin{lemma}
Let $p_h^n \in P_h$ be defined by \eqref{eq5.40}. Then $p_h^n \in P_h^0$, $\forall n \geq 0$.
\end{lemma}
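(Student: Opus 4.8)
The plan is to verify the zero-average condition $\int_{\mathcal{O}} p_h^n = 0$ directly, by induction on $n$, testing the pressure update against the constant function. Since $r_2 \geq 0$, the constant function $1$ belongs to $P_h$ (constants are polynomials of degree zero), and because $\int_{\mathcal{O}} p_h^n = (p_h^n, 1)$, it suffices to prove $(p_h^n, 1) = 0$ for every $n \geq 0$. I would work with the equivalent $\mathcal{b}$-form \eqref{eq5.38} of the update rather than the expanded form \eqref{eq5.40}, since the two are identical and \eqref{eq5.38} exposes the relevant structure more transparently.

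For the base case $n = 0$, we have $p_h^0 = 0$ by initialization, so $(p_h^0, 1) = 0$ and $p_h^0 \in P_h^0$. For the inductive step, I would assume $p_h^{n-1} \in P_h^0$, i.e. $(p_h^{n-1}, 1) = 0$, and choose the admissible test function $g_h = 1 \in P_h$ in \eqref{eq5.38}, which yields
\begin{equation*}
(p_h^n, 1) = (p_h^{n-1}, 1) + (v_h^n, 1) - \delta \mu\, \mathcal{b}(\widetilde{\mathbf{u}}_h^n, 1) - \delta\, \mathcal{b}(q_r^n(\widetilde{\mathbf{u}}_h), 1).
\end{equation*}

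The only step requiring any thought is the observation that the bilinear form $\mathcal{b}(\cdot, \cdot)$ annihilates constants in its second argument. Invoking the equivalent representation \eqref{eq:4.24}, for any $\mathbf{w} \in \mathbf{M}$ one has $\mathcal{b}(\mathbf{w}, 1) = -\sum_{K \in \mathcal{K}_h} \int_K \mathbf{w} \cdot \nabla 1 + \sum_{F \in \mathcal{F}_h^i} \int_F \{\mathbf{w}\} \cdot \mathbf{n}_F\, [1] = 0$, because $\nabla 1 = 0$ on each element and the jump $[1]$ vanishes on every interior face. Applying this with $\mathbf{w} = \widetilde{\mathbf{u}}_h^n \in \mathbf{M}_h$ and with $\mathbf{w} = q_r^n(\widetilde{\mathbf{u}}_h) = \tau \sum_{j=1}^n \beta_{n-j} \widetilde{\mathbf{u}}_h^j \in \mathbf{M}_h$ (a finite linear combination of elements of $\mathbf{M}_h$) makes both $\mathcal{b}$-terms vanish.

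Finally, since $v_h^n \in P_h^0$ by construction of the potential step, we have $(v_h^n, 1) = \int_{\mathcal{O}} v_h^n = 0$, while the inductive hypothesis gives $(p_h^{n-1}, 1) = 0$. Substituting these into the displayed identity leaves $(p_h^n, 1) = 0$, which closes the induction and shows $p_h^n \in P_h^0$ for all $n \geq 0$. I do not anticipate a genuine obstacle here: the entire argument reduces to the constant-annihilation property of $\mathcal{b}$, which is immediate from the equivalent form \eqref{eq:4.24}, combined with the fact that $v_h^n$ and $p_h^0$ already have zero mean.
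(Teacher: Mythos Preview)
Your proof is correct and follows essentially the same approach as the paper: induction on $n$, testing \eqref{eq5.38} with $g_h = 1$, and using that $\mathcal{b}(\cdot,1)=0$ together with $v_h^n \in P_h^0$. The only cosmetic difference is that the paper invokes \eqref{eq4.32} rather than \eqref{eq:4.24} to kill the $\mathcal{b}$-terms, but these are equivalent formulations and your justification via $\nabla 1 = 0$ and $[1]=0$ is perfectly valid.
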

\begin{proof}
We take the help of mathematical induction on $n$ to prove this lemma. The result holds trivially for $n = 0$. Assume that $p_h^{n-1} \in P_h^0$. Let $g_h = 1$ in \eqref{eq5.38} and use \eqref{eq4.32} to obtain:
\begin{equation*}
    \int_\mathcal{O} p_h^n = \int_\mathcal{O} p_h^{n-1} + \int_\mathcal{O} v_h^n.
\end{equation*}
We use the fact that $v_h^n \in P_h^0$ to conclude the result.
\end{proof}
\begin{lemma}
For the symmetric bilinear form $\mathcal{A}_d(\cdot,\cdot)$, for given $(\widetilde{\mathbf{u}}_h^{n-1}, \mathbf{u}_h^{n-1}, p_h^{n-1}) \in \mathbf{M}_h \times \mathbf{M}_h \times P_h^0$, there exists a unique solution $(\widetilde{\mathbf{u}}_h^n, \mathbf{u}_h^n, p_h^n) \in \mathbf{M}_h \times \mathbf{M}_h \times P_h^0$ to the fully discrete scheme given by \eqref{eq5.36}--\eqref{eq5.39}. Moreover, for non-symmetric form of $\mathcal{A}_\epsilon$, for $\tau$ small enough, the solution to the fully discrete scheme given by \eqref{eq5.36}--\eqref{eq5.39} is unique.
\end{lemma}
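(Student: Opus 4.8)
The plan is to exploit the fully sequential structure of the scheme \eqref{eq5.36}--\eqref{eq5.39}. Given $(\mathbf{u}_h^{n-1}, p_h^{n-1})$, one first solves \eqref{eq5.36} for the intermediate velocity $\widetilde{\mathbf{u}}_h^n$; once this is available, \eqref{eq:5.41} determines $v_h^n \in P_h^0$ uniquely, since $\mathcal{A}_{\text{sip}}$ is coercive on $P_h^0$ by \eqref{eq3.21} (recall that $|\cdot|_{\text{dG}}$ is a norm there), and finally \eqref{eq5.38} and \eqref{eq5.39} are $L^2$ mass-matrix identities that fix $p_h^n$ and $\mathbf{u}_h^n$ uniquely, with $p_h^n \in P_h^0$ guaranteed by the preceding lemma. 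Consequently, the entire well-posedness question collapses to that of the single equation \eqref{eq5.36}.

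Since \eqref{eq5.36} is a square linear system posed on the finite-dimensional space $\mathbf{M}_h$, existence and uniqueness are equivalent, so it suffices to show that the corresponding homogeneous problem admits only $\widetilde{\mathbf{u}}_h^n = \mathbf{0}$. I would first isolate the current-step contribution of the discrete memory term by writing $q_r^n(\widetilde{\mathbf{u}}_h) = \tau\beta_0\widetilde{\mathbf{u}}_h^n + \tau\sum_{j=1}^{n-1}\beta_{n-j}\widetilde{\mathbf{u}}_h^j$, where the latter sum is data inherited from earlier steps. The operator acting on the unknown $\widetilde{\mathbf{u}}_h^n$ is then
\begin{equation*}
\mathcal{B}(\widetilde{\mathbf{u}}_h^n, \mathbf{w}_h) = (\widetilde{\mathbf{u}}_h^n, \mathbf{w}_h) + \tau\mathcal{A}_c(\mathbf{u}_h^{n-1}; \mathbf{u}_h^{n-1}, \widetilde{\mathbf{u}}_h^n, \mathbf{w}_h) + \tau(\mu + \tau\beta_0)\mathcal{A}_\epsilon(\widetilde{\mathbf{u}}_h^n, \mathbf{w}_h),
\end{equation*}
with the inherited history, the pressure term $\tau\mathcal{b}(\mathbf{w}_h, p_h^{n-1})$, the force, and $\mathbf{u}_h^{n-1}$ all collected on the right-hand side. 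Testing with $\mathbf{w}_h = \widetilde{\mathbf{u}}_h^n$ and using the convection positivity \eqref{eq4.12} in the form $\mathcal{A}_c(\mathbf{u}_h^{n-1}; \mathbf{u}_h^{n-1}, \widetilde{\mathbf{u}}_h^n, \widetilde{\mathbf{u}}_h^n) \ge 0$, together with the coercivity \eqref{eq3.20} and the positivity of $\mu + \tau\beta_0$, I obtain for the symmetric form $\mathcal{A}_d$
\begin{equation*}
\mathcal{B}(\widetilde{\mathbf{u}}_h^n, \widetilde{\mathbf{u}}_h^n) \ge \|\widetilde{\mathbf{u}}_h^n\|^2 + \tau(\mu + \tau\beta_0)\,\omega\,\|\widetilde{\mathbf{u}}_h^n\|^2_{\text{dG}} \ge \|\widetilde{\mathbf{u}}_h^n\|^2.
\end{equation*}
Hence $\mathcal{B}$ is coercive for every $\tau > 0$; the homogeneous problem forces $\widetilde{\mathbf{u}}_h^n = \mathbf{0}$, and unconditional existence and uniqueness for the full system follow.

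For a non-symmetric $\mathcal{A}_\epsilon$ ($\epsilon \in \{0,1\}$), the convection positivity \eqref{eq4.12} and the diffusion coercivity \eqref{eq3.20} still hold, so the diagonal of $\mathcal{B}$ remains strictly positive and the same test function $\mathbf{w}_h = \widetilde{\mathbf{u}}_h^n$ yields injectivity. The smallness of $\tau$ recorded in the statement is the condition under which a perturbation variant of this argument closes, namely when the skew consistency terms of $\mathcal{A}_\epsilon$, estimated through the continuity bound \eqref{eq4.23} and a standard inverse inequality, can be absorbed into the mass contribution $\|\widetilde{\mathbf{u}}_h^n\|^2$. Handling this non-symmetric diffusion interaction is the only delicate point of the proof; everywhere else it is a direct application of \eqref{eq4.12}, \eqref{eq3.20}, and \eqref{eq3.21}.
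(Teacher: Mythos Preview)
Your argument is correct and is in fact cleaner than the paper's. The key move you make---and the paper does not---is to split the discrete memory term as $q_r^n(\widetilde{\mathbf{u}}_h)=\tau\beta_0\widetilde{\mathbf{u}}_h^n+\tau\sum_{j=1}^{n-1}\beta_{n-j}\widetilde{\mathbf{u}}_h^j$, pushing the history to the right-hand side as known data. This reduces the single-step problem \eqref{eq5.36} to a bilinear form whose diagonal is manifestly coercive by \eqref{eq4.12} and \eqref{eq3.20}, for \emph{every} choice of $\epsilon\in\{-1,0,1\}$ and without any restriction on $\tau$. The paper instead keeps $q_r^n(\bm{\mathcal{X}}_h)$ intact, which leads it to sum the difference inequality over $n=1,\dots,m$ and invoke either the global positivity property \eqref{5.38} (symmetric case) or a continuity-plus-Gr\"onwall argument (non-symmetric case), the latter producing the small-$\tau$ condition in the statement. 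Your route shows that this restriction is an artifact of the proof technique rather than a genuine obstruction; the paper's approach, on the other hand, rehearses exactly the machinery used later in the stability and error analyses, where the history cannot be frozen and the summation/Gr\"onwall structure is essential. One small comment: your final paragraph reads as if the small-$\tau$ hypothesis might still be needed for injectivity in the non-symmetric case---it is not, as your own first sentence there already establishes, so you could simply say the hypothesis is superfluous for this lemma.
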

\begin{proof}
Since the problem is linear in finite dimensions, it is enough to prove the uniqueness of the solution. Assume that, there exist two solutions $\widetilde{\mathbf{u}}_{h,1}^n$ and $\widetilde{\mathbf{u}}_{h,2}^n$ to \eqref{eq5.36}. Set $\bm{\mathcal{X}}_h^n = \widetilde{\mathbf{u}}_{h,1}^n - \widetilde{\mathbf{u}}_{h,2}^n$.\\
Then, recalling the linearity of $\mathcal{A}_c$ in the third argument, and choosing $\mathbf{w}_h = \bm{\mathcal{X}}_h^n$, we obtain
\begin{equation*}
    (\bm{\mathcal{X}}_h^n, \bm{\mathcal{X}}_h^n) + \tau \mathcal{A}_c(\mathbf{u}_h^{n-1}; \mathbf{u}_h^{n-1}, \bm{\mathcal{X}}_h^n, \bm{\mathcal{X}}_h^n) + \tau \mu \mathcal{A}_\epsilon(\bm{\mathcal{X}}_h^n, \bm{\mathcal{X}}_h^n) + \tau \mathcal{A}_\epsilon(q_r^n(\bm{\mathcal{X}}_h),\bm{\mathcal{X}}_h^n) = 0.
\end{equation*}
Use of the positivity property of $\mathcal{A}_c$ \eqref{eq4.12} and the coercivity of $\mathcal{A}_\epsilon$ \eqref{eq3.20}, leads above to
\begin{equation}
    \|\bm{\mathcal{X}}_h^n\|^2 + \omega \tau \mu \|\bm{\mathcal{X}}_h^n\|_\text{dG}^2 + \tau \mathcal{A}_\epsilon(q_r^n(\bm{\mathcal{X}}_h),\bm{\mathcal{X}}_h^n) \leq 0. \label{51}
\end{equation}
We first prove the lemma for the symmetric form $\mathcal{A}_d(\cdot,\cdot)$. Summing over $n=1$ to $m$, we have
\begin{equation*}
\sum_{n=1}^{m} \|\bm{\mathcal{X}}_h^n\|^2 + \sum_{n=1}^{m} \omega \tau \mu \|\bm{\mathcal{X}}_h^n\|_\text{dG}^2 + \sum_{n=1}^{m} \tau \mathcal{A}_d(q_r^n(\bm{\mathcal{X}}_h),\bm{\mathcal{X}}_h^n) \leq 0.
\end{equation*}
Apply positivity property \eqref{5.38} to have
\begin{equation*}
\sum_{n=1}^{m} \|\bm{\mathcal{X}}_h^n\|^2 + \sum_{n=1}^{m} \omega \tau \mu \|\bm{\mathcal{X}}_h^n\|_\text{dG}^2 \leq 0.
\end{equation*}
Which implies,
\begin{equation*}
\|\bm{\mathcal{X}}_h^n\|^2 + \omega \tau \mu \|\bm{\mathcal{X}}_h^n\|_\text{dG}^2 \leq 0.
\end{equation*}
Thus, $\bm{\mathcal{X}}_h^n = 0$. \\
For the non-symmetric form of $\mathcal{A}_\epsilon$, we have used \eqref{eq4.23} and Young's inequality to have
\begin{equation}
\begin{aligned}
|\mathcal{A}_\epsilon(q_r^n(\bm{\mathcal{X}}_h), \bm{\mathcal{X}}_h^n)| &\leq C \tau \sum_{i=1}^n \beta(t_n-t_i) \|\bm{\mathcal{X}}_h^i\|_\text{dG} \|\bm{\mathcal{X}}_h^n\|_\text{dG} \\
&\leq \frac{\omega \mu}{2}\|\bm{\mathcal{X}}_h^n\|^2_\text{dG}+\tilde{C} \left(\tau \sum_{i=1}^n \beta(t_n-t_i) \|\bm{\mathcal{X}}_h^i\|_\text{dG} \right)^2 . \label{55}
\end{aligned}
\end{equation}
Utilizing \eqref{55} in \eqref{51} and summing from $n=1$ to $m$, we arrive at
\begin{equation}
\sum_{n=1}^{m} \|\bm{\mathcal{X}}_h^n\|^2 + \frac{\omega \tau \mu}{2} \sum_{n=1}^{m}  \|\bm{\mathcal{X}}_h^n\|_\text{dG}^2 \leq \tilde{C} \tau \sum_{n=1}^m \left(\tau \sum_{i=1}^n \beta(t_n-t_i) \|\bm{\mathcal{X}}_h^i\|_\text{dG} \right)^2. \label{56}
\end{equation}
By Holder's inequality, the term on the right hand side (RHS) of \eqref{56} is bounded as
\begin{equation}
    \begin{aligned}
        \tau \sum_{n=1}^m \left( \tau \sum_{i=1}^n \beta(t_n-t_i) \|\bm{\mathcal{X}}_h^i\|_\text{dG} \right)^2 &\leq \gamma^2 \tau \sum_{n=1}^m \left(\tau \sum_{i=1}^n e^{-2\eta(t_n-t_i)}\right) \left(\tau \sum_{i=1}^n \|\bm{\mathcal{X}}_h^i\|^2_\text{dG}\right)\\
        & \leq \frac{\gamma^2 e^{2\eta \tau}}{2\eta} \tau^2 \sum_{n=1}^m \sum_{i=1}^n \|\bm{\mathcal{X}}_h^i\|^2_\text{dG} . \label{eqn58}
    \end{aligned}
\end{equation}
Using \eqref{eqn58} and applying the discrete Gronwall's inequality, \eqref{56} yields:
\begin{equation*}
\sum_{n=1}^{m} \|\bm{\mathcal{X}}_h^n\|^2 + \frac{\omega \tau \mu}{2} \sum_{n=1}^{m}  \|\bm{\mathcal{X}}_h^n\|_\text{dG}^2 \leq 0,
\end{equation*}
which yields $\bm{\mathcal{X}}_h^n = 0$.
Hence, \eqref{eq5.36} has a unique solution.
Following similar arguments, using the coercivity property of $\mathcal{A}_{\text{sip}}(\cdot, \cdot)$, and the fact that $\|\cdot\|_\text{dG}$ is a norm for the space $P_h^0$, one can show the existence of $v_h^n \in P_h^0$. Using \eqref{eq5.40} and Lemma 1, we have the existence of $p_h^n \in P_h^0$. Finally, the existence of $\mathbf{u}_h^n \in \mathbf{M}_h$ follows from \eqref{eq5.41}.
\end{proof}
\begin{lemma}
Let $(\mathbf{u}, p)$ be the solution of \eqref{eq3.1}-\eqref{eq3.4}. Then $(\mathbf{u},p)$ satisfies,
\begin{equation*}
(\partial_t \mathbf{u}, \mathbf{w}) + \mathcal{A}_c(\mathbf{u}; \mathbf{u}, \mathbf{u}, \mathbf{w}) + \mu \mathcal{A}_\epsilon(\mathbf{u}, \mathbf{w}) + \int_0^t \beta(t-s) \mathcal{A}_\epsilon(\mathbf{u}(s), \mathbf{w}) ds = \mathcal{b}(\mathbf{w}, p) + (\mathbf{f}, \mathbf{w}), \quad \forall \mathbf{w} \in \mathbf{M}.
\end{equation*}
\end{lemma}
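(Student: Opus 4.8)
The plan is to start from the strong form \eqref{eq3.1}, pair it with an arbitrary test function $\mathbf{w} \in \mathbf{M}$, integrate over each element $K \in \mathcal{K}_h$, sum over the partition, and then integrate by parts element-by-element and reassemble the resulting inter-element boundary integrals into the average--jump face representation. The guiding principle is that the exact solution is single-valued and sufficiently regular, so that $[\mathbf{u}]=0$, $[\nabla \mathbf{u}]=0$ and $[p]=0$ on every interior face, while $\mathbf{u}=0$ on $\partial\mathcal{O}$; consequently every stabilization and consistency contribution built into the dG forms (the penalty terms, the $\epsilon$-symmetrization, the jump and upwind terms) vanishes, and each dG form collapses to its corresponding classical integral.

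For the diffusion term I would integrate $-\mu\int_K \Delta\mathbf{u}\cdot\mathbf{w}$ by parts on each $K$ to produce $\mu\int_K \nabla\mathbf{u}\cdot\nabla\mathbf{w}$ minus the boundary flux $\mu\int_{\partial K}(\nabla\mathbf{u}\,\mathbf{n}_K)\cdot\mathbf{w}$. Summing over $K$ and using the elementary identity $[ab]=\{a\}[b]+[a]\{b\}$ to regroup the boundary terms by faces, the flux becomes $\sum_{F}\int_F(\{\nabla\mathbf{u}\}\mathbf{n}_F\cdot[\mathbf{w}]+[\nabla\mathbf{u}]\mathbf{n}_F\cdot\{\mathbf{w}\})$; since $[\nabla\mathbf{u}]=0$ the second piece drops, matching the first two groups in the definition of $\mathcal{A}_\epsilon$. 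The $\epsilon$-symmetrization and penalty terms in $\mathcal{A}_\epsilon(\mathbf{u},\mathbf{w})$ contain $[\mathbf{u}]$, which is zero on interior faces and equals $\mathbf{u}=\mathbf{0}$ on boundary faces, so they vanish, leaving $-\mu\int_\mathcal{O}\Delta\mathbf{u}\cdot\mathbf{w}=\mu\mathcal{A}_\epsilon(\mathbf{u},\mathbf{w})$. Applying the identical manipulation to $\mathbf{u}(s)$ inside the memory integral and interchanging the time and space integrations by Fubini's theorem gives the term $\int_0^t\beta(t-s)\mathcal{A}_\epsilon(\mathbf{u}(s),\mathbf{w})\,ds$.

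For the convection term I would observe that with $\bm\theta=\mathbf{z}=\bm\varphi=\mathbf{u}$ the divergence-symmetrization $\tfrac12\int_K(\nabla\cdot\mathbf{u})\mathbf{u}\cdot\mathbf{w}$ vanishes because $\nabla\cdot\mathbf{u}=0$ by \eqref{eq3.2}; the face term $-\tfrac12\sum_F\int_F[\mathbf{u}]\cdot\mathbf{n}_F\{\mathbf{u}\cdot\mathbf{w}\}$ vanishes since $[\mathbf{u}]=0$; and the upwind term vanishes because $\mathbf{u}^{\mathrm{int}}-\mathbf{u}^{\mathrm{ext}}=0$ on interior faces by continuity and equals $\mathbf{0}-\mathbf{0}$ on boundary faces by the boundary condition together with the convention $\bm\varphi^{\mathrm{ext}}|_F=0$. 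Hence $\mathcal{A}_c(\mathbf{u};\mathbf{u},\mathbf{u},\mathbf{w})=\int_\mathcal{O}(\mathbf{u}\cdot\nabla\mathbf{u})\cdot\mathbf{w}$. For the pressure term I would use the equivalent form \eqref{eq:4.24}: because $[p]=0$ on interior faces, the face sum drops and $\mathcal{b}(\mathbf{w},p)=-\int_\mathcal{O}\mathbf{w}\cdot\nabla p$, so that $\int_\mathcal{O}\nabla p\cdot\mathbf{w}=-\mathcal{b}(\mathbf{w},p)$. Collecting the trivial terms $(\partial_t\mathbf{u},\mathbf{w})$ and $(\mathbf{f},\mathbf{w})$ and moving the pressure contribution to the right-hand side yields the asserted identity.

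The routine parts are the two inner-product terms and the application of Fubini's theorem; the step demanding the most care is the reassembly of the element-boundary flux integrals into face integrals, where the orientation of $\mathbf{n}_F$ relative to $\mathbf{n}_K$, the sign bookkeeping in the average--jump splitting, and the correct treatment of boundary faces under the convention $\bm\varphi^{\mathrm{ext}}|_F=0$ must all be tracked consistently. The only genuine hypothesis is the regularity of the exact solution guaranteeing the single-valuedness of $\mathbf{u}$, $\nabla\mathbf{u}$ and $p$ across interfaces and the homogeneous boundary trace; everything else follows from this together with the incompressibility constraint $\nabla\cdot\mathbf{u}=0$.
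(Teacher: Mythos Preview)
Your proposal is correct and follows essentially the same route as the paper: multiply the strong equation by $\mathbf{w}$, integrate by parts on each element, regroup the element-boundary fluxes into face integrals via the average--jump identity, and then invoke the single-valuedness of $\mathbf{u}$, $\nabla\mathbf{u}$, $p$ across interior faces together with $\mathbf{u}|_{\partial\mathcal{O}}=0$ and $\nabla\cdot\mathbf{u}=0$ to kill the extra stabilization, symmetrization, and upwind terms. The only cosmetic difference is that for the pressure you use the equivalent form \eqref{eq:4.24} directly, whereas the paper works from \eqref{eq4.13} and applies the jump identity $[p\mathbf{w}\cdot\mathbf{n}_F]=\{p\}[\mathbf{w}]\cdot\mathbf{n}_F+[p]\{\mathbf{w}\}\cdot\mathbf{n}_F$; both lead to the same conclusion.
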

\begin{proof}
Multiply \eqref{eq3.1} by $\mathbf{w} \in \mathbf{M}$ and integrate over each mesh element $K \in \mathcal{K}_h$. Then, using Green’s formula and summing over all elements $K$, we obtain
\begin{align*}
\sum_{K \in \mathcal{K}_h} \int_K \mathbf{u}_t \cdot \mathbf{w} 
+ \mu \sum_{K \in \mathcal{K}_h} \int_K \nabla \mathbf{u} \cdot \nabla \mathbf{w}
- \mu \sum_{F \in \mathcal{F}_h} \int_F [\nabla \mathbf{u} \mathbf{n}_F \cdot \mathbf{w}] 
+ \sum_{K \in \mathcal{K}_h} \int_K \mathbf{u} \cdot \nabla \mathbf{u} \cdot \mathbf{w}  \\
+ \int_0^t \beta(t-s) \left( \sum_{K \in \mathcal{K}_h} \int_K \nabla \mathbf{u}(s) \cdot \nabla \mathbf{w} 
- \sum_{F \in \mathcal{F}_h} \int_F [\nabla \mathbf{u}(s) \mathbf{n}_F \cdot \mathbf{w}] \right) ds \\
- \sum_{K \in \mathcal{K}_h} \int_K p \nabla \cdot v 
+ \sum_{F \in \mathcal{F}_h} \int_F [p \mathbf{w} \cdot \mathbf{n}_F] = \int_\mathcal{O} \mathbf{f} \cdot \mathbf{w}.
\end{align*}
Note that, $[\nabla \mathbf{u} \mathbf{n}_F \cdot \mathbf{w}] = \{ \nabla \mathbf{u} \} \mathbf{n}_F \cdot [\mathbf{w}] + [\nabla \mathbf{u}] \mathbf{n}_F \cdot \{\mathbf{w}\}$. Since, $[\mathbf{u}]=\mathbf{0}, \forall F \in \mathcal{F}_h$ and $[\nabla \mathbf{u}]\cdot \mathbf{n}_F=\mathbf{0}, \forall F \in \mathcal{F}_h^i$, we have
\begin{align*}
&\mu \sum_{K \in \mathcal{K}_h} \int_K \nabla \mathbf{u} \cdot \nabla \mathbf{w}
- \mu \sum_{F \in \mathcal{F}_h} \int_F [\nabla \mathbf{u} \mathbf{n}_F \cdot \mathbf{w}] \\
&= \mu \sum_{K \in \mathcal{K}_h} \int_K \nabla \mathbf{u} \cdot \nabla \mathbf{w}
- \mu \sum_{F \in \mathcal{F}_h} \int_F \{\nabla \mathbf{u}\} \mathbf{n}_F \cdot [\mathbf{w}] = \mu \mathcal{A}_\epsilon(\mathbf{u}, \mathbf{w}).
\end{align*}
An application of similar arguments as above yields
\begin{align*}
\int_0^t \beta(t-s) \left( \sum_{K \in \mathcal{K}_h} \int_K \nabla \mathbf{u}(s) \cdot \nabla \mathbf{w}
- \sum_{F \in \mathcal{F}_h} \int_F [\nabla \mathbf{u}(s) \mathbf{n}_F \cdot \mathbf{w}] \right) ds
= \int_0^t \beta(t-s) \mathcal{A}_\epsilon(\mathbf{u}(s), \mathbf{w}) ds.
\end{align*}
With the incompressibility condition \eqref{eq3.2}, and using $[\mathbf{u}]=\mathbf{0}, \forall F \in \mathcal{F}_h$, $[\nabla \mathbf{u}]=0, \forall F\in \mathcal{F}_h^i$ and $[p]=0, \forall F\in \mathcal{F}_h^i$, one can find
\[
\sum_{K \in \mathcal{K}_h} \int_K \mathbf{u} \cdot \nabla \mathbf{u} \cdot \mathbf{w} = \mathcal{A}_C(\mathbf{u};\mathbf{u}, \mathbf{u}, \mathbf{w})
\]
and
\[
- \sum_{K \in \mathcal{K}_h} \int_K p \nabla \cdot \mathbf{w}
+ \sum_{F \in \mathcal{F}_h} \int_F [p \mathbf{w} \cdot \mathbf{n}_F] = - \mathcal{b}(\mathbf{w}, p).
\]
This completes the proof of this lemma.
\end{proof}

\section{Stability}
We recall from \cite{masri2022discontinuous}, the following identities which will be used in sequel.
\begin{lemma}
Let $(\mathbf{u}_h^n, \widetilde{\mathbf{u}}_h^n, v_h^n) \in \mathbf{M}_h \times \mathbf{M}_h \times P_h^0$ satisfies \eqref{eq5.36}--\eqref{eq5.39}, then  
for all $g_h \in P_h$ and $n \geq 1$, we have
\begin{equation}
    \mathcal{b}(\mathbf{u}_h^n, g_h) = \mathcal{b}(\widetilde{\mathbf{u}}_h^n, g_h) + \tau \mathcal{A}_{\text{sip}}(v_h^n, g_h)
    - \tau \sum_{F \in \mathcal{F}_h^i}  \frac{\tilde{\sigma}}{h_F} \int_F [v_h^n][g_h] 
    + \tau (\mathbf{G}_h([v_h^n]), \mathbf{G}_h([g_h])), \label{eq6.52}
\end{equation}
\begin{equation}
    \mathcal{b}(\mathbf{u}_h^n, g_h) = -\tau \sum_{F \in \mathcal{F}_h^i}  \frac{\tilde{\sigma}}{h_F} \int_F [v_h^n][g_h] 
    + \tau (\mathbf{G}_h([v_h^n]), \mathbf{G}_h([g_h])). \label{eq6.53}
\end{equation}
\end{lemma}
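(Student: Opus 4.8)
The plan is to express $\mathcal{b}(\mathbf{u}_h^n, g_h)$ entirely in terms of $\widetilde{\mathbf{u}}_h^n$ and $v_h^n$ by combining the velocity-update identity \eqref{eq5.39} with the equivalent representation \eqref{eq4.32} of the form $\mathcal{b}$ and the defining properties of the lift operators. First I would rewrite, using \eqref{eq4.32}, $\mathcal{b}(\mathbf{u}_h^n, g_h) = (-\nabla_h g_h + \mathbf{G}_h([g_h]), \mathbf{u}_h^n)$. The key admissibility observation is that, under the standing assumption $r_2 \leq r_1 + 1$, the broken gradient $\nabla_h g_h$ is piecewise of degree $r_2 - 1 \leq r_1$, hence $\nabla_h g_h \in \mathbf{M}_h$; together with $\mathbf{G}_h([g_h]) \in \mathbf{M}_h$ this makes the test function $\mathbf{w}_h := -\nabla_h g_h + \mathbf{G}_h([g_h])$ a legitimate choice in \eqref{eq5.39}.

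Substituting this $\mathbf{w}_h$ into \eqref{eq5.39} gives $\mathcal{b}(\mathbf{u}_h^n, g_h) = (\mathbf{w}_h, \widetilde{\mathbf{u}}_h^n) + \tau\,\mathcal{b}(\mathbf{w}_h, v_h^n)$. I would recognize $(\mathbf{w}_h, \widetilde{\mathbf{u}}_h^n) = \mathcal{b}(\widetilde{\mathbf{u}}_h^n, g_h)$ again by \eqref{eq4.32}, and re-expand $\mathcal{b}(\mathbf{w}_h, v_h^n)$ with the same identity to arrive at
\begin{equation*}
\mathcal{b}(\mathbf{u}_h^n, g_h) = \mathcal{b}(\widetilde{\mathbf{u}}_h^n, g_h) + \tau\big(\nabla_h v_h^n - \mathbf{G}_h([v_h^n]),\, \nabla_h g_h - \mathbf{G}_h([g_h])\big).
\end{equation*}
The main computational step is then to expand this inner product and match it against the definition of $\mathcal{A}_{\text{sip}}$. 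Here the lift-operator identities are essential: since $\nabla_h g_h, \nabla_h v_h^n \in \mathbf{M}_h$, the defining relation $\int_\mathcal{O}\mathbf{g}_F(\varsigma)\cdot\mathbf{w}_h = \int_F\{\mathbf{w}_h\}\cdot\mathbf{n}_F\,\varsigma$ yields $(\mathbf{G}_h([v_h^n]), \nabla_h g_h) = \sum_{F \in \mathcal{F}_h^i}\int_F \{\nabla g_h\}\cdot\mathbf{n}_F [v_h^n]$ and $(\nabla_h v_h^n, \mathbf{G}_h([g_h])) = \sum_{F \in \mathcal{F}_h^i}\int_F \{\nabla v_h^n\}\cdot\mathbf{n}_F [g_h]$, which are exactly the two consistency/symmetry face terms of $\mathcal{A}_{\text{sip}}$. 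Collecting the volume term $(\nabla_h v_h^n, \nabla_h g_h)$, these two face terms, and the surviving $\mathbf{G}_h$–$\mathbf{G}_h$ product, and completing to $\mathcal{A}_{\text{sip}}(v_h^n,g_h)$ by adding and subtracting the penalty term $\sum_{F \in \mathcal{F}_h^i}\frac{\tilde{\sigma}}{h_F}\int_F [v_h^n][g_h]$, produces \eqref{eq6.52}.

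For \eqref{eq6.53} I would invoke the potential equation \eqref{eq:5.41}, which gives $\tau\,\mathcal{A}_{\text{sip}}(v_h^n, g_h) = -\mathcal{b}(\widetilde{\mathbf{u}}_h^n, g_h)$, so that the first two terms of \eqref{eq6.52} cancel. Since \eqref{eq:5.41} is stated only for $g_h \in P_h^0$ while the claim is for all $g_h \in P_h$, I would note that both $\mathcal{b}(\widetilde{\mathbf{u}}_h^n, \cdot)$ and $\mathcal{A}_{\text{sip}}(v_h^n, \cdot)$ annihilate constants (the broken gradient and all interior jumps of a constant vanish), so, writing $g_h$ as its mean plus a mean-zero part, the cancellation extends from $P_h^0$ to all of $P_h$.

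I expect the routine algebra to be the expansion of the four-term inner product, but the genuine obstacle is the bookkeeping of the lift-operator cross terms and the verification that $\nabla_h g_h$ and $\nabla_h v_h^n$ lie in $\mathbf{M}_h$ — which is precisely where the hypothesis $r_2 \leq r_1 + 1$ enters — together with the constant-function argument needed to upgrade the second identity from $P_h^0$ to $P_h$.
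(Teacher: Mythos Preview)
Your argument is correct and is precisely the standard derivation: the paper does not supply its own proof here but simply cites \cite{masri2022discontinuous}, and your approach---testing \eqref{eq5.39} with $\mathbf{w}_h=-\nabla_h g_h+\mathbf{G}_h([g_h])\in\mathbf{M}_h$ (legitimate since $r_2\le r_1+1$), re-expanding via \eqref{eq4.32}, identifying the lift cross-terms with the $\mathcal{A}_{\text{sip}}$ face terms, and then invoking \eqref{eq:5.41} plus the constant-annihilation argument---is exactly what that reference does. The only points worth double-checking, which you already flagged, are the inclusion $\nabla_h g_h\in\mathbf{M}_h$ and the extension of \eqref{eq:5.41} from $P_h^0$ to $P_h$; both are handled correctly.
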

\begin{lemma} [\cite{girault2005discontinuous,lasis2003poincare}]
There exists a constant $C_k$ that depends on $k$, but independent of $h$ and $\tau$ such that
\begin{equation}
    \|\mathbf{w}\|_{L^k(\mathcal{O})} \leq C_k \|\mathbf{w}\|_\text{dG}, \quad \forall \mathbf{w} \in \mathbf{M}, \label{eq6.54}
\end{equation}
where $2 \leq k < \infty$ for $d = 2$ and $2 \leq k \leq 6$ for $d = 3$.
\end{lemma}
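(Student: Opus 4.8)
The statement is a discrete Sobolev embedding: the broken energy norm $\|\cdot\|_\text{dG}$, which aggregates the elementwise gradient energy and the scaled jumps $\sum_{F}h_{F}^{-1}\|[\mathbf{w}]\|_{L^{2}(F)}^{2}$, is asked to control the full $L^{k}(\mathcal{O})$ norm in exactly the range where the continuous embedding $H_{0}^{1}(\mathcal{O})\hookrightarrow L^{k}(\mathcal{O})$ holds (namely $k<\infty$ for $d=2$ and $k\le 6=2d/(d-2)$ for $d=3$). The plan is therefore to reduce the broken inequality to the continuous Sobolev inequality by splitting $\mathbf{w}$ into a conforming part plus a non-conforming remainder that is controlled entirely by the jumps. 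Throughout, all constants are to depend only on the shape-regularity parameter $\rho$, the penalty $\sigma$ and $k$, never on $h$ or $\tau$.

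First I would construct a conforming reconstruction $\mathbf{w}^{c}\in (H_{0}^{1}(\mathcal{O}))^{d}$ of $\mathbf{w}$ through a trace-based averaging operator of Oswald/Scott--Zhang type, which is well defined on all of $\mathbf{M}$ since functions in $\mathbf{M}$ are elementwise in $H^{1}$ (so that point values are not required). The inputs I need are the two standard averaging estimates
\[
\sum_{K\in\mathcal{K}_{h}}\|\nabla(\mathbf{w}-\mathbf{w}^{c})\|_{L^{2}(K)}^{2}\le C\sum_{F\in\mathcal{F}_{h}}h_{F}^{-1}\|[\mathbf{w}]\|_{L^{2}(F)}^{2},\qquad \sum_{K\in\mathcal{K}_{h}}\|\mathbf{w}-\mathbf{w}^{c}\|_{L^{2}(K)}^{2}\le C\sum_{F\in\mathcal{F}_{h}}h_{F}\|[\mathbf{w}]\|_{L^{2}(F)}^{2}.
\]
A triangle inequality with the first estimate gives $\|\nabla\mathbf{w}^{c}\|_{L^{2}(\mathcal{O})}\le C\|\mathbf{w}\|_\text{dG}$, and the continuous Sobolev embedding together with the Poincar\'e inequality on $H_{0}^{1}$ then yields $\|\mathbf{w}^{c}\|_{L^{k}(\mathcal{O})}\le C_{k}\|\nabla\mathbf{w}^{c}\|_{L^{2}(\mathcal{O})}\le C_{k}\|\mathbf{w}\|_\text{dG}$ for $k$ in the admissible range. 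This handles the conforming part and is exactly where the sharp exponent constraint originates.

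It remains to bound the remainder $\mathbf{e}=\mathbf{w}-\mathbf{w}^{c}$ in $L^{k}$, and the scaling bookkeeping here is the step I expect to be the main obstacle. On each $K$ I would apply the local Sobolev--Poincar\'e inequality, splitting off the elementwise mean, to obtain $\|\mathbf{e}\|_{L^{k}(K)}\le C\big(h_{K}^{d/k-d/2}\|\mathbf{e}\|_{L^{2}(K)}+h_{K}^{1+d/k-d/2}\|\nabla\mathbf{e}\|_{L^{2}(K)}\big)$, then reassemble the elementwise contributions using the embedding $\ell^{k}\hookrightarrow\ell^{2}$ (valid for $k\ge 2$) and the two averaging estimates above, while tracking the powers of $h_{K}$ and $h_{F}$ and invoking shape-regularity ($h_{F}\simeq h_{K}$). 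The delicate point is that the aggregated jump contributions must close against $\sum_{F}h_{F}^{-1}\|[\mathbf{w}]\|_{L^{2}(F)}^{2}\le\sigma^{-1}\|\mathbf{w}\|_\text{dG}^{2}$ uniformly in $h$ for the full range up to $k=2d/(d-2)$, so that the endpoint exponent is not lost; this is precisely the technical heart carried out in \cite{lasis2003poincare,girault2005discontinuous}, whose argument I would follow. Adding the bounds for $\mathbf{w}^{c}$ and for $\mathbf{e}$ yields \eqref{eq6.54} with $C_{k}$ depending on $k$, $\rho$ and $\sigma$ but independent of $h$ and $\tau$.
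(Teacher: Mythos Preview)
The paper does not prove this lemma at all: it is stated with a citation to \cite{girault2005discontinuous,lasis2003poincare} and used as a black box throughout. Your sketch via a conforming Oswald/Scott--Zhang reconstruction plus a jump-controlled remainder is precisely the strategy those references employ, so your proposal is consistent with the literature the paper defers to; there is nothing further to compare.
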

In order to establish the stability result, we carefully define the auxiliary functions $S_h^n \in P_h$ and $\xi_h^n \in P_h$, for $n \geq 0$:
\begin{equation}
S_h^0 = 0, \quad S_h^n = \delta \mu \sum_{i=1}^n (\nabla_h \cdot \widetilde{\mathbf{u}}_h^i - R_h([\widetilde{\mathbf{u}}_h^i])) + \delta \sum_{i=1}^n \sum_{j=1}^i \beta_{i-j} (\nabla_h \cdot \widetilde{\mathbf{u}}_h^j - R_h([\widetilde{\mathbf{u}}_h^j])), \quad n \geq 1, \label{eq6.55}
\end{equation}
\begin{equation}
\xi_h^0 = 0, \quad \xi_h^n = p_h^n + S_h^n, \quad n \geq 1. \label{eq6.56}
\end{equation}
Using the equivalent definition \eqref{eq4.31}, we obtain
\begin{equation*}
    \int_\mathcal{O} S_h^n = \delta \mu \sum_{i=1}^n \mathcal{b}(\widetilde{\mathbf{u}}_h^i, 1) + \delta \sum_{i=1}^n \sum_{j=1}^i \beta_{i-j} \mathcal{b}(\widetilde{\mathbf{u}}_h^j,1) = 0, \quad n \geq 1,
\end{equation*}
and this implies $S_h^n \in P_h^0$.
Hence, $\xi_h^n \in P_h^0$ as $p_h^n \in P_h^0$, by Lemma 1.

We now make the following assumptions, which will be used in sequel.

(\textbf{A1}) The penalty parameters $\sigma$ and $\tilde{\sigma}$ are large enough, specifically $\sigma \geq B_{r_2}^2 / d$ and $\tilde{\sigma} \geq 4 \tilde{B}_{r_1}^2$. \\Furthermore, the positive parameter $\delta \leq \min\left(\frac{\omega}{8d}, \frac{\omega \mu^2}{16 \gamma^2 d}\right)$.

(\textbf{A2}) The exact solution satisfies the following regularity assumptions:
\begin{align*}
&\mathbf{u} \in L^\infty(0, T; (H^{r+1}(\mathcal{O}))^d) \cap L^2(0, T; (H^{r+1}(\mathcal{O}))^d), \quad 
\partial_t \mathbf{u} \in L^2(0, T; (H^{r+1}(\mathcal{O}))^d), \\
&\partial_{tt} \mathbf{u} \in L^2(0, T; (L^2(\mathcal{O}))^d), \quad
 p \in L^\infty(0, T; H^r(\mathcal{O})).
\end{align*}

(\textbf{A3}) The exact solution satisfies the following regularity assumption:
\begin{align*}
\partial_t p \in L^2(0,T; H^1(\mathcal{O})). 
\end{align*}

(\textbf{A4}) Fixing $0 < \upsilon \leq 1$, there exist positive constants $c_1, c_2$ such that $\tau$ satisfies
\begin{equation}
    c_1 h^2 \leq \tau \leq c_2 h^{(1+\upsilon)d/3}. \label{8.223}
\end{equation}
With these preparations, we now state and prove the main result of this section.
\begin{theorem}
Let assumption (\textbf{A1}) be satisfied, and $\mathbf{u}^0 \in (L^2(\mathcal{O}))^d$. If $\tau$ is small enough, the discrete solution to the pressure correction scheme \eqref{eq5.36}--\eqref{eq5.39} satisfies for all $\tau > 0$ and $1 \leq m \leq N$,
\begin{equation*}
    \norm{\mathbf{u}_h^m}^2 + \frac{\omega \mu}{2} \tau \sum_{n=1}^m \norm{\widetilde{\mathbf{u}}_h^n}_\text{dG}^2 
    + \frac{1}{2} \tau^2 \sum_{n=1}^m |\xi_h^m|_\text{dG}^2 
    + \frac{1}{\delta \mu} \tau \norm{S_h^m}^2 \\
    \leq \tilde{C}_T\left(\norm{\mathbf{u}^0}^2 + \frac{4C_2^2}{\omega \mu} \tau \sum_{n=1}^m \norm{\mathbf{f}^n}^2\right).
\end{equation*}
\end{theorem}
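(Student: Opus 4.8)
The plan is to run a discrete energy estimate driven by testing the intermediate-velocity equation with its own solution and then telescoping through the projection and pressure-update steps. First I would set $\mathbf{w}_h=\widetilde{\mathbf{u}}_h^n$ in \eqref{eq5.36}. The convection term is discarded by the positivity property \eqref{eq4.12}, since it appears as $\mathcal{A}_c(\mathbf{u}_h^{n-1};\mathbf{u}_h^{n-1},\widetilde{\mathbf{u}}_h^n,\widetilde{\mathbf{u}}_h^n)\ge 0$, and the viscous term is bounded below by $\omega\mu\tau\norm{\widetilde{\mathbf{u}}_h^n}_\text{dG}^2$ via coercivity \eqref{eq3.20}. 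The leading term together with the right-hand contribution $(\mathbf{u}_h^{n-1},\widetilde{\mathbf{u}}_h^n)$ is reorganized by the polarization identity into $\tfrac12\norm{\widetilde{\mathbf{u}}_h^n}^2-\tfrac12\norm{\mathbf{u}_h^{n-1}}^2+\tfrac12\norm{\widetilde{\mathbf{u}}_h^n-\mathbf{u}_h^{n-1}}^2$, which sets up the telescoping of the kinetic energy.

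Next I would convert the intermediate-velocity norm into the end-of-step velocity norm. Writing the projection \eqref{eq5.41} as $\mathbf{u}_h^n=\widetilde{\mathbf{u}}_h^n-\tau(\nabla_h v_h^n-\mathbf{G}_h([v_h^n]))$ in $L^2$, expanding $\norm{\mathbf{u}_h^n}^2$, using \eqref{eq4.32} to identify the cross term as $-\mathcal{b}(\widetilde{\mathbf{u}}_h^n,v_h^n)$, and then \eqref{eq:5.41} with $g_h=v_h^n$ to rewrite this as $\tau\mathcal{A}_{\text{sip}}(v_h^n,v_h^n)$, produces the identity $\norm{\widetilde{\mathbf{u}}_h^n}^2=\norm{\mathbf{u}_h^n}^2+2\tau^2\mathcal{A}_{\text{sip}}(v_h^n,v_h^n)-\tau^2\norm{\nabla_h v_h^n-\mathbf{G}_h([v_h^n])}^2$. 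With the coercivity \eqref{eq3.21} of $\mathcal{A}_{\text{sip}}$ this delivers the telescoped $\norm{\mathbf{u}_h^n}^2$ plus nonnegative potential contributions, at the cost of a lift-operator remainder $\tau^2\norm{\nabla_h v_h^n-\mathbf{G}_h([v_h^n])}^2$ to be absorbed; the condition $\tilde\sigma\ge 4\tilde B_{r_1}^2$ in (\textbf{A1}), via \eqref{eq4.30}, is exactly what makes the favorable $\mathcal{A}_{\text{sip}}$ term dominate this remainder.

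The heart of the argument is the pressure-coupling term $\tau\mathcal{b}(\widetilde{\mathbf{u}}_h^n,p_h^{n-1})$. Here I would exploit the auxiliary functions, substituting $p_h^{n-1}=\xi_h^{n-1}-S_h^{n-1}$ from \eqref{eq6.56} and invoking the pressure update \eqref{eq5.40} together with the definition \eqref{eq6.55} of $S_h^n$. These are constructed precisely so that the viscous and memory divergence contributions folded into $p_h^n$ are matched by increments of $S_h^n$, so that the combination telescopes into the control terms $\tfrac1{\delta\mu}\tau\norm{S_h^m}^2$ and $\tfrac12\tau^2\sum_n|\xi_h^n|_\text{dG}^2$. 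The residual quadratic terms generated involve the lift operators $R_h,\mathbf{G}_h$, which I would bound by \eqref{eq4.29}--\eqref{eq4.30} and absorb into the favorable terms using the smallness of $\delta$ and the penalty sizes of assumption (\textbf{A1}) (namely $\sigma\ge B_{r_2}^2/d$, $\tilde\sigma\ge 4\tilde B_{r_1}^2$ and $\delta\le\min(\omega/8d,\ \omega\mu^2/16\gamma^2 d)$). This bookkeeping is the main obstacle: tracking which remainders carry a good sign and verifying that the constants in (\textbf{A1}) make the absorption exact.

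Finally I would treat the memory and forcing terms and close by summation. Summing $\tau\mathcal{A}_\epsilon(q_r^n(\widetilde{\mathbf{u}}_h),\widetilde{\mathbf{u}}_h^n)$ over $n$, I would drop it by the positivity property \eqref{5.38} in the symmetric case $\mathcal{A}_d$, whereas in the nonsymmetric case I would estimate it exactly as in \eqref{55}--\eqref{eqn58} using continuity \eqref{eq4.23}, Young's and Hölder's inequalities and the exponential kernel, then absorb half of it into the remaining $\tfrac{\omega\mu}{2}$ of the viscous term. The forcing is handled by Cauchy--Schwarz, the embedding \eqref{eq6.54} with $k=2$, and Young's inequality to split off $\tfrac{\omega\mu}{4}\tau\norm{\widetilde{\mathbf{u}}_h^n}_\text{dG}^2$ (absorbed) and $\tfrac{C_2^2}{\omega\mu}\tau\norm{\mathbf{f}^n}^2$ (kept). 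Summing from $n=1$ to $m$, telescoping the kinetic-energy and auxiliary-pressure differences, using $S_h^0=\xi_h^0=0$ and $\mathbf{u}_h^0$ the $L^2$ projection of $\mathbf{u}^0$, and applying the discrete Gronwall inequality (whose constant yields the $\mu,\gamma,T$-dependence in $\tilde C_T$) gives the claimed bound; the smallness of $\tau$ enters through the Gronwall step in the nonsymmetric case.
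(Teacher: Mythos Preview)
Your proposal is correct and follows essentially the same route as the paper: test \eqref{eq5.36} with $\widetilde{\mathbf{u}}_h^n$, use \eqref{eq4.12} and \eqref{eq3.20}, pass from $\norm{\widetilde{\mathbf{u}}_h^n}^2$ to $\norm{\mathbf{u}_h^n}^2$ via the projection identities (the paper does this through Lemma~4 rather than a single expansion, but the outcome is the same inequality \eqref{eq6.67}), split $\mathcal{b}(\widetilde{\mathbf{u}}_h^n,p_h^{n-1})$ through the auxiliary functions $S_h^n,\xi_h^n$ and absorb the lift remainders via (\textbf{A1}), handle the memory term by \eqref{5.38} or \eqref{eq4.23}, bound the forcing by \eqref{eq6.54} and Young, then sum and apply discrete Gronwall. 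One small correction: Gronwall---and hence the smallness of $\tau$---is needed in the symmetric case as well, because the memory contribution to $S_h^n-S_h^{n-1}$ in \eqref{6.80}--\eqref{6.81} and the cross term \eqref{6.82}--\eqref{6.83} leave behind the double sum $\tau^2\sum_{n}\sum_{i\le n}\norm{\widetilde{\mathbf{u}}_h^i}_\text{dG}^2$ and the term $\tfrac{\tau^2}{\delta\mu}\sum_n\norm{S_h^{n-1}}^2$ on the right of \eqref{eq6.86}.
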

\begin{proof}
Set $\mathbf{w}_h = \widetilde{\mathbf{u}}_h^n$ in \eqref{eq5.36}. With the positivity of $\mathcal{A}_c$ \eqref{eq4.12} and the coercivity of $\mathcal{A}_\epsilon$ \eqref{eq3.20}, we get
\begin{equation}
\begin{aligned}
    \frac{1}{2} \left( \norm{\widetilde{\mathbf{u}}_h^n}^2 - \norm{\mathbf{u}_h^{n-1}}^2 + \norm{\widetilde{\mathbf{u}}_h^n - \mathbf{u}_h^{n-1}}^2 \right)
    + \omega \mu \tau \norm{\widetilde{\mathbf{u}}_h^n}_\text{dG}^2 + \tau \mathcal{A}_\epsilon(q_r^n(\widetilde{\mathbf{u}}_h),\widetilde{\mathbf{u}}_h^n) \\
    \leq \tau \mathcal{b}(\widetilde{\mathbf{u}}_h^n, p_h^{n-1}) + \tau (\mathbf{f}^n, \widetilde{\mathbf{u}}_h^n).
    \label{eq6.59}
\end{aligned}
\end{equation}
Next, set $\mathbf{w}_h = \mathbf{u}_h^n$ in \eqref{eq5.39} to have
\begin{equation}
    \frac{1}{2} \left( \norm{\mathbf{u}_h^n}^2 - \norm{\widetilde{\mathbf{u}}_h^n}^2 + \norm{\mathbf{u}_h^n - \widetilde{\mathbf{u}}_h^n}^2 \right)
    = \tau \mathcal{b}(\mathbf{u}_h^n, v_h^n). \label{eq6.60}
\end{equation}
Set $\mathbf{w}_h = \mathbf{u}_h^n - \widetilde{\mathbf{u}}_h^n$ in \eqref{eq5.39} to obtain
\begin{equation}
    \norm{\mathbf{u}_h^n - \widetilde{\mathbf{u}}_h^n}^2 = \tau \mathcal{b}(\mathbf{u}_h^n - \widetilde{\mathbf{u}}_h^n, v_h^n). \label{eqn6.71}
\end{equation}
Using equation \eqref{eq6.52}, we obtain
\begin{equation}
    \norm{\mathbf{u}_h^n - \widetilde{\mathbf{u}}_h^n}^2 
    = \tau^2 \mathcal{A}_{\text{sip}}(v_h^n, v_h^n) 
    - \tau^2 \sum_{F \in \mathcal{F}_h^i}  \frac{\tilde{\sigma}}{h_F} \norm{[v_h^n]}^2_{L^2(F)} 
    + \tau^2 \norm{\mathbf{G}_h([v_h^n])}^2.
\end{equation}
Again using equation \eqref{eq6.53}, we find
\begin{equation}
    \mathcal{b}(\mathbf{u}_h^n, v_h^n) = - \tau \sum_{F \in \mathcal{F}_h^i}  \frac{\tilde{\sigma}}{h_F} \norm{[v_h^n]}^2_{L^2(F)} + \tau \norm{\mathbf{G}_h([v_h^n])}^2. \label{eqn6.73}
\end{equation}
With the help of \eqref{eqn6.71}-\eqref{eqn6.73}, equation \eqref{eq6.60} yields
\begin{equation}
    \frac{1}{2} \left( \norm{\mathbf{u}_h^n}^2 - \norm{\widetilde{\mathbf{u}}_h^n}^2 \right) 
    + \frac{\tau^2}{2} \mathcal{A}_{\text{sip}}(v_h^n, v_h^n) \\
    + \frac{\tau^2}{2} \sum_{F \in \mathcal{F}_h^i}  \frac{\tilde{\sigma}}{h_F} \norm{[v_h^n]}^2_{L^2(F)} 
    = \frac{\tau^2}{2} \norm{\mathbf{G}_h([v_h^n])}^2. \label{eq6.66}
\end{equation}
The last term in \eqref{eq6.66} is estimated, using \eqref{eq4.30}, as
\begin{equation}
    \norm{\mathbf{G}_h([v_h^n])} \leq \tilde{B}_{r_1} \left( \sum_{F \in \mathcal{F}_h^i}  h_F^{-1} \norm{[v_h^n]}_{L^2(F)}^2 \right)^{1/2}. \label{eqn6.75}
\end{equation}
Using \eqref{eqn6.75} in \eqref{eq6.66} to have
\begin{equation*}
    \frac{1}{2} \left( \norm{\mathbf{u}_h^n}^2 - \norm{\widetilde{\mathbf{u}}_h^n}^2 \right) 
    + \frac{\tau^2}{2} \mathcal{A}_{\text{sip}}(v_h^n, v_h^n) \\
    + \frac{\tau^2}{2} \sum_{F \in \mathcal{F}_h^i}  \left( \tilde{\sigma} - \tilde{B}_{r_1}^2 \right) h_F^{-1} \norm{[v_h^n]}_{L^2(F)}^2 \leq 0.
\end{equation*}
Invoking the assumption, $\tilde{\sigma} \geq \tilde{B}_{r_1}^2$, we obtain
\begin{equation}
    \frac{1}{2} \norm{\mathbf{u}_h^n}^2 - \frac{1}{2} \norm{\widetilde{\mathbf{u}}_h^n}^2 
    + \frac{\tau^2}{2} \mathcal{A}_{\text{sip}}(v_h^n, v_h^n) \leq 0. \label{eq6.67}
\end{equation}
Adding \eqref{eq6.67} to \eqref{eq6.59} yields
\begin{equation}
\begin{aligned}
    \frac{1}{2} \left( \norm{\mathbf{u}_h^n}^2 - \norm{\mathbf{u}_h^{n-1}}^2 - \norm{\widetilde{\mathbf{u}}_h^n - \mathbf{u}_h^{n-1}}^2 \right) 
    + \omega \mu \tau \norm{\widetilde{\mathbf{u}}_h^n}_\text{dG}^2 
    + \frac{\tau^2}{2} \mathcal{A}_{\text{sip}}(v_h^n, v_h^n) + \tau \mathcal{A}_\epsilon(q_r^n(\widetilde{\mathbf{u}}_h),\widetilde{\mathbf{u}}_h^n) \\
    \leq \tau \mathcal{b}(\widetilde{\mathbf{u}}_h^n, p_h^{n-1}) + \tau (\mathbf{f}^n, \widetilde{\mathbf{u}}_h^n).
    \label{eq6.68}
\end{aligned}
\end{equation}
Now we utilize the auxiliary function $S_h^n$ defined in \eqref{eq6.55} to tackle the first term on the RHS of \eqref{eq6.68} and obtain
\begin{equation}
    \mathcal{b}(\widetilde{\mathbf{u}}_h^n, p_h^{n-1}) = \mathcal{b}(\widetilde{\mathbf{u}}_h^n, p_h^{n-1} + S_h^n) - \mathcal{b}(\widetilde{\mathbf{u}}_h^n, S_h^n). \label{eq6.69}
\end{equation}
Now we use the equivalent expression \eqref{eq4.31} to rewrite the second term in \eqref{eq6.69} as
\begin{equation}
\begin{aligned}
    \mathcal{b}(\widetilde{\mathbf{u}}_h^n, S_h^{n-1}) 
    &= \left(\nabla_h \cdot \widetilde{\mathbf{u}}_h^n - R_h([\widetilde{\mathbf{u}}_h^n]), S_h^{n-1}\right) \\
    &= \frac{1}{\delta \mu} (S_h^n - S_h^{n-1},S_h^{n-1}) - \frac{1}{ \mu} \left(\sum_{j=1}^n \beta_{i-j} (\nabla_h \cdot \widetilde{\mathbf{u}}_h^j - R_h([\widetilde{\mathbf{u}}_h^j])),S_h^{n-1}\right) \\
    &= \frac{1}{2 \delta \mu} \left( \norm{S_h^n}^2 - \norm{S_h^{n-1}}^2 + \norm{S_h^n - S_h^{n-1}}^2 \right) \\
    &\hspace{12em} - \frac{1}{ \mu} \left(\nabla_h \cdot q_r^n(\widetilde{\mathbf{u}}_h)-R_h([q_r^n(\widetilde{\mathbf{u}}_h)]),S_h^{n-1}\right). \label{eq6.70}
\end{aligned}
\end{equation}
With the definition of $\xi_h^n$ in \eqref{eq6.56}, \eqref{eq5.38} implies
\begin{equation}
    \xi_h^n - \xi_h^{n-1} = v_h^n, \quad \forall n \geq 1. \label{eq6.71}
\end{equation}
Note that, $S_h^n, p_h^n \in P_h^0$. Hence by applying \eqref{eq:5.41} with \eqref{eq6.71}, we obtain
\begin{equation*}
    \mathcal{b}(\widetilde{\mathbf{u}}_h^n, \xi_h^{n-1}) 
    = - \tau \mathcal{A}_{\text{sip}}(v_h^n, \xi_h^{n-1}) 
    = - \mathcal{A}_{\text{sip}}(\xi_h^n - \xi_h^{n-1}, \xi_h^{n-1}).
\end{equation*}
Since $\mathcal{A}_{\text{sip}}(\cdot, \cdot)$ is symmetric, we have
\begin{equation}
\begin{aligned}
    \mathcal{b}(\widetilde{\mathbf{u}}_h^n, \xi_h^{n-1}) &= -\frac{\tau}{2} \left( 
    \mathcal{A}_{\text{sip}}(\xi_h^n, \xi_h^n) 
    - \mathcal{A}_{\text{sip}}(\xi_h^{n-1}, \xi_h^{n-1}) 
    - \mathcal{A}_{\text{sip}}(\xi_h^n - \xi_h^{n-1}, \xi_h^n - \xi_h^{n-1}) \right) \\
    &= -\frac{\tau}{2} ( 
    \mathcal{A}_{\text{sip}}(\xi_h^n, \xi_h^n) 
    - \mathcal{A}_{\text{sip}}(\xi_h^{n-1}, \xi_h^{n-1}) 
    - \mathcal{A}_{\text{sip}}(v_h^n, v_h^n) ). \label{eq6.73}
\end{aligned}
\end{equation}
Substituting \eqref{eq6.70} and \eqref{eq6.73} into \eqref{eq6.69} yields
\begin{equation}
\begin{aligned}
    \mathcal{b}(\widetilde{\mathbf{u}}_h^n, p_h^{n-1}) = -\frac{\tau}{2} &\left(
    \mathcal{A}_{\text{sip}}(\xi_h^n, \xi_h^n) 
    - \mathcal{A}_{\text{sip}}(\xi_h^{n-1}, \xi_h^{n-1}) 
    - \mathcal{A}_{\text{sip}}(v_h^n, v_h^n) \right) \\
    &- \frac{1}{2 \delta \mu} \left(
    \norm{S_h^n}^2 - \norm{S_h^{n-1}}^2 + \norm{S_h^n - S_h^{n-1}}^2 \right) \\
    &+ \frac{1}{ \mu} \left(\nabla_h \cdot q_r^n(\widetilde{\mathbf{u}}_h)-R_h([q_r^n(\widetilde{\mathbf{u}}_h)]),S_h^{n-1}\right). \label{eq:6.75}
\end{aligned}
\end{equation}
With the expression \eqref{eq:6.75} above, \eqref{eq6.68} becomes
\begin{equation}
\begin{aligned}
    &\frac{1}{2} \left( \norm{\mathbf{u}_h^n}^2 - \norm{\mathbf{u}_h^{n-1}}^2 \right) 
    + \norm{\widetilde{\mathbf{u}}_h^n - \mathbf{u}_h^{n-1}}^2 )
    + \omega \mu \tau \norm{\widetilde{\mathbf{u}}_h^n}_\text{dG}^2 \\
    &+ \frac{\tau^2}{2} (\mathcal{A}_{\text{sip}}(\xi_h^n, \xi_h^n) 
    - \mathcal{A}_{\text{sip}}(\xi_h^{n-1}, \xi_h^{n-1})) 
    + \frac{\tau}{2 \delta \mu} \left(
    \norm{S_h^n}^2 - \norm{S_h^{n-1}}^2 \right) + \tau \mathcal{A}_\epsilon(q_r^n(\widetilde{\mathbf{u}}_h),\widetilde{\mathbf{u}}_h^n) \\
    &\leq \tau (\mathbf{f}^n, \widetilde{\mathbf{u}}_h^n) + \frac{\tau}{2 \delta \mu} \norm{S_h^n - S_h^{n-1}}^2 + \frac{\tau}{ \mu} \left(\nabla_h \cdot q_r^n(\widetilde{\mathbf{u}}_h)-R_h([q_r^n(\widetilde{\mathbf{u}}_h)]),S_h^{n-1}\right). \label{eq6.75}
\end{aligned}
\end{equation}
Now, we handle the second term on the RHS of \eqref{eq6.75}. Recall that from the definition of $S_h^n$, we have
\begin{equation}
    S_h^n - S_h^{n-1} = \delta \mu \left( \nabla_h \cdot \widetilde{\mathbf{u}}_h^n - R_h(\widetilde{\mathbf{u}}_h^n) \right) + \delta \left(\nabla_h \cdot q_r^n(\widetilde{\mathbf{u}}_h)-R_h([q_r^n(\widetilde{\mathbf{u}}_h)])\right). \label{6.76}
\end{equation}
From \eqref{6.76}, we write using triangle inequality
\begin{equation}
\|S^n_h - S^{n-1}_h\|^2 \leq 2 \delta^2 \mu^2 \|\nabla_h \cdot \widetilde{\mathbf{u}}^n_h - R_h([\widetilde{\mathbf{u}}^n_h])\|^2 + 2 \delta^2 \norm{\nabla_h \cdot q_r^n(\widetilde{\mathbf{u}}_h)-R_h([q_r^n(\widetilde{\mathbf{u}}_h)])}^2. \label{6.77}
\end{equation}
Using definition of $\beta(t)$, we obtain
\begin{equation}
\norm{\nabla_h \cdot q_r^n(\widetilde{\mathbf{u}}_h)-R_h([q_r^n(\widetilde{\mathbf{u}}_h)])}^2 \leq \tau^2 \gamma^2  \sum_{j=1}^n \norm{\nabla_h \cdot \widetilde{\mathbf{u}}_h^j - R_h(\widetilde{\mathbf{u}}_h^j)}^2 . \label{6.78}
\end{equation}
Using \eqref{eq4.29}, it follows that
\begin{equation}
\begin{aligned}
\norm{\nabla_h \cdot \widetilde{\mathbf{u}}_h^n - R_h(\widetilde{\mathbf{u}}_h^n)}^2
&\leq 2 ( \|\nabla_h \cdot \widetilde{\mathbf{u}}^n_h\|^2 + \norm{R_h(\widetilde{\mathbf{u}}_h^n)}^2) \\
&\leq 2 \left(d\norm{\nabla_h \widetilde{\mathbf{u}}^n_h}^2+ B_{r_2}^2 \sum_{F \in \mathcal{F}_h}h_F^{-1} \|[\widetilde{\mathbf{u}}^n_h]\|^2_{L^2(F)}\right). \label{6.79}
\end{aligned}
\end{equation}
Using \eqref{6.78}, \eqref{6.79}, and the bound $\|\nabla_h \cdot \mathbf{w}\| \leq d^{1/2} \|\nabla_h \mathbf{w}\|, \forall \mathbf{w} \in \mathbf{M}$, inequality \eqref{6.77} yields
\begin{equation}
\begin{aligned}
\|S^n_h - S^{n-1}_h\|^2 &\leq 2 \delta^2 \mu^2 \|\nabla_h \cdot \widetilde{\mathbf{u}}^n_h - R_h([\widetilde{\mathbf{u}}^n_h])\|^2 + 2 \delta^2 \gamma^2 \tau^2 \sum_{j=1}^n \norm{\nabla_h \cdot \widetilde{\mathbf{u}}_h^j - R_h(\widetilde{\mathbf{u}}_h^j)}^2\\
&\leq 4 \delta^2 \mu^2 \left(d\norm{\nabla_h \widetilde{\mathbf{u}}^n_h}^2+ B_{r_2}^2 \sum_{F \in \mathcal{F}_h}h_F^{-1} \|[\widetilde{\mathbf{u}}^n_h]\|^2_{L^2(F)}\right) \\
&\hspace{2em} +4 \delta^2 \gamma^2 \tau^2 \sum_{j=1}^n \left(d\norm{\nabla_h \widetilde{\mathbf{u}}^j_h}^2+ B_{r_2}^2 \sum_{F \in \mathcal{F}_h}h_F^{-1} \|[\widetilde{\mathbf{u}}^j_h]\|^2_{L^2(F)}\right).  \label{6.80}
\end{aligned}
\end{equation}
Using the assumptions that $\delta \leq \min\left(\frac{\omega}{4d}, \frac{\omega \mu^2}{16 \gamma^2 d}\right)$ and $\sigma \geq \frac{B_{r_2}^2}{d}$, \eqref{6.80} leads to
\begin{equation}
\begin{aligned}
\frac{1}{2\delta\mu}\|S^n_h - S^{n-1}_h\|^2 
\leq \frac{\omega \mu}{2}&\left(\norm{\nabla_h \widetilde{\mathbf{u}}^n_h}^2+ \sum_{F \in \mathcal{F}_h}\sigma h_F^{-1} \|[\widetilde{\mathbf{u}}^n_h]\|^2_{L^2(F)}\right) \\
&+\frac{\omega \mu}{8} \tau^2 \sum_{j=1}^n \left(\norm{\nabla_h \widetilde{\mathbf{u}}^j_h}^2+ \sum_{F \in \mathcal{F}_h}\sigma h_F^{-1} \|[\widetilde{\mathbf{u}}^j_h]\|^2_{L^2(F)}\right). \label{6.81}
\end{aligned}
\end{equation}
To handle the third term on the RHS of \eqref{eq6.75}, we use the CS inequality and Young's inequality to obtain
\begin{equation}
\begin{aligned}
&\left |\frac{\tau}{\mu}\left(\nabla_h \cdot q_r^n(\widetilde{\mathbf{u}}_h)-R_h([q_r^n(\widetilde{\mathbf{u}}_h)]),S_h^{n-1}\right) \right | \\
&\leq \frac{\tau}{\mu}\norm{\nabla_h \cdot q_r^n(\widetilde{\mathbf{u}}_h)-R_h([q_r^n(\widetilde{\mathbf{u}}_h)])} \norm{S_h^{n-1}}\\
&\leq \frac{\delta}{2 \mu}\norm{\nabla_h \cdot q_r^n(\widetilde{\mathbf{u}}_h)-R_h([q_r^n(\widetilde{\mathbf{u}}_h)])}^2 + \frac{\tau^2}{2 \delta \mu} \norm{S_h^{n-1}}^2 \\
&\leq \frac{2 \gamma^2 \delta}{ \mu} \tau^2 \sum_{j=1}^n \left(d\norm{\nabla_h \widetilde{\mathbf{u}}^j_h}^2+ B_{r_2}^2 \sum_{F \in \mathcal{F}_h}h_F^{-1} \|[\widetilde{\mathbf{u}}^j_h]\|^2_{L^2(F)}\right) + \frac{\tau^2}{2 \delta \mu} \norm{S_h^{n-1}}^2. \label{6.82}
\end{aligned}
\end{equation}
Using the assumptions that $\delta \leq \frac{\omega \mu^2}{16 \gamma^2 d}$ and $\sigma \geq \frac{B_{r_2}^2}{d}$, \eqref{6.82} yields
\begin{equation}
\begin{aligned}
&\left |\frac{\tau}{\mu}\left(\nabla_h \cdot q_r^n(\widetilde{\mathbf{u}}_h)-R_h([q_r^n(\widetilde{\mathbf{u}}_h)]),S_h^{n-1}\right) \right | \\
&\leq \frac{\omega \mu }{8} \tau^2 \sum_{j=1}^n\left(\norm{\nabla_h \widetilde{\mathbf{u}}^j_h}^2+ \sum_{F \in \mathcal{F}_h}\sigma h_F^{-1} \|[\widetilde{\mathbf{u}}^j_h]\|^2_{L^2(F)}\right) + \frac{\tau^2}{2 \delta \mu} \norm{S_h^{n-1}}^2. \label{6.83}
\end{aligned}
\end{equation}
Substituting the bounds \eqref{6.81} and \eqref{6.83} in \eqref{eq6.75}, we have
\begin{equation}
\begin{aligned}
\frac{1}{2} \left(\|\mathbf{u}^n_h\|^2 - \|\mathbf{u}^{n-1}_h\|^2 + \|\widetilde{\mathbf{u}}^n_h - \mathbf{u}^{n-1}_h\|^2 \right) + \frac{\omega \mu}{2} \tau \|\widetilde{\mathbf{u}}^n_h\|^2_\text{dG} 
+ \frac{\tau^2}{2} \left(\mathcal{A}_{\text{sip}}(\xi^n_h, \xi^n_h) - \mathcal{A}_{\text{sip}}(\xi^{n-1}_h, \xi^{n-1}_h)\right) \\ 
+ \frac{\tau}{2\delta \mu} \left(\|S^n_h\|^2 - \|S^{n-1}_h\|^2\right) + \tau \mathcal{A}_\epsilon(q_r^n(\widetilde{\mathbf{u}}_h),\widetilde{\mathbf{u}}_h^n)
\leq \tau \left(\mathbf{f}^n, \widetilde{\mathbf{u}}^n_h\right) + \frac{\tau^2}{2 \delta \mu} \norm{S_h^{n-1}}^2+\frac{\omega \mu}{4} \tau^2 \sum_{i=1}^n \|\widetilde{\mathbf{u}}^i_h\|^2_\text{dG}. \label{eq6.84}
\end{aligned}
\end{equation}
First consider the symmetric case $\mathcal{A}_d(\cdot,\cdot)$.
Using the CS inequality, bound \eqref{eq6.54}, and then Young's inequality, we obtain
\begin{equation}
\left|(\mathbf{f}^n, \widetilde{\mathbf{u}}^n_h)\right| \leq \frac{C^2_2}{\omega \mu} \|\mathbf{f}^n\|^2 + \frac{\omega \mu}{4} \|\widetilde{\mathbf{u}}^n_h\|^2_\text{dG}. \label{6.85}
\end{equation}
Substituting \eqref{6.85} in \eqref{eq6.84} and multiplying by 2, we sum the inequality over $n=1$ to $m$ to arrive at
\begin{equation}
\begin{aligned}
&\|\mathbf{u}^m_h\|^2 + \frac{\omega \mu}{2} \tau \sum_{n=1}^m \|\widetilde{\mathbf{u}}^n_h\|^2_\text{dG} + \tau^2 \mathcal{A}_{\text{sip}}(\xi^m_h, \xi^m_h) + \frac{1}{\delta \mu} \tau \|S^m_h\|^2 +2 \tau \sum_{n=1}^m \mathcal{A}_d(q_r^n(\widetilde{\mathbf{u}}_h),\widetilde{\mathbf{u}}_h^n)\\
&\leq \|\mathbf{u}^0_h\|^2 + \frac{2C^2_2}{\omega \mu} \tau \sum_{n=1}^m \|\mathbf{f}^n\|^2 + \frac{1}{\delta \mu}\tau^2 \sum_{n=1}^m \norm{S_h^{n-1}}^2 + \tau^2 \mathcal{A}_{\text{sip}}(\xi^0_h, \xi^0_h) + \frac{1}{\delta \mu} \tau \|S^0_h\|^2 \\
&\hspace{1em} +\frac{\omega \mu}{2} \tau^2 
\sum_{n=1}^m \sum_{i=1}^n \|\widetilde{\mathbf{u}}^i_h\|^2_\text{dG}. \label{eq6.86}
\end{aligned}
\end{equation}
Using the positivity property given in  \eqref{5.38} and applying the discrete Gronwall's inequality, \eqref{eq6.86} yields
\begin{equation}
\begin{aligned}
&\|\mathbf{u}^m_h\|^2 + \frac{\omega \mu}{2} \tau \sum_{n=1}^m \|\widetilde{\mathbf{u}}^n_h\|^2_\text{dG} + \tau^2 \mathcal{A}_{\text{sip}}(\xi^m_h, \xi^m_h) + \frac{1}{\delta \mu} \tau \|S^m_h\|^2 \\
&\leq \tilde{C}_T \left(\|\mathbf{u}^0_h\|^2 + \frac{2C^2_2}{\omega \mu} \tau \sum_{n=1}^m \|\mathbf{f}^n\|^2 + \tau^2 \mathcal{A}_{\text{sip}}(\xi^0_h, \xi^0_h) + \frac{1}{\delta \mu} \tau \|S^0_h\|^2\right). \label{94}
\end{aligned}
\end{equation}
For the non-symmetric form of $\mathcal{A}_\epsilon$, another application of the CS inequality, Poincaré's inequality \eqref{eq6.54}, and Young's inequality leads to
\begin{equation}
\left|(\mathbf{f}^n, \widetilde{\mathbf{u}}^n_h)\right| \leq \frac{2 C^2_2}{\omega \mu} \|\mathbf{f}^n\|^2 + \frac{\omega \mu}{8} \|\widetilde{\mathbf{u}}^n_h\|^2_\text{dG}. \label{95}
\end{equation}
The last term in the left hand side (LHS) of \eqref{eq6.84} can be bounded using \eqref{eq4.23} and Young's inequality as
\begin{equation}
\begin{aligned}
|\mathcal{A}_\epsilon(q_r^n(\widetilde{\mathbf{u}}_h),\widetilde{\mathbf{u}}_h^n)| &\leq C \tau \sum_{i=1}^n \beta(t_n-t_i) \|\widetilde{\mathbf{u}}_h^i\|_\text{dG} \|\widetilde{\mathbf{u}}_h^n\|_\text{dG} \\
&\leq \frac{\omega \mu}{8}\|\widetilde{\mathbf{u}}_h^n\|^2_\text{dG}+\tilde{C}\left(\tau \sum_{i=1}^n \beta(t_n-t_i) \|\widetilde{\mathbf{u}}_h^i\|_\text{dG} \right)^2 .
    \label{6.87}
    \end{aligned}
\end{equation}
We utilize \eqref{95}-\eqref{6.87} in \eqref{eq6.84}. Then, multiplying the resulting inequality by 2, and taking the sum over $n=1$ to $m$, we arrive at
\begin{equation}
    \begin{aligned}
       &\|\mathbf{u}^m_h\|^2 + \frac{\omega \mu}{2} \tau \sum_{n=1}^m \|\widetilde{\mathbf{u}}^n_h\|^2_\text{dG} + \tau^2 \mathcal{A}_{\text{sip}}(\xi^m_h, \xi^m_h) + \frac{1}{\delta \mu} \tau \|S^m_h\|^2
\leq \|\mathbf{u}^0_h\|^2 + \frac{4 C^2_2}{\omega \mu} \tau \sum_{n=1}^m \|\mathbf{f}^n\|^2 \\&+ \frac{1}{\delta \mu}\tau^2 \sum_{n=1}^m \norm{S_h^{n-1}}^2 + \tau^2 \mathcal{A}_{\text{sip}}(\xi^0_h, \xi^0_h) + \frac{1}{\delta \mu} \tau \|S^0_h\|^2 +\frac{\omega \mu}{2} \tau^2 
\sum_{n=1}^m \sum_{i=1}^n \|\widetilde{\mathbf{u}}^i_h\|^2_\text{dG}\\
&+\tilde{C}\tau \sum_{n=1}^m \left( \tau \sum_{i=1}^n \beta(t_n-t_i) \|\widetilde{\mathbf{u}}_h^i\|_\text{dG} \right)^2.
\label{6.88}
    \end{aligned}
\end{equation}
Using Holder's inequality the last term of the above inequality is bounded as
\begin{equation}
    \begin{aligned}
        \tau \sum_{n=1}^m \left( \tau \sum_{i=1}^n \beta(t_n-t_i) \|\widetilde{\mathbf{u}}_h^i\|_\text{dG} \right)^2 &\leq \gamma^2 \tau \sum_{n=1}^m \left(\tau \sum_{i=1}^n e^{-2\eta(t_n-t_i)}\right) \left(\tau \sum_{i=1}^n \|\widetilde{\mathbf{u}}_h^i\|^2_\text{dG}\right)\\
        & \leq \frac{\gamma^2 e^{2\eta \tau}}{2\eta} \tau^2 \sum_{n=1}^m \sum_{i=1}^n \|\widetilde{\mathbf{u}}_h^i\|^2_\text{dG} . \label{eq6.89}
    \end{aligned}
\end{equation}
Using \eqref{eq6.89} and the discrete Gronwall's inequality, the inequality \eqref{6.88} becomes
\begin{equation}
\begin{aligned}
&\|\mathbf{u}^m_h\|^2 + \frac{\omega \mu}{2} \tau \sum_{n=1}^m \|\widetilde{\mathbf{u}}^n_h\|^2_\text{dG} + \tau^2 \mathcal{A}_{\text{sip}}(\xi^m_h, \xi^m_h) + \frac{1}{\delta \mu} \tau \|S^m_h\|^2 \\
&\leq \tilde{C}_T \left(\|\mathbf{u}^0_h\|^2 + \frac{4 C^2_2}{\omega \mu} \tau \sum_{n=1}^m \|\mathbf{f}^n\|^2 + \tau^2 \mathcal{A}_{\text{sip}}(\xi^0_h, \xi^0_h) + \frac{1}{\delta \mu} \tau \|S^0_h\|^2\right). \label{99}
\end{aligned}
\end{equation}
Recall that $\xi^0_h = S^0_h = 0$. With the coercivity property of $\mathcal{A}_{\text{sip}}$ \eqref{eq3.21} and the stability of the $L^2$ projection, \eqref{94} and \eqref{99} conclude the result for symmetric and non-symmetric cases, respectively.
\end{proof}

\section{\textit{A priori} bounds for the velocity}
From this point, we consider the case where $r_1=r$ and $r_2 = r - 1$. Then we introduce a new norm, for given $\bm\varphi \in (W^{1,3}(\mathcal{O}) \cap L^{\infty}(\mathcal{O}))^d$, as
\begin{equation*}
    ||| \bm\varphi ||| = \| \bm\varphi \|_{L^{\infty}(\mathcal{O})} + | \bm\varphi |_{W^{1,3}(\mathcal{O})}.
\end{equation*}
We assume that $\mathbf{u}^0 \in (H_0^1(\mathcal{O}))^d$ and that $\nabla \cdot \mathbf{u}^0 = 0$.
We will utilize the following inverse inequalities \cite{brenner2007mathematical,riviere2008discontinuous}:
\begin{align}
    \|\mathbf{w}_h\|_{L^3(\mathcal{O})} &\leq C h^{-d/6} \|\mathbf{w}_h\|, \quad \forall \mathbf{w}_h \in \mathbf{M}_h,
    \label{8.157}
\\
    \|\mathbf{w}_h\|_\text{dG} &\leq C h^{-1} \|\mathbf{w}_h\|, \quad \forall \mathbf{w}_h \in \mathbf{M}_h, \label{8.159}
\\
    \|g_h\|_\text{dG} &\leq C h^{-1} \|g_h\|, \quad \forall g_h \in P_h, \label{8.160}
\\
    \|\mathbf{w}_h\|_{L^\infty(\mathcal{O})} &\leq C h^{-d/2} \|\mathbf{w}_h\|. \quad \forall \mathbf{w}_h \in \mathbf{M}_h.
    \label{8.158}
\end{align}
Additionally, we will employ the following trace inequalities \cite{di2011mathematical,riviere2008discontinuous}. For all $K \in \mathcal{K}_h$ and $F \in \partial K$,
\begin{align}
    \|\bm\varphi\|_{L^2(F)} &\leq C h^{-1/2} \left( \|\bm\varphi\|_{L^2(K)} + h \|\nabla \bm\varphi\|_{L^2(K)} \right), \quad \forall \bm\varphi \in H^1(K), \label{8.161}
\\
    \|\mathbf{w}_h\|_{L^k(F)} &\leq C h^{-1/k} \|\mathbf{w}_h\|_{L^k(K)}, \quad \forall \mathbf{w}_h \in \mathbf{M}_h, \, k \geq 1.
\end{align}
\subsection{Approximation}
\textbullet \quad For $0 \leq t \leq T$, the local $L^2$ projection $\pi_h : L^2(\mathcal{O}) \to P_h$, defined on each $K \in \mathcal{K}_h$ by
\begin{equation*}
    \int_K (\pi_h q(t) - q(t)) g_h = 0, \quad \forall g_h \in \mathbb{P}_{r-1}(K), \quad \forall q(t) \in L^2(\mathcal{O}).
\end{equation*}
$\pi_h$ satisfies for all $0 \leq t \leq T$, and for all $K \in \mathcal{K}_h$,
\begin{equation}
    \|\pi_h q(t) - q(t)\|_{L^2(K)} + h_K \|\nabla_h (\pi_h q(t) - q(t))\|_{L^2(K)} \leq C h_K^{\min(r,s)} |q(t)|_{H^s(K)}, \\ \forall q(t) \in H^s(\mathcal{O}). \label{eq6.101}
\end{equation}
We also recall the stability of this projection in the dG norm \cite{girault2016strong}.
\begin{lemma}
Assume $q(t) \in H^1(\mathcal{O})$. Then, for some $t \in [0,T]$, we have 
\begin{equation}
    |\pi_h q(t)|_{\text{dG}} \leq C |q(t)|_{H^1(\mathcal{O})}. \label{eq6.102}
\end{equation}
\end{lemma}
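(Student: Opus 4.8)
The plan is to bound the two contributions to $|\pi_h q(t)|_\text{dG}^2$ separately---the broken-gradient term $\sum_{K \in \mathcal{K}_h} \|\nabla_h \pi_h q\|^2_{L^2(K)}$ and the interior-jump term $\sum_{F \in \mathcal{F}_h^i} (\tilde{\sigma}/h_F)\|[\pi_h q]\|^2_{L^2(F)}$---and to recombine them at the end. Throughout I suppress the time variable and write $q$ for $q(t)$. The workhorse is the approximation estimate \eqref{eq6.101} with $s=1$: since $r \geq 1$ we have $\min(r,1)=1$, so on each $K$, $\|\pi_h q - q\|_{L^2(K)} + h_K \|\nabla_h(\pi_h q - q)\|_{L^2(K)} \leq C h_K |q|_{H^1(K)}$.

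For the gradient term, I would add and subtract $q$ and apply the triangle inequality, $\|\nabla_h \pi_h q\|_{L^2(K)} \leq \|\nabla_h(\pi_h q - q)\|_{L^2(K)} + \|\nabla q\|_{L^2(K)}$. Dividing the approximation estimate by $h_K$ bounds the first term by $C|q|_{H^1(K)}$, while the second is exactly $|q|_{H^1(K)}$. Squaring, summing over $K \in \mathcal{K}_h$, and using $|q|^2_{H^1(\mathcal{O})} = \sum_K |q|^2_{H^1(K)}$ yields $\sum_K \|\nabla_h \pi_h q\|^2_{L^2(K)} \leq C |q|^2_{H^1(\mathcal{O})}$.

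For the jump term, the key observation is that $q \in H^1(\mathcal{O})$ forces $[q] = 0$ on every interior face, so $[\pi_h q] = [\pi_h q - q]$ there. On a face $F$ shared by $K^1_F$ and $K^2_F$, I would bound $\|[\pi_h q]\|_{L^2(F)}$ by the sum of the two one-sided traces $\|(\pi_h q - q)|_{K^i_F}\|_{L^2(F)}$, estimate each with the trace inequality \eqref{8.161}, and then invoke the approximation estimate once more: $\|\pi_h q - q\|_{L^2(F)} \leq C h_K^{-1/2}(\|\pi_h q - q\|_{L^2(K)} + h_K\|\nabla_h(\pi_h q - q)\|_{L^2(K)}) \leq C h_K^{1/2}|q|_{H^1(K)}$. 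Multiplying by $\tilde{\sigma}/h_F$ and using the shape-regularity/quasi-uniformity relation $h_F \simeq h_K$ cancels the mesh-size powers and leaves $C|q|^2_{H^1(K)}$ per side. Summing over all interior faces, and using that each element abuts a uniformly bounded number of faces, gives $\sum_{F \in \mathcal{F}_h^i} (\tilde{\sigma}/h_F)\|[\pi_h q]\|^2_{L^2(F)} \leq C|q|^2_{H^1(\mathcal{O})}$. Adding the two bounds and taking square roots completes the proof.

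The argument is largely routine once the global-$H^1$ continuity $[q]=0$ is exploited; the only point requiring care is the bookkeeping in the jump term---matching the face mesh-size $h_F$ to the neighbouring element size $h_K$ via shape regularity, and ensuring the face-to-element summation introduces only an $h$-independent multiplicity constant. I expect this face/element accounting, rather than any single inequality, to be the main (and relatively minor) obstacle.
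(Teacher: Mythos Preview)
Your proof is correct and is the standard argument for this stability result. The paper itself does not prove this lemma but merely cites it from the literature (Girault et al.\ 2016), so there is no paper proof to compare against; your decomposition into the broken-gradient part (handled by the local approximation estimate \eqref{eq6.101} with $s=1$ plus a triangle inequality) and the interior-jump part (handled via $[q]=0$ for $q\in H^1(\mathcal{O})$, the trace inequality \eqref{8.161}, and shape regularity) is exactly the proof one finds in the cited source.
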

\textbullet \quad For $0 \leq t \leq T$, we employ the operator $\Pi_h : (H_0^1(\mathcal{O}))^d \to \mathbf{M}_h$, defined on each $K \in \mathcal{K}_h$, which ensures the discrete divergence preservation \cite{chaabane2017convergence, riviere2008discontinuous}, as follows
\begin{equation}
    \mathcal{b}(\Pi_h \bm\varphi(t), g_h) = \mathcal{b}(\bm\varphi(t), g_h), \quad \forall g_h \in P_h. \label{6.89}
\end{equation}
If $\bm\varphi(t) \in \mathbf{M} \cap (W^{s,k}(\mathcal{O}) \cap H_0^1(\mathcal{O}))^d$, then for $0 \leq t \leq T$, we have the following approximation properties \cite{chaabane2017convergence}:
\begin{align}
    \|\Pi_h \bm\varphi(t) - \bm\varphi(t)\|_{L^k(K)} &\leq C h_K^s |\bm\varphi(t)|_{W^{s,k}(\Delta_K)}, \label{6.90}
\\
    \|\nabla(\Pi_h \bm\varphi(t) - \bm\varphi(t))\|_{L^k(K)} &\leq C h_K^{s-1} |\bm\varphi(t)|_{W^{s,k}(\Delta_K)},
    \label{6.91}
\end{align}
where $1 \leq s \leq r + 1$, $1 \leq k \leq \infty$, and $\Delta_K$ represents a macro element containing $K$.
If $\bm\varphi(t) \in (W^{s,k}(\mathcal{O}) \cap H_0^1(\mathcal{O}))^d$, then for $0 \leq t \leq T$, bounds \eqref{6.90} and \eqref{6.91} yield the global estimates:
\begin{align}
    \|\Pi_h \bm\varphi(t) - \bm\varphi(t)\|_{L^k(\mathcal{O})} &\leq C h^s |\bm\varphi(t)|_{W^{s,k}(\mathcal{O})}, \label{eq:6.96}
\\
    \|\Pi_h \bm\varphi(t) - \bm\varphi(t)\|_{\text{dG}} &\leq C h^{s-1} |\bm\varphi(t)|_{H^s(\mathcal{O})}. \label{eq:6.97}
\end{align}
\begin{lemma} [\cite{masri2022discontinuous}]
Assume that $\bm\varphi(t) \in (L^\infty(\mathcal{O}) \cap W^{1,3}(\mathcal{O}) \cap H_0^1(\mathcal{O}))^d$. Then, for some $t \in [0,T]$, we have
\begin{equation}
    \|\Pi_h \bm\varphi(t)\|_{L^\infty(\mathcal{O})} \leq C ||| \bm\varphi(t) |||.
\end{equation}
\end{lemma}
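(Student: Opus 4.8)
The plan is to compare $\Pi_h\bm\varphi$ against a second, purely local interpolant that is manifestly $L^\infty$-stable, and to control the difference—which is a piecewise polynomial—by an inverse estimate. Concretely, let $\mathbf{P}_h$ denote the elementwise $L^2$ projection onto $\mathbf{M}_h$ (the same local projection used to define $\mathbf{u}_h^0$). Two facts about $\mathbf{P}_h$ drive the argument. First, a scaling argument on the reference element gives the $h$-independent stability $\|\mathbf{P}_h\bm\varphi\|_{L^\infty(K)} \leq C\|\bm\varphi\|_{L^\infty(K)}$: since $\mathbf{P}_h$ commutes with the affine pullback (the Jacobian is constant on $K$) and the $L^\infty$ norm is invariant under that pullback, it suffices to note that on $\hat K$ the norms $L^\infty$ and $L^2$ are equivalent on the finite-dimensional polynomial space and that $\hat P$ is $L^2$-bounded. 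Second, the Bramble–Hilbert lemma yields the first-order estimate $\|\bm\varphi - \mathbf{P}_h\bm\varphi\|_{L^3(K)} \leq C h_K|\bm\varphi|_{W^{1,3}(K)}$.

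First I would write, on each $K \in \mathcal{K}_h$, the splitting $\Pi_h\bm\varphi = \mathbf{P}_h\bm\varphi + (\Pi_h\bm\varphi - \mathbf{P}_h\bm\varphi)$ and bound the two pieces separately. The first piece is handled directly by the $L^\infty$-stability of $\mathbf{P}_h$. For the second piece, since $\bm{\chi}_h := \Pi_h\bm\varphi - \mathbf{P}_h\bm\varphi$ lies in $(\mathbb{P}_{r_1}(K))^d$, I would apply the standard inverse estimate $\|\bm{\chi}_h\|_{L^\infty(K)} \leq C h_K^{-d/3}\|\bm{\chi}_h\|_{L^3(K)}$ (the $L^\infty$-versus-$L^3$ analogue of \eqref{8.157}–\eqref{8.158}) and then insert $\bm\varphi$ via the triangle inequality:
\[
\|\bm{\chi}_h\|_{L^3(K)} \leq \|\Pi_h\bm\varphi - \bm\varphi\|_{L^3(K)} + \|\bm\varphi - \mathbf{P}_h\bm\varphi\|_{L^3(K)} \leq C h_K\,|\bm\varphi|_{W^{1,3}(\Delta_K)},
\]
where the first term uses the approximation property \eqref{6.90} of $\Pi_h$ with $s=1$, $k=3$, and the second uses the Bramble–Hilbert bound above (enlarging $K$ to $\Delta_K$). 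Combining these gives $\|\bm{\chi}_h\|_{L^\infty(K)} \leq C h_K^{1-d/3}\,|\bm\varphi|_{W^{1,3}(\Delta_K)}$.

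The decisive point—and what I expect to be the only genuine subtlety—is the bookkeeping of the powers of $h_K$ in the borderline case $d=3$. Because $d \in \{2,3\}$, the exponent $1 - d/3$ is nonnegative ($1/3$ when $d=2$, and exactly $0$ when $d=3$), so $h_K^{1-d/3} \leq C$ uniformly on the bounded domain $\mathcal{O}$, and the second piece is controlled by $C\,|\bm\varphi|_{W^{1,3}(\Delta_K)}$. It is essential that the inverse estimate is taken in $L^3$ rather than $L^2$: an $L^2$-based estimate would produce the factor $h_K^{1-d/2}$, which blows up for $d=3$. This is also the structural reason the norm $|||\,\cdot\,|||$ retains the $L^\infty$ term separately: in three dimensions $W^{1,3}\not\hookrightarrow L^\infty$, so $\|\bm\varphi\|_{L^\infty(\mathcal{O})}$ cannot be absorbed into $|\bm\varphi|_{W^{1,3}(\mathcal{O})}$ and must be carried along.

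Finally I would assemble the two bounds into $\|\Pi_h\bm\varphi\|_{L^\infty(K)} \leq C\big(\|\bm\varphi\|_{L^\infty(K)} + |\bm\varphi|_{W^{1,3}(\Delta_K)}\big)$, take the maximum over $K \in \mathcal{K}_h$, and invoke the bounded overlap of the macro-elements $\Delta_K$ to pass to global norms. This yields $\|\Pi_h\bm\varphi\|_{L^\infty(\mathcal{O})} \leq C\big(\|\bm\varphi\|_{L^\infty(\mathcal{O})} + |\bm\varphi|_{W^{1,3}(\mathcal{O})}\big) = C\,|||\,\bm\varphi\,|||$, which is the claimed estimate.
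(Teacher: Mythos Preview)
Your argument is correct and is precisely the standard route to this $L^\infty$-stability estimate. The paper does not actually prove this lemma; it is quoted verbatim from \cite{masri2022discontinuous} with no accompanying argument. Your proof---splitting $\Pi_h\bm\varphi$ against the elementwise $L^2$ projection, invoking the $L^\infty$-stability of the latter via scaling, and controlling the polynomial difference by an $L^3$-based inverse estimate combined with the $W^{1,3}$ approximation bound \eqref{6.90}---is exactly the argument given in that reference, including the observation that the $L^3$ exponent is what makes the borderline case $d=3$ work.
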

As a corollary, we have
\begin{equation}
    |||\bm\varphi-\Pi_h\bm\varphi||| \leq C |||\bm\varphi|||. \label{eq7.99}
\end{equation}
\textbullet \quad For $0 \leq t \leq T$, we define the Ritz projection $Q_h :(H_0^1(\mathcal{O}))^d \to \mathbf{M}_h$ by
\begin{equation}
\mathcal{A}_d\left(Q_h \bm{\varphi}(t), \mathbf{w}_h\right) = \mathcal{A}_d\left( \bm{\varphi}(t), \mathbf{w}_h\right), \quad \forall \mathbf{w}_h \in \mathbf{M}_h.  
\end{equation}
If the domain is convex, then the projection satisfies the following approximation properties \cite{riviere2008discontinuous},
\begin{equation}
\|\bm{\varphi}(t) - Q_h \bm{\varphi}(t)\| \leq C h^{r+1} |\bm{\varphi}(t)|_{H^{r+1}(\mathcal{O})}. \label{8.185}
\end{equation}
\subsection{Error equations}
In view of Lemma 3, since $\mathbf{M}_h \subset \mathbf{M}$ the true solution $(\mathbf{u},p)$ of \eqref{eq3.1}--\eqref{eq3.4} satisfies $\forall n \geq 1$,
\begin{equation}
\begin{aligned}
((\partial_t \mathbf{u})^n, \mathbf{w}_h) + \mathcal{A}_c(\mathbf{u}^n; \mathbf{u}^n, \mathbf{u}^n, \mathbf{w}_h) + \mu \mathcal{A}_\epsilon(\mathbf{u}^n, \mathbf{w}_h) + \int_0^{t_n} \beta(t_n-s) \mathcal{A}_\epsilon(\mathbf{u}(s), \mathbf{w}_h) ds \\= \mathcal{b}(\mathbf{w}_h, p^n) + (\mathbf{f}^n, \mathbf{w}_h), \quad \forall \mathbf{w}_h \in \mathbf{M}_h. \label{6.98}
\end{aligned}
\end{equation}
For simplification of notation, we introduce the following discretization errors, $\forall n \geq 0$,
\begin{align*}
\tilde{\bm{\mathcal{E}}}_h^n &= \widetilde{\mathbf{u}}_h^n - \Pi_h \mathbf{u}^n, \\
\bm{\mathcal{E}}_h^n &= \mathbf{u}_h^n - \Pi_h \mathbf{u}^n.
\end{align*}
We set $\widetilde{\mathbf{u}}_h^0 = \mathbf{u}_h^0 = \Pi_h\mathbf{u}^0$, therefore $\tilde{\bm{\mathcal{E}}}_h^0 = 0$. \\
Multiply \eqref{6.98} by $\tau$ and then subtract it from \eqref{eq5.36} to obtain  $\forall \mathbf{w}_h \in \mathbf{M}_h$ and $\forall n \geq 1$,
\begin{equation}
\begin{aligned}
&(\tilde{\bm{\mathcal{E}}}_h^n, \mathbf{w}_h) + \tau \mathcal{A}_c(\mathbf{u}_h^{n-1}; \mathbf{u}_h^{n-1}, \tilde{\bm{\mathcal{E}}}_h^n, \mathbf{w}_h) + \tau R_C(\mathbf{w}_h) + \tau \mu \mathcal{A}_\epsilon(\tilde{\bm{\mathcal{E}}}_h^n, \mathbf{w}_h)+ \tau \mathcal{A}_\epsilon(q_r^n(\tilde{\bm{\mathcal{E}}}_h),\mathbf{w}_h) \\
&+ \tau R_S(\mathbf{w}_h) = (\bm{\mathcal{E}}_h^{n-1}, \mathbf{w}_h) - \tau \mu \mathcal{A}_\epsilon(\Pi_h \mathbf{u}^n - \mathbf{u}^n, \mathbf{w}_h) -\tau \mathcal{A}_\epsilon(q_r^n(\Pi_h \mathbf{u}-\mathbf{u}),\mathbf{w}_h) \\
&+ \tau \mathcal{b}(\mathbf{w}_h, p_h^{n-1} - p^n) + R_t(\mathbf{w}_h),
\end{aligned}
\label{7.105}
\end{equation}
where
\begin{align*}
R_C(\mathbf{w}_h) &= \mathcal{A}_c(\mathbf{u}_h^{n-1}; \mathbf{u}_h^{n-1}, \Pi_h \mathbf{u}^n, \mathbf{w}_h) - \mathcal{A}_c(\mathbf{u}^n; \mathbf{u}^n, \mathbf{u}^n, \mathbf{w}_h), \\
R_S(\mathbf{w}_h) &=\mathcal{A}_\epsilon(q_r^n(\mathbf{u}),\mathbf{w}_h) - \int_0^{t_n} \beta(t_n-s) \mathcal{A}_\epsilon(\mathbf{u}(s), \mathbf{w}_h) ds, \\
R_t(\mathbf{w}_h) &= \tau ((\partial_t \mathbf{u})^n, \mathbf{w}_h) + (\Pi_h \mathbf{u}^{n-1} - \Pi_h \mathbf{u}^n, \mathbf{w}_h).
\end{align*}
In view of $\mathcal{b}(\mathbf{u}^n,g_h)=0, \forall g_h \in P_h$, using \eqref{6.89} and \eqref{eq4.31}, we obtain $\forall n \geq 1$,
\begin{equation}
\mathcal{b}(\Pi_h \mathbf{u}^n, g_h) = (\nabla_h \cdot \Pi_h \mathbf{u}^n - R_h([\Pi_h \mathbf{u}^n]), g_h) = 0, \quad \forall g_h \in P_h. \label{6.102}
\end{equation}
Using \eqref{6.102} and \eqref{eq:5.41}, we have $\forall n \geq 1$,
\begin{equation}
\mathcal{A}_\text{sip}(v_h^n, g_h) = -\frac{1}{\tau} \mathcal{b}(\tilde{\bm{\mathcal{E}}}_h^n, g_h), \quad \forall g_h \in P_h^0. \label{eq6.108}
\end{equation}
Adding and subtracting $\Pi_h \mathbf{u}^n$ in \eqref{eq5.41}, we obtain $\forall n \geq 1$,
\begin{equation}
(\bm{\mathcal{E}}_h^n, \mathbf{w}_h) = (\tilde{\bm{\mathcal{E}}}_h^n, \mathbf{w}_h) - \tau (\nabla_h v_h^n - \mathbf{G}_h([v_h^n]), \mathbf{w}_h), \quad \forall \mathbf{w}_h \in \mathbf{M}_h. \label{6.104}
\end{equation}
From \eqref{eq4.32} and \eqref{6.104}, we obtain $\forall n \geq 1$,
\begin{equation}
(\bm{\mathcal{E}}_h^n, \mathbf{w}_h) = (\tilde{\bm{\mathcal{E}}}_h^n, \mathbf{w}_h) + \tau \mathcal{b}(\mathbf{w}_h, v_h^n), \quad \forall \mathbf{w}_h \in \mathbf{M}_h.  \label{eq:6.110}  
\end{equation}
Adding and subtracting $\Pi_h \mathbf{u}^n$ in \eqref{eq5.40} we have $\forall n \geq 1$,
\begin{equation}
(p_h^n, g_h) = (p_h^{n-1}, g_h) + (v_h^n, g_h) - \delta \mu (\nabla_h \cdot \tilde{\bm{\mathcal{E}}}_h^n - R_h([\tilde{\bm{\mathcal{E}}}_h^n]), g_h) - \delta (\nabla_h \cdot q_r^n(\tilde{\bm{\mathcal{E}}}_h) - R_h([q_r^n(\tilde{\bm{\mathcal{E}}}_h)]), g_h).
\label{eq:6.111}
\end{equation}

\subsection{Auxiliary estimates}
\begin{lemma} [\cite{masri2022discontinuous}]
Fix $g_h \in P_h$. Then we have $\forall n \geq 1$,
\begin{equation}
\mathcal{b}(\bm{\mathcal{E}}_h^n, g_h) = \mathcal{b}(\tilde{\bm{\mathcal{E}}}_h^n, g_h) + \tau \mathcal{A}_{\text{sip}}(v_h^n, g_h) - \tau \sum_{F \in \mathcal{F}_h^i}  \frac{\tilde{\sigma}}{h_F} \int_F [v_h^n][g_h] + \tau (\mathbf{G}_h([v_h^n]), \mathbf{G}_h([g_h])), \label{6.112}
\end{equation}
\begin{equation}
\mathcal{b}(\bm{\mathcal{E}}_h^n, g_h) = -\tau \sum_{F \in \mathcal{F}_h^i}  \frac{\tilde{\sigma}}{h_F} \int_F [v_h^n][g_h] + \tau (\mathbf{G}_h([v_h^n]), \mathbf{G}_h([g_h])). \label{6.113}
\end{equation}
In addition, $\forall n \geq 1$,
\begin{equation}
\begin{aligned}
\| \tilde{\bm{\mathcal{E}}}_h^n - \bm{\mathcal{E}}_h^{n-1} \|^2 = \| \bm{\mathcal{E}}_h^n - \bm{\mathcal{E}}_h^{n-1} \|^2 + \tau^2 (\|\nabla_h v_h^n \|^2 + \|\mathbf{G}_h([v_h^n])\|^2) + \tau^2 (H_1^n - H_2^n) \\
 + \tau^2 \bigg(\sum_{F \in \mathcal{F}_h^i} \frac{\tilde{\sigma}}{h_F} \| [v_h^n - v_h^{n-1}] \|^2 - \|\mathbf{G}_h([v_h^n - v_h^{n-1}])\|^2\bigg) \\
 - 2\tau^2 (\nabla_h v_h^n, \mathbf{G}_h([v_h^n])) + 2\delta_{n,1} \tau \mathcal{b}(\bm{\mathcal{E}}_h^0, v_h^1), \label{6.114}
\end{aligned}
\end{equation}
\textit{where,} $\forall n \geq 1$,
\begin{align}
H_1^n &= \sum_{F \in \mathcal{F}_h^i}  \frac{\tilde{\sigma}}{h_F} \|[v_h^n]\|_{L^2(F)}^2 
- \sum_{F \in \mathcal{F}_h^i}  \frac{\tilde{\sigma}}{h_F} \|[v_h^{n-1}]\|_{L^2(F)}^2,
\label{eq:6.115}
\\
H_2^n &= \|\mathbf{G}_h([v_h^n])\|^2 - \|\mathbf{G}_h([v_h^{n-1}])\|^2.
\label{eq:6.116}
\end{align}
\end{lemma}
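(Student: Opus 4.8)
The plan is to treat the three identities separately, deriving the first two directly from Lemma 4 and then using them to unwind the norm in \eqref{6.114}. For \eqref{6.112} and \eqref{6.113} I would exploit linearity of $\mathcal{b}(\cdot, g_h)$ in its first argument together with the discrete-divergence property \eqref{6.102}, which gives $\mathcal{b}(\Pi_h \mathbf{u}^n, g_h) = 0$ for every $g_h \in P_h$. Writing $\bm{\mathcal{E}}_h^n = \mathbf{u}_h^n - \Pi_h \mathbf{u}^n$ and $\tilde{\bm{\mathcal{E}}}_h^n = \widetilde{\mathbf{u}}_h^n - \Pi_h \mathbf{u}^n$, this yields $\mathcal{b}(\bm{\mathcal{E}}_h^n, g_h) = \mathcal{b}(\mathbf{u}_h^n, g_h)$ and $\mathcal{b}(\tilde{\bm{\mathcal{E}}}_h^n, g_h) = \mathcal{b}(\widetilde{\mathbf{u}}_h^n, g_h)$, so \eqref{6.112} and \eqref{6.113} follow by substituting these into \eqref{eq6.52} and \eqref{eq6.53} respectively; no further computation is needed.

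For \eqref{6.114} the first step is to convert the weak relation \eqref{eq:6.110} into a pointwise one. Inserting the equivalent form \eqref{eq4.32} into \eqref{eq:6.110} gives $(\bm{\mathcal{E}}_h^n, \mathbf{w}_h) = (\tilde{\bm{\mathcal{E}}}_h^n - \tau \nabla_h v_h^n + \tau \mathbf{G}_h([v_h^n]), \mathbf{w}_h)$ for all $\mathbf{w}_h \in \mathbf{M}_h$. Since $\nabla_h v_h^n$ and $\mathbf{G}_h([v_h^n])$ both lie in $\mathbf{M}_h$ (the former because $r_2 = r_1-1$ so $\nabla_h v_h^n$ has element degree $\le r_1$, the latter by construction of $\mathbf{G}_h$), non-degeneracy of the $L^2$ inner product on $\mathbf{M}_h$ upgrades this to $\tilde{\bm{\mathcal{E}}}_h^n = \bm{\mathcal{E}}_h^n + \tau \nabla_h v_h^n - \tau \mathbf{G}_h([v_h^n])$. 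I would then write $\tilde{\bm{\mathcal{E}}}_h^n - \bm{\mathcal{E}}_h^{n-1} = (\bm{\mathcal{E}}_h^n - \bm{\mathcal{E}}_h^{n-1}) + \tau \nabla_h v_h^n - \tau \mathbf{G}_h([v_h^n])$ and expand the squared $L^2$ norm. The three diagonal terms reproduce $\|\bm{\mathcal{E}}_h^n - \bm{\mathcal{E}}_h^{n-1}\|^2 + \tau^2(\|\nabla_h v_h^n\|^2 + \|\mathbf{G}_h([v_h^n])\|^2)$, and the $\nabla_h v_h^n$--$\mathbf{G}_h([v_h^n])$ cross term yields $-2\tau^2(\nabla_h v_h^n, \mathbf{G}_h([v_h^n]))$, exactly as in the statement.

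The substance lies in the remaining mixed cross term $2\tau(\bm{\mathcal{E}}_h^n - \bm{\mathcal{E}}_h^{n-1},\, \nabla_h v_h^n - \mathbf{G}_h([v_h^n]))$. Reading \eqref{eq4.32} backwards identifies this with $-2\tau\,\mathcal{b}(\bm{\mathcal{E}}_h^n - \bm{\mathcal{E}}_h^{n-1}, v_h^n)$. For $n \geq 2$ I would apply \eqref{6.113} to both $\mathcal{b}(\bm{\mathcal{E}}_h^n, v_h^n)$ and $\mathcal{b}(\bm{\mathcal{E}}_h^{n-1}, v_h^n)$, then use the polarization identity $2(x-y)x = x^2 - y^2 + (x-y)^2$ on both the face-jump contributions and the $\mathbf{G}_h$ contributions (the latter in the $L^2$ inner product, using linearity $\mathbf{G}_h([v_h^n]) - \mathbf{G}_h([v_h^{n-1}]) = \mathbf{G}_h([v_h^n - v_h^{n-1}])$). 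The ``$x^2 - y^2$'' parts assemble precisely into $\tau^2(H_1^n - H_2^n)$ with $H_1^n, H_2^n$ as in \eqref{eq:6.115}--\eqref{eq:6.116}, while the ``$(x-y)^2$'' parts supply $\tau^2\big(\sum_{F \in \mathcal{F}_h^i} \tfrac{\tilde{\sigma}}{h_F}\|[v_h^n - v_h^{n-1}]\|^2 - \|\mathbf{G}_h([v_h^n - v_h^{n-1}])\|^2\big)$.

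I expect the main obstacle to be the boundary case $n = 1$, which is exactly why the Kronecker term $2\delta_{n,1}\tau\,\mathcal{b}(\bm{\mathcal{E}}_h^0, v_h^1)$ appears. When $n = 1$ the factor $\mathcal{b}(\bm{\mathcal{E}}_h^0, v_h^1)$ cannot be expanded through \eqref{6.113}, since that identity is available only for indices $\ge 1$; I would therefore leave it untouched and apply \eqref{6.113} only to $\mathcal{b}(\bm{\mathcal{E}}_h^1, v_h^1)$. Using the initialization $v_h^0 = 0$, the definitions of $H_1^1, H_2^1$ collapse to $\sum_{F \in \mathcal{F}_h^i} \tfrac{\tilde{\sigma}}{h_F}\|[v_h^1]\|^2$ and $\|\mathbf{G}_h([v_h^1])\|^2$, and $[v_h^1 - v_h^0] = [v_h^1]$, so a direct check confirms the $n = 1$ instance of \eqref{6.114} with $\delta_{1,1} = 1$. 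The only real care needed throughout is sign and factor bookkeeping in the polarization steps; everything else is substitution.
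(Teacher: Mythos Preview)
Your proposal is correct and follows the natural route: the paper does not give its own proof of this lemma but cites \cite{masri2022discontinuous}, and your argument---reducing \eqref{6.112}--\eqref{6.113} to Lemma~4 via $\mathcal{b}(\Pi_h\mathbf{u}^n,g_h)=0$, then expanding $\|\tilde{\bm{\mathcal{E}}}_h^n-\bm{\mathcal{E}}_h^{n-1}\|^2$ through the pointwise identity from \eqref{6.104}/\eqref{eq4.32}, rewriting the cross term as $-2\tau\,\mathcal{b}(\bm{\mathcal{E}}_h^n-\bm{\mathcal{E}}_h^{n-1},v_h^n)$, and polarizing via \eqref{6.113}---is exactly the standard derivation. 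The $n=1$ case and the Kronecker term are handled correctly.
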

\begin{lemma} [\cite{masri2022discontinuous}]
We have the following bounds on the discretization forms $ \mathcal{A}_c, \mathcal{b}, \mathscr{U}, $ and $ \mathscr{C} $.
\begin{itemize}
\item[(i)] Assuming $ \bm\varphi \in (W^{1,3}(\mathcal{O}) \cap H_0^1(\mathcal{O}) \cap L^{\infty}(\mathcal{O}))^d $, we have for all $ \bm{\theta} \in \mathbf{M} $ and $ \mathbf{w}_h, \mathbf{z}_h \in \mathbf{M}_h $,
\begin{equation}
|\mathcal{A}_c(\bm{\theta}; \mathbf{z}_h, \bm\varphi, \mathbf{w}_h)| + |\mathcal{b}(\mathbf{z}_h, \bm\varphi \cdot \mathbf{w}_h)| \leq C \|\mathbf{z}_h\| \|\bm\varphi\| \|\mathbf{w}_h\|_\text{dG}. \label{4.117}
\end{equation}
\item[(ii)] Assuming $ \bm\varphi \in (H^{r+1}(\mathcal{O}) \cap H_0^1(\mathcal{O}))^d $, we have for all $ \bm{\theta} \in \mathbf{M} $ and $ \mathbf{w}_h, \mathbf{z}_h \in \mathbf{M}_h $,
\begin{equation}
|\mathscr{C}(\mathbf{z}_h, \bm\varphi - \Pi_h \bm\varphi, \mathbf{w}_h)| + |\mathcal{b}(\mathbf{z}_h, \bm\varphi - \Pi_h \bm\varphi \cdot \mathbf{w}_h)| \leq C \|\mathbf{z}_h\| \|\bm\varphi\|_{H^{r+1}(\mathcal{O})} \|\mathbf{w}_h\|_\text{dG}. \label{4.118}
\end{equation}
\item[(iii)] Assume $ \mathbf{z} \in (H^{r+1}(\mathcal{O}) \cap H_0^1(\mathcal{O}))^d $ satisfies $ \mathcal{b}(\mathbf{z}, g_h) = 0, \forall g_h \in P_h $ and $ \bm\varphi \in (W^{1,3}(\mathcal{O}) \cap H_0^1(\mathcal{O}) \cap L^{\infty}(\mathcal{O}))^d $. Then for all $ \bm{\theta} \in \mathbf{M} $ and $ \mathbf{w}_h \in \mathbf{M}_h $,
\begin{equation}
|\mathcal{A}_c(\bm{\theta}; \Pi_h \mathbf{z} - \mathbf{z}, \bm\varphi, \mathbf{w}_h)| + |\mathcal{b}(\Pi_h \mathbf{z} - \mathbf{z}, \bm\varphi \cdot \mathbf{w}_h)| \leq C h^{r+1} \|\mathbf{z}\|_{H^{r+1}(\mathcal{O})} \|\bm\varphi\| \|\mathbf{w}_h\|_\text{dG}. \label{4.119}
\end{equation}
\item[(iv)] Assume $ \mathbf{z} \in (H^{r+1}(\mathcal{O}) \cap H_0^1(\mathcal{O}))^d $ and $ \bm\varphi \in (W^{1,3}(\mathcal{O}) \cap H_0^1(\mathcal{O}) \cap L^{\infty}(\mathcal{O}))^d $. Then for all $ \bm{\theta} \in \mathbf{M} $ and $ \mathbf{w}_h \in \mathbf{M}_h $,
\begin{equation}
|\mathscr{C}(\Pi_h \bm\varphi, \Pi_h \mathbf{z} - \mathbf{z}, \mathbf{w}_h)| + |\mathscr{U}(\bm{\theta}; \Pi_h \bm\varphi, \Pi_h \mathbf{z} - \mathbf{z}, \mathbf{w}_h)| \leq C h^r \|\mathbf{z}\|_{H^{r+1}(\mathcal{O})} \|\bm\varphi\| \|\mathbf{w}_h\|_\text{dG}. \label{4.120}
\end{equation}
\item[(v)] Finally, if $ \bm\varphi \in (H^{r+1}(\mathcal{O}) \cap H_0^1(\mathcal{O}))^d $, then for all $ \bm{\theta} \in \mathbf{M} $ and $ \mathbf{w}_h \in \mathbf{M}_h $,
\begin{equation}
|\mathscr{U}(\bm{\theta}; \mathbf{z}_h, \bm\varphi - \Pi_h \bm\varphi, \mathbf{w}_h)| \leq C h^{r - d/4} \|\mathbf{z}_h\| \|\bm\varphi\|_{H^{r+1}(\mathcal{O})} \|\mathbf{w}_h\|_\text{dG}.
\label{4.121}
\end{equation}
\end{itemize}
Here, $ C $ is a constant that is independent of $ h, \tau, \mathbf{z}_h, \bm{\theta}, \bm\varphi $ and $ \mathbf{w}_h $.
\end{lemma}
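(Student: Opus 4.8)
The plan is to prove all five bounds by one recipe: decompose each discrete form into element-interior integrals and face integrals, estimate every piece by Hölder's inequality with exponents chosen so that the two discrete factors end up measured in $\|\cdot\|$ and $\|\cdot\|_\text{dG}$, and then invoke the tools already assembled above—the discrete Sobolev embedding \eqref{eq6.54}, the trace inequality \eqref{8.161}, the inverse inequalities \eqref{8.157} and \eqref{8.158}, the approximation bounds \eqref{6.90}--\eqref{6.91}, the $L^\infty$ stability of $\Pi_h$ (Lemma 7) together with \eqref{eq7.99}, and the equivalent representation \eqref{eq:4.24} of $\mathcal{b}$. The regularity hypotheses on $\bm\varphi$ are exactly what the terms consume: the $W^{1,3}$-seminorm absorbs the convective gradient $\nabla\bm\varphi$ through the Hölder split with exponents $(2,3,6)$, in which $\|\mathbf{w}_h\|_{L^6}$ is controlled by $\|\mathbf{w}_h\|_\text{dG}$ via \eqref{eq6.54}, while the $L^\infty$-norm handles the half-divergence volume term and the interior-jump term.

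For (i) I would split $\mathcal{A}_c(\bm\theta;\mathbf{z}_h,\bm\varphi,\mathbf{w}_h)$ into its volume convection term, its half-divergence term, the interior-face jump term, and the upwind boundary term, and rewrite $\mathcal{b}(\mathbf{z}_h,\bm\varphi\cdot\mathbf{w}_h)$ through \eqref{eq:4.24}; each piece then falls to the Hölder/embedding argument above, the face contributions being reduced to element norms by \eqref{8.161}. Items (ii) and (iii) repeat this with $\bm\varphi$ replaced by $\bm\varphi-\Pi_h\bm\varphi$, respectively with $\Pi_h\mathbf{z}-\mathbf{z}$ in the convecting slot; the only change is that the nondiscrete factor is now measured by \eqref{6.90}--\eqref{6.91} (with a Sobolev embedding to recover the $H^{r+1}$-norm), trading the seminorm for powers of $h$. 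In (iii) the decisive point is the hypothesis $\mathcal{b}(\mathbf{z},g_h)=0$, which by \eqref{6.89} is inherited by $\Pi_h\mathbf{z}$, so that $\mathcal{b}(\Pi_h\mathbf{z}-\mathbf{z},g_h)=0$ for all $g_h\in P_h$; this orthogonality lets one subtract the $P_h$-part of the broken-divergence contribution and upgrade the otherwise $O(h^r)$ divergence term of $\Pi_h\mathbf{z}-\mathbf{z}$ into an $O(h^{r+1})$ remainder, giving the optimal order claimed.

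The face-driven items (iv) and (v) concern $\mathscr{C}$ and the upwind form $\mathscr{U}$. In (iv) the convecting field is $\Pi_h\bm\varphi$, which is bounded in $L^\infty$ by Lemma 7 and \eqref{eq7.99}; pairing this fixed $L^\infty$ factor against the jump of $\Pi_h\mathbf{z}-\mathbf{z}$ (controlled on faces by \eqref{8.161} and \eqref{6.90}--\eqref{6.91}, contributing $h^{r+1/2}$) and against $\mathbf{w}_h^{\text{int}}$ (reduced by \eqref{8.161}, contributing $h^{-1/2}$, and then by \eqref{eq6.54}) yields the net power $h^{r}$. Item (v) is the crux and is where the suboptimal factor $h^{r-d/4}$ appears. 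Here the convecting field in $\mathscr{U}(\bm\theta;\mathbf{z}_h,\bm\varphi-\Pi_h\bm\varphi,\mathbf{w}_h)$ is the discrete $\mathbf{z}_h$, for which only $\|\mathbf{z}_h\|$ is at hand, so on each inflow face I would apply Hölder with exponents $(4,2,4)$: the factor $\{\mathbf{z}_h\}$ in $L^4(F)$ is passed to $L^4(K)$ by the discrete trace inequality and then down to $\|\mathbf{z}_h\|$ by the inverse inequality $\|\mathbf{z}_h\|_{L^4}\le Ch^{-d/4}\|\mathbf{z}_h\|$; the error jump $[\bm\varphi-\Pi_h\bm\varphi]$ in $L^2(F)$ contributes $h^{r+1/2}$ via \eqref{8.161} and \eqref{6.90}--\eqref{6.91}; and $\mathbf{w}_h^{\text{int}}$ in $L^4(F)$ is passed to $L^4(\mathcal{O})$ and then to $\|\mathbf{w}_h\|_\text{dG}$ by \eqref{eq6.54}. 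Collecting the trace-plus-inverse factors $h^{-1/4-d/4}$, the approximation factor $h^{r+1/2}$, and the remaining $h^{-1/4}$ produces exactly $h^{r-d/4}$, after a final discrete Hölder summation over elements with exponents $(2,4,4)$ that assembles $\|\mathbf{z}_h\|$, $|\bm\varphi|_{H^{r+1}(\mathcal{O})}$, and $\|\mathbf{w}_h\|_{L^4(\mathcal{O})}$.

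The main obstacle I anticipate is precisely item (v). Unlike (iv), the convecting velocity there is discrete and cannot be held in $L^\infty$ without cost, so it must be measured in an $L^4$ face norm and transferred to the volume $L^2$ norm through an inverse inequality, which is the origin of the $h^{-d/4}$ loss. The delicate bookkeeping is to choose the Hölder exponents on the faces, and then on the element index, consistently, so that $\mathbf{z}_h$ lands in $\|\cdot\|$, $\mathbf{w}_h$ in $\|\cdot\|_\text{dG}$, and the approximation error supplies enough positive powers of $h$ to offset the $h^{-1/2}$ trace losses and the $h^{-d/4}$ inverse loss. I would also verify that restricting the surface integrals to the inflow part $\partial K^{\bm\theta}_-$ is harmless, since it only shrinks the domain of integration and therefore preserves every trace and inverse estimate invoked.
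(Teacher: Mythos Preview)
The paper does not prove this lemma at all; it is quoted verbatim from \cite{masri2022discontinuous} and stated without argument. Your proposal is a correct reconstruction of the standard proof for these bounds, and the ingredients you list---the $(2,3,6)$ H\"older split for the volume convection term, trace plus approximation for the face terms, the inherited orthogonality $\mathcal{b}(\Pi_h\mathbf{z}-\mathbf{z},g_h)=0$ from \eqref{6.89} to upgrade (iii) to $h^{r+1}$, the $L^\infty$-stability of $\Pi_h$ for (iv), and the $(4,2,4)$ face H\"older combined with the inverse inequality $\|\mathbf{z}_h\|_{L^4}\le Ch^{-d/4}\|\mathbf{z}_h\|$ to explain the $h^{-d/4}$ loss in (v)---are exactly the ones used in the cited reference.

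One small remark on your bookkeeping in (v): after the per-face estimate you propose a discrete H\"older over the element index with exponents $(2,4,4)$, which produces a factor $\bigl(\sum_K |\bm\varphi|_{H^{r+1}(\Delta_K)}^4\bigr)^{1/4}$. This is fine, since for nonnegative sequences the $\ell^4$ norm is dominated by the $\ell^2$ norm, and the latter is bounded by $C|\bm\varphi|_{H^{r+1}(\mathcal{O})}$ thanks to the bounded overlap of the macroelements $\Delta_K$; you should make that step explicit. With that, the argument closes as you describe.
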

\begin{lemma} [\cite{masri2022discontinuous}] Assuming $\mathbf{u} \in L^{\infty}(0, T; (H^{r+1}(\mathcal{O}))^d)$ and $\partial_t \mathbf{u} \in L^2(0, T; (L^2(\mathcal{O}))^d)$, we have
\begin{equation}
|R_C(\tilde{\bm{\mathcal{E}}}_h^n)| \leq \frac{C}{\omega \mu} \tau \int_{t_{n-1}}^{t_n} \|\partial_t \mathbf{u}\|^2 
+ \frac{C}{\omega \mu} h^{2r} 
+ \frac{C}{\omega \mu} \|\bm{\mathcal{E}}_h^{n-1}\|^2 
+ \frac{\omega \mu}{16} \|\tilde{\bm{\mathcal{E}}}_h^n\|_{\text{dG}}^2, \quad \forall n \geq 1. \label{eq:6.117}
\end{equation}
\end{lemma}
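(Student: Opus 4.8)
The plan is to treat $R_C$ as the consistency error of the upwinded convection form and, by repeated telescoping, to reduce it to pieces each controlled by one of the estimates \eqref{4.117}--\eqref{4.121}. I would first apply the identity $\mathcal{A}_c(\mathbf{z};\mathbf{z},\bm\varphi,\mathbf{w}) = \mathscr{C}(\mathbf{z},\bm\varphi,\mathbf{w}) - \mathscr{U}(\mathbf{z};\mathbf{z},\bm\varphi,\mathbf{w})$ to both terms of $R_C$, noting that $\mathscr{U}(\mathbf{u}^n;\mathbf{u}^n,\mathbf{u}^n,\mathbf{w}_h)=0$, since $\mathbf{u}^n$ is continuous across interior faces and vanishes on $\partial\mathcal{O}$, so the jump of the advected field (the only way it enters $\mathscr{U}$) is identically zero. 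This leaves
\[
R_C(\mathbf{w}_h) = \mathscr{C}(\mathbf{u}_h^{n-1},\Pi_h\mathbf{u}^n,\mathbf{w}_h) - \mathscr{C}(\mathbf{u}^n,\mathbf{u}^n,\mathbf{w}_h) - \mathscr{U}(\mathbf{u}_h^{n-1};\mathbf{u}_h^{n-1},\Pi_h\mathbf{u}^n,\mathbf{w}_h).
\]

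Adding and subtracting $\mathcal{A}_c(\mathbf{u}_h^{n-1};\mathbf{u}_h^{n-1},\mathbf{u}^n,\mathbf{w}_h)$ and using the linearity of $\mathcal{A}_c$ (and of $\mathscr{C}$) in its arguments, I split $R_C$ into an advected-error group and an advecting-error group. The advected-error group is $\mathcal{A}_c(\mathbf{u}_h^{n-1};\mathbf{u}_h^{n-1},\Pi_h\mathbf{u}^n-\mathbf{u}^n,\mathbf{w}_h)$; writing its advecting field as $\mathbf{u}_h^{n-1}=\Pi_h\mathbf{u}^{n-1}+\bm{\mathcal{E}}_h^{n-1}$, its $\mathscr{C}$-part with advecting field $\Pi_h\mathbf{u}^{n-1}$ is matched to \eqref{4.120} (order $h^{r}$), the piece carrying $\bm{\mathcal{E}}_h^{n-1}$ is a product of two small factors that I would bound directly with the inverse inequality \eqref{8.157} and the approximation bounds \eqref{eq:6.96}--\eqref{eq:6.97} so that it lands in $\|\bm{\mathcal{E}}_h^{n-1}\|^2$, and its $\mathscr{U}$-part is exactly of the form bounded in \eqref{4.121}. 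The advecting-error group reduces, once the companion exact upwind terms vanish by continuity of $\mathbf{u}^n$, to $\mathscr{C}(\mathbf{u}_h^{n-1}-\mathbf{u}^n,\mathbf{u}^n,\mathbf{w}_h)$; here I split $\mathbf{u}_h^{n-1}-\mathbf{u}^n = \bm{\mathcal{E}}_h^{n-1} + (\Pi_h\mathbf{u}^{n-1}-\mathbf{u}^{n-1}) + (\mathbf{u}^{n-1}-\mathbf{u}^n)$, bounding the first piece by \eqref{4.117} (producing $\|\bm{\mathcal{E}}_h^{n-1}\|$), the second by \eqref{4.119} using $\mathcal{b}(\mathbf{u}^{n-1},g_h)=0$ (order $h^{r+1}$), and the last by Hölder's inequality with $\|\mathbf{u}^{n-1}-\mathbf{u}^n\|\le \tau^{1/2}\big(\int_{t_{n-1}}^{t_n}\|\partial_t\mathbf{u}\|^2\big)^{1/2}$, which yields the $\tau\int_{t_{n-1}}^{t_n}\|\partial_t\mathbf{u}\|^2$ contribution (only the volume advection survives, since $\nabla\cdot(\mathbf{u}^{n-1}-\mathbf{u}^n)=0$ and $\mathbf{u}^n$ vanishes on the boundary).

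Finally, each of the resulting pieces has the form (factor)$\times\|\mathbf{w}_h\|_\text{dG}$. Setting $\mathbf{w}_h=\tilde{\bm{\mathcal{E}}}_h^n$ and applying Young's inequality, I would split off a multiple of $\|\tilde{\bm{\mathcal{E}}}_h^n\|_\text{dG}^2$ from each term, choosing the Young parameters so that the absorbed copies sum to $\tfrac{\omega\mu}{16}\|\tilde{\bm{\mathcal{E}}}_h^n\|_\text{dG}^2$. Using the stability bound of Theorem 1 to control $\|\mathbf{u}_h^{n-1}\|$, and the regularity $\mathbf{u}\in L^\infty(0,T;(H^{r+1}(\mathcal{O}))^d)$ (so that $|||\mathbf{u}^n|||$ and $\|\mathbf{u}^n\|_{H^{r+1}(\mathcal{O})}$ are uniformly bounded, via the embedding $H^{r+1}\hookrightarrow W^{1,3}\cap L^\infty$ together with \eqref{eq6.54}), the remaining squared factors collect into the three source terms $\tfrac{C}{\omega\mu}\tau\int_{t_{n-1}}^{t_n}\|\partial_t\mathbf{u}\|^2$, $\tfrac{C}{\omega\mu}h^{2r}$, and $\tfrac{C}{\omega\mu}\|\bm{\mathcal{E}}_h^{n-1}\|^2$.

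The main obstacle is the upwind term $\mathscr{U}(\mathbf{u}_h^{n-1};\mathbf{u}_h^{n-1},\Pi_h\mathbf{u}^n-\mathbf{u}^n,\mathbf{w}_h)$: because the inflow boundary $\partial K_-^{\bm{\theta}}$ depends nonlinearly on the advecting field through the sign of $\{\bm{\theta}\}\cdot\mathbf{n}_K$, this term cannot be linearized the way $\mathscr{C}$ can, so it must be handled as a single block by \eqref{4.121}, and it is this estimate that dictates the limiting spatial order. One must also carefully use the continuity of $\mathbf{u}^n$ to annihilate the exact upwind terms and the $L^3$ inverse inequality \eqref{8.157} to absorb the cross term in $\bm{\mathcal{E}}_h^{n-1}$; the rest is routine, if lengthy, bookkeeping.
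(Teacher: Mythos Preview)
The paper does not supply its own proof of this lemma; it is quoted from \cite{masri2022discontinuous}. Your telescoping into an advected-error group and an advecting-error group, followed by appeal to the bounds \eqref{4.117}--\eqref{4.121} and Young's inequality, is precisely the strategy the paper itself deploys when it later treats $R_C(\bm\zeta_h^n)$ in the proof of Theorem~4 (see the four-term splitting $L_1^n,\dots,L_4^n$ there), so the plan is correct and aligned with the intended argument.

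One point deserves sharpening. You propose to handle the whole upwind contribution $\mathscr{U}(\mathbf{u}_h^{n-1};\mathbf{u}_h^{n-1},\Pi_h\mathbf{u}^n-\mathbf{u}^n,\tilde{\bm{\mathcal{E}}}_h^n)$ at once via \eqref{4.121} and then absorb $\|\mathbf{u}_h^{n-1}\|$ with Theorem~1. That route produces $Ch^{r-d/4}\|\mathbf{u}_h^{n-1}\|\,\|\tilde{\bm{\mathcal{E}}}_h^n\|_\text{dG}$ and hence, after Young and Theorem~1, only $h^{2r-d/2}$ on the right-hand side, which is weaker than the stated $h^{2r}$. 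The fix is to carry the splitting $\mathbf{u}_h^{n-1}=\Pi_h\mathbf{u}^{n-1}+\bm{\mathcal{E}}_h^{n-1}$ into the second argument of $\mathscr{U}$ as well (that argument is linear; the inflow indicator in the first argument is left as $\mathbf{u}_h^{n-1}$): the $\Pi_h\mathbf{u}^{n-1}$-piece then falls under \eqref{4.120} and contributes $h^{r}$, while the $\bm{\mathcal{E}}_h^{n-1}$-piece falls under \eqref{4.121} and contributes $h^{r-d/4}\|\bm{\mathcal{E}}_h^{n-1}\|$, which after Young lands in the $\|\bm{\mathcal{E}}_h^{n-1}\|^2$ term rather than degrading the $h^{2r}$ term. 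This is exactly how the paper organizes the bound on $L_1^n$ in Theorem~4, and with it Theorem~1 is not needed at all.
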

We are now in a state to present and prove the key convergence estimates.
\begin{theorem}
Let the assumptions (\textbf{A1}) and (\textbf{A2}) be satisfied. Then for $\tau$ small enough, we have for $0 \leq m \leq N$, 
\begin{equation}
\|\mathbf{u}_h^m - \mathbf{u}^m\|^2 + \frac{\omega \mu}{4} \tau \sum_{n=1}^m \|\widetilde{\mathbf{u}}_h^n - \mathbf{u}^n\|^2_\text{dG} 
\leq \tilde{C}_T \left(\tau + h^{2r}\right). \label{eq:6.118}
\end{equation}
\end{theorem}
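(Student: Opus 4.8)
The plan is to run a discrete energy argument on the error equation \eqref{7.105}, testing with $\mathbf{w}_h=\tilde{\bm{\mathcal{E}}}_h^n$, in close parallel with the stability proof of Theorem 1. First I would discard the convection term by the positivity property \eqref{eq4.12}, since $\mathcal{A}_c(\mathbf{u}_h^{n-1};\mathbf{u}_h^{n-1},\tilde{\bm{\mathcal{E}}}_h^n,\tilde{\bm{\mathcal{E}}}_h^n)\ge 0$, and retain $\omega\mu\tau\|\tilde{\bm{\mathcal{E}}}_h^n\|_\text{dG}^2$ on the left via the coercivity \eqref{eq3.20}. The first term $(\tilde{\bm{\mathcal{E}}}_h^n,\tilde{\bm{\mathcal{E}}}_h^n)$ together with the right-hand contribution $(\bm{\mathcal{E}}_h^{n-1},\tilde{\bm{\mathcal{E}}}_h^n)$ is rewritten by polarization as $\tfrac12\big(\|\tilde{\bm{\mathcal{E}}}_h^n\|^2-\|\bm{\mathcal{E}}_h^{n-1}\|^2+\|\tilde{\bm{\mathcal{E}}}_h^n-\bm{\mathcal{E}}_h^{n-1}\|^2\big)$, after which I would invoke the identity \eqref{6.114} of Lemma 8 to trade $\|\tilde{\bm{\mathcal{E}}}_h^n-\bm{\mathcal{E}}_h^{n-1}\|^2$ for $\|\bm{\mathcal{E}}_h^n-\bm{\mathcal{E}}_h^{n-1}\|^2$ plus the potential contributions in $v_h^n$, so that upon summation the leading term telescopes into $\|\bm{\mathcal{E}}_h^m\|^2$ and the $v_h^n$-terms are poised to cancel against the pressure coupling.

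The pressure coupling term $\tau\mathcal{b}(\tilde{\bm{\mathcal{E}}}_h^n, p_h^{n-1}-p^n)$ is the heart of the matter, and since no pressure error estimate is available at this stage it cannot be bounded directly; instead I would absorb it through the splitting structure. Using $\mathcal{b}(\Pi_h\mathbf{u}^n,g_h)=0$ from \eqref{6.102}, the relation \eqref{eq6.108} connecting $\mathcal{A}_\text{sip}(v_h^n,g_h)$ to $\mathcal{b}(\tilde{\bm{\mathcal{E}}}_h^n,g_h)$, and the telescoping identities \eqref{6.112}--\eqref{6.113}, the discrete pressure increments $p_h^{n-1}$ collapse against the $\tau^2\mathcal{A}_\text{sip}(v_h^n,v_h^n)$ contributions produced by the velocity-correction step, exactly as the auxiliary variables $\xi_h^n$ and $S_h^n$ were used in Theorem 1. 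The continuous pressure is split as $p^n=\pi_h p^n+(p^n-\pi_h p^n)$; the consistency part $\mathcal{b}(\tilde{\bm{\mathcal{E}}}_h^n, p^n-\pi_h p^n)$ is controlled by \eqref{eq4.37}, the stability \eqref{eq6.102}, and the approximation bound \eqref{eq6.101}, yielding an $O(h^{2r})$ contribution after a Young's inequality that deposits half its weight on $\omega\mu\tau\|\tilde{\bm{\mathcal{E}}}_h^n\|_\text{dG}^2$. This is also where the time-suboptimal $O(\tau)$ enters, as the residual of the splitting survives the telescoping.

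The remaining right-hand terms are routine truncation and approximation estimates. The convection defect $R_C(\tilde{\bm{\mathcal{E}}}_h^n)$ is bounded directly by Lemma 11 \eqref{eq:6.117}; the elliptic approximation term $\mu\mathcal{A}_\epsilon(\Pi_h\mathbf{u}^n-\mathbf{u}^n,\tilde{\bm{\mathcal{E}}}_h^n)$ is handled with continuity \eqref{eq4.23} and \eqref{eq:6.97} to give $O(h^{2r})$; the time-truncation term $R_t(\tilde{\bm{\mathcal{E}}}_h^n)$ is expanded by Taylor's theorem about $t_n$, giving the backward-Euler defect $\int_{t_{n-1}}^{t_n}(s-t_{n-1})\partial_{tt}\mathbf{u}\,ds$ plus a projection defect, which after Cauchy--Schwarz and Young's inequality contributes a higher-order time term and $O(h^{2r})$. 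The memory defects $R_S(\tilde{\bm{\mathcal{E}}}_h^n)$ and $\mathcal{A}_\epsilon(q_r^n(\Pi_h\mathbf{u}-\mathbf{u}),\tilde{\bm{\mathcal{E}}}_h^n)$ combine the quadrature error \eqref{41} with \eqref{eq:6.97}. The self-history term $\mathcal{A}_\epsilon(q_r^n(\tilde{\bm{\mathcal{E}}}_h),\tilde{\bm{\mathcal{E}}}_h^n)$ is treated exactly as in Theorem 1: in the symmetric case $\mathcal{A}_d$ I would sum over $n$ and discard it by the positivity property \eqref{5.38}, while in the non-symmetric case I would estimate it as in \eqref{6.87} and reduce the double history sum by the H\"older bound \eqref{eq6.89}.

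Summing over $n=1,\dots,m$, using $\tilde{\bm{\mathcal{E}}}_h^0=\bm{\mathcal{E}}_h^0=0$ and the zero initial pressure data, all telescoping quantities collapse and the surviving $\|\bm{\mathcal{E}}_h^m\|^2$, $\tfrac{\omega\mu}{2}\tau\sum_{n}\|\tilde{\bm{\mathcal{E}}}_h^n\|_\text{dG}^2$, and the nonnegative $\mathcal{A}_\text{sip}$-contributions stand on the left against $\tilde C_T(\tau+h^{2r})$ plus the history double sums on the right; the discrete Gronwall inequality then absorbs the latter and delivers the bound for $\bm{\mathcal{E}}_h^m$ and $\tilde{\bm{\mathcal{E}}}_h^n$, and a final triangle inequality with \eqref{eq:6.96} and \eqref{eq:6.97} converts these into the stated estimate on $\mathbf{u}_h^m-\mathbf{u}^m$ and $\widetilde{\mathbf{u}}_h^n-\mathbf{u}^n$. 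I expect the pressure-coupling step to be the principal obstacle: unlike the monolithic scheme, the split pressure cannot be annihilated by a discretely divergence-free test function, so the entire argument hinges on reproducing the $\xi_h^n$/$S_h^n$ telescoping of the stability proof at the level of the error, while simultaneously keeping the extra memory term's history sums within reach of the discrete Gronwall inequality.
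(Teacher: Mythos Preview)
Your proposal is correct and follows essentially the same approach as the paper's proof: test \eqref{7.105} with $\tilde{\bm{\mathcal{E}}}_h^n$, use Lemma 8's identity \eqref{6.114}, absorb the discrete pressure via the $\xi_h^n/S_h^n$ telescoping of Theorem 1, split $p^n$ through $\pi_h p^n$ (where the $\tau\mathcal{A}_\text{sip}(v_h^n,\pi_h p^n)$ term indeed produces the suboptimal $O(\tau)$), bound $R_C$ by \eqref{eq:6.117}, treat the memory terms by \eqref{5.38} or \eqref{eq6.89}, sum, and apply Gronwall. One small caveat: the continuity bound \eqref{eq4.23} is stated only for arguments in $\mathbf{M}_h$, so it cannot be applied directly to $\mathcal{A}_\epsilon(\Pi_h\mathbf{u}^n-\mathbf{u}^n,\tilde{\bm{\mathcal{E}}}_h^n)$; the paper instead splits this term face-by-face into $T_1+T_2+T_3+T_4$ and uses trace inequalities with \eqref{6.90}--\eqref{6.91}, which yields the same $O(h^{2r})$ contribution you claim.
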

\begin{proof}
Set $\mathbf{w}_h = \tilde{\bm{\mathcal{E}}}_h^n$ in \eqref{7.105}. Using the positivity property of $\mathcal{A}_c$ \eqref{eq4.12} and the coercivity property of $\mathcal{A}_\epsilon$ \eqref{eq3.20}, we have
\begin{align*}
\frac{1}{2} (\|\tilde{\bm{\mathcal{E}}}_h^n\|^2 - \|\bm{\mathcal{E}}_h^{n-1}\|^2 + \|\tilde{\bm{\mathcal{E}}}_h^n - \bm{\mathcal{E}}_h^{n-1}\|^2) + \omega \mu \tau \|\tilde{\bm{\mathcal{E}}}_h^n\|^2_\text{dG} 
+ \tau R_C(\tilde{\bm{\mathcal{E}}}_h^n) + \tau \mathcal{A}_\epsilon(q_r^n(\tilde{\bm{\mathcal{E}}}_h),\tilde{\bm{\mathcal{E}}}_h^n) + \tau R_S(\tilde{\bm{\mathcal{E}}}_h^n)\\
\leq \tau \mathcal{b}(\tilde{\bm{\mathcal{E}}}_h^n, p_h^{n-1} - p^n) - \tau \mu \mathcal{A}_\epsilon(\Pi_h \mathbf{u}^n - \mathbf{u}^n, \tilde{\bm{\mathcal{E}}}_h^n)-\tau \mathcal{A}_\epsilon(q_r^n(\Pi_h \mathbf{u}-\mathbf{u}),\tilde{\bm{\mathcal{E}}}_h^n) + R_t(\tilde{\bm{\mathcal{E}}}_h^n).
\end{align*}
Inserting \eqref{6.114} into the inequality above results in
\begin{equation}
\begin{aligned}
&\frac{1}{2} \|\tilde{\bm{\mathcal{E}}}_h^n\|^2 - \|\bm{\mathcal{E}}_h^{n-1}\|^2 + \|\bm{\mathcal{E}}_h^n - \bm{\mathcal{E}}_h^{n-1}\|^2 
+ \frac{\tau^2}{2} \left( \|\nabla_h v_h^n\|^2 + \|\mathbf{G}_h([v_h^n])\|^2 \right) + \omega \mu \tau \|\tilde{\bm{\mathcal{E}}}_h^n\|_\text{dG}^2\\ 
&+ \frac{\tau^2}{2} \sum_{F \in \mathcal{F}_h^i}\frac{\tilde{\sigma}}{h_F} \|[v_h^n - v_h^{n-1}]\|_{L^2(F)}^2 + \frac{\tau^2}{2} (H_1^n - H_2^n)+ \tau \mathcal{A}_\epsilon(q_r^n(\tilde{\bm{\mathcal{E}}}_h),\tilde{\bm{\mathcal{E}}}_h^n)\\
&\leq -\tau R_C(\tilde{\bm{\mathcal{E}}}_h^n) - \tau R_S(\tilde{\bm{\mathcal{E}}}_h^n) + \tau \mathcal{b}(\tilde{\bm{\mathcal{E}}}_h^n, p_h^{n-1} - p^n) 
- \mu \tau \mathcal{A}_\epsilon (\Pi_h \mathbf{u}^n - \mathbf{u}^n, \tilde{\bm{\mathcal{E}}}_h^n) + R_t(\tilde{\bm{\mathcal{E}}}_h^n) \\
&\hspace{1em} -\tau \mathcal{A}_\epsilon(q_r^n(\Pi_h \mathbf{u}-\mathbf{u}),\tilde{\bm{\mathcal{E}}}_h^n)+ \frac{\tau^2}{2} \|\mathbf{G}_h([v_h^n - v_h^{n-1}])\|^2 
+ \tau^2 (\nabla_h v_h^n, \mathbf{G}_h([v_h^n])) - \delta_{n,1} \tau \mathcal{b}(\bm{\mathcal{E}}_h^0, v_h^1).
\label{6.117}
\end{aligned}
\end{equation}
We start with handling the sixth and seventh terms in the RHS of \eqref{6.117}. Using \eqref{eq4.30} and the assumption on the penalty parameter $\tilde{\sigma} \geq 2 \tilde{B}_r^2$, we have
\begin{equation}
\|\mathbf{G}_h([v_h^n - v_h^{n-1}])\|^2 
\leq \sum_{F \in \mathcal{F}_h^i}  \frac{\tilde{B}_r^2}{h_F} \|[v_h^n - v_h^{n-1}]\|_{L^2(F)}^2 
\leq \frac{1}{2} \sum_{F \in \mathcal{F}_h^i}  \frac{\tilde{\sigma}}{h_F} \|[v_h^n - v_h^{n-1}]\|_{L^2(F)}^2.
\label{6.118}
\end{equation}
With the assumption $\tilde{\sigma} \geq 4 \tilde{B}_r^2$, we use the CS inequality, Young's inequality and \eqref{eq4.30}, to obtain
\begin{equation}
\begin{aligned}
| (\nabla_h v_h^n, \mathbf{G}_h([v_h^n])) | 
&\leq \frac{1}{4} \|\nabla_h v_h^n\|^2 + \|\mathbf{G}_h([v_h^n])\|^2 \\
&\leq \frac{1}{4} \|\nabla_h v_h^n\|^2 
+ \sum_{F \in \mathcal{F}_h^i}  \frac{\tilde{B}_r^2}{h_F} \|[v_h^n]\|_{L^2(F)}^2 
\leq \frac{1}{4} \|\nabla_h v_h^n\|^2 
+ \frac{1}{4} \sum_{F \in \mathcal{F}_h^i}  \frac{\tilde{\sigma}}{h_F} \|[v_h^n]\|_{L^2(F)}^2.
\label{6.119}
\end{aligned}
\end{equation}
Next, setting $\mathbf{w}_h = \bm{\mathcal{E}}_h^n$ in \eqref{6.104} and use \eqref{eq4.32} to have 
\begin{equation*}
\frac{1}{2} (\|\bm{\mathcal{E}}_h^n\|^2 - \|\tilde{\bm{\mathcal{E}}}_h^n\|^2) + \frac{1}{2} \|\bm{\mathcal{E}}_h^n - \tilde{\bm{\mathcal{E}}}_h^n\|^2 
= \tau \mathcal{b}(\bm{\mathcal{E}}_h^n, v_h^n).
\end{equation*}
Using \eqref{6.113}, we find that
\begin{equation}
\frac{1}{2} (\|\bm{\mathcal{E}}_h^n\|^2 - \|\tilde{\bm{\mathcal{E}}}_h^n\|^2) + \frac{1}{2} \|\bm{\mathcal{E}}_h^n - \tilde{\bm{\mathcal{E}}}_h^n\|^2 
+ \tau^2 \sum_{F \in \mathcal{F}_h^i}  \frac{\tilde{\sigma}}{h_F}\|v_h^n\|_{L^2(F)}^2 = \tau^2 \|\mathbf{G}_h([v_h^n])\|^2. \label{6.121}
\end{equation}
Now, we set $\mathbf{w}_h = \bm{\mathcal{E}}_h^n - \tilde{\bm{\mathcal{E}}}_h^n$ in \eqref{6.104} and then apply \eqref{6.112} to obtain
\begin{equation*}
\|\bm{\mathcal{E}}_h^n - \tilde{\bm{\mathcal{E}}}_h^n\|^2 = \tau^2 \mathcal{A}_{\text{sip}}(v_h^n, v_h^n) - \tau^2 \sum_{F \in \mathcal{F}_h^i}  \frac{\tilde{\sigma}}{h_F}\|v_h^n\|_{L^2(F)}^2 + \tau^2 \|\mathbf{G}_h([v_h^n])\|^2.
\end{equation*}
Thus, \eqref{6.121} now reads
\begin{equation}
\begin{aligned}
\frac{1}{2} (\|\bm{\mathcal{E}}_h^n\|^2 - \|\bm{\mathcal{E}}_h^{n-1}\|^2) + \frac{\tau^2}{2} \mathcal{A}_{\text{sip}}(v_h^n, v_h^n) 
+ \frac{\tau^2}{2} \sum_{F \in \mathcal{F}_h^i}  \frac{\tilde{\sigma}}{h_F} \|[v_h^n]\|_{L^2(F)}^2 = \frac{\tau^2}{2} \|\mathbf{G}_h([v_h^n])\|^2. \label{6.123}
\end{aligned}
\end{equation}
We now add \eqref{6.123} to \eqref{6.117}. With bounds \eqref{6.118} and \eqref{6.119}, we obtain
\begin{equation}
\begin{aligned}
&\frac{1}{2} \left( \| \bm{\mathcal{E}}_h^n \|^2 - \| \bm{\mathcal{E}}_h^{n-1} \|^2 + \| \bm{\mathcal{E}}_h^n - \bm{\mathcal{E}}_h^{n-1} \|^2 \right) + \frac{\tau^2}{4} | v_h^n |_{\text{dG}}^2 + \frac{\tau^2}{2} \mathcal{A}_{\text{sip}} (v_h^n, v_h^n) + \omega \mu \tau \| \tilde{\bm{\mathcal{E}}}_h^n \|_{\text{dG}}^2 \\
 &+ \frac{\tau^2}{4} \sum_{F \in \mathcal{F}_h^i}  \frac{\tilde{\sigma}}{h_F} \| [v_h^n - v_h^{n-1}] \|_{L^2(F)}^2 + \frac{\tau^2}{2} \left(H_1^n - H_2^n\right)+ \tau \mathcal{A}_\epsilon(q_r^n(\tilde{\bm{\mathcal{E}}}_h),\tilde{\bm{\mathcal{E}}}_h^n) \leq - \tau R_C(\tilde{\bm{\mathcal{E}}}_h^n) \\ 
&- \tau R_S(\tilde{\bm{\mathcal{E}}}_h^n) + \tau \mathcal{b}(\tilde{\bm{\mathcal{E}}}_h^n, p_h^{n-1} - p^n) - \tau \mu \mathcal{A}_\epsilon(\Pi_h \mathbf{u}^n - \mathbf{u}^n, \tilde{\bm{\mathcal{E}}}_h^n)-\tau \mathcal{A}_\epsilon(q_r^n(\Pi_h \mathbf{u}-\mathbf{u}),\tilde{\bm{\mathcal{E}}}_h^n) \\
&+ R_t(\tilde{\bm{\mathcal{E}}}_h^n) - \delta_{n,1} \tau \mathcal{b}(\bm{\mathcal{E}}_h^0, v_h^1).
\end{aligned}
\label{eq:6.59}
\end{equation}
To bound the third term on the RHS of \eqref{eq:6.59}, we split the term as
\begin{equation*}
\mathcal{b}(\tilde{\bm{\mathcal{E}}}_h^n, p_h^{n-1} - p^n) = \mathcal{b}(\tilde{\bm{\mathcal{E}}}_h^n, p_h^{n-1}) + \mathcal{b}(\tilde{\bm{\mathcal{E}}}_h^n, \pi_h p^n - p^n)- \mathcal{b}(\tilde{\bm{\mathcal{E}}}_h^n, \pi_h p^n) .
\end{equation*}
Since $\nabla_h \cdot \tilde{\bm{\mathcal{E}}}_h^n \in P_h$, using \eqref{eq6.101} and the trace inequalities, we obtain
\begin{equation}
\left|\mathcal{b}(\tilde{\bm{\mathcal{E}}}_h^n, \pi_h p^n - p^n) \right|= \left| \sum_{F \in \mathcal{F}_h} \int_F \{ \pi_h p^n - p^n \} [\tilde{\bm{\mathcal{E}}}_h^n] \cdot \mathbf{n}_F\right|\\
\leq \tilde{C} h^{2r} | p^n |^2_{H^r(\mathcal{O})} + \frac{\omega \mu}{16} \| \tilde{\bm{\mathcal{E}}}_h^n \|^2_{\text{dG}}. \label{eq:6.61}
\end{equation}
Since $p^n$ satisfies \eqref{eq3.4}, we observe that $\pi_h p^n \in P_h^0$. By \eqref{eq6.108}, continuity of $\mathcal{A}_{\text{sip}}(\cdot, \cdot)$ \eqref{eq4.24}, Young's inequality, and \eqref{eq6.102}, we have
\begin{equation}
\begin{aligned}
\big| \mathcal{b}(\tilde{\bm{\mathcal{E}}}_h^n, \pi_h p^n) \big| = \tau | \mathcal{A}_{\text{sip}} (v_h^n, \pi_h p^n) | &\leq C \tau | v_h^n |_{\text{dG}} | \pi_h p^n |_{\text{dG}} \\
&\leq \frac{\tau}{8} | v_h^n |_{\text{dG}}^2 + C \tau | p^n |^2_{H^1(\mathcal{O})}.
\label{eq:6.60}
\end{aligned}
\end{equation}
Thus, with bounds \eqref{eq:6.61} and \eqref{eq:6.60}, \eqref{eq:6.59} becomes
\begin{equation}
\begin{aligned}
\frac{1}{2} \left( \| \bm{\mathcal{E}}_h^n \|^2 - \| \bm{\mathcal{E}}_h^{n-1} \|^2 + \| \bm{\mathcal{E}}_h^n - \bm{\mathcal{E}}_h^{n-1} \|^2 \right) + \frac{\tau^2}{8} | v_h^n |_{\text{dG}}^2 + \frac{\tau^2}{2} \mathcal{A}_{\text{sip}} (v_h^n, v_h^n) + \omega \mu \tau \| \tilde{\bm{\mathcal{E}}}_h^n \|_{\text{dG}}^2 \\
 + \frac{\tau^2}{4} \sum_{F \in \mathcal{F}_h^i}  \frac{\tilde{\sigma}}{h_F} \| [v_h^n - v_h^{n-1}] \|_{L^2(F)}^2 + \frac{\tau^2}{2} \left(H_1^n - H_2^n\right)+ \tau \mathcal{A}_\epsilon(q_r^n(\tilde{\bm{\mathcal{E}}}_h),\tilde{\bm{\mathcal{E}}}_h^n)
\leq - \tau R_C(\tilde{\bm{\mathcal{E}}}_h^n) \\
- \tau R_S(\tilde{\bm{\mathcal{E}}}_h^n) + \tau \mathcal{b}(\tilde{\bm{\mathcal{E}}}_h^n,p_h^{n-1})+ C \tau^2 | p^n |^2_{H^1(\mathcal{O})} + \tilde{C} \tau h^{2r}|p^n|^2_{H^r(\mathcal{O})} - \tau \mu \mathcal{A}_\epsilon(\Pi_h \mathbf{u}^n - \mathbf{u}^n, \tilde{\bm{\mathcal{E}}}_h^n) \\
-\tau \mathcal{A}_\epsilon(q_r^n(\Pi_h \mathbf{u}-\mathbf{u}),\tilde{\bm{\mathcal{E}}}_h^n)
+ \frac{\omega \mu}{16} \tau \norm{\tilde{\bm{\mathcal{E}}}_h^n}_\text{dG}^2+ R_t(\tilde{\bm{\mathcal{E}}}_h^n) - \delta_{n,1} \tau \mathcal{b}(\bm{\mathcal{E}}_h^0, v_h^1). \label{6.129}
\end{aligned}
\end{equation}
Note that by \eqref{6.89}, $\mathcal{b}(\bm{\mathcal{E}}_h^n, p_h^{n-1}) = \mathcal{b}(\widetilde{\mathbf{u}}_h^n, p_h^{n-1})$. Hence, from \eqref{eq:6.75}, we have
\begin{equation}
\begin{aligned}
    \mathcal{b}(\tilde{\bm{\mathcal{E}}}_h^n, p_h^{n-1}) = -\frac{\tau}{2} \left(
    \mathcal{A}_{\text{sip}}(\xi_h^n, \xi_h^n) 
    - \mathcal{A}_{\text{sip}}(\xi_h^{n-1}, \xi_h^{n-1}) 
    - \mathcal{A}_{\text{sip}}(v_h^n, v_h^n) \right) \\
    - \frac{1}{2 \delta \mu} \left(
    \norm{S_h^n}^2 - \norm{S_h^{n-1}}^2 + \norm{S_h^n - S_h^{n-1}}^2 \right) \\
    + \frac{1}{ \mu} \left(\nabla_h \cdot q_r^n(\widetilde{\mathbf{u}}_h)-R_h([q_r^n(\widetilde{\mathbf{u}}_h)]),S_h^{n-1}\right). \label{eqn123}
\end{aligned}
\end{equation}
With the above expression \eqref{eqn123}, inequality \eqref{6.129} takes the form
\begin{equation}
\begin{aligned}
\frac{1}{2} \left( \| \bm{\mathcal{E}}_h^n \|^2 - \| \bm{\mathcal{E}}_h^{n-1} \|^2 + \| \bm{\mathcal{E}}_h^n - \bm{\mathcal{E}}_h^{n-1} \|^2 \right) 
+ \frac{\tau^2}{2} \left( \mathcal{A}_{\text{sip}}(\xi_h^n, \xi_h^n) - \mathcal{A}_{\text{sip}}(\xi_h^{n-1}, \xi_h^{n-1}) \right) + \frac{\tau^2}{8} | v_h^n |_{\text{dG}}^2  \\
+ \omega \mu \tau \| \tilde{\bm{\mathcal{E}}}_h^n \|_{\text{dG}}^2+ \frac{\tau^2}{2} \left( H_1^n - H_2^n \right) + \frac{\tau^2}{4} \sum_{F \in \mathcal{F}_h^i}  \frac{\tilde{\sigma}}{h_F} \| [v_h^n - v_h^{n-1}] \|_{L^2(F)}^2 + \frac{\tau}{2 \delta \mu} \left( \| S_h^n \|^2 - \| S_h^{n-1} \|^2\right)  \\
+ \tau \mathcal{A}_\epsilon(q_r^n(\tilde{\bm{\mathcal{E}}}_h),\tilde{\bm{\mathcal{E}}}_h^n) \leq - \tau R_C(\tilde{\bm{\mathcal{E}}}_h^n) 
- \tau R_S(\tilde{\bm{\mathcal{E}}}_h^n) - \tau \mu \mathcal{A}_\epsilon(\Pi_h \mathbf{u}^n - \mathbf{u}^n, \tilde{\bm{\mathcal{E}}}_h^n) -\tau \mathcal{A}_\epsilon(q_r^n(\Pi_h \mathbf{u}-\mathbf{u}),\tilde{\bm{\mathcal{E}}}_h^n) \\
+ C \tau^2 | p^n |^2_{H^1(\mathcal{O})}
+ \tilde{C}\tau h^{2r}|p^n|^2_{H^r(\mathcal{O})} 
+ \frac{\omega \mu}{16} \tau \norm{\tilde{\bm{\mathcal{E}}}_h^n}_\text{dG}^2
+ \frac{\tau}{ \mu} \left(\nabla_h \cdot q_r^n(\widetilde{\mathbf{u}}_h)-R_h([q_r^n(\widetilde{\mathbf{u}}_h)]),S_h^{n-1}\right) \\
+ \frac{\tau}{2 \delta \mu} \left( \| S_h^n - S_h^{n-1} \|^2 \right) + R_t(\tilde{\bm{\mathcal{E}}}_h^n) - \delta_{n,1} \tau \mathcal{b}(\bm{\mathcal{E}}_h^0, v_h^1).
\end{aligned}
\label{eq:6.64}
\end{equation}
We now consider $R_S(\tilde{\bm{\mathcal{E}}}_h^n)$. Since $\mathbf{u}$ is continuous, the two jump terms containing $\mathbf{u}$ in the bilinear form $\mathcal{A}_\epsilon(\cdot,\cdot)$ will vanish. We use the trace inequalities, and estimates \eqref{eq6.54} and \eqref{41}. Then using Young's inequality, we obtain
\begin{equation}
\begin{aligned}
&\left|\mathcal{A}_\epsilon(q_r^n(\mathbf{u}),\tilde{\bm{\mathcal{E}}}_h^n) - \int_0^{t_n} \beta(t_n-s) \mathcal{A}_\epsilon(\mathbf{u}(s), \tilde{\bm{\mathcal{E}}}_h^n) ds\right| \\ 
&\leq  \Bigg(C \tau \sum_{i=1}^n \int_{t_{i-1}}^{t_i}\bigg(\beta_s(t_n-s)\Big(\|\mathbf{u}(s)\|_{H^1(\mathcal{O})}+h\|\mathbf{u}(s)\|_{H^2(\mathcal{O})}\Big)
+\beta(t_n-s)\Big(\|\mathbf{u}_s(s)\|_{H^1(\mathcal{O})}\\
&\hspace{5em} +h\|\mathbf{u}_s(s)\|_{H^2(\mathcal{O})}\Big)\bigg)ds \Bigg)\|\tilde{\bm{\mathcal{E}}}_h^n\|_\text{dG}\\
&\leq \tilde{C} \tau^2 \int_0^{t_n} e^{-2\eta(t_n-t)}\left(\|\mathbf{u}(t)\|_{H^1(\mathcal{O})}^2+h^2\|\mathbf{u}(t)\|_{H^2(\mathcal{O})}^2+\|\mathbf{u}_t(t)\|_{H^1(\mathcal{O})}^2+h^2\|\mathbf{u}_t(t)\|_{H^2(\mathcal{O})}^2\right)dt\\
&\hspace{3em} + \frac{\omega \mu}{16}\|\tilde{\bm{\mathcal{E}}}_h^n\|_{\text{dG}}^2. \label{eqn125}
\end{aligned}
\end{equation}
Using Young's inequality, the coercivity of $\mathcal{A}_\epsilon(\cdot,\cdot)$ and \eqref{eq:6.97}, we obtain
\begin{equation}
\begin{aligned}
|\mathcal{A}_\epsilon(q_r^n(\Pi_h \mathbf{u} - \mathbf{u}),\tilde{\bm{\mathcal{E}}}_h^n)|  
 &\leq C \norm{q_r^n(\Pi_h \mathbf{u} - \mathbf{u})}_\text{dG} \norm{\tilde{\bm{\mathcal{E}}}_h^n}_\text{dG} \\
 &= C \tau \sum_{i=1}^n \beta(t_n - t_i) \norm{(\Pi_h \mathbf{u} - \mathbf{u})^i}_\text{dG} \norm{\tilde{\bm{\mathcal{E}}}_h^n}_\text{dG} \\
  &\leq \frac{\omega \mu }{16} \norm{\tilde{\bm{\mathcal{E}}}_h^n}_\text{dG}^2 + \tilde{C} \left(\tau \sum_{i=1}^n \beta(t_n - t_i) \norm{(\Pi_h \mathbf{u} - \mathbf{u})^i}_\text{dG}\right)^2 \\
 &\leq \frac{\omega \mu }{16} \norm{\tilde{\bm{\mathcal{E}}}_h^n}_\text{dG}^2 + \tilde{C} h^{2r} \left(\tau \sum_{i=1}^n \beta(t_n - t_i) |\mathbf{u}^i|_{H^{r+1}(\mathcal{O})}\right)^2 . \label{6.134}
\end{aligned}
\end{equation}
Assuming that $\delta \leq \frac{\omega \mu^2}{16 \gamma^2 d}$ and $\sigma \geq M_{r-1}^2 / d$, with \eqref{eq4.29} and \eqref{6.102}, we arrive at
\begin{equation}
\begin{aligned}
&\left |\frac{\tau}{\mu}\left(\nabla_h \cdot q_r^n(\widetilde{\mathbf{u}}_h)-R_h([q_r^n(\widetilde{\mathbf{u}}_h)]),S_h^{n-1}\right) \right | \\
&\leq \frac{\tau}{\mu}\norm{\nabla_h \cdot q_r^n(\tilde{\bm{\mathcal{E}}}_h)-R_h([q_r^n(\tilde{\bm{\mathcal{E}}}_h)])} \norm{S_h^{n-1}}\\
&\leq \frac{\delta}{2 \mu}\norm{\nabla_h \cdot q_r^n(\tilde{\bm{\mathcal{E}}}_h)-R_h([q_r^n(\tilde{\bm{\mathcal{E}}}_h)])}^2 + \frac{\tau^2}{2 \delta \mu} \norm{S_h^{n-1}}^2 \\
&\leq \frac{2 \gamma^2 \delta}{ \mu} \tau^2 \sum_{j=1}^n \left(d\norm{\nabla_h \tilde{\bm{\mathcal{E}}}_h^j}+ B_{r-1}^2 \sum_{F \in \mathcal{F}_h}h_F^{-1} \|[\tilde{\bm{\mathcal{E}}}_h^j]\|^2_{L^2(F)}\right) + \frac{\tau^2}{2 \delta \mu} \norm{S_h^{n-1}}^2 \\
&\leq \frac{\omega \mu }{8}\tau^2 \sum_{j=1}^n\left(\norm{\nabla_h \tilde{\bm{\mathcal{E}}}_h^j}+ \sum_{F \in \mathcal{F}_h}\sigma h_F^{-1} \|[\tilde{\bm{\mathcal{E}}}_h^j]\|^2_{L^2(F)}\right) + \frac{\tau^2}{2 \delta \mu} \norm{S_h^{n-1}}^2.
\label{6.135}
\end{aligned}
\end{equation}
With the definition of $S_h^n$ \eqref{eq6.55} and \eqref{6.102}, we have
\begin{equation*}
S_h^n - S_h^{n-1} = \delta \mu \left( \nabla_h \cdot \tilde{\bm{\mathcal{E}}}_h^n - R_h([\tilde{\bm{\mathcal{E}}}_h^n]) \right) + \delta \left(\nabla_h \cdot q_r^n(\tilde{\bm{\mathcal{E}}}_h)-R_h([q_r^n(\tilde{\bm{\mathcal{E}}}_h)])\right).
\label{6.136}
\end{equation*}
 Using \eqref{eq4.29} in the above, we have
\begin{equation}
\begin{aligned}
\|S^n_h - S^{n-1}_h\|^2 &\leq 2 \delta^2 \mu^2 \|\nabla_h \cdot \tilde{\bm{\mathcal{E}}}_h^n - R_h([\tilde{\bm{\mathcal{E}}}_h^n])\|^2 + 2 \delta^2 \gamma^2 \tau^2 \sum_{j=1}^n \norm{\nabla_h \cdot \tilde{\bm{\mathcal{E}}}_h^j - R_h(\tilde{\bm{\mathcal{E}}}_h^j)}^2\\
&\leq 4 \delta^2 \mu^2 \left(d\norm{\nabla_h \tilde{\bm{\mathcal{E}}}_h^n}+ B_{r-1}^2 \sum_{F \in \mathcal{F}_h}h_F^{-1} \|[\tilde{\bm{\mathcal{E}}}_h^n]\|^2_{L^2(F)}\right) \\
&\hspace{3em} +4 \delta^2 \gamma^2 \tau^2 \sum_{j=1}^n \left(d\norm{\nabla_h \tilde{\bm{\mathcal{E}}}_h^j}+ B_{r-1}^2 \sum_{F \in \mathcal{F}_h}h_F^{-1} \|[\tilde{\bm{\mathcal{E}}}_h^j]\|^2_{L^2(F)}\right). \label{6.137}
\end{aligned}
\end{equation}
With the assumptions that $\delta \leq \min\left(\frac{\omega}{8d}, \frac{\omega \mu^2}{16 \gamma^2 d}\right)$ and $\sigma \geq M_{r-1}^2 / d$, \eqref{6.137} reads
\begin{equation}
\begin{aligned}
\frac{1}{2\delta\mu}\|S^n_h - S^{n-1}_h\|^2 
\leq \frac{\omega \mu}{4}&\left(\norm{\nabla_h \tilde{\bm{\mathcal{E}}}_h^n}+ \sum_{F \in \mathcal{F}_h}\sigma h_F^{-1} \|[\tilde{\bm{\mathcal{E}}}_h^n]\|^2_{L^2(F)}\right) \\
&+\frac{\omega \mu}{8} \tau^2 \sum_{j=1}^n \left(\norm{\nabla_h \tilde{\bm{\mathcal{E}}}_h^j}+ \sum_{F \in \mathcal{F}_h}\sigma h_F^{-1} \|[\tilde{\bm{\mathcal{E}}}_h^j]\|^2_{L^2(F)}\right).
\label{6.138}
\end{aligned}
\end{equation}
To tackle $\mathcal{A}_\epsilon(\Pi_h \mathbf{u}^n - \mathbf{u}^n, \tilde{\bm{\mathcal{E}}}_h^n)$, we split it as
\begin{align*}
&\mathcal{A}_\epsilon(\Pi_h \mathbf{u}^n - \mathbf{u}^n, \tilde{\bm{\mathcal{E}}}_h^n) = \sum_{K \in \mathcal{K}_h} \int_K \nabla(\Pi_h \mathbf{u}^n - \mathbf{u}^n) \nabla \tilde{\bm{\mathcal{E}}}_h^n \\
 &- \sum_{F \in \mathcal{F}_h} \int_F \{ \nabla(\Pi_h \mathbf{u}^n - \mathbf{u}^n) \} \mathbf{n}_F \cdot [\tilde{\bm{\mathcal{E}}}_h^n] + \epsilon \sum_{F \in \mathcal{F}_h} \int_F \{ \nabla \tilde{\bm{\mathcal{E}}}_h^n \} \mathbf{n}_F \cdot [\Pi_h \mathbf{u}^n - \mathbf{u}^n] \\
 &+ \sum_{F \in \mathcal{F}_h} \int_F \frac{\sigma}{h_F} [\Pi_h \mathbf{u}^n - \mathbf{u}^n] \cdot [\tilde{\bm{\mathcal{E}}}_h^n] = T_1 + T_2 + T_3 + T_4.
\end{align*}
By using the trace inequalities, the terms $T_1$, $T_3$, and $T_4$ can be treated via standard arguments. We have
\begin{equation*}
|T_1| + |T_3| + |T_4| \leq \frac{\omega}{8} \| \tilde{\bm{\mathcal{E}}}_h^n \|_{\text{dG}}^2 + \frac{C}{\omega} h^{2r} | \mathbf{u}^n |_{H^{r+1}(\mathcal{O})}^2.
\end{equation*}
To tackle the term $T_2$, we employ a strategy similar to the one used in \cite{riviere2008discontinuous}, resulting in
\begin{equation*}
|T_2| \leq \frac{\omega}{16} \| \tilde{\bm{\mathcal{E}}}_h^n \|_{\text{dG}}^2 + \frac{C}{\omega} h^{2r} | \mathbf{u}^n |_{H^{r+1}(\mathcal{O})}^2.
\end{equation*}
Next, we consider the term $R_t(\tilde{\bm{\mathcal{E}}}_h^n)$. Using the CS inequality, Young's inequality, and \eqref{eq6.102}, we obtain
\begin{align*}
| R_t(\tilde{\bm{\mathcal{E}}}_h^n) | &\leq \tilde{C} \tau^{-1} \| \tau (\partial_t \mathbf{u})^n - (\mathbf{u}^n - \mathbf{u}^{n-1}) \|^2  \\
&\quad + \tilde{C} \tau^{-1} \| (\Pi_h \mathbf{u}^{n-1} - \mathbf{u}^{n-1}) - (\Pi_h \mathbf{u}^n - \mathbf{u}^n) \|^2 + \frac{\omega \mu}{16} \tau \| \tilde{\bm{\mathcal{E}}}_h^n \|_{\text{dG}}^2.
\end{align*}
Using Taylor expansion along with the approximation property \eqref{eq:6.96}, we find that
\begin{equation}
\begin{aligned}
| R_t(\tilde{\bm{\mathcal{E}}}_h^n) | &\leq \tilde{C} \tau^2\int_{t_{n-1}}^{t_n} \| \partial_{tt} \mathbf{u} \|^2 \, dt + \tilde{C} \int_{t_{n-1}}^{t_n} \| \partial_t (\Pi_h \mathbf{u} - \mathbf{u}) \|^2 \, dt + \frac{\omega \mu}{16} \tau \| \tilde{\bm{\mathcal{E}}}_h^n \|_{\text{dG}}^2 \\
&\leq \tilde{C} \tau^2 \int_{t_{n-1}}^{t_n} \| \partial_{tt} \mathbf{u} \|^2 \, dt + \tilde{C} h^{2r} \int_{t_{n-1}}^{t_n} | \partial_t \mathbf{u} |^2_{H^r(\mathcal{O})} \, dt + \frac{\omega \mu}{16} \tau \| \tilde{\bm{\mathcal{E}}}_h^n \|_{\text{dG}}^2. \label{6.133}
\end{aligned}
\end{equation}
We use the aforementioned bounds on $T_1$--$T_4$, the bound on the term $|R_C(\tilde{\bm{\mathcal{E}}}_h^n)|$ \eqref{eq:6.117}, the bound on $|R_S(\tilde{\bm{\mathcal{E}}}_h^n)|$ \eqref{eqn125}, \eqref{6.134}, \eqref{6.135}, \eqref{6.138}, and the bound on $|R_t(\bm{\mathcal{E}}_h^n)|$ \eqref{6.133} in \eqref{eq:6.64}. Using these bounds along with the assumption on regularity (\textbf{A2}), \eqref{eq:6.64} reads
\begin{equation}
\begin{aligned}
\frac{1}{2} \left( \| \bm{\mathcal{E}}_h^n \|^2 - \| \bm{\mathcal{E}}_h^{n-1} \|^2 + \| \bm{\mathcal{E}}_h^n - \bm{\mathcal{E}}_h^{n-1} \|^2 \right) 
+ \frac{\tau^2}{2} \left( \mathcal{A}_{\text{sip}}(\xi_h^n, \xi_h^n) - \mathcal{A}_{\text{sip}}(\xi_h^{n-1}, \xi_h^{n-1}) \right)
+ \frac{\tau^2}{8} | v_h^n |_{\text{dG}}^2 \\
+ \frac{\omega \mu}{4} \tau \| \tilde{\bm{\mathcal{E}}}_h^n \|_{\text{dG}}^2+ \frac{\tau^2}{2} \left( H_1^n - H_2^n \right) + \frac{\tau}{2 \delta \mu} \| S_h^n - S_h^{n-1} \|^2 + \tau \mathcal{A}_\epsilon(q_r^n(\tilde{\bm{\mathcal{E}}}_h),\tilde{\bm{\mathcal{E}}}_h^n)\\
\leq \tilde{C} \tau^2 \int_{t_{n-1}}^{t_n} \left( \| \partial_t \mathbf{u} \|^2+ \| \partial_{tt} \mathbf{u} \|^2 \right) dt
+ \tilde{C} h^{2r} \int_{t_{n-1}}^{t_n} | \partial_t \mathbf{u} |^2_{H^r(\mathcal{O})} dt
+\tilde{C}\tau h^{2r} + \tilde{C} \tau \| \bm{\mathcal{E}}_h^{n-1} \|^2 + C \tau^2 \\
 +\tilde{C} \tau^3 \int_0^{t_n} e^{-2\eta(t_n-t)}\left(\|\mathbf{u}(t)\|_{H^1(\mathcal{O})}^2+h^2\|\mathbf{u}(t)\|_{H^2(\mathcal{O})}^2+\|\mathbf{u}_t(t)\|_{H^1(\mathcal{O})}^2+h^2\|\mathbf{u}_t(t)\|_{H^2(\mathcal{O})}^2\right)dt\\
 - \delta_{n,1} \tau \mathcal{b}(\bm{\mathcal{E}}_h^0, v_h^1) +\frac{\omega \mu}{4} \tau^2 \sum_{i=1}^n \|\tilde{\bm{\mathcal{E}}}^i_h\|^2_\text{dG}
 + \frac{\tau^2}{2 \delta \mu} \norm{S_h^{n-1}}^2 
 + \tilde{C} h^{2r} \tau \left(\tau \sum_{i=1}^n \beta(t_n - t_i) |\mathbf{u}^i|_{H^{r+1}(\mathcal{O})}\right)^2. \label{6.142}
\end{aligned}
\end{equation}
Now, we consider the symmetric form $\mathcal{A}_d(\cdot,\cdot)$ first. 
We multiply \eqref{6.142} by 2, sum over $n=1$ to $m$, and employ the regularity assumption (\textbf{A2}) to have
\begin{equation}
\begin{aligned}
     \|\bm{\mathcal{E}}_h^m\|^2 + \sum_{n=1}^m \|\bm{\mathcal{E}}_h^n - \bm{\mathcal{E}}_h^{n-1}\|^2 + \tau^2 \mathcal{A}_{\text{sip}}(\xi_h^m, \xi_h^m) + \frac{\tau^2}{4} \sum_{n=1}^m |v_h^n|_{\text{dG}}^2 + \frac{\omega \mu}{2} \tau \sum_{n=1}^m \|\tilde{\bm{\mathcal{E}}}_h^n\|_{\text{dG}}^2  \\
     + \tau^2 \sum_{n=1}^m\left( H_1^n - H_2^n \right)
    + \frac{\tau}{\delta \mu} \|S_h^m\|^2 + 2 \tau \sum_{n=1}^m \mathcal{A}_d(q_r^n(\tilde{\bm{\mathcal{E}}}_h),\tilde{\bm{\mathcal{E}}}_h^n) \leq \tilde{C}\left(\tau + h^{2r}\right) 
 + \tilde{C} \tau \sum_{n=0}^{m-1} \|\bm{\mathcal{E}}_h^n\|^2 \\ 
 +\tilde{C} \tau^3 \sum_{n=1}^m \int_0^{t_n}e^{-2\eta(t_n-t)}\left(\|\mathbf{u}(t)\|_{H^1(\mathcal{O})}^2+h^2\|\mathbf{u}(t)\|_{H^2(\mathcal{O})}^2+\|\mathbf{u}_t(t)\|_{H^1(\mathcal{O})}^2+h^2\|\mathbf{u}_t(t)\|_{H^2(\mathcal{O})}^2\right)dt \\
 + \|\bm{\mathcal{E}}_h^0\|^2 + 2\tau |\mathcal{b}(\bm{\mathcal{E}}_h^0, v_h^1)|
 + \frac{\tau^2}{ \delta \mu} \sum_{n=1}^m \norm{S_h^{n-1}}^2 
 +\frac{\omega \mu}{2} \tau^2 \sum_{n=1}^m \sum_{i=1}^n \|\tilde{\bm{\mathcal{E}}}^i_h\|^2_\text{dG}\\
 + \tilde{C} h^{2r} \tau \sum_{n=1}^m \tau^2 \left(\sum_{i=1}^n \beta(t_n - t_i) |\mathbf{u}^i|_{H^{r+1}(\mathcal{O})}\right)^2. \label{6.146}
\end{aligned} 
\end{equation}
We note that $v_h^0 = 0$. Recalling the definitions of $H_1^n$ and $H_2^n$ (see \eqref{eq:6.115}--\eqref{eq:6.116}), we use \eqref{eq4.30} and the assumption $\tilde{\sigma} \geq \tilde{B}_r^2$ to get
\begin{equation}
\begin{aligned}
    \sum_{n=1}^m (H_1^n - H_2^n) &= \sum_{F \in \mathcal{F}_h^i}  \frac{\tilde{\sigma}}{h_F} \| [v_h^m] \|_{L^2(F)}^2 - \|\mathbf{G}_h([v_h^m])\|^2 \\
    &\geq \sum_{F \in \mathcal{F}_h^i}  \frac{\tilde{\sigma} - \tilde{B}_r^2}{h_F} \| [v_h^m] \|_{L^2(F)}^2 \geq 0. \label{6.147}
\end{aligned}
\end{equation}
Use of \eqref{eq4.37} and approximation properties yields
\begin{equation}
|\mathcal{b}(\bm{\mathcal{E}}_h^0, v_h^1)| \leq C \|\bm{\mathcal{E}}_h^0\| |v_h^1|_{\text{dG}} \leq C \tau^{-1} h^{2r+2} |\mathbf{u}^0|^2_{H^{r+1}(\mathcal{O})} + \frac{\tau}{8} |v_h^1|_{\text{dG}}^2. \label{6.148}
\end{equation}
By using the CS inequality and change of order of summation, we arrive at
\begin{equation}
 \tilde{C} h^{2r} \tau \sum_{n=1}^m \tau^2 \left(\sum_{i=1}^n \beta(t_n - t_i) |\mathbf{u}^i|_{H^{r+1}(\mathcal{O})}\right)^2
 \leq \tilde{C} h^{2r} \frac{\gamma^2 e^{2 \eta \tau}}{\eta^2} \tau \sum_{n=1}^m |\mathbf{u}^n|_{H^{r+1}(\mathcal{O})}^2. \label{6.149}
\end{equation} 
Under the assumption (\textbf{A2}), we observe that
\begin{equation}
\begin{aligned}
\tilde{C} \tau^3 \sum_{n=1}^m \int_0^{t_n}e^{-2\eta(t_n-t)}&\left(\|\mathbf{u}(t)\|_{H^1(\mathcal{O})}^2+h^2\|\mathbf{u}(t)\|_{H^2(\mathcal{O})}^2+\|\mathbf{u}_t(t)\|_{H^1(\mathcal{O})}^2+h^2\|\mathbf{u}_t(t)\|_{H^2(\mathcal{O})}^2\right)dt \\
&\leq \tilde{C} \tau^2 e^{2 \eta t_m}. \label{166}
\end{aligned}
\end{equation}
With the bounds \eqref{6.147}-\eqref{166} and approximation properties, we apply the positivity property of $\mathcal{A}_d$ \eqref{5.38} and the coercivity of $\mathcal{A}_{\text{sip}}$ \eqref{eq3.21} in \eqref{6.146}. Then use of the discrete Gronwall's inequality along with the triangle inequality yield bound \eqref{eq:6.118}. \\
For the non-symmetric form of $\mathcal{A}_\epsilon$, the last term in the LHS of \eqref{6.142} can be bounded using \eqref{eq4.23} and Young's inequality as
\begin{equation}
\begin{aligned}
|\mathcal{A}_\epsilon(q_r^n(\tilde{\bm{\mathcal{E}}}_h), \tilde{\bm{\mathcal{E}}}_h^n)| &\leq C \gamma \sum_{i=1}^n \beta(t_n-t_i) \|\tilde{\bm{\mathcal{E}}}_h^i\|_\text{dG} \|\tilde{\bm{\mathcal{E}}}_h^n\|_\text{dG} \\
&\leq \frac{\omega \mu}{8}\|\tilde{\bm{\mathcal{E}}}_h^n\|^2_\text{dG}+\tilde{C}\left(\tau \sum_{i=1}^n \beta(t_n-t_i) \|\tilde{\bm{\mathcal{E}}}_h^i\|_\text{dG} \right)^2 .
    \label{6.150}
    \end{aligned}
\end{equation}
Now we use \eqref{6.150} in inequality \eqref{6.142}. Multiplying the resulting expression by 2, sum over $n=1$ to $m$, and employ the regularity assumption (\textbf{A2}) to have
\begin{equation}
\begin{aligned}
     \|\bm{\mathcal{E}}_h^m\|^2 + \sum_{n=1}^m \|\bm{\mathcal{E}}_h^n - \bm{\mathcal{E}}_h^{n-1}\|^2 + \tau^2 \mathcal{A}_{\text{sip}}(\xi_h^m, \xi_h^m) + \frac{\tau^2}{4} \sum_{n=1}^m |v_h^n|_{\text{dG}}^2 + \frac{\omega \mu}{4} \tau \sum_{n=1}^m \|\tilde{\bm{\mathcal{E}}}_h^n\|_{\text{dG}}^2 \\
    + \tau^2 \sum_{n=1}^m\left( H_1^n - H_2^n \right)  
    + \frac{\tau}{\delta \mu} \|S_h^m\|^2 \leq \tilde{C}\left(\tau + h^{2r}\right) 
 + \tilde{C} \tau \sum_{n=0}^{m-1} \|\bm{\mathcal{E}}_h^n\|^2 + \frac{\tau^2}{ \delta \mu} \sum_{n=1}^m \norm{S_h^{n-1}}^2 \\
 +\tilde{C} \tau^3 \sum_{n=1}^m \int_0^{t_n}e^{-2\eta(t_n-t)}\left(\|\mathbf{u}(t)\|_{H^1(\mathcal{O})}^2+h^2\|\mathbf{u}(t)\|_{H^2(\mathcal{O})}^2+\|\mathbf{u}_t(t)\|_{H^1(\mathcal{O})}^2+h^2\|\mathbf{u}_t(t)\|_{H^2(\mathcal{O})}^2\right)dt \\
 + \|\bm{\mathcal{E}}_h^0\|^2 + 2\tau |\mathcal{b}(\bm{\mathcal{E}}_h^0, v_h^1)|
 +\frac{\omega \mu}{2} \tau^2 \sum_{n=1}^m \sum_{i=1}^n \|\tilde{\bm{\mathcal{E}}}^i_h\|^2_\text{dG} 
 + \tilde{C} h^{2r} \tau \sum_{n=1}^m \tau^2 \left(\sum_{i=1}^n \beta(t_n - t_i) |\mathbf{u}^i|_{k+1}\right)^2\\
 +\tilde{C}\tau \sum_{n=1}^m\left(\tau \sum_{i=1}^n \beta(t_n-t_i) \|\tilde{\bm{\mathcal{E}}}_h^i\|_\text{dG} \right)^2. \label{6.151}
\end{aligned} 
\end{equation}
The last term in the RHS of \eqref{6.151} is bounded using Holder's inequality as follows
\begin{equation}
    \begin{aligned}
        \tau \sum_{n=1}^m \left( \tau \sum_{i=1}^n \beta(t_n-t_i) \|\tilde{\bm{\mathcal{E}}}_h^i\|_\text{dG} \right)^2 &\leq \gamma^2 \tau \sum_{n=1}^m \left(\tau \sum_{i=1}^n e^{-2\eta(t_n-t_i)}\right) \left(\tau \sum_{i=1}^n \|\tilde{\bm{\mathcal{E}}}_h^i\|^2_\text{dG}\right)\\
        & \leq \frac{\gamma^2 e^{2\eta \tau}}{2\eta} \tau^2 \sum_{n=1}^m \sum_{i=1}^n \|\tilde{\bm{\mathcal{E}}}_h^i\|^2_\text{dG} . \label{6.152}
    \end{aligned}
\end{equation} 
Next, we apply the bounds \eqref{6.147}-\eqref{166}, \eqref{6.152} in \eqref{6.151}. Then use of the discrete Gronwall's inequality, approximation properties, the coercivity of $\mathcal{A}_\text{sip}$, and the triangle inequality yield the bound \eqref{eq:6.118}.
\end{proof}
\begin{lemma}
Under the assumptions of Theorem 2, we obtain the following estimate for $1 \leq m \leq N$:
\begin{align*}
    \|\bm{\mathcal{E}}_h^m\|^2+\sum_{n=1}^m \|\bm{\mathcal{E}}_h^n - \bm{\mathcal{E}}_h^{n-1}\|^2 + \frac{1}{4} \tau^2 \sum_{n=1}^m |v_h^n|_{\text{dG}}^2 + \frac{\omega \mu}{4} \tau \sum_{n=1}^m \|\tilde{\bm{\mathcal{E}}}_h^n\|_{\text{dG}}^2
    \leq \tilde{C}_T\left(\tau + h^{2r}\right).
\end{align*}
\end{lemma}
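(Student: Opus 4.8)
The plan is to recognize that this Lemma is essentially a byproduct of the proof of Theorem 2, requiring no new analysis but only careful bookkeeping of the terms that were discarded there. In establishing the velocity error bound \eqref{eq:6.118}, the proof of Theorem 2 arrives at the summed inequalities \eqref{6.146} (for the symmetric form $\mathcal{A}_d$) and \eqref{6.151} (for the non-symmetric form of $\mathcal{A}_\epsilon$). Each of these retains on its left-hand side precisely the four nonnegative quantities appearing in the present Lemma, namely $\|\bm{\mathcal{E}}_h^m\|^2$, $\sum_{n=1}^m\|\bm{\mathcal{E}}_h^n - \bm{\mathcal{E}}_h^{n-1}\|^2$, $\frac{\tau^2}{4}\sum_{n=1}^m|v_h^n|_{\text{dG}}^2$, and the dG-norm sum of $\tilde{\bm{\mathcal{E}}}_h^n$, together with additional nonnegative contributions such as $\tau^2\mathcal{A}_{\text{sip}}(\xi_h^m,\xi_h^m)$, $\frac{\tau}{\delta\mu}\|S_h^m\|^2$, and the telescoped jump sum $\tau^2\sum_{n=1}^m(H_1^n-H_2^n)$.

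First I would invoke the positivity property \eqref{5.38} to discard the memory term $\tau\sum_{n=1}^m\mathcal{A}_d(q_r^n(\tilde{\bm{\mathcal{E}}}_h),\tilde{\bm{\mathcal{E}}}_h^n)$ in the symmetric case, and the coercivity of $\mathcal{A}_{\text{sip}}$ \eqref{eq3.21} to guarantee $\tau^2\mathcal{A}_{\text{sip}}(\xi_h^m,\xi_h^m)\geq 0$; by \eqref{6.147} the jump sum $\sum_{n=1}^m(H_1^n-H_2^n)$ is also nonnegative and may be dropped. The right-hand side is then controlled exactly as in Theorem 2: the consistency and quadrature contributions via the regularity assumption (\textbf{A2}), the memory terms via H\"older's inequality as in \eqref{6.152}, and the initial-data term via \eqref{6.148}, all summing to $\tilde{C}_T(\tau + h^{2r})$. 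Finally, applying the discrete Gronwall inequality to absorb the terms $\tilde{C}\tau\sum_{n=0}^{m-1}\|\bm{\mathcal{E}}_h^n\|^2$ and $\frac{\tau^2}{\delta\mu}\sum_{n=1}^m\|S_h^{n-1}\|^2$ on the right yields a uniform bound of $\tilde{C}_T(\tau + h^{2r})$ for the whole left-hand side, from which the stated estimate follows by retaining only the four quantities of interest.

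The only point distinguishing this Lemma from Theorem 2 is that we stop short of the final triangle-inequality step: rather than converting $\|\bm{\mathcal{E}}_h^m\|$ into $\|\mathbf{u}_h^m - \mathbf{u}^m\|$ and discarding the difference and potential sums, we keep these quantities on the left. Since they carry nonnegative coefficients throughout the derivation leading to \eqref{6.146} and \eqref{6.151}, they inherit the same upper bound without further work. I do not expect any genuine obstacle here; the coefficient $\frac{\omega\mu}{4}$ on the dG-norm term simply reflects the non-symmetric case, which is the weaker of the two and therefore valid in both settings.
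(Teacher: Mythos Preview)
Your proposal is correct and matches the paper's approach: the paper does not provide a separate proof of this lemma but treats it as an immediate consequence of the proof of Theorem 2, since the four quantities in the statement all appear with nonnegative coefficients on the left-hand sides of \eqref{6.146} and \eqref{6.151} and survive the Gronwall step unchanged. Your observation that the coefficient $\frac{\omega\mu}{4}$ reflects the non-symmetric case (the weaker of the two) is also accurate.
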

This lemma will be used repeatedly in the subsequent sections.
\begin{remark} As a consequence of the Lemma 11, with \eqref{eq5.41} and \eqref{eq4.30}, we obtain
\begin{align*}
    \|\widetilde{\mathbf{u}}_h^n - \mathbf{u}^n\|^2 &\leq 2 \|\widetilde{\mathbf{u}}_h^n - \mathbf{u}_h^n\|^2 + 2 \|\mathbf{u}_h^n - \mathbf{u}^n\|^2 \\
    &\leq C \tau^2 |v_h^n|_{\text{dG}}^2 + 2 \|\mathbf{u}_h^n - \mathbf{u}^n\|^2 \leq \tilde{C}_T\left(\tau + h^{2r}\right).
\end{align*}
\end{remark}
\begin{remark}
In view of the above bound, we apply \eqref{eq5.41} and the inverse estimate \eqref{8.159} to derive the following bound:
\begin{align*}
    \mu \tau \sum_{n=1}^m \|\mathbf{u}_h^n - \mathbf{u}^n\|_{\text{dG}}^2 &\leq \mu \tau \sum_{n=1}^m \|\widetilde{\mathbf{u}}_h^n - \mathbf{u}^n\|_{\text{dG}}^2 +\mu \tau \sum_{n=1}^m \tau^2 \|\nabla_h v_h^n + \mathbf{G}_h([v_h^n])\|_{\text{dG}}^2 \\
    &\leq \tilde{C}_T\left(\tau + h^{2r}\right) + C \mu\tau h^{-2} \sum_{n=1}^m \tau^2 |v_h^n|_{\text{dG}}^2.
\end{align*}
Thus, provided the CFL condition, $\tau \leq h^2 $, we have
\begin{equation*}
\mu \tau \sum_{n=1}^m \norm{\mathbf{u}_h^n - \mathbf{u}^n}_\text{dG}^2 \leq \tilde{C}_T\left(\tau + h^{2r}\right).
\end{equation*}
\end{remark}

\section{Improved \textit{a priori} estimate for the velocity}
From this point, we will use only the symmetric form $\mathcal{A}_d(\cdot,\cdot)$ for the discretization of the elliptic operator $-\Delta \widetilde{\mathbf{u}}$ defined in \eqref{18}. A crucial step in obtaining refined error estimates is the construction of an appropriate dual problem along with its discretization. Accordingly, we introduce the error functions, $\forall n \geq 1$,
\begin{equation*}
\bm\psi(t) = \mathbf{u}_h^n - \mathbf{u}(t),  \quad \bm\psi(0) = \mathbf{u}_h^0 - \mathbf{u}^0,
\end{equation*}
where $t \in (t_{n-1},t_n]$.
For $t \geq 0$, let $(\mathbf{V}(t), P(t)) \in (H_0^1(\mathcal{O}))^d \times L_0^2(\mathcal{O})$ be the solution of the dual Stokes problem given by:
\begin{align}
    -\Delta \mathbf{V}(t) + \nabla P(t) &= \bm\psi(t) \quad \text{in } \mathcal{O}, \label{8.164}
\\
    \nabla \cdot \mathbf{V}(t) &= 0 \quad \text{in } \mathcal{O}, \label{8.165}
\\
    \mathbf{V}(t) &= 0 \quad \text{on } \partial \mathcal{O}. \label{8.166}
\end{align}
As $\bm\psi(t) \in (L^2(\mathcal{O}))^d$ and the domain is convex, we have
\begin{equation}
\|\mathbf{V}(t)\|_{H^2(\mathcal{O})} + \|P(t)\|_{H^1(\mathcal{O})} \leq C \|\bm\psi(t)\|. \label{8.167}
\end{equation}
Furthermore, for $t \geq 0$, let $(\mathbf{V}_h(t), P_h(t)) \in \mathbf{M}_h \times P_h^0$ be the dG solution corresponding to the dual problem \eqref{8.164}--\eqref{8.166}:
\begin{equation}
    \mathcal{A}_d(\mathbf{V}_h(t), \mathbf{w}_h) - \mathcal{b}(\mathbf{w}_h, P_h(t)) = (\bm\psi(t), \mathbf{w}_h), \quad \forall \mathbf{w}_h \in \mathbf{M}_h, \label{8.168}
\end{equation}
\begin{equation}
    \mathcal{b}(\mathbf{V}_h(t), g_h) = 0, \quad \forall g_h \in P_h^0.  \label{8.169}
\end{equation}
The existence and uniqueness of $(\mathbf{V}_h(t), P_h(t))$ have been rigorously established in \cite{riviere2008discontinuous}. We now outline the stability and error estimates associated with the dG solution of the auxiliary problem.
\begin{lemma} [\cite{masri2023improved}]
For all $t \in [0, T]$, we have
\begin{align}
    |||\mathbf{V}(t)||| &\leq C \|\bm\psi(t)\|, \label{8.170}
\\
    \|\mathbf{V}(t) - \mathbf{V}_h(t)\| + h \|\mathbf{V}(t) - \mathbf{V}_h(t)\|_\text{dG} &+ h \|P(t) - P_h(t)\| \leq C h^2\|\bm\psi(t)\|, \label{8.171}
\\
    \|\mathbf{V}_h(t)\|_{L^\infty(\mathcal{O})} &\leq C \|\bm\psi(t)\|. \label{8.172}
\end{align}
\end{lemma}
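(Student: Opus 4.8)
The plan is to prove the three bounds separately, using throughout the elliptic regularity estimate \eqref{8.167}, the approximation and stability properties of $\Pi_h$ from Section 5, and standard dG Stokes theory. Since $\mathcal{O}$ is convex and $\bm\psi(t) \in (L^2(\mathcal{O}))^d$, \eqref{8.167} supplies the key a priori bound $\norm{\mathbf{V}(t)}_{H^2(\mathcal{O})} + \norm{P(t)}_{H^1(\mathcal{O})} \leq C\norm{\bm\psi(t)}$, which will feed all three estimates.

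For the $|||\cdot|||$-stability \eqref{8.170}, I would simply invoke Sobolev embeddings. For $d \in \{2,3\}$ one has $H^2(\mathcal{O}) \hookrightarrow L^\infty(\mathcal{O})$, so $\norm{\mathbf{V}(t)}_{L^\infty(\mathcal{O})} \leq C\norm{\mathbf{V}(t)}_{H^2(\mathcal{O})}$; applying $H^1(\mathcal{O}) \hookrightarrow L^3(\mathcal{O})$ to $\nabla\mathbf{V}(t)$ gives $|\mathbf{V}(t)|_{W^{1,3}(\mathcal{O})} = \norm{\nabla\mathbf{V}(t)}_{L^3(\mathcal{O})} \leq C\norm{\mathbf{V}(t)}_{H^2(\mathcal{O})}$. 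Summing and using \eqref{8.167} yields \eqref{8.170}.

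Estimate \eqref{8.171} is the heart of the lemma and is the classical a priori error analysis of the symmetric interior penalty dG scheme \eqref{8.168}--\eqref{8.169} for Stokes. I would first obtain the energy-norm estimate $\norm{\mathbf{V}(t) - \mathbf{V}_h(t)}_\text{dG} + \norm{P(t) - P_h(t)} \leq Ch\,\norm{\mathbf{V}(t)}_{H^2(\mathcal{O})}$ by the usual route: write the error equation via Galerkin orthogonality (subtracting \eqref{8.168} from the exact weak form), exploit that $\mathbf{V}_h(t) - \Pi_h\mathbf{V}(t)$ is discretely divergence free (by \eqref{8.169} and the divergence-preserving identity \eqref{6.89}, arguing as in \eqref{6.102}) to control the velocity via coercivity \eqref{eq3.20} and continuity \eqref{eq4.23} of $\mathcal{A}_d$, recover the pressure via the discrete inf-sup condition on $\mathbf{M}_h \times P_h^0$, and insert the approximation bounds \eqref{eq:6.97} and \eqref{eq6.101}. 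The optimal $L^2$-velocity bound $\norm{\mathbf{V}(t) - \mathbf{V}_h(t)} \leq Ch^2\,\norm{\mathbf{V}(t)}_{H^2(\mathcal{O})}$ would then follow from an Aubin--Nitsche duality argument against an auxiliary Stokes problem, whose $H^2$-regularity is again guaranteed by convexity. Combining these with \eqref{8.167} gives \eqref{8.171}.

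Finally, for the $L^\infty$-stability \eqref{8.172} I would split $\norm{\mathbf{V}_h(t)}_{L^\infty(\mathcal{O})} \leq \norm{\mathbf{V}_h(t) - \Pi_h\mathbf{V}(t)}_{L^\infty(\mathcal{O})} + \norm{\Pi_h\mathbf{V}(t)}_{L^\infty(\mathcal{O})}$. The second term is bounded by Lemma 7 and \eqref{8.170}: $\norm{\Pi_h\mathbf{V}(t)}_{L^\infty(\mathcal{O})} \leq C|||\mathbf{V}(t)||| \leq C\norm{\bm\psi(t)}$. For the first term, since $\mathbf{V}_h(t) - \Pi_h\mathbf{V}(t) \in \mathbf{M}_h$, the inverse inequality \eqref{8.158} gives $\norm{\mathbf{V}_h(t) - \Pi_h\mathbf{V}(t)}_{L^\infty(\mathcal{O})} \leq Ch^{-d/2}\norm{\mathbf{V}_h(t) - \Pi_h\mathbf{V}(t)}$; by the triangle inequality together with \eqref{8.171}, the approximation property \eqref{eq:6.96}, and \eqref{8.167}, one has $\norm{\mathbf{V}_h(t) - \Pi_h\mathbf{V}(t)} \leq Ch^2\norm{\bm\psi(t)}$, so this contribution is $Ch^{2-d/2}\norm{\bm\psi(t)} \leq C\norm{\bm\psi(t)}$ for $d \in \{2,3\}$ and $h$ bounded. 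Adding the two pieces establishes \eqref{8.172}. The main obstacle is the $L^2$-velocity half of \eqref{8.171}: in the dG setting the duality argument must accommodate the nonconformity and the jump/consistency terms of $\mathcal{A}_d$ and $\mathcal{b}$, and one must verify that the adjoint consistency of the symmetric form together with the divergence preservation \eqref{6.89} suffice to recover the full second order in $h$ rather than only first order.
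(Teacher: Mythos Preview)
The paper does not supply its own proof of this lemma: it is stated with attribution to \cite{masri2023improved} and used as a black box. Your proposal is correct and follows precisely the standard route one expects in that reference---Sobolev embedding plus \eqref{8.167} for \eqref{8.170}; Galerkin orthogonality, coercivity/continuity of $\mathcal{A}_d$, the divergence-preservation \eqref{6.89} to decouple velocity from pressure, discrete inf-sup, and an Aubin--Nitsche duality for the $L^2$ rate in \eqref{8.171}; and the splitting through $\Pi_h\mathbf{V}(t)$ with Lemma~7 and the inverse estimate \eqref{8.158} for \eqref{8.172}. Your identification of the $L^2$ duality step as the only delicate point is apt: the symmetric form $\mathcal{A}_d$ is adjoint-consistent, so the argument goes through on a convex domain exactly as you describe.
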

\begin{theorem}
Under the same assumptions of Theorem 2, we have $\forall 0 \leq m \leq N$,
\begin{equation*}
\mu \tau \sum_{n=1}^m \| \mathbf{u}_h^n - \mathbf{u}^n \|^2 + \mu \tau \sum_{n=1}^m \| \widetilde{\mathbf{u}}_h^n - \mathbf{u}^n \|^2 \leq \tilde{C}_T \left(\tau^2 + \tau h^2 + h^{2r+2}\right). 
\end{equation*}
\end{theorem}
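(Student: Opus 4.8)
The plan is to run an Aubin--Nitsche duality argument built on the dual Stokes problem \eqref{8.164}--\eqref{8.169}, upgrading the energy-type estimate of Lemma 11 into an $L^2$ estimate that gains one full power in both $h$ and $\tau$. Fix $n$ and $t \in (t_{n-1}, t_n]$. Since $\bm\psi(t) = \mathbf{u}_h^n - \mathbf{u}(t)$ is not a discrete function, I would first split
$$\|\bm\psi(t)\|^2 = (\bm\psi(t), \bm{\mathcal{E}}_h^n) + (\bm\psi(t), \Pi_h \mathbf{u}^n - \mathbf{u}(t)),$$
where $\bm{\mathcal{E}}_h^n = \mathbf{u}_h^n - \Pi_h \mathbf{u}^n \in \mathbf{M}_h$. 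The second inner product is handled directly: writing $\Pi_h\mathbf{u}^n - \mathbf{u}(t) = (\Pi_h\mathbf{u}^n - \mathbf{u}^n) + (\mathbf{u}^n - \mathbf{u}(t))$ and invoking the approximation property \eqref{eq:6.96} together with a Taylor expansion in time, a Cauchy--Schwarz--Young splitting contributes $O(h^{2r+2})$ and $O(\tau^2)$ after absorbing the $\|\bm\psi\|^2$ factor into the left-hand side.

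For the first term I would test the discrete dual problem \eqref{8.168} with $\mathbf{w}_h = \bm{\mathcal{E}}_h^n$ to obtain $(\bm\psi, \bm{\mathcal{E}}_h^n) = \mathcal{A}_d(\mathbf{V}_h, \bm{\mathcal{E}}_h^n) - \mathcal{b}(\bm{\mathcal{E}}_h^n, P_h)$, then feed in the error equation \eqref{7.105} tested against the dual velocity $\mathbf{V}_h(t)$. Because $\mathbf{V}_h$ is discretely divergence-free by \eqref{8.169}, the relation \eqref{eq:6.110} gives $(\bm{\mathcal{E}}_h^n,\mathbf{V}_h) = (\tilde{\bm{\mathcal{E}}}_h^n,\mathbf{V}_h)$ and the pressure-coupling terms collapse exactly as in the proofs of Theorems 1 and 2, so that $\mathcal{A}_d(\mathbf{V}_h, \cdot)$ is expressed through the consistency residuals $R_C, R_S, R_t$, the viscoelastic term $\mathcal{A}_\epsilon(q_r^n(\tilde{\bm{\mathcal{E}}}_h),\cdot)$, and the approximation residuals. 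The decisive gain is that each such term is now paired with a dual quantity to which the enhanced regularity \eqref{8.167} and the bounds \eqref{8.170}--\eqref{8.172} apply: the $L^\infty$ control $\|\mathbf{V}_h\|_{L^\infty}\le C\|\bm\psi\|$ and the $L^2$ error bound $\|\mathbf{V}-\mathbf{V}_h\|\le C h^2\|\bm\psi\|$ each supply an extra factor of $h$ (or $\tau$) over the energy-norm bounds used in Lemma 11, while the remaining factors $\|\tilde{\bm{\mathcal{E}}}_h^n\|_{\text{dG}}$ and $\|\bm{\mathcal{E}}_h^{n-1}\|$ are controlled by Lemma 11. Multiplying $\tau + h^{2r}$ by the dual's $h^2$ produces the target orders $\tau h^2$ and $h^{2r+2}$, and $R_t$ together with the quadrature error \eqref{41} supplies the $\tau^2$ term.

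The main obstacle I anticipate is the nonlinear convection residual $R_C$ together with the memory residual $R_S$. For $R_C$ one must re-derive the convection bounds of Lemma 7, but now tested against $\mathbf{V}_h$ rather than against the error itself: the goal is to rewrite $\mathcal{A}_c(\mathbf{u}_h^{n-1}; \mathbf{u}_h^{n-1}, \Pi_h\mathbf{u}^n, \mathbf{V}_h) - \mathcal{A}_c(\mathbf{u}^n; \mathbf{u}^n, \mathbf{u}^n, \mathbf{V}_h)$ as products of the already-controlled errors $\bm{\mathcal{E}}_h^{n-1}, \tilde{\bm{\mathcal{E}}}_h^n$ with $\|\mathbf{V}_h\|_{L^\infty}$ or $\|\mathbf{V}-\mathbf{V}_h\|$, so that no power of $h$ is lost; this is precisely where the sharpened estimates \eqref{4.117}--\eqref{4.121} and the $L^\infty$ bound \eqref{8.172} are essential. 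The memory term requires combining the positivity property \eqref{5.38}, the right-rectangle quadrature bound \eqref{41}, and a H\"older resummation of the exponential convolution kernel (as in \eqref{6.152}) so that the accumulated history does not spoil the final Gronwall step.

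Finally, I would integrate in $t$ over each subinterval and sum over $n = 1, \dots, m$, using the positivity \eqref{5.38} and coercivity \eqref{eq3.21} to discard the memory and $\mathcal{A}_{\text{sip}}$ contributions, absorb the $\tfrac{\omega\mu}{4}\tau\|\tilde{\bm{\mathcal{E}}}_h^n\|_{\text{dG}}^2$-type terms on the left, and close with the discrete Gronwall inequality. The triangle inequality, combined with the approximation bound \eqref{eq:6.96} and the velocity update relation \eqref{eq5.41}, then converts the resulting control of $\bm{\mathcal{E}}_h^n$ and $\tilde{\bm{\mathcal{E}}}_h^n$ into the stated estimate for $\mathbf{u}_h^n - \mathbf{u}^n$ and $\widetilde{\mathbf{u}}_h^n - \mathbf{u}^n$.
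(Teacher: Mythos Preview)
Your high-level intuition (Aubin--Nitsche duality using the discrete Stokes dual $(\mathbf V_h,P_h)$) is correct, and testing the error equation with $\mathbf V_h^n$ is exactly what the paper does. But the mechanism you describe for extracting $\|\bm{\mathcal{E}}_h^n\|^2$ and closing the sum has a genuine gap: you never explain what happens to the discrete time-difference term. When you test \eqref{7.105} with $\mathbf V_h^n$, the left-hand side carries $(\tilde{\bm{\mathcal E}}_h^n-\bm{\mathcal E}_h^{n-1},\mathbf V_h^n)$ with no factor of $\tau$, while the diffusion piece $\mathcal A_d(\tilde{\bm{\mathcal E}}_h^n,\mathbf V_h^n)$ carries $\tau\mu$. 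In your plan you would effectively divide by $\tau\mu$ to isolate $\mathcal A_d(\mathbf V_h,\bm{\mathcal E}_h^n)$, which turns the time-difference into a term of size $O(1)$ per step; summed over $n$ that blows up, and the Gronwall step cannot close. The paper resolves this by the \emph{parabolic telescoping identity} in the dual variable,
\[
(\bm\psi^n-\bm\psi^{n-1},\mathbf V_h^n)=\mathcal A_d(\mathbf V_h^n-\mathbf V_h^{n-1},\mathbf V_h^n)
=\tfrac12\bigl(\mathcal A_d(\mathbf V_h^n,\mathbf V_h^n)-\mathcal A_d(\mathbf V_h^{n-1},\mathbf V_h^{n-1})\bigr)+\tfrac12\mathcal A_d(\mathbf V_h^n-\mathbf V_h^{n-1},\mathbf V_h^n-\mathbf V_h^{n-1}),
\]
which comes from applying \eqref{8.168} with $\mathbf w_h=\mathbf V_h^n-\mathbf V_h^{n-1}$ and the symmetry of $\mathcal A_d$. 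After summation this leaves the bounded quantity $\mathcal A_d(\mathbf V_h^m,\mathbf V_h^m)$ plus a nonnegative term, and the $L^2$ control $\tau\mu\sum_n\|\bm{\mathcal E}_h^n\|^2$ then appears on the left from $\tau\mu\,\mathcal A_d(\bm{\mathcal E}_h^n,\mathbf V_h^n)=\tau\mu\|\bm{\mathcal E}_h^n\|^2+\ldots$ via \eqref{8.191}. This Lyapunov structure in the dual is the key missing ingredient in your outline.

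A second, related confusion is in your last paragraph: you speak of ``absorbing the $\tfrac{\omega\mu}{4}\tau\|\tilde{\bm{\mathcal E}}_h^n\|_{\mathrm{dG}}^2$-type terms on the left.'' In the improved estimate there are no such terms on the left; the dG-norm errors sit on the \emph{right} and are bounded using Lemma~11 (the already-proved suboptimal estimate). The left-hand side consists only of the dual energy $\mathcal A_d(\mathbf V_h^m,\mathbf V_h^m)$ and $\tau\mu\sum\|\bm{\mathcal E}_h^n\|^2$. Likewise, the extra $h^2$ gain does not come from multiplying Lemma~11's $(\tau+h^{2r})$ by a dual $h^2$; it comes because the dual pairing produces $\|\bm{\mathcal E}_h^n\|^2$ directly (no dG-norm loss), and the right-hand residuals paired with $\mathbf V_h^n$ are estimated by $\|\bm\psi^n\|\lesssim\|\bm{\mathcal E}_h^n\|+h^{r+1}$ rather than $\|\tilde{\bm{\mathcal E}}_h^n\|_{\mathrm{dG}}$. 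Finally, integrating in $t$ over subintervals is unnecessary---the argument is purely at the nodes $t_n$, and one passes from $\bm{\mathcal E}_h^n$ to $\tilde{\bm{\mathcal E}}_h^n$ at the end via \eqref{8.196} and Lemma~11.
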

\begin{proof}
Multiplying \eqref{6.98} by $\tau$, subtracting it from \eqref{eq5.36}, and choosing $\mathbf{w}_h = \mathbf{V}_h^n$, we have
\begin{align}
(\widetilde{\mathbf{u}}_h^n - \mathbf{u}^n - \bm\psi^{n-1}, \mathbf{V}_h^n) + \tau \tilde{R}_C(\mathbf{V}_h^n) + \tau \mu \mathcal{A}_d(\widetilde{\mathbf{u}}_h^n - \mathbf{u}^n, \mathbf{V}_h^n) + \tau \mathcal{A}_d(q_r^n(\widetilde{\mathbf{u}}_h-\mathbf{u}),  \mathbf{V}_h^n) + \tau \tilde{R}_S(\mathbf{V}_h^n)\notag \\
= \tau \mathcal{b}(\mathbf{V}_h^n, p_h^{n-1} - p^n) + (\tau (\partial_t \mathbf{u})^n - (\mathbf{u}^n - \mathbf{u}^{n-1}), \mathbf{V}_h^n), \label{8.175}
\end{align}
where
\begin{align*}
\tilde{R}_C(\mathbf{V}_h^n) &= \mathcal{A}_c(\mathbf{u}_h^{n-1};\mathbf{u}_h^{n-1},\widetilde{\mathbf{u}}_h^n,\mathbf{V}_h^n) - \mathcal{A}_c(\mathbf{u}^n; \mathbf{u}^n, \mathbf{u}^n, \mathbf{V}_h^n),\\
\tilde{R}_S(\mathbf{V}_h^n) &= \mathcal{A}_d(q_r^n(\mathbf{u}), \mathbf{V}_h^n) -\int_0^{t_n} \beta(t_n-s) \mathcal{A}_d(\mathbf{u}(s),\mathbf{V}_h^n)ds.
\end{align*}
For the rest of our analysis, let $\varepsilon$ represent a small positive constant that will be specified later. We start by considering the two terms in the RHS of \eqref{8.175}. Noting that $\mathbf{V}_h^n$ satisfies \eqref{8.169}, we derive
\begin{equation*}
\left|\mathcal{b}\left(\mathbf{V}_h^n, p_h^{n-1} - p^n\right)\right| = \left|\mathcal{b}\left(\mathbf{V}_h^n, p^n - \pi_h p^n\right) \right|= \left|\sum_{F \in \mathcal{F}_h} \int_F \{p^n - \pi_h p^n\} [\mathbf{V}_h^n] \cdot \mathbf{n}_F\right|.
\end{equation*}
By applying a trace inequality, the approximation properties \eqref{eq6.101}, \eqref{8.171}, and observing that $[\mathbf{V}^n] = 0$ almost everywhere $\forall F \in \mathcal{F}_h$ as $\mathbf{V}^n \in (H_0^2(\mathcal{O}))^d$, we have
\begin{equation}
|\mathcal{b}\left(\mathbf{V}_h^n, p_h^{n-1} - p^n\right)| \leq C h^{r+1} \|p^n\|_{H^r(\mathcal{O})} \|\bm\psi^n\| \leq \varepsilon \mu \|\bm{\mathcal{E}}_h^n\|^2 + C h^{2r+2} \left(1 + \frac{1}{\varepsilon \mu}\right). \label{8.179}
\end{equation}
In the above derivation, we have used the following estimate:
\begin{equation*}
\|\bm\psi^n\| \leq C \left(h^{r+1} \|\mathbf{u}^n\|_{H^{r+1}(\mathcal{O})} + \|\bm{\mathcal{E}}_h^n\|\right), \label{8.180}
\end{equation*}
which results from applying the triangle inequality and the approximation property \eqref{eq:6.96}. For the final term in \eqref{8.175}, we have the simple bound
\begin{equation}
\left|\tau(\partial_t \mathbf{u})^n - (\mathbf{u}^n - \mathbf{u}^{n-1}), \mathbf{V}_h^n)\right| \leq C \tau^2 \int_{t_{n-1}}^{t_n} \|\partial_{tt} \mathbf{u}\|^2 dt + \tau \|\mathbf{V}_h^n\|^2_\text{dG}. \label{8.181}
\end{equation}
Next, we examine the terms on the LHS of \eqref{8.175}. Using \eqref{eq5.39} and \eqref{8.169}, we find
\begin{equation*}
(\widetilde{\mathbf{u}}_h^n - \mathbf{u}^n - \bm\psi^n, \mathbf{V}_h^n) = (\bm\psi^n - \bm\psi^{n-1}, \mathbf{V}_h^n) - \tau \mathcal{b}\left(\mathbf{V}_h^n, v_h^n\right) = (\bm\psi^n - \bm\psi^{n-1}, \mathbf{V}_h^n).
\end{equation*}
From \eqref{8.168}, \eqref{8.169}, the symmetry of $\mathcal{A}_d(\cdot, \cdot)$, along with the above equality, we have
\begin{align}
(\widetilde{\mathbf{u}}_h^n - \mathbf{u}^n - \bm\psi^{n-1}, \mathbf{V}_h^n) = \mathcal{A}_d\left(\mathbf{V}_h^n - \mathbf{V}_h^{n-1}, \mathbf{V}_h^n\right) = \frac{1}{2} \left(\mathcal{A}_d(\mathbf{V}_h^n, \mathbf{V}_h^n) - \mathcal{A}_d(\mathbf{V}_h^{n-1}, \mathbf{V}_h^{n-1})\right) \notag \\ 
+ \frac{1}{2} \mathcal{A}_d\left(\mathbf{V}_h^n - \mathbf{V}_h^{n-1}, \mathbf{V}_h^n - \mathbf{V}_h^{n-1}\right). \label{8.183}
\end{align}
Additionally, we write
\begin{equation}
\mathcal{A}_d\left(\widetilde{\mathbf{u}}_h^n - \mathbf{u}^n, \mathbf{V}_h^n\right) = \mathcal{A}_d\left(\tilde{\bm{\mathcal{E}}}_h^n - \bm{\mathcal{E}}_h^n, \mathbf{V}_h^n\right) + \mathcal{A}_d\left(\bm{\mathcal{E}}_h^n, \mathbf{V}_h^n\right) + \mathcal{A}_d\left(\Pi_h \mathbf{u}^n - \mathbf{u}^n, \mathbf{V}_h^n\right). \label{8.184}
\end{equation}
To tackle the final term in \eqref{8.184}, we take $\mathbf{w}_h = \Pi_h \mathbf{u}^n - Q_h \mathbf{u}^n$ in \eqref{8.168}, to obtain
\begin{align*}
\mathcal{A}_d\left(\Pi_h \mathbf{u}^n - \mathbf{u}^n, \mathbf{V}_h^n\right) &= \mathcal{A}_d\left(\Pi_h \mathbf{u}^n -Q_h \mathbf{u}^n, \mathbf{V}_h^n\right) \notag \\
&= (\bm\psi^n, \Pi_h \mathbf{u}^n - Q_h \mathbf{u}^n) + \mathcal{b}\left(\Pi_h \mathbf{u}^n - Q_h \mathbf{u}^n, P_h^n\right).
\end{align*}
Using \eqref{eq4.37}, we derive
\begin{equation}
|\mathcal{b}\left(\Pi_h \mathbf{u}^n - Q_h \mathbf{u}^n, P_h^n\right)| \leq C \|\Pi_h \mathbf{u}^n - Q_h \mathbf{u}^n\| |P_h^n|_\text{dG}. \label{eqn155}
\end{equation}
Applying \eqref{eq6.102}, the inverse estimate \eqref{8.160}, \eqref{eq6.101}, \eqref{8.167}, \eqref{8.171}, and the triangle inequality, we obtain
\begin{equation}
|P_h^n|_\text{dG} \leq |P_h^n - \pi_h P^n|_\text{dG} + |\pi_h P^n|_\text{dG} \leq C h^{-1} \|P_h^n - \pi_h P^n\| + C |P^n|_{H^1(\mathcal{O})}. \label{8.188}
\end{equation}
Hence,
\begin{equation}
|P_h^n|_\text{dG} \leq C \left(\|\bm{\mathcal{E}}_h^n\| + h^{r+1} |\mathbf{u}^n|_{H^{r+1}(\mathcal{O})}\right). \label{8.189}
\end{equation}
Using the bounds \eqref{eqn155}, \eqref{8.189}, the CS inequality, Young's inequality, and the assumption on regularity (\textbf{A2}),  we arrive at
\begin{align}
|\mathcal{A}_d\left(\Pi_h \mathbf{u}^n - \mathbf{u}^n, \mathbf{V}_h^n\right)| &\leq C \|\Pi_h \mathbf{u}^n - Q_h \mathbf{u}^n\| \left(\|\bm{\mathcal{E}}_h^n\| + h^{r+1} \|\mathbf{u}^n\|_{H^{r+1}(\mathcal{O})}\right) \notag \\
&\leq C h^r |\mathbf{u}^n|_{H^{r+1}(\mathcal{O})} \left(\|\bm{\mathcal{E}}_h^n\| + h^{r+1} \|\mathbf{u}^n\|_{H^{r+1}(\mathcal{O})}\right) \notag \\
&\leq \varepsilon \|\bm{\mathcal{E}}_h^n\|^2 + C \left(\frac{1}{\varepsilon} + 1\right) h^{2r+2}. \label{8.190}
\end{align}
Next, consider the second term in \eqref{8.184}. Set $\mathbf{w}_h = \bm{\mathcal{E}}_h^n$ in \eqref{8.168}, to have
\begin{equation}
\mathcal{A}_d\left(\bm{\mathcal{E}}_h^n, \mathbf{V}_h^n\right) = \|\bm{\mathcal{E}}_h^n\|^2 
+ \mathcal{b}\left(\bm{\mathcal{E}}_h^n, P_h^n\right)+ (\bm\psi^n - \bm{\mathcal{E}}_h^n, \bm{\mathcal{E}}_h^n) . \label{8.191}
\end{equation}
Combining \eqref{8.179}, \eqref{8.181}, \eqref{8.183}, \eqref{8.184}, \eqref{8.190}, and \eqref{8.191}, the equality \eqref{8.175} reads
\begin{equation}
\begin{aligned}
&\frac{1}{2} \left(\mathcal{A}_d(\mathbf{V}_h^n, \mathbf{V}_h^n) - \mathcal{A}_d(\mathbf{V}_h^{n-1}, \mathbf{V}_h^{n-1}) + \mathcal{A}_d(\mathbf{V}_h^n - \mathbf{V}_h^{n-1}, \mathbf{V}_h^n - \mathbf{V}_h^{n-1})\right) 
+ \tau \|\bm{\mathcal{E}}_h^n\|^2 \\
&\leq 2 \varepsilon \tau \mu \|\bm{\mathcal{E}}_h^n\|^2 + C\tau \|\mathbf{V}_h^n\|_\text{dG}^2+ C \tau^2 \int_{t_{n-1}}^{t_n} \|\partial_{tt} \mathbf{u}\|^2 dt + C \left(\mu \left(1 + \frac{1}{\varepsilon}\right) + 1 + \frac{1}{\varepsilon \mu}\right) \tau h^{2r+2}\\
&\hspace{1em} -\tau \tilde{R}_C(\mathbf{V}_h^n)-\tau \tilde{R}_S(\mathbf{V}_h^n)- \tau \mathcal{A}_d(q_r^n(\widetilde{\mathbf{u}}_h-\mathbf{u}),  \mathbf{V}_h^n)-\tau \mu \mathcal{A}_d(\tilde{\bm{\mathcal{E}}}_h^n-\bm{\mathcal{E}}_h^n, \mathbf{V}_h^n)\\
&\hspace{1em} -\tau \mu (\Pi_h \mathbf{u}^n-\mathbf{u}^n, \bm{\mathcal{E}}_h^n) -\tau \mu \mathcal{b}(\bm{\mathcal{E}}_h^n,P_h^n). \label{8.192}
\end{aligned}
\end{equation}
In view of \eqref{eq:6.96}, we obtain
\begin{equation}
| \left( \Pi_h \mathbf{u}^n - \mathbf{u}^n, \bm{\mathcal{E}}_h^n \right) | \leq \varepsilon \| \bm{\mathcal{E}}_h^n \|^2 + \frac{C}{\varepsilon} h^{2r+2} | \mathbf{u}^n |_{H^{r+1}(\mathcal{O})}^2. \label{eqn160}
\end{equation}
Next, setting $\mathbf{w}_h = \tilde{\bm{\mathcal{E}}}_h^n - \bm{\mathcal{E}}_h^n$ in \eqref{8.168} to have
\begin{equation}
\mathcal{A}_d\left(\tilde{\bm{\mathcal{E}}}_h^n - \bm{\mathcal{E}}_h^n, \mathbf{V}_h^n\right) = \left(\bm{\mathcal{E}}_h^n, \tilde{\bm{\mathcal{E}}}_h^n - \bm{\mathcal{E}}_h^n\right) + \mathcal{b}\left(\tilde{\bm{\mathcal{E}}}_h^n - \bm{\mathcal{E}}_h^n, P_h^n\right)+ \left(\Pi_h \mathbf{u}^n - \mathbf{u}^n, \tilde{\bm{\mathcal{E}}}_h^n - \bm{\mathcal{E}}_h^n\right) .
\end{equation}
Use of \eqref{eq:6.110}, \eqref{6.113}, \eqref{eq4.30}, and assuming that $\tilde{\sigma} \geq 4 \tilde{B}_r^2$, it follows that
\begin{equation}
\left(\bm{\mathcal{E}}_h^n, \tilde{\bm{\mathcal{E}}}_h^n - \bm{\mathcal{E}}_h^n\right) = -\tau \mathcal{b}\left(\bm{\mathcal{E}}_h^n, v_h^n\right) = \tau^2 \sum_{F \in \mathcal{F}_h^i}  \frac{\tilde{\sigma}}{h_F} \|v_h^n\|_{L^2(F)}^2 - \tau^2 \|\mathbf{G}_h(v_h^n)\|^2 \geq 0. \label{eqn:162}
\end{equation}
Utilizing the CS inequality, \eqref{eq4.37}, and \eqref{eq:6.96}, we obtain
\begin{equation}
 \left| \mathcal{b} ( \tilde{\bm{\mathcal{E}}}_h^n - \bm{\mathcal{E}}_h^n, P_h^n ) \right| +\left| ( \Pi_h \mathbf{u}^n - \mathbf{u}^n, \tilde{\bm{\mathcal{E}}}_h^n - \bm{\mathcal{E}}_h^n ) \right|  
\leq C \| \tilde{\bm{\mathcal{E}}}_h^n - \bm{\mathcal{E}}_h^n \| \left(| P_h^n |_\text{dG} + h^{r+1} | \mathbf{u}^n |_{H^{r+1}(\mathcal{O})} \right). \label{8.195}
\end{equation}
With an aid of \eqref{eq4.37} and \eqref{eq:6.110}, we find that
\begin{equation}
\|\tilde{\bm{\mathcal{E}}}_h^n - \bm{\mathcal{E}}_h^n \| \leq C \tau \| v_h^n \|_\text{dG}.  \label{8.196}
\end{equation}
Thus, using \eqref{8.189}, \eqref{8.196}, the assumption on regularity (\textbf{A2}), and Young's inequality, the bound \eqref{8.195} yields
\begin{equation}
\begin{aligned}
\left| \mathcal{b} ( \tilde{\bm{\mathcal{E}}}_h^n - \bm{\mathcal{E}}_h^n, P_h^n ) \right| +\left| ( \Pi_h \mathbf{u}^n - \mathbf{u}^n, \tilde{\bm{\mathcal{E}}}_h^n - \bm{\mathcal{E}}_h^n ) \right| 
&\leq C \tau | v_h^n |_\text{dG} \left( \| \bm{\mathcal{E}}_h^n \| + h^{r+1} | \mathbf{u}^n |_{H^{r+1}(\mathcal{O})}  \right) \\
&\leq \varepsilon \| \bm{\mathcal{E}}_h^n \|^2 + C h^{2r+2} + C \left( 1+\frac{1}{\varepsilon} \right) \tau^2 | v_h^n |_\text{dG}^2 . \label{eqn:165}
\end{aligned}
\end{equation}
Next, we bound the last term in \eqref{8.192}. Using \eqref{6.113}, \eqref{eq4.30}, and \eqref{8.189}, we have
\begin{equation}
\begin{aligned}
| \mathcal{b} \left( \bm{\mathcal{E}}_h^n, P_h^n \right) | &= \left|\tau \left( \mathbf{G}_h \left( \left[ v_h^n \right] \right), \mathbf{G}_h \left( \left[ P_h^n \right] \right) \right)- \tau \sum_{F \in \mathcal{F}_h^i}  \frac{\tilde{\sigma}}{h_F} \int_F [ v_h^n ] [ P_h^n ]  \right| \\
&\leq C \tau | P_h^n |_\text{dG} | v_h^n |_\text{dG}  \leq \varepsilon \| \bm{\mathcal{E}}_h^n \|^2 + C h^{2r+2} + C \left( 1 + \frac{1}{\varepsilon} \right) \tau^2 | v_h^n |_\text{dG}^2 . \label{eqn:166}
\end{aligned}
\end{equation}
Combining the above bounds \eqref{eqn160}-\eqref{eqn:162}, \eqref{eqn:165}-\eqref{eqn:166}, inequality \eqref{8.192} becomes
\begin{equation}
\begin{aligned}
\frac{1}{2} \left( \mathcal{A}_d \left( \mathbf{V}_h^n, \mathbf{V}_h^n \right) - \mathcal{A}_d \left( \mathbf{V}_h^{n-1}, \mathbf{V}_h^{n-1} \right) + \mathcal{A}_d \left( \mathbf{V}_h^n - \mathbf{V}_h^{n-1}, \mathbf{V}_h^n - \mathbf{V}_h^{n-1} \right) \right)+ \tau \mu \| \bm{\mathcal{E}}_h^n \|^2 \\
\leq 5 \varepsilon \tau \mu \| \bm{\mathcal{E}}_h^n \|^2 + C \mu \left( 1 + \frac{1}{\varepsilon} \right) \tau^3 | v_h^n |_\text{dG}^2 - \tau \tilde{R}_C \left( \mathbf{V}_h^n \right)- \tau \tilde{R}_S \left( \mathbf{V}_h^n \right)- \tau \mathcal{A}_d(q_r^n(\widetilde{\mathbf{u}}_h-\mathbf{u}),  \mathbf{V}_h^n) \\
+ C \tau \| \mathbf{V}_h^n \|_\text{dG}^2 +C \tau^2 \int_{t_{n-1}}^{t_n} \|\partial_{tt}\mathbf{u}^n\|^2 dt + C \left( \mu \left( 1 + \frac{1}{\varepsilon} \right) + 1 + \frac{1}{\varepsilon \mu} \right) \tau h^{2r+2}. \label{8.200}
\end{aligned}
\end{equation}
The bound for the nonlinear term $\tilde{R}_C \left( \mathbf{V}_h^n \right)$ is given in Lemma 13 below. \\
Next, we bound the term $ \tilde{R}_S(\mathbf{V}_h^n)$ as in Theorem 2. 
\begin{equation}
\begin{aligned}
\left|\tilde{R}_S(\mathbf{V}_h^n)\right| &= \left| \mathcal{A}_d(q_r^n(\mathbf{u}), \mathbf{V}_h^n) -\int_0^{t_n} \beta(t_n-s) \mathcal{A}_d(\mathbf{u}(s),\mathbf{V}_h^n)ds \right| \\
&\leq \tilde{C} \tau^2 \int_0^{t_n} e^{-2\eta(t_n-t)}\left(\|\mathbf{u}(t)\|_{H^1(\mathcal{O})}^2+h^2\|\mathbf{u}(t)\|_{H^2(\mathcal{O})}^2+\|\mathbf{u}_t(t)\|_{H^1(\mathcal{O})}^2+h^2\|\mathbf{u}_t(t)\|_{H^2(\mathcal{O})}^2\right)dt\\
&\hspace{2em} + \tilde{C}\|\mathbf{V}_h^n\|_{\text{dG}}^2. \label{eqn162}
\end{aligned}
\end{equation}
To bound the fifth term in the RHS of \eqref{8.200}, we now write
\begin{equation}
\begin{aligned}
&\tau \mathcal{A}_d(q_r^n(\widetilde{\mathbf{u}}_h-\mathbf{u}),\mathbf{V}_h^n) \\
&= \tau \mathcal{A}_d(q_r^n(\tilde{\bm{\mathcal{E}}}_h - \bm{\mathcal{E}}_h),\mathbf{V}_h^n) + \tau \mathcal{A}_d(q_r^n(\bm{\mathcal{E}}_h),\mathbf{V}_h^n) + \tau \mathcal{A}_d(q_r^n(\Pi_h \mathbf{u} - \mathbf{u}),\mathbf{V}_h^n) \\
&=P_1 + P_2 + P_3. \label{8.201}
\end{aligned}
\end{equation}
To bound the term $P_1$, letting $\mathbf{w}_h = q_r^n(\tilde{\bm{\mathcal{E}}}_h - \bm{\mathcal{E}}_h)$ in \eqref{8.168} and multiplying the resulting expression by $\tau$, we obtain
\begin{equation}
\begin{aligned}
\tau \mathcal{A}_d(q_r^n(\tilde{\bm{\mathcal{E}}}_h& - \bm{\mathcal{E}}_h),\mathbf{V}_h^n) = \tau \mathcal{b}(q_r^n(\tilde{\bm{\mathcal{E}}}_h - \bm{\mathcal{E}}_h),P_h^n) + \tau (\bm\psi^n, q_r^n(\tilde{\bm{\mathcal{E}}}_h - \bm{\mathcal{E}}_h)) \\
&= \tau (\bm{\mathcal{E}}_h^n, q_r^n(\tilde{\bm{\mathcal{E}}}_h - \bm{\mathcal{E}}_h)) + \tau (\Pi_h \mathbf{u}^n - \mathbf{u}^n, q_r^n(\tilde{\bm{\mathcal{E}}}_h - \bm{\mathcal{E}}_h)) + \tau \mathcal{b}(q_r^n(\tilde{\bm{\mathcal{E}}}_h - \bm{\mathcal{E}}_h),P_h^n). \label{8.202}
\end{aligned}
\end{equation}
To bound the first term in \eqref{8.202}, using the CS inequality, Young's inequality and \eqref{8.196}, we obtain
\begin{equation}
\begin{aligned}
\tau |(\bm{\mathcal{E}}_h^n, q_r^n(\tilde{\bm{\mathcal{E}}}_h - \bm{\mathcal{E}}_h))| \leq \tau \norm{\bm{\mathcal{E}}_h^n} \norm{q_r^n(\tilde{\bm{\mathcal{E}}}_h - \bm{\mathcal{E}}_h)} &= \tau^2 \sum_{i=1}^n \beta(t_n - t_i) \norm{(\tilde{\bm{\mathcal{E}}}_h - \bm{\mathcal{E}}_h)^i} \norm{\bm{\mathcal{E}}_h^n} \\
& \leq \varepsilon \mu \norm{\bm{\mathcal{E}}_h^n}^2 + \frac{1}{\varepsilon \mu}\left(\tau^2 \sum_{i=1}^n \beta(t_n - t_i) \norm{(\tilde{\bm{\mathcal{E}}}_h - \bm{\mathcal{E}}_h)^i}\right)^2 \\
& \leq \varepsilon \mu \norm{\bm{\mathcal{E}}_h^n}^2 + \frac{1}{\varepsilon \mu}\left(\tau^3 \sum_{i=1}^n \beta(t_n - t_i) |v_h^i|_\text{dG}\right)^2. \label{eqn165}
\end{aligned}
\end{equation}
Now consider the second and third terms in \eqref{8.202}. With \eqref{eq4.37}, \eqref{8.189} and \eqref{8.196}, we have
\begin{equation}
\begin{aligned}
& \quad \tau |(\Pi_h \mathbf{u}^n - \mathbf{u}^n, q_r^n(\tilde{\bm{\mathcal{E}}}_h - \bm{\mathcal{E}}_h))| + \tau |\mathcal{b}(q_r^n(\tilde{\bm{\mathcal{E}}}_h - \bm{\mathcal{E}}_h),P_h^n)| \\
& \leq \tau \norm{\Pi_h \mathbf{u}^n - \mathbf{u}^n} \norm{q_r^n(\tilde{\bm{\mathcal{E}}}_h - \bm{\mathcal{E}}_h)} + C \tau \norm{q_r^n(\tilde{\bm{\mathcal{E}}}_h - \bm{\mathcal{E}}_h)} |P_h^n|_\text{dG} \\
& \leq C \tau \norm{q_r^n(\tilde{\bm{\mathcal{E}}}_h - \bm{\mathcal{E}}_h)} \left(h^{r+1} |\mathbf{u}^n|_{H^{r+1}(\mathcal{O})} + |P_h^n|_\text{dG}\right) \\
& \leq C \tau^2 \sum_{i=1}^n \beta(t_n - t_i) \norm{(\tilde{\bm{\mathcal{E}}}_h - \bm{\mathcal{E}}_h)^i} \left(h^{r+1} |\mathbf{u}^n|_{H^{r+1}(\mathcal{O})} + |P_h^n|_\text{dG}\right) \\
& \leq \frac{C}{\varepsilon \mu} \left(\tau^3 \sum_{i=1}^n \beta(t_n - t_i) |v_h^i|_\text{dG}\right)^2 + C h^{2r+2} + \varepsilon \mu \norm{\bm{\mathcal{E}}_h^n}^2. \label{eqn166}
\end{aligned}
\end{equation}
We now consider the term $P_2$ in \eqref{8.201}. Letting $\mathbf{w}_h = q_r^n(\bm{\mathcal{E}}_h)$ in \eqref{8.168}, and multiplying by $\tau$, we obtain
\begin{equation}
\begin{aligned}
\tau \mathcal{A}_d(q_r^n(\bm{\mathcal{E}}_h),\mathbf{V}_h^n) &= \tau \mathcal{b}(q_r^n(\bm{\mathcal{E}}_h), P_h^n) + \tau (\bm\psi^n, q_r^n(\bm{\mathcal{E}}_h)) \\
&= \tau \mathcal{b}(q_r^n(\bm{\mathcal{E}}_h), P_h^n) + \tau (\bm{\mathcal{E}}_h^n, q_r^n(\bm{\mathcal{E}}_h)) + \tau (\bm\psi^n - \bm{\mathcal{E}}_h^n, q_r^n(\bm{\mathcal{E}}_h)). \label{8.205}
\end{aligned}
\end{equation}
The first term in \eqref{8.205} is bounded using \eqref{eq4.37} and \eqref{8.189} as
\begin{equation}
\begin{aligned}
    \tau |\mathcal{b}(q_r^n(\bm{\mathcal{E}}_h), P_h^n)| & \leq C \tau^2 \sum_{i=1}^n \beta(t_n - t_i) \norm{\bm{\mathcal{E}}_h^i} ( \norm{\bm{\mathcal{E}}_h^n} + h^{r+1} |\mathbf{u}^n|_{H^{k+1}(\mathcal{O})}) \\
& \leq \frac{C}{\varepsilon \mu} \left(\tau^3 \sum_{i=1}^n \beta(t_n - t_i) |v_h^i|_\text{dG}\right)^2 + C h^{2r+2} + \varepsilon \mu\norm{\bm{\mathcal{E}}_h^n}^2. \label{eqn168}
\end{aligned}
\end{equation}
Using the CS inequality we bound the last two terms of \eqref{8.205} as
\begin{equation}
\begin{aligned}
&\tau |(\bm{\mathcal{E}}_h^n, q_r^n(\bm{\mathcal{E}}_h))|+\tau |(\bm\psi^n - \bm{\mathcal{E}}_h^n, q_r^n(\bm{\mathcal{E}}_h))| \\
&\leq \tau \|\bm{\mathcal{E}}_h^n\| \|q_r^n(\bm{\mathcal{E}}_h\|+\tau \norm{\Pi_h \mathbf{u}^n - \mathbf{u}^n} \norm{q_r^n(\bm{\mathcal{E}}_h)} \\
&\leq \varepsilon \mu \|\bm{\mathcal{E}}_h^n\|^2+\frac{C}{\varepsilon \mu}\left(\tau^2 \sum_{i=1}^n \beta(t_n-t_i) \|\bm{\mathcal{E}}_h^i\|\right)^2 + C h^{2r+2}. \label{eqn169}
\end{aligned}
\end{equation}
Next, to bound the term $P_3$, we choose $\mathbf{w}_h = q_r^n(\Pi_h \mathbf{u} - Q_h \mathbf{u})$ in \eqref{8.168}. Multiplying the resulting equality by $\tau$, we obtain
\begin{align*}
\tau \mathcal{A}_d(q_r^n(\Pi_h \mathbf{u} - \mathbf{u}), \mathbf{V}_h^n) &= \tau \mathcal{A}_d(q_r^n(\Pi_h \mathbf{u} - Q_h \mathbf{u}), \mathbf{V}_h^n) \\
&= \tau \mathcal{b}(q_r^n(\Pi_h \mathbf{u} - Q_h \mathbf{u}), P_h^n) + \tau (\bm\psi^n, q_r^n(\Pi_h \mathbf{u} - Q_h \mathbf{u})).
\end{align*}
Use of \eqref{eq4.37} to have
\begin{equation*}
|\mathcal{b}(q_r^n(\Pi_h \mathbf{u} - Q_h \mathbf{u}), P_h^n)| \leq C \norm{q_r^n(\Pi_h \mathbf{u} - Q_h \mathbf{u})} |P_h^n|_\text{dG}.
\end{equation*}
Hence, using the above bound, the CS inequality, Young's inequality, and the assumption on regularity (\textbf{A2}), we have
\begin{equation}
\begin{aligned}
\tau |\mathcal{A}_d(q_r^n(\Pi_h \mathbf{u} - \mathbf{u}), \mathbf{V}_h^n)| & \leq C \tau\norm{q_r^n(\Pi_h \mathbf{u} - Q_h\mathbf{u})}\left(|P_h^n|_\text{dG} + \norm{\bm\psi^n}\right) \\
& \leq C\left(1+\frac{1}{\varepsilon \mu}\right) \left(\tau^2 \sum_{i=1}^n \beta(t_n - t_i) |\mathbf{u}^i|_{H^{r+1}(\mathcal{O})}\right)^2 + C h^{2r+2} + \varepsilon \mu \norm{\bm{\mathcal{E}}_h^n}^2. \label{eqn170}
\end{aligned}
\end{equation}
We use the bounds \eqref{eqn162}-\eqref{eqn170} in \eqref{8.200} to obtain
\begin{equation}
\begin{aligned}
\frac{1}{2} \left( \mathcal{A}_d \left( \mathbf{V}_h^n, \mathbf{V}_h^n \right) - \mathcal{A}_d \left( \mathbf{V}_h^{n-1}, \mathbf{V}_h^{n-1} \right) + \mathcal{A}_d \left( \mathbf{V}_h^n - \mathbf{V}_h^{n-1}, \mathbf{V}_h^n - \mathbf{V}_h^{n-1} \right) \right)+ \tau \mu \| \bm{\mathcal{E}}_h^n \|^2 \\
\leq 10 \varepsilon \tau \mu \| \bm{\mathcal{E}}_h^n \|^2 + C \mu \left( 1 + \frac{1}{\varepsilon} \right) \tau^3 | v_h^n |_\text{dG}^2 - \tau \tilde{R}_C \left( \mathbf{V}_h^n \right) + C \left( \mu \left( 1 + \frac{1}{\varepsilon} \right) + 1 + \frac{1}{\varepsilon \mu} \right) \tau h^{2r+2} \\
+\tilde{C} \tau^3 \int_0^{t_n} e^{-2\eta(t_n-t)}\left(\|\mathbf{u}(t)\|_{H^1(\mathcal{O})}^2+h^2\|\mathbf{u}(t)\|_{H^2(\mathcal{O})}^2+\|\mathbf{u}_t(t)\|_{H^1(\mathcal{O})}^2+h^2\|\mathbf{u}_t(t)\|_{H^2(\mathcal{O})}^2\right)dt\\
+ C\tau \left(1+\frac{1}{\varepsilon \mu}\right) \left(\tau^2 \sum_{i=1}^n \beta(t_n - t_i) |\mathbf{u}^i|_{H^{r+1}(\mathcal{O})}\right)^2
+\frac{C}{\varepsilon \mu}\tau\left(\tau^2 \sum_{i=1}^n \beta(t_n-t_i) \|\bm{\mathcal{E}}_h^i\|\right)^2 \\+ C \tau\left(\tau^3 \sum_{i=1}^n \beta(t_n - t_i) |v_h^i|_\text{dG}\right)^2
+C \tau^2 \int_{t_{n-1}}^{t_n} \|\partial_{tt}\mathbf{u}\|^2 dt + C \tau \| \mathbf{V}_h^n \|_\text{dG}^2. \label{eqn171}
\end{aligned}
\end{equation}
We use \eqref{8.216}, bound \eqref{166}, apply the coercivity property \eqref{eq3.20}, and choose $\varepsilon = 1/34$ in \eqref{eqn171}. Then summing over $n=1$ to $m$, and employing the assumption on regularity ($\mathbf{A2}$), we have
\begin{equation}
\begin{aligned}
&\frac{1}{2} \mathcal{A}_d \left( \mathbf{V}_h^m, \mathbf{V}_h^m \right) - \frac{1}{2} \mathcal{A}_d \left( \mathbf{V}_h^0, \mathbf{V}_h^0 \right) + \frac{1}{4} \sum_{n=1}^{m} \| \mathbf{V}_h^n - \mathbf{V}_h^{n-1} \|_\text{dG}^2 + \frac{\tau \mu}{2} \sum_{n=1}^{m} \| \bm{\mathcal{E}}_h^n \|^2 \\
&\leq \tilde{C} h^{2r+2} + \tilde{C} \tau^2 + \tilde{C} \tau^3 \sum_{n=1}^{m}  | v_h^n |_\text{dG}^2 +\tilde{C}\tau \sum_{n=1}^m\left(\tau^2 \sum_{i=1}^n \beta(t_n-t_i) \|\bm{\mathcal{E}}_h^i\|\right)^2 \\
&+ C \tau \sum_{n=1}^m\left(\tau^3 \sum_{i=1}^n \beta(t_n - t_i) |v_h^i|_\text{dG}\right)^2 
+ \tilde{C} \tau \sum_{n=1}^m\left(\tau^2 \sum_{i=1}^n \beta(t_n - t_i) |\mathbf{u}^i|_{H^{r+1}(\mathcal{O})}\right)^2 \\
&+ \frac{\tau}{17} \sum_{n=1}^{m} \| \bm{\mathcal{E}}_h^n - \bm{\mathcal{E}}_h^{n-1} \|^2
+ \tilde{C} \tau \sum_{n=1}^{m} \left( h^2 \left( \| \bm{\mathcal{E}}_h^n \|^2 + \| \bm{\mathcal{E}}_h^{n-1} \|^2 \right) + \left( \|\bm{\mathcal{E}}_h^{n-1} \|^2 + h^2 \right) \| \tilde{\bm{\mathcal{E}}}_h^n \|_\text{dG}^2 \right)\\
&+\tilde{C} \tau \sum_{n=1}^{m} \| \mathbf{V}_h^n \|_\text{dG}^2 .
\label{8.213}
\end{aligned}
\end{equation} 
Using Holder's inequality, we find that
\begin{equation}
\begin{aligned}
\tau \sum_{n=1}^m \left(\tau^2 \sum_{i=1}^n \beta(t_n - t_i)\|\bm{\mathcal{E}}_h^i\|\right)^2
& \leq \gamma^2 \tau \sum_{n=1}^m\left(\tau^3 \sum_{i=1}^n e^{-2\eta(t_n - t_i)}\right) \left(\tau \sum_{i=1}^n\|\bm{\mathcal{E}}_h^i\|^2\right) \\
& \leq \frac{\gamma^2 e^{2 \eta \tau}}{\eta^3} \tau^2 \sum_{n=1}^m \sum_{i=1}^n \|\bm{\mathcal{E}}_h^i\|^2. \label{eqn173}
\end{aligned}
\end{equation}
Using the CS inequality and change of order of summation, we obtain
\begin{align*}
\tau \sum_{n=1}^m \left(\tau^2 \sum_{i=1}^n \beta(t_n - t_i)|\mathbf{u}^i|_{H^{r+1}(\mathcal{O})}\right)^2
\leq \frac{\gamma^2 e^{2 \eta \tau}}{\eta^4} \tau^3 \sum_{n=1}^m |\mathbf{u}^n|_{H^{r+1}(\mathcal{O})}^2,
\end{align*}
and analogously as above,
\begin{equation}
\begin{aligned}
\tau \sum_{n=1}^m \left(\tau^3 \sum_{i=1}^n \beta(t_n - t_i)|v_h^i|_\text{dG}\right)^2
\leq \frac{\gamma^2 e^{2 \eta \tau}}{\eta^4} \tau^3 \sum_{n=1}^m|v_h^n|_\text{dG}^2. \label{eqn174}
\end{aligned}
\end{equation}
Altogether bounds \eqref{eqn173}, \eqref{eqn174} and the regularity assumption (\textbf{A2}), leads \eqref{8.213} to
\begin{align*}
&\frac{1}{2} \mathcal{A}_d \left( \mathbf{V}_h^m, \mathbf{V}_h^m \right) - \frac{1}{2} \mathcal{A}_d \left( \mathbf{V}_h^0, \mathbf{V}_h^0 \right) + \frac{1}{4} \sum_{n=1}^{m} \| \mathbf{V}_h^n - \mathbf{V}_h^{n-1} \|_\text{dG}^2 + \frac{\tau \mu}{2} \sum_{n=1}^{m} \| \bm{\mathcal{E}}_h^n \|^2 \\
&\leq \tilde{C} h^{2r+2} + \tilde{C} \tau^2 + \tilde{C} \tau^3 \sum_{n=1}^{m}  | v_h^n |_\text{dG}^2 +\tilde{C}\tau^2 \sum_{n=1}^m \sum_{i=1}^n \|\bm{\mathcal{E}}_h^i\|^2 
+ \frac{\tau}{17} \sum_{n=1}^{m} \| \bm{\mathcal{E}}_h^n- \bm{\mathcal{E}}_h^{n-1} \|^2 \\
&\hspace{2em} + \tilde{C} \tau \sum_{n=1}^{m} \left( h^2 \left( \| \bm{\mathcal{E}}_h^n \|^2 + \| \bm{\mathcal{E}}_h^{n-1} \|^2 \right) + \left( \|\bm{\mathcal{E}}_h^{n-1} \|^2 + h^2 \right) \| \tilde{\bm{\mathcal{E}}}_h^n \|_\text{dG}^2 \right)\\
&\hspace{2em} + \tilde{C} \tau  \sum_{n=1}^{m} \| \mathbf{V}_h^n \|_\text{dG}^2 .
\end{align*}
By \eqref{8.168}-\eqref{8.169} and the definition of the $L^2$ projection, we have
\begin{equation*}
\mathcal{A}_d \left( \mathbf{V}_h^0, \mathbf{V}_h^0 \right) = \left( \bm\psi^0, \mathbf{V}_h^0 \right) = 0,
\end{equation*}
hence $\mathbf{V}_h^0 = 0$ as $\mathcal{A}_d$ is coercive.
Using Lemma 11, coercivity of $\mathcal{A}_d$ \eqref{eq3.20}, we use the discrete Gronwall's inequality to obtain
\begin{align*}
\| \mathbf{V}_h^m \|_\text{dG}^2 + \sum_{n=1}^m \| \mathbf{V}_h^n - \mathbf{V}_h^{n-1} \|_\text{dG}^2 + 2 \tau \mu \sum_{n=1}^m \| \bm{\mathcal{E}}_h^n \|^2 
\leq \tilde{C}_T h^{2r+2} + \tilde{C}_T \tau^2 \\
+ \tilde{C}_T \left( \tau + h^{2r} \right) \left( \tau + h^2 \right) 
+ \tilde{C}_T \tau  \sum_{n=1}^m \| \mathbf{V}_h^n \|_\text{dG}^2.
\end{align*}
Application of the discrete Gronwall's inequality yields the bound
\begin{equation*}
\tau \mu \sum_{n=1}^m \| \bm{\mathcal{E}}_h^n \|^2 
\leq \tilde{C}_T \left( \tau^2 + \tau h^2 + h^{2r+2} \right) .
\end{equation*}
Next, to establish a bound on $\tilde{\bm{\mathcal{E}}}_h^n$, we utilize \eqref{eq:6.110}, \eqref{eq4.37}, the above bound and Lemma 12
\begin{align*}
\tau \mu \sum_{n=1}^m \| \tilde{\bm{\mathcal{E}}}_h^n \|^2 
&\leq 2 \tau \mu \sum_{n=1}^m \| \tilde{\bm{\mathcal{E}}}_h^n - \bm{\mathcal{E}}_h^n \|^2
+ 2 \tau \mu \sum_{n=1}^m \| \bm{\mathcal{E}}_h^n \|^2 \\
&\leq C \mu \tau^3 \sum_{n=1}^m \| v_h^n \|_\text{dG}^2 
+ 2 \tau \mu \sum_{n=1}^m \| \bm{\mathcal{E}}_h^n \|^2 \\
&\leq \tilde{C}_T \left( \tau^2 + \tau h^2 + h^{2r+2} \right) .
\end{align*}
By applying the triangle inequality, we obtain the desired result.
\end{proof}
\begin{remark}
Observe that, for $k=1$, the result is optimal as $\tau h^2 \leq (\tau^2+h^4)/2$. If $k \geq 2$, invoking the reverse CFL condition $h^2 \leq \tau$ yields the optimal estimate.
\end{remark}
\begin{lemma} [\cite{masri2023improved}]
For any $\varepsilon > 0$, we have
\begin{equation}
\begin{aligned}
|\tilde{R}_C(\mathbf{V}_h^n)| \leq 7 \varepsilon \mu \|\bm{\mathcal{E}}_h^n\|^2 + 2 \varepsilon \|\bm{\mathcal{E}}_h^{n-1} - \bm{\mathcal{E}}_h^n\|^2 
+ C \left( \frac{h^2}{\varepsilon \mu} + 1 \right) \left( \tau^2 |v_h^n|_{\text{dG}}^2 + h^{2r+2} \right) \\
+ C \left( \frac{1}{\varepsilon \mu} + 1 \right) \left( h^2 (\|\bm{\mathcal{E}}_h^n\|^2 + \|\bm{\mathcal{E}}_h^{n-1}\|^2) + (\|\bm{\mathcal{E}}_h^{n-1}\|^2 + h^2) \|\tilde{\bm{\mathcal{E}}}_h^n\|_{\text{dG}}^2 \right) \\
+ C \tau \int_{t_{n-1}}^{t_n} \|\partial_t \mathbf{u}\|^2 \, dt + C \left( \frac{1}{\varepsilon \mu} + 1 \right) \|\mathbf{V}_h^n\|_{\text{dG}}^2. \label{8.216}
\end{aligned}
\end{equation}
\end{lemma}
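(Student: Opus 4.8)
The plan is to expand $\tilde{R}_C(\mathbf{V}_h^n)$ as a telescoping sum of convection-form differences, each isolating a single ``small'' factor, and then to estimate every piece with the trilinear and upwind bounds of Lemma 9 combined with the smoothness of the discrete dual velocity furnished by Lemma 12. The argument is the convective part of the $L^2$ duality estimate of \cite{masri2023improved}; the viscoelastic memory never enters $\tilde{R}_C$, so no new ingredient beyond the Navier--Stokes case is needed here. First I would insert $\mathbf{u}^n$ into the convected slot and, using linearity of $\mathcal{A}_c$ in its third argument, write
\begin{equation*}
\begin{aligned}
\tilde{R}_C(\mathbf{V}_h^n) = {}& \mathcal{A}_c(\mathbf{u}_h^{n-1};\mathbf{u}_h^{n-1},\widetilde{\mathbf{u}}_h^n-\mathbf{u}^n,\mathbf{V}_h^n) \\
&+ \mathcal{A}_c(\mathbf{u}_h^{n-1};\mathbf{u}_h^{n-1},\mathbf{u}^n,\mathbf{V}_h^n) - \mathcal{A}_c(\mathbf{u}^n;\mathbf{u}^n,\mathbf{u}^n,\mathbf{V}_h^n).
\end{aligned}
\end{equation*}
In the first (``convected-error'') term I substitute $\widetilde{\mathbf{u}}_h^n-\mathbf{u}^n=\tilde{\bm{\mathcal{E}}}_h^n+(\Pi_h\mathbf{u}^n-\mathbf{u}^n)$, and in the remaining (``advecting-field mismatch'') difference I substitute $\mathbf{u}_h^{n-1}-\mathbf{u}^n=\bm{\mathcal{E}}_h^{n-1}+(\Pi_h\mathbf{u}^{n-1}-\mathbf{u}^{n-1})+(\mathbf{u}^{n-1}-\mathbf{u}^n)$ in both the advecting slot and the inflow slot $\bm\theta$.

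For the convected-error term I would treat $\mathbf{V}_h^n$ as the smooth factor: Lemma 9(i) applied with $\mathbf{u}^n$ or $\mathbf{V}_h^n$ playing the role of $\bm\varphi$, together with the regularity bounds $\|\mathbf{V}_h^n\|_{L^\infty(\mathcal{O})}\le C\|\bm\psi^n\|$ and $|||\mathbf{V}^n|||\le C\|\bm\psi^n\|$ from Lemma 12 and the triangle-inequality estimate $\|\bm\psi^n\|\le C(h^{r+1}\|\mathbf{u}^n\|_{H^{r+1}(\mathcal{O})}+\|\bm{\mathcal{E}}_h^n\|)$, converts each contribution into powers of $h$ plus a factor $\|\bm{\mathcal{E}}_h^n\|$ ready for Young's inequality; the approximation-error factor $\Pi_h\mathbf{u}^n-\mathbf{u}^n$ in the convected slot is absorbed by Lemma 9(iv)--(v). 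For the advecting-field mismatch I would split each $\mathcal{A}_c$ through $\mathcal{A}_c(\mathbf{z};\mathbf{z},\bm\varphi,\mathbf{w})=\mathscr{C}(\mathbf{z},\bm\varphi,\mathbf{w})-\mathscr{U}(\mathbf{z};\mathbf{z},\bm\varphi,\mathbf{w})$: the trilinear part telescopes to $\mathscr{C}(\mathbf{u}_h^{n-1}-\mathbf{u}^n,\mathbf{u}^n,\mathbf{V}_h^n)$ and is bounded (after integration by parts onto the smooth convected field $\mathbf{u}^n$) piece by piece by the $\mathscr{C}$-estimates of Lemma 9.

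The genuinely temporal factor $\mathbf{u}^{n-1}-\mathbf{u}^n$ is controlled by $\|\mathbf{u}^n-\mathbf{u}^{n-1}\|^2\le\tau\int_{t_{n-1}}^{t_n}\|\partial_t\mathbf{u}\|^2\,dt$, which is the source of the $C\tau\int_{t_{n-1}}^{t_n}\|\partial_t\mathbf{u}\|^2\,dt$ term, and the intermediate-step mismatch is replaced using $\|\tilde{\bm{\mathcal{E}}}_h^n-\bm{\mathcal{E}}_h^n\|\le C\tau\|v_h^n\|_\text{dG}$ from \eqref{8.196}, which generates the $\tau^2|v_h^n|_{\text{dG}}^2$ contributions. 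The final step is a systematic use of Young's inequality with the free parameter $\varepsilon$: products of the form $(\text{error})\times\|\mathbf{V}_h^n\|_\text{dG}$ or $(\text{error})\times\|\bm{\mathcal{E}}_h^n\|$ are split so that all $\|\bm{\mathcal{E}}_h^n\|^2$ mass accumulates into the $7\varepsilon\mu\|\bm{\mathcal{E}}_h^n\|^2$ and $2\varepsilon\|\bm{\mathcal{E}}_h^{n-1}-\bm{\mathcal{E}}_h^n\|^2$ budget (the constants $7$ and $2$ merely count how many pieces feed each slot), the residual $\|\mathbf{V}_h^n\|_\text{dG}^2$ is retained for later absorption by coercivity and Gronwall in \eqref{eqn171}, and the quadratic error products collect into the $h^2(\|\bm{\mathcal{E}}_h^n\|^2+\|\bm{\mathcal{E}}_h^{n-1}\|^2)$ and $(\|\bm{\mathcal{E}}_h^{n-1}\|^2+h^2)\|\tilde{\bm{\mathcal{E}}}_h^n\|_\text{dG}^2$ groups.

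I expect the main obstacle to be the upwind difference $\mathscr{U}(\mathbf{u}_h^{n-1};\mathbf{u}_h^{n-1},\mathbf{u}^n,\mathbf{V}_h^n)-\mathscr{U}(\mathbf{u}^n;\mathbf{u}^n,\mathbf{u}^n,\mathbf{V}_h^n)$. Unlike $\mathscr{C}$, this form is genuinely nonlinear in the advecting field, since both the weight $|\{\mathbf{z}\}\cdot\mathbf{n}_K|$ and the inflow set $\partial K_-^{\bm\theta}$ vary with $\mathbf{z}$, so it cannot be telescoped by a simple linearity argument. Controlling it at the optimal order forces one to isolate the jump $[\mathbf{V}_h^n]$ on each face and to exploit the $L^\infty$- and $|||\cdot|||$-regularity of the dual velocity from Lemma 12 to compensate the derivative loss — exactly the mechanism packaged in Lemma 9(iv)--(v). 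This nonlinear upwind bookkeeping, rather than any single inequality, is the technical heart of the proof.
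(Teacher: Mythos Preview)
The paper does not prove this lemma at all: it is stated with the citation \cite{masri2023improved} and no argument is given, so there is no ``paper's own proof'' to compare against beyond the reference. Your outline---telescoping $\tilde{R}_C(\mathbf{V}_h^n)$ into a convected-error piece and an advecting-field mismatch, splitting $\mathcal{A}_c=\mathscr{C}-\mathscr{U}$, invoking the trilinear bounds of Lemma~9 together with the dual regularity of Lemma~12, and closing with Young's inequality budgeted by $\varepsilon$---is exactly the mechanism of the cited Navier--Stokes argument and is the right reconstruction; as you note, the memory term is absent from $\tilde{R}_C$, so nothing new is required here.
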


\section{Estimates of the discrete time derivative for the velocity}
For $\bm{\theta} \in \mathbf{M}$, we define the discrete time derivative, $\forall n \geq 0$ as follows:
\begin{equation*}
    \delta_\tau \bm{\theta}^{n+1} := \frac{\bm{\theta}^{n+1} - \bm{\theta}^n}{\tau}.
\end{equation*}
For the discrete time derivative of a scalar function in $\mathcal{C}([0,T]; M)$, we use the same notation as well.
\begin{lemma}
Let the assumptions (\textbf{A3}) and (\textbf{A4}) hold. Then, under the same assumptions of Theorem 3, we have the following stability bound. For $1 \leq m \leq N - 1$,
\begin{equation*}
    \|\delta_\tau \bm{\mathcal{E}}_h^{m+1}\|^2 
    + \sum_{n=1}^m \|\delta_\tau \bm{\mathcal{E}}_h^{n+1} - \delta_\tau \bm{\mathcal{E}}_h^n\|^2 
    + \tau^2 \sum_{n=1}^m |\delta_\tau v_h^{n+1}|^2_{\text{dG}}
    + \mu \tau \sum_{n=1}^m \|\delta_\tau \tilde{\bm{\mathcal{E}}}_h^{n+1}\|^2_{\text{dG}} 
    \leq \widehat{C}_T,
\end{equation*}
where $\widehat{C}_T = \tilde{C}_T + (\tilde{C}_T)^{\varrho+4} c_2^\varrho$ is independent of $h$ and $\tau$, where $\varrho$ is an integer such that $\varrho \geq 1/\upsilon$.
\end{lemma}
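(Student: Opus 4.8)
The plan is to differentiate the velocity error equation \eqref{7.105} in time and to re-run the energy argument of Theorem 1 on the differenced quantities. Concretely, I would write \eqref{7.105} at the levels $n+1$ and $n$, subtract, divide by $\tau$, and test the resulting identity with $\mathbf{w}_h=\delta_\tau\tilde{\bm{\mathcal{E}}}_h^{n+1}$. The mass term then yields the telescoping expression $\tfrac12(\|\delta_\tau\tilde{\bm{\mathcal{E}}}_h^{n+1}\|^2-\|\delta_\tau\bm{\mathcal{E}}_h^{n}\|^2+\|\delta_\tau\tilde{\bm{\mathcal{E}}}_h^{n+1}-\delta_\tau\bm{\mathcal{E}}_h^{n}\|^2)$, while the positivity of $\mathcal{A}_c$ \eqref{eq4.12} and the coercivity of $\mathcal{A}_\epsilon$ \eqref{eq3.20} control the diagonal viscous and convective contributions of $\delta_\tau\tilde{\bm{\mathcal{E}}}_h^{n+1}$. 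To treat the pressure-correction and potential steps I would introduce the discrete time derivatives $\delta_\tau S_h^{n+1}$ and $\delta_\tau\xi_h^{n+1}$ of the auxiliary functions \eqref{eq6.55}-\eqref{eq6.56}; differencing \eqref{eq6.108} and \eqref{eq:6.110} gives the exact analogues of the identities of Lemma 8, so that the term $\mathcal{b}(\cdot,\delta_\tau p_h^{n})$ telescopes into increments of $\mathcal{A}_{\text{sip}}(\delta_\tau\xi_h^{\bullet},\delta_\tau\xi_h^{\bullet})$, $\|\delta_\tau S_h^{\bullet}\|^2$ and $\tau^2|\delta_\tau v_h^{n+1}|_{\text{dG}}^2$, precisely as in \eqref{eq:6.75}. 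This reproduces, at the level of time differences, the left-hand side of the statement.

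The memory term is handled by the algebraic recursion $\delta_\tau q_r^{n+1}(\bm\varphi)=\gamma\,\bm\varphi^{n+1}+\tfrac{e^{-\eta\tau}-1}{\tau}\,q_r^{n}(\bm\varphi)$, which follows from $\beta_{n+1-j}=e^{-\eta\tau}\beta_{n-j}$ and $\beta_0=\gamma$. Thus the differenced memory contribution remains an exponentially weighted sum of the same type, so after summing in $n$ its leading part is nonnegative by the positivity property \eqref{5.38} and may be discarded; the remaining factor $\tfrac{e^{-\eta\tau}-1}{\tau}$ is uniformly bounded and is absorbed with Young's inequality and the Hölder estimate for the kernel already used in \eqref{eqn58} and \eqref{eq6.89}. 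The consistency remainders obtained by differencing $R_C$, $R_S$, $R_t$ and the approximation terms $\mathcal{A}_\epsilon(\Pi_h\mathbf{u}^n-\mathbf{u}^n,\cdot)$, $\mathcal{A}_\epsilon(q_r^n(\Pi_h\mathbf{u}-\mathbf{u}),\cdot)$ are estimated with the trace and approximation inequalities and the quadrature bound \eqref{41}; here the extra regularity $\partial_t p\in L^2(0,T;H^1(\mathcal{O}))$ of (\textbf{A3}) is exactly what is needed to control the time difference of the pressure-consistency term, and $\partial_{tt}\mathbf{u}\in L^2(L^2)$ from (\textbf{A2}) controls the second difference quotient in $\delta_\tau R_t$.

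The genuinely delicate step is the nonlinear convection term. Differencing $\mathcal{A}_c(\mathbf{u}_h^{n-1};\mathbf{u}_h^{n-1},\tilde{\bm{\mathcal{E}}}_h^n,\cdot)$ and $R_C$ produces, besides the positive diagonal piece $\mathcal{A}_c(\mathbf{u}_h^{n};\mathbf{u}_h^{n},\delta_\tau\tilde{\bm{\mathcal{E}}}_h^{n+1},\delta_\tau\tilde{\bm{\mathcal{E}}}_h^{n+1})\ge0$, cross terms in which $\delta_\tau\mathbf{u}_h^{n}$ sits in the convecting slots. Using the boundedness estimates of Lemma 9, the $L^\infty$/$W^{1,3}$ control of $\Pi_h\mathbf{u}$ (Lemma 7 and \eqref{eq7.99}), and the embedding \eqref{eq6.54}, these reduce to products of $\|\delta_\tau\mathbf{u}_h^n\|$-type norms with the a priori small quantities of Theorem 3 and Lemma 11; however, matching the norms forces the use of the inverse inequalities \eqref{8.157}-\eqref{8.158}, which introduce negative powers of $h$. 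The reverse-CFL constraint (\textbf{A4}), $\tau\le c_2 h^{(1+\upsilon)d/3}$, is then invoked to trade each factor $h^{-d/3}$ for $(c_2/\tau)^{1/(1+\upsilon)}$, converting the spurious powers of $h$ into small powers of $\tau$. I expect this to be the main obstacle: a single pass does not close the estimate, so I would bootstrap, feeding the crude bound back into the cross terms and iterating $\varrho\ge 1/\upsilon$ times until the accumulated exponent of $h$ becomes nonpositive; each iteration multiplies the Gronwall constant, which is the source of the factor $(\tilde{C}_T)^{\varrho+4}c_2^{\varrho}$ in $\widehat{C}_T$.

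Finally, I would dispose of the boundary contributions of the summation: the lower telescoping term $\|\delta_\tau\bm{\mathcal{E}}_h^1\|^2=\|\bm{\mathcal{E}}_h^1\|^2/\tau^2$ (recall $\bm{\mathcal{E}}_h^0=0$) is controlled by a separate direct estimate of the scheme at $n=1$, yielding $\|\bm{\mathcal{E}}_h^1\|\le C\tau$, while the differenced initial pressure and potential terms vanish since $S_h^0=\xi_h^0=v_h^0=0$. Summing the assembled per-step inequality over $n=1$ to $m$, discarding the nonnegative memory and interface terms, applying the coercivity of $\mathcal{A}_{\text{sip}}$ \eqref{eq3.21}, and closing with the discrete Gronwall inequality then delivers the asserted bound with $\widehat{C}_T=\tilde{C}_T+(\tilde{C}_T)^{\varrho+4}c_2^{\varrho}$.
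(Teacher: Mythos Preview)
Your plan is essentially the paper's proof and would go through; the skeleton—difference \eqref{7.105} in time, test with $\delta_\tau\tilde{\bm{\mathcal{E}}}_h^{n+1}$, telescope via differenced auxiliary functions, use positivity of the memory term, bound the nonlinear cross terms with the inverse inequality $\|\cdot\|_{L^3}\le Ch^{-d/3}\|\cdot\|$, and bootstrap $\varrho$ times using (\textbf{A4})—is exactly what the paper does. A few of your implementation choices differ from the paper and are worth noting.

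First, for the memory term the paper does not use your recursion $\delta_\tau q_r^{n+1}(\bm\varphi)=\gamma\bm\varphi^{n+1}+\tfrac{e^{-\eta\tau}-1}{\tau}q_r^n(\bm\varphi)$. Instead it exploits $\tilde{\bm{\mathcal{E}}}_h^0=0$ and Abel summation to get the exact identity $q_r^{n+1}(\tilde{\bm{\mathcal{E}}}_h)-q_r^n(\tilde{\bm{\mathcal{E}}}_h)=\tau\,q_r^{n+1}(\delta_\tau\tilde{\bm{\mathcal{E}}}_h)$, so the differenced memory term is again a right-rectangle quadrature of the \emph{sequence} $\delta_\tau\tilde{\bm{\mathcal{E}}}_h$ and the positivity property \eqref{5.38} applies verbatim. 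Your recursion would also close, but less cleanly. Second, your proposed auxiliary functions $\delta_\tau S_h^{n+1}$, $\delta_\tau\xi_h^{n+1}$ are not quite what the argument needs: the paper defines \emph{new} functions $\hat S_h^n$, $\hat\xi_h^n$ built from $\delta_\tau\tilde{\bm{\mathcal{E}}}_h^i$ (not from $\tilde{\bm{\mathcal{E}}}_h^i$), so that $\hat\xi_h^{n+1}-\hat\xi_h^n=\delta_\tau v_h^{n+1}$ and $\mathcal{b}(\delta_\tau\tilde{\bm{\mathcal{E}}}_h^{n+1},\hat S_h^n)$ telescopes exactly as in \eqref{eq6.70}; taking discrete time derivatives of $S_h,\xi_h$ directly does not reproduce this structure. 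Third, the paper does not extract the diagonal positivity $\mathcal{A}_c(\mathbf{u}_h^n;\mathbf{u}_h^n,\delta_\tau\tilde{\bm{\mathcal{E}}}_h^{n+1},\delta_\tau\tilde{\bm{\mathcal{E}}}_h^{n+1})\ge0$ from the differenced convection term; it simply groups everything into $\mathcal{L}_1+\mathcal{L}_2$ and bounds these (following \cite{masri2023improved}) to produce the single bad term $\tau h^{-d/3}\|\delta_\tau\bm{\mathcal{E}}_h^n\|^2\|\tilde{\bm{\mathcal{E}}}_h^{n+1}\|_\text{dG}^2$ that drives the bootstrap. Your decomposition is valid and would lead to the same residual term. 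Finally, in the bootstrap you want the accumulated exponent of $h$ to become \emph{nonnegative}, not nonpositive; each pass gains a factor $h^{\upsilon d/3}$ via $\tau h^{-d/3}\le c_2 h^{\upsilon d/3}$, and $\varrho\ge 1/\upsilon$ iterations make the exponent $\ge 0$.
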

\begin{proof}
Subtracting \eqref{7.105} at time $t_n$ from \eqref{7.105} at time $t_{n+1}$, we have $ \forall \mathbf{w}_h \in \mathbf{M}_h$,
\begin{equation}
\begin{aligned}
    &(\delta_\tau \tilde{\bm{\mathcal{E}}}_h^{n+1} - \delta_\tau \bm{\mathcal{E}}_h^n, \mathbf{w}_h) 
    + \tau \mu \mathcal{A}_d(\delta_\tau \tilde{\bm{\mathcal{E}}}_h^{n+1}, \mathbf{w}_h)
    + \tau \mathcal{A}_d\left( q_r^{n+1}(\delta_\tau \tilde{\bm{\mathcal{E}}}_h), \mathbf{w}_h\right)
    + \tau \hat{R}_S(\mathbf{w}_h)\\
    &= \tau \mathcal{b}(\mathbf{w}_h, \delta_\tau p_h^n - \delta_\tau p^{n+1}) 
    - \tau \mu \mathcal{A}_d(\delta_\tau \Pi_h \mathbf{u}^{n+1} - \delta_\tau \mathbf{u}^{n+1}, \mathbf{w}_h)
 + \mathcal{L}_1(\mathbf{w}_h) + \mathcal{L}_2(\mathbf{w}_h) + \hat{R}_t(\mathbf{w}_h). \label{8.226}
    \end{aligned}
\end{equation}
Here, $\hat{R}_S(\mathbf{w}_h)$, $\mathcal{L}_1(\mathbf{w}_h)$, $\mathcal{L}_2(\mathbf{w}_h)$, and $\hat{R}_t(\mathbf{w}_h)$ are given by, for $\mathbf{w}_h \in \mathbf{M}_h$,
\begin{align*}
    \hat{R}_S(\mathbf{w}_h) & = \mathcal{A}_d\left(q_r^{n+1}(\mathbf{u}), \mathbf{w}_h\right) -\int_0^{t_{n+1}} \beta(t_{n+1}-s) \mathcal{A}_d(\mathbf{u},\mathbf{w}_h) ds\\
    &\hspace{4em} -\mathcal{A}_d\left(q_r^n(\mathbf{u}), \mathbf{w}_h\right) +\int_0^{t_n} \beta(t_n-s) \mathcal{A}_d(\mathbf{u},\mathbf{w}_h) ds, \\
    \mathcal{L}_1(\mathbf{w}_h) &= \mathcal{A}_d(\mathbf{u}^{n+1}; \mathbf{u}^{n+1}, \mathbf{u}^{n+1}, \mathbf{w}_h) 
    - \mathcal{A}_d(\mathbf{u}_h^n; \mathbf{u}_h^n, \widetilde{\mathbf{u}}_h^{n+1}, \mathbf{w}_h), \\
    \mathcal{L}_2(\mathbf{w}_h) &= \mathcal{A}_d(\mathbf{u}_h^{n-1}; \mathbf{u}_h^{n-1}, \widetilde{\mathbf{u}}_h^n, \mathbf{w}_h) 
    - \mathcal{A}_d(\mathbf{u}^n; \mathbf{u}^n, \mathbf{u}^n, \mathbf{w}_h), \\
    \hat{R}_t(\mathbf{w}_h) &= ((\partial_t \mathbf{u})^{n+1} - (\partial_t \mathbf{u})^n, \mathbf{w}_h) 
    - \frac{1}{\tau} (\Pi_h \mathbf{u}^{n+1} - 2 \Pi_h \mathbf{u}^n + \Pi_h \mathbf{u}^{n-1}, \mathbf{w}_h).
\end{align*}
Choosing $\mathbf{w}_h = \delta_\tau \tilde{\bm{\mathcal{E}}}_h^{n+1}$ in \eqref{8.226} and with an aid of the coercivity of $\mathcal{A}_d$ \eqref{eq3.20}, we obtain
\begin{equation}
\begin{aligned}
    &\frac{1}{2} \left(\|\delta_\tau \tilde{\bm{\mathcal{E}}}_h^{n+1}\|^2 - \|\delta_\tau \bm{\mathcal{E}}_h^n\|^2 + \|\delta_\tau \tilde{\bm{\mathcal{E}}}_h^{n+1} - \delta_\tau \bm{\mathcal{E}}_h^n\|^2\right) 
    + \frac{1}{2} \tau \mu \|\delta_\tau \tilde{\bm{\mathcal{E}}}_h^{n+1}\|^2_{\text{dG}} \\&+ \tau \mathcal{A}_d\left( q_r^{n+1}(\delta_\tau\tilde{\bm{\mathcal{E}}}_h), \delta_\tau \tilde{\bm{\mathcal{E}}}_h^{n+1}\right) 
    + \tau \hat{R}_S(\delta_\tau \tilde{\bm{\mathcal{E}}}_h^{n+1})
    \leq \tau \mathcal{b}(\delta_\tau \tilde{\bm{\mathcal{E}}}_h^{n+1}, \delta_\tau p_h^n - \delta_\tau p^{n+1}) \\
    &- \tau \mu \mathcal{A}_d(\delta_\tau \Pi_h \mathbf{u}^{n+1} - \delta_\tau \mathbf{u}^{n+1}, \delta_\tau \tilde{\bm{\mathcal{E}}}_h^{n+1}) 
    + \mathcal{L}_1(\delta_\tau \tilde{\bm{\mathcal{E}}}_h^{n+1}) 
    + \mathcal{L}_2(\delta_\tau \tilde{\bm{\mathcal{E}}}_h^{n+1}) 
    + \hat{R}_t(\delta_\tau \tilde{\bm{\mathcal{E}}}_h^{n+1}). \label{8.231}
\end{aligned}
\end{equation}
From \eqref{eq:6.110}, we obtain $\forall n \geq 1$,
\begin{equation}
    (\delta_\tau \bm{\mathcal{E}}_h^{n+1} - \delta_\tau \tilde{\bm{\mathcal{E}}}_h^{n+1}, \mathbf{w}_h) 
    = \tau \mathcal{b}(\mathbf{w}_h, \delta_\tau v_h^{n+1}), \quad \forall \mathbf{w}_h \in \mathbf{M}_h.
    \label{8.232}
\end{equation}
Additionally, from \eqref{eq:6.111} and \eqref{6.112}, we have $\forall g_h \in P_h$ and $\forall n \geq 1$,
\begin{equation}
\begin{aligned}
\mathcal{b}(\delta_\tau \bm{\mathcal{E}}_h^{n+1}, g_h) &= \mathcal{b}(\delta_\tau \tilde{\bm{\mathcal{E}}}_h^{n+1}, g_h) + \tau \mathcal{A}_{\text{sip}}(\delta_\tau v_h^{n+1}, g_h) - \tau \sum_{F \in \mathcal{F}_h^i}  \frac{\tilde{\sigma}}{h_F} \int_F [\delta_\tau v_h^{n+1}][g_h] \\
&\hspace{3em} + \tau (\mathbf{G}_h([\delta_\tau v_h^{n+1}]), \mathbf{G}_h([g_h])) \\
&= - \tau \sum_{F \in \mathcal{F}_h^i}  \frac{\tilde{\sigma}}{h_F} \int_F [\delta_\tau v_h^{n+1}][g_h] + \tau (\mathbf{G}_h([\delta_\tau v_h^{n+1}]), \mathbf{G}_h([g_h])). \label{8.233}
\end{aligned}   
\end{equation}
From \eqref{8.232} and \eqref{8.233}, we observe that
\begin{align*}
\|\delta_\tau \bm{\mathcal{E}}_h^{n+1} - \delta_\tau \tilde{\bm{\mathcal{E}}}_h^{n+1}\|^2 &= \tau \mathcal{b}(\delta_\tau \bm{\mathcal{E}}_h^{n+1} - \delta_\tau \tilde{\bm{\mathcal{E}}}_h^{n+1}, \delta_\tau v_h^{n+1}) \\
&= \tau^2 \mathcal{A}_{\text{sip}}(\delta_\tau v_h^{n+1}, \delta_\tau v_h^{n+1}) 
- \tau^2 \sum_{F \in \mathcal{F}_h^i}  \frac{\tilde{\sigma}}{h_F} \|[\delta_\tau v_h^{n+1}]\|_{L^2(F)}^2 + \tau^2 \|\mathbf{G}_h([\delta_\tau v_h^{n+1}])\|^2.
\end{align*}
Thus, by letting $\mathbf{w}_h = \delta_\tau \bm{\mathcal{E}}_h^{n+1}$ in \eqref{8.232} and using \eqref{8.233}, we obtain
\begin{equation}
\begin{aligned}
\frac{1}{2} \left( \|\delta_\tau \bm{\mathcal{E}}_h^{n+1}\|^2 - \|\delta_\tau \tilde{\bm{\mathcal{E}}}_h^{n+1}\|^2\right) + \tau^2 \mathcal{A}_{\text{sip}}(\delta_\tau v_h^{n+1}, \delta_\tau v_h^{n+1}) \\
+ \frac{\tau^2}{2} \sum_{F \in \mathcal{F}_h^i}  \frac{\tilde{\sigma}}{h_F} \|[\delta_\tau v_h^{n+1}]\|_{L^2(F)}^2 = \frac{\tau^2}{2} \|\mathbf{G}_h([\delta_\tau v_h^{n+1}])\|^2. \label{8.235}
\end{aligned}
\end{equation}
Additionally, from \eqref{8.232} and \eqref{eq4.32}, we note that
\[
\|\delta_\tau \tilde{\bm{\mathcal{E}}}_h^{n+1} - \delta_\tau \bm{\mathcal{E}}_h^n\|^2 = \| \delta_\tau \bm{\mathcal{E}}_h^{n+1} - \delta_\tau \bm{\mathcal{E}}_h^n + \tau \nabla_h \delta_\tau v_h^{n+1} - \tau \mathbf{G}_h([\delta_\tau v_h^{n+1}]) \|^2.
\]
Using \eqref{eq4.32} with $\mathbf{w}_h = \delta_\tau \bm{\mathcal{E}}_h^{n+1} - \delta_\tau \bm{\mathcal{E}}_h^n$ and $g_h = \delta_\tau v_h^{n+1}$ yields
\begin{equation}
\begin{aligned}
\|\delta_\tau \tilde{\bm{\mathcal{E}}}_h^{n+1} - \delta_\tau \bm{\mathcal{E}}_h^n\|^2 =  \tau^2 &\|\nabla_h \delta_\tau v_h^{n+1}\|^2 + \tau^2 \|\mathbf{G}_h([\delta_\tau v_h^{n+1}])\|^2 + \|\delta_\tau \bm{\mathcal{E}}_h^{n+1} - \delta_\tau \bm{\mathcal{E}}_h^n\|^2 \\
&- 2\tau^2 (\nabla_h \delta_\tau v_h^{n+1}, \mathbf{G}_h([\delta_\tau v_h^{n+1}])) - 2\tau \mathcal{b}(\delta_\tau \bm{\mathcal{E}}_h^{n+1} - \delta_\tau \bm{\mathcal{E}}_h^n, \delta_\tau v_h^{n+1}). \label{8.236}
\end{aligned}
\end{equation}
We utilize \eqref{8.233} and \eqref{6.112} to obtain
\begin{equation}
\begin{aligned}
-2\mathcal{b}(\delta_\tau \bm{\mathcal{E}}_h^{n+1} - \delta_\tau \bm{\mathcal{E}}_h^n, \delta_\tau v_h^{n+1}) = \tau (\tilde{H}_1^n - \tilde{H}_2^n) - \tau \|\mathbf{G}_h([\delta_\tau v_h^{n+1} - \delta_\tau v_h^n])\|^2 \\
+ \tau \sum_{F \in \mathcal{F}_h^i}  \frac{\tilde{\sigma}}{h_F} \|[\delta_\tau v_h^{n+1} - \delta_\tau v_h^n]\|_{L^2(F)}^2 - \frac{2}{\tau} \delta_{n,1} \mathcal{b}(\bm{\mathcal{E}}_h^0, \delta_\tau v_h^2), \label{8.237}
\end{aligned}
\end{equation}
where $\forall n \geq 1$,
\begin{align*}
\tilde{H}_1^n &= \sum_{F \in \mathcal{F}_h^i}  \frac{\tilde{\sigma}}{h_F} \left( \|[\delta_\tau v_h^{n+1}]\|_{L^2(F)}^2 - \|[\delta_\tau v_h^n]\|_{L^2(F)}^2 \right), \\
\tilde{H}_2^n &= \|\mathbf{G}_h([\delta_\tau v_h^{n+1}])\|^2 - \|\mathbf{G}_h([\delta_\tau v_h^n])\|^2.
\end{align*}
Using \eqref{eq4.30} and assuming $\tilde{\sigma} \geq 4\tilde{B}_r^2$, we have
\begin{equation}
\begin{aligned}
\sum_{F \in \mathcal{F}_h^i}  \frac{\tilde{\sigma}}{h_F} \| [\delta_\tau v_h^{n+1} - \delta_\tau v_h^n] \|_{L^2(F)}^2 &- \| \mathbf{G}_h([\delta_\tau v_h^{n+1} - \delta_\tau v_h^n]) \|^2 \\
&\geq \frac{1}{2} \sum_{F \in \mathcal{F}_h^i}  \frac{\tilde{\sigma}}{h_F} \| [\delta_\tau v_h^{n+1} - \delta_\tau v_h^n] \|_{L^2(F)}^2,
\end{aligned}
\end{equation}
\begin{align}
|(\nabla_h \delta_\tau v_h^{n+1}, \mathbf{G}_h([\delta_\tau v_h^{n+1}])) | \leq \frac{1}{4} \| \nabla_h \delta_\tau v_h^{n+1} \|^2 + \frac{1}{4} \sum_{F \in \mathcal{F}_h^i}  \frac{\tilde{\sigma}}{h_F} \| [\delta_\tau v_h^{n+1}] \|_{L^2(F)}^2. \label{eqn:190}
\end{align}
With the expressions\eqref{8.235} -\eqref{eqn:190}, \eqref{8.231} becomes
\begin{equation}
\begin{aligned}
&\frac{1}{2} (\| \delta_\tau \bm{\mathcal{E}}_h^{n+1} \|^2 + \| \delta_\tau \bm{\mathcal{E}}_h^n \|^2 + \| \delta_\tau \bm{\mathcal{E}}_h^{n+1} - \delta_\tau \bm{\mathcal{E}}_h^n \|^2) + \frac{\tau^2}{4} \| \delta_\tau v_h^{n+1} \|_\text{dG}^2 + \frac{1}{2}\tau \mu \norm{\delta_\tau \tilde{\bm{\mathcal{E}}}_h^{n+1}}_\text{dG}^2\\
&+ \tau^2 \mathcal{A}_{\text{sip}}(\delta_\tau v_h^{n+1}, \delta_\tau v_h^{n+1}) + \frac{\tau^2}{4} \sum_{F \in \mathcal{F}_h^i}  \frac{\tilde{\sigma}}{h_F} \| [\delta_\tau v_h^{n+1} - \delta_\tau v_h^n] \|_{L^2(F)}^2 + \frac{\tau^2}{2} (\tilde{H}_1^n - \tilde{H}_2^n) + \tau \hat{R}_S(\delta_\tau \tilde{\bm{\mathcal{E}}}_h^{n+1})\\
&+ \tau \mathcal{A}_d\left( q_r^{n+1}(\delta_\tau\tilde{\bm{\mathcal{E}}}_h), \delta_\tau \tilde{\bm{\mathcal{E}}}_h^{n+1}\right) \leq \tau \mathcal{b}(\delta_\tau \tilde{\bm{\mathcal{E}}}_h^{n+1}, \delta_\tau p_h^n - \delta_\tau p^{n+1}) - \tau \mu \mathcal{A}_d(\delta_\tau \Pi_h \mathbf{u}^{n+1} - \delta_\tau \mathbf{u}^{n+1}, \delta_\tau \tilde{\bm{\mathcal{E}}}_h^{n+1}) \\
&+ \mathcal{L}_1(\delta_\tau \tilde{\bm{\mathcal{E}}}_h^{n+1}) + \mathcal{L}_2(\delta_\tau \tilde{\bm{\mathcal{E}}}_h^{n+1}) + \hat{R}_t(\delta_\tau \tilde{\bm{\mathcal{E}}}_h^{n+1}) + \delta_{n,1} \mathcal{b}(\bm{\mathcal{E}}_h^0, \delta_\tau v_h^2). \label{8.238}
\end{aligned}
\end{equation}
Following a technique analogous to the one used in \cite{girault2005splitting,masri2022discontinuous}, we get
\begin{equation}
\begin{aligned}
&|\mathcal{A}_d(\delta_\tau \Pi_h \mathbf{u}^{n+1} - \delta_\tau \mathbf{u}^{n+1}, \delta_\tau \tilde{\bm{\mathcal{E}}}_h^{n+1})| = |\mathcal{A}_d(\Pi_h \delta_\tau \mathbf{u}^{n+1} - \delta_\tau \mathbf{u}^{n+1}, \delta_\tau \tilde{\bm{\mathcal{E}}}_h^{n+1})| \\
&\leq C h |\delta_\tau \mathbf{u}^{n+1}|_{H^2(\mathcal{O})} \| \delta_\tau \tilde{\bm{\mathcal{E}}}_h^{n+1} \|_\text{dG} \leq \varepsilon \| \delta_\tau \tilde{\bm{\mathcal{E}}}_h^{n+1} \|_\text{dG}^2 + \frac{C}{\varepsilon} h^2 \tau^{-1} \int_{t_n}^{t_{n+1}} |\partial_t \mathbf{u}|_{H^2(\mathcal{O})}^2 dt. \label{eqn183}
\end{aligned}
\end{equation}
Let us consider the first two terms on the RHS of \eqref{8.238}. We split them as follows:
\begin{align*}
\mathcal{b}(\delta_\tau \tilde{\bm{\mathcal{E}}}_h^{n+1}, \delta_\tau p_h^n - \delta_\tau p^{n+1}) = \mathcal{b}&(\delta_\tau \tilde{\bm{\mathcal{E}}}_h^{n+1}, \delta_\tau p_h^n)+ \mathcal{b}(\delta_\tau \tilde{\bm{\mathcal{E}}}_h^{n+1}, \pi_h(\delta_\tau p^{n+1}) - \delta_\tau p^{n+1}) \notag \\ 
& - \mathcal{b}(\delta_\tau \tilde{\bm{\mathcal{E}}}_h^{n+1}, \pi_h(\delta_\tau p^{n+1})).
\end{align*}
Since, $\pi_h(\delta_\tau p^{n+1})$ belongs to $P_h^0$, hence by \eqref{eq:5.41}, \eqref{eq6.102} and \eqref{eq:6.110}, we have
\begin{equation}
\begin{aligned}
|\mathcal{b}(\delta_\tau \tilde{\bm{\mathcal{E}}}_h^{n+1}, \pi_h(\delta_\tau p^{n+1}))| &= \tau |\mathcal{A}_{\text{sip}}(\delta_\tau v_h^{n+1}, \pi_h(\delta_\tau p^{n+1}))| \\
&\leq C \tau | \delta_\tau p^{n+1} |_{H^1(\mathcal{O})}|\delta_\tau v_h^{n+1}|_\text{dG} .
\end{aligned}
\end{equation}
Also, by \eqref{eq4.37} and \eqref{eq6.101}, we obtain
\begin{equation}
\begin{aligned}
|\mathcal{b}(\delta_\tau \tilde{\bm{\mathcal{E}}}_h^{n+1}, \pi_h(\delta_\tau p^{n+1}) - \delta_\tau p^{n+1}| 
&\leq C h  \|\delta_\tau \tilde{\bm{\mathcal{E}}}_h^{n+1}\|_\text{dG} |\delta_\tau p^{n+1}|_{H^1(\mathcal{O})} \\
&\leq \varepsilon \mu |\delta_\tau \tilde{\bm{\mathcal{E}}}_h^{n+1}|_\text{dG}^2 + \frac{C}{\varepsilon \mu} h^2 \tau^{-1} \int_{t_n}^{t_{n+1}} |\partial_t p|^2_{H^1(\mathcal{O})} dt.
\end{aligned}
\end{equation}
To tackle the term $\mathcal{b}(\delta_\tau \tilde{\bm{\mathcal{E}}}_h^{n+1}, \delta_\tau p_h^n)$, we modify the auxiliary functions, and define $\forall n \geq 1$,
\begin{align*}
\hat{S}_h^n &= \delta \mu \sum_{i=1}^n (\nabla_h \cdot \delta_\tau \tilde{\bm{\mathcal{E}}}_h^i - R_h([\delta_\tau \tilde{\bm{\mathcal{E}}}_h^i])) + \delta \sum_{i=1}^n \sum_{j=1}^i \beta_{i-j} (\nabla_h \cdot \delta_\tau \tilde{\bm{\mathcal{E}}}_h^j - R_h([\delta_\tau \tilde{\bm{\mathcal{E}}}_h^j])), \\
\quad \hat{\xi}_h^n &= \delta_\tau p_h^n + \hat{S}_h^n.
\end{align*}
From \eqref{eq:6.111} and the above definitions, we note that $\hat{\xi}_h^{n+1 } - \hat{\xi}_h^n = \delta_\tau v_h^{n+1}$. Based on this expression, we have
\[
\mathcal{b}(\delta_\tau \tilde{\bm{\mathcal{E}}}_h^{n+1}, \delta_\tau p_h^n) = \mathcal{b}(\delta_\tau \tilde{\bm{\mathcal{E}}}_h^{n+1}, \hat{\xi}_h^n) - \mathcal{b}(\delta_\tau \tilde{\bm{\mathcal{E}}}_h^{n+1}, \hat{S}_h^n).
\]
Since $\hat{S}_h^n \in P_h^0$, it can be noted that $\hat{\xi}_h^n \in P_h^0$ $\forall n \geq 1$. Therefore, we apply \eqref{eq6.108} to conclude that
\begin{equation}
\begin{aligned}
\mathcal{b}(\delta_\tau \tilde{\bm{\mathcal{E}}}_h^{n+1}, \hat{\xi}_h^n) &= -\tau \mathcal{A}_{\text{sip}}(\delta_\tau v_h^{n+1}, \hat{\xi}_h^n) = -\tau \mathcal{A}_{\text{sip}}(\hat{\xi}_h^{n+1} - \hat{\xi}_h^n, \hat{\xi}_h^n) \\
&= -\frac{\tau}{2} \left( \mathcal{A}_{\text{sip}}(\hat{\xi}_h^{n+1}, \hat{\xi}_h^{n+1}) - \mathcal{A}_{\text{sip}}(\hat{\xi}_h^n, \hat{\xi}_h^n) - \mathcal{A}_{\text{sip}}(\delta_\tau v_h^{n+1}, \delta_\tau v_h^{n+1}) \right).
\end{aligned}
\end{equation}
With \eqref{eq4.31}, we have
\begin{equation}
\begin{aligned}
\mathcal{b}(\delta_\tau \tilde{\bm{\mathcal{E}}}_h^{n+1}, \hat{S}_h^n) = \frac{1}{2 \delta \mu} &\left( \|\hat{S}_h^{n+1}\|^2 - \|\hat{S}_h^n\|^2 - \|\hat{S}_h^{n+1} - \hat{S}_h^n\|^2 \right) \\
&- \frac{1}{ \mu} \left(\nabla_h \cdot q_r^{n+1}(\tilde{\bm{\mathcal{E}}}_h)-R_h([q_r^{n+1}(\tilde{\bm{\mathcal{E}}}_h)]),\hat{S}_h^n\right).
\end{aligned}
\end{equation}
With the assumptions that $\delta \leq \min\left(\frac{1}{16d}, \frac{ \mu^2}{32 \gamma^2 d}\right)$ and $\sigma \geq \frac{M_{r-1}^2}{d}$, we use \eqref{eq4.29} to obtain
\begin{equation}
\begin{aligned}
\frac{1}{2 \delta \mu} \|\hat{S}_h^{n+1} - \hat{S}_h^n\|^2 \leq \frac{\mu}{8}&\left( \|\nabla_h \delta_\tau \tilde{\bm{\mathcal{E}}}_h^{n+1}\|^2 + \sum_{F \in \mathcal{F}_h}\sigma h_F^{-1} \|[\delta_\tau \tilde{\bm{\mathcal{E}}}_h^{n+1}]\|_{L^2(F)}^2\right)\\
&+\frac{\mu}{16}\tau^2 \sum_{j=1}^n \left( \|\nabla_h \delta_\tau \tilde{\bm{\mathcal{E}}}_h^{j+1}\|^2 + \sum_{F \in \mathcal{F}_h}\sigma h_F^{-1} \|[\delta_\tau \tilde{\bm{\mathcal{E}}}_h^{j+1}]\|_{L^2(F)}^2\right).
\end{aligned}
\end{equation}
Also, using the assumptions that $\delta \leq \frac{ \mu^2}{32 \gamma^2 d}$ and $\sigma \geq \frac{M_{r-1}^2}{d}$, we obtain
\begin{equation}
\begin{aligned}
&\left |\frac{\tau}{\mu}\left(\nabla_h \cdot q_r^{n+1}(\tilde{\bm{\mathcal{E}}}_h)-R_h([q_r^{n+1}(\tilde{\bm{\mathcal{E}}}_h)]),\hat{S}_h^n\right) \right | \\
&\leq \frac{ \mu }{16} \tau^2 \sum_{j=1}^n\left(\norm{\nabla_h \tilde{\bm{\mathcal{E}}}^{j+1}_h}^2+ \sum_{F \in \mathcal{F}_h}\sigma h_F^{-1} \|[\tilde{\bm{\mathcal{E}}}^{j+1}_h]\|^2_{L^2(F)}\right) + \frac{\tau^2}{2 \delta \mu} \norm{\hat{S}_h^n}^2. \label{eqn189}
\end{aligned}
\end{equation}
Using the above estimates \eqref{eqn183}-\eqref{eqn189}, inequality \eqref{8.238} yields
\begin{equation}
\begin{aligned}
&\frac{1}{2} \left( \|\delta_\tau \bm{\mathcal{E}}_h^{n+1}\|^2 - \|\delta_\tau \bm{\mathcal{E}}_h^n\|^2 + \|\delta_\tau \bm{\mathcal{E}}_h^{n+1} - \delta_\tau \bm{\mathcal{E}}_h^n\|^2 \right) + \frac{\tau \mu}{4} \|\delta_\tau \tilde{\bm{\mathcal{E}}}_h^{n+1}\|_\text{dG}^2 + \frac{3\tau^2}{16} |\delta_\tau v_h^{n+1}|_\text{dG}^2\\
&+ \frac{\tau^2}{2} \left(\mathcal{A}_{\text{sip}}(\hat{\xi}_h^{n+1}, \hat{\xi}_h^{n+1}) - \mathcal{A}_{\text{sip}}(\hat{\xi}_h^n, \hat{\xi}_h^n)\right)  + \frac{\tau^2}{4} \sum_{F \in \mathcal{F}_h^i}  \frac{\tilde{\sigma}}{h_F} \| [\delta_\tau v_h^{n+1} - \delta_\tau v_h^n] \|_{L^2(F)}^2 \\
 &+ \frac{\tau^2}{2} (\tilde{H}_1^n - \tilde{H}_2^n)+ \frac{\tau}{2 \delta \mu} (\|\hat{S}_h^{n+1}\|^2 - \|\hat{S}_h^n\|^2) + \tau \hat{R}_S(\delta_\tau \tilde{\bm{\mathcal{E}}}_h^{n+1}) + \tau \mathcal{A}_d\left( q_r^{n+1}(\delta_\tau\tilde{\bm{\mathcal{E}}}_h), \delta_\tau \tilde{\bm{\mathcal{E}}}_h^{n+1}\right)\\
&\leq \left(2 \varepsilon \tau \mu +\frac{\tau \mu}{8}\right)\|\delta_\tau \tilde{\bm{\mathcal{E}}}_h^{n+1}\|_\text{dG}^2 + C \frac{\mu}{\varepsilon} h^2 \int_{t_n}^{t_{n+1}} |\partial_t \mathbf{u}|_{H^2(\mathcal{O})}^2 dt + C \left(\tau + \frac{h^2}{\varepsilon \mu}\right) \int_{t_n}^{t_{n+1}} |\partial_t p|_{H^1(\mathcal{O})}^2 dt\\
&\hspace{2em}+ \frac{\tau^2}{2 \delta \mu} \norm{\hat{S}_h^n}^2 
+\frac{\mu}{8}\tau^2 \sum_{i=1}^n \|\delta_\tau \tilde{\bm{\mathcal{E}}}_h^{i+1}\|_\text{dG}^2 + \mathcal{L}_1(\delta_\tau \tilde{\bm{\mathcal{E}}}_h^{n+1}) + \mathcal{L}_2(\delta_\tau \tilde{\bm{\mathcal{E}}}_h^{n+1}) + \hat{R}_t(\delta_\tau \tilde{\bm{\mathcal{E}}}_h^{n+1}) \\&\hspace{2em}+ \delta_{n,1} \mathcal{b}(\bm{\mathcal{E}}_h^0, \delta_\tau v_h^2). \label{8.243}
\end{aligned}
\end{equation}
Following the similar approach used in \cite{masri2023improved}, we can bound the nonlinear terms as
\begin{equation}
\begin{aligned}
|\mathcal{L}_1(\delta_\tau \tilde{\bm{\mathcal{E}}}_h^{n+1})| + |\mathcal{L}_2(\delta_\tau \tilde{\bm{\mathcal{E}}}_h^{n+1})| \leq 4 \varepsilon \tau &\mu \|\delta_\tau \tilde{\bm{\mathcal{E}}}_h^{n+1}\|_{\text{dG}}^2 + \frac{C}{\varepsilon \mu} \tau \|\delta_\tau \bm{\mathcal{E}}_h^n\|_{L^3(\mathcal{O})}^2 \|\tilde{\bm{\mathcal{E}}}_h^{n+1}\|_{\text{dG}}^2 \\
& + \frac{C}{\varepsilon \mu} \tau^{-1}\|\bm{\mathcal{E}}_h^n - \bm{\mathcal{E}}_h^{n-1}\|^2 + \frac{C}{\varepsilon \mu} \|\tilde{\bm{\mathcal{E}}}_h^{n+1}\|_{\text{dG}}^2\int_{t_{n-1}}^{t_n} |\partial_t \mathbf{u}|_{H^1(\mathcal{O})}^2 dt \\
& + \frac{C}{\varepsilon \mu} \left(1 + \mu^{-1} + \tau^2 + h^{2r+2} + \|\bm{\mathcal{E}}_h^n\|^2\right) \int_{t_{n-1}}^{t_{n+1}} |\partial_t \mathbf{u}|_{H^2(\mathcal{O})}^2 dt. \label{eqn191}
\end{aligned}
\end{equation}
We now estimate $\hat{R}_t(\delta_\tau \tilde{\bm{\mathcal{E}}}_h^{n+1})$. With the approximation properties \eqref{eq:6.96}–\eqref{eq:6.97}, and \eqref{8.223}, we use a Taylor expansions to obtain $\forall n \geq 1$,
\begin{align}
\hat{R}_t(\delta_\tau \tilde{\bm{\mathcal{E}}}_h^{n+1}) &\leq \varepsilon \tau \mu \|\delta_\tau \tilde{\bm{\mathcal{E}}}_h^{n+1}\|_{\text{dG}}^2 + \frac{C}{\varepsilon \mu} \int_{t_{n-1}}^{t_{n+1}} |\partial_t \mathbf{u}|_{H^2(\mathcal{O})}^2 dt + \frac{C}{\varepsilon \mu} \int_{t_{n-1}}^{t_{n+1}} \|\partial_{tt} \mathbf{u}\|^2 dt . 
\end{align}
Next, using the continuity of $\mathcal{A}_d$ \eqref{eq4.23}, approximation property \eqref{eq:6.97} and Young's inequality we bound the term $\hat{R}_S(\delta_\tau \tilde{\bm{\mathcal{E}}}_h^{n+1})$ as
\begin{equation}
\begin{aligned}
&\left|\hat{R}_S(\delta_\tau \tilde{\bm{\mathcal{E}}}_h^{n+1})\right|  = \left|\mathcal{A}_d\left(q_r^{n+1}(\mathbf{u}), \delta_\tau \tilde{\bm{\mathcal{E}}}_h^{n+1}\right) -\int_0^{t_{n+1}} \beta(t_{n+1}-s) \mathcal{A}_d(\mathbf{u},\delta_\tau \tilde{\bm{\mathcal{E}}}_h^{n+1})ds\right| \\
    &\hspace{2em} +\left|\mathcal{A}_d\left(q_r^n(\mathbf{u}), \delta_\tau \tilde{\bm{\mathcal{E}}}_h^{n+1}\right) -\int_0^{t_n} \beta(t_n-s) \mathcal{A}_d(\mathbf{u},\delta_\tau \tilde{\bm{\mathcal{E}}}_h^{n+1})ds\right| \\
    &\leq \tilde{C} \tau^2 \int_0^{t_{n+1}} e^{-2\eta(t_{n+1-}t)}\left(\|\mathbf{u}(t)\|_{H^1(\mathcal{O})}^2+h^2\|\mathbf{u}(t)\|_{H^2(\mathcal{O})}^2+\|\mathbf{u}_t(t)\|_{H^1(\mathcal{O})}^2+h^2\|\mathbf{u}_t(t)\|_{H^2(\mathcal{O})}^2\right)dt\\
    &\hspace{2em} +\tilde{C} \tau^2 \int_0^{t_n} e^{-2\eta(t_n-t)}\left(\|\mathbf{u}(t)\|_{H^1(\mathcal{O})}^2+h^2\|\mathbf{u}(t)\|_{H^2(\mathcal{O})}^2+\|\mathbf{u}_t(t)\|_{H^1(\mathcal{O})}^2+h^2\|\mathbf{u}_t(t)\|_{H^2(\mathcal{O})}^2\right)dt\\
    &\hspace{2em} + \frac{ \mu}{32}\|\tilde{\bm{\mathcal{E}}}_h^{n+1}\|_{\text{dG}}^2. \label{eqn193}
\end{aligned}
\end{equation}
Using the above bounds \eqref{eqn191}-\eqref{eqn193} and choosing $\varepsilon = 1/224$, inequality \eqref{8.243} becomes
\begin{equation}
\begin{aligned}
&\frac{1}{2} \left( \|\delta_\tau \bm{\mathcal{E}}_h^{n+1}\|^2 - \|\delta_\tau \bm{\mathcal{E}}_h^n\|^2 + \|\delta_\tau \bm{\mathcal{E}}_h^{n+1} - \delta_\tau \bm{\mathcal{E}}_h^n\|^2 \right) + \frac{\tau \mu}{16} \|\delta_\tau \tilde{\bm{\mathcal{E}}}_h^{n+1}\|_\text{dG}^2 + \frac{\tau^2}{16} |\delta_\tau v_h^{n+1}|_\text{dG}^2 \\
&+ \frac{\tau^2}{2} \left(\mathcal{A}_{\text{sip}}(\hat{\xi}_h^{n+1}, \hat{\xi}_h^{n+1}) - \mathcal{A}_{\text{sip}}(\hat{\xi}_h^n, \hat{\xi}_h^n)\right) + \frac{\tau^2}{2} (\tilde{H}_1^n - \tilde{H}_2^n) + \frac{\tau}{2 \delta \mu} (\|\hat{S}_h^n\|^2 - \|\hat{S}_h^{n+1}\|^2) \\
& + \tau \mathcal{A}_d\left( q_r^{n+1}(\delta_\tau\tilde{\bm{\mathcal{E}}}_h), \delta_\tau \tilde{\bm{\mathcal{E}}}_h^{n+1}\right) \leq \frac{C}{\mu} \int_{t_{n-1}}^{t_{n+1}} \|\partial_{tt} \mathbf{u}\|^2 dt+ C(\tau + h^2 \mu^{-1}) \int_{t_n}^{t_{n+1}} |\partial_t p|_{H^1(\mathcal{O})}^2 dt \\
&+ \frac{C}{\mu} \tau \|\delta_\tau \bm{\mathcal{E}}_h^n\|_{L^3(\mathcal{O})}^2 \norm{\tilde{\bm{\mathcal{E}}}_h^{n+1}}_\text{dG}^2 + \frac{C}{\mu}\left(1+\mu^2+\mu^{-1}+\norm{\bm{\mathcal{E}}_h^n}^2 + \norm{\tilde{\bm{\mathcal{E}}}_h^{n+1}}_\text{dG}^2 \right) \int_{t_{n-1}}^{t_{n+1}} \|\partial_t \mathbf{u}\|_{H^2(\mathcal{O})}^2 dt \\
&+ \tilde{C} \tau^2 \int_0^{t_{n+1}} e^{-2\eta(t_{n+1-}t)}\left(\|\mathbf{u}(t)\|_{H^1(\mathcal{O})}^2+h^2\|\mathbf{u}(t)\|_{H^2(\mathcal{O})}^2+\|\mathbf{u}_t(t)\|_{H^1(\mathcal{O})}^2+h^2\|\mathbf{u}_t(t)\|_{H^2(\mathcal{O})}^2\right)dt\\
    &+\tilde{C} \tau^2 \int_0^{t_n} e^{-2\eta(t_n-t)}\left(\|\mathbf{u}(t)\|_{H^1(\mathcal{O})}^2+h^2\|\mathbf{u}(t)\|_{H^2(\mathcal{O})}^2+\|\mathbf{u}_t(t)\|_{H^1(\mathcal{O})}^2+h^2\|\mathbf{u}_t(t)\|_{H^2(\mathcal{O})}^2\right)dt\\
&+ \frac{\tau^2}{2 \delta \mu} \norm{\hat{S}_h^n}^2 +\frac{\mu}{8}\tau^2 \sum_{i=1}^n \|\delta_\tau \tilde{\bm{\mathcal{E}}}_h^{i+1}\|_\text{dG}^2+ \frac{C}{\mu} \tau^{-1} \|\bm{\mathcal{E}}_h^n - \bm{\mathcal{E}}_h^{n-1}\|^2 + \delta_{n,1} |\mathcal{b}(\bm{\mathcal{E}}_h^0, \delta_\tau v_h^2)|. \label{8.248}
\end{aligned}
\end{equation}
Using \eqref{eq4.30}, we note the following
\begin{equation}
\begin{aligned}
\frac{\tau^2}{2} \sum_{n=1}^m \left( \tilde{H}_1^n - \tilde{H}_2^n \right) 
&= \frac{\tau^2}{2} \left( \sum_{F \in \mathcal{F}_h^i}  \frac{\tilde{\sigma}}{h_F} \| [\delta_\tau v_h^{m+1}] \|^2 
- \| \mathbf{G}_h \left( [\delta_\tau v_h^{m+1}] \right) \|^2 \right) \\
&\hspace{2em} + \frac{\tau^2}{2} \left( \| \mathbf{G}_h \left( [\delta_\tau v_h^1] \right) \|^2 
- \sum_{F \in \mathcal{F}_h^i}  \frac{\tilde{\sigma}}{h_F} \| [\delta_\tau v_h^1] \|^2 \right)\\
&\leq C\tau^2 | \delta_\tau v_h^1 |^2_\text{dG}. \label{eqn195}
\end{aligned}
\end{equation}
We then sum \eqref{8.248} over $n=1$ to $m$. Under the assumptions on regularity (\textbf{A2}), (\textbf{A3}), we utilize the continuity of $\mathcal{A}_{\text{sip}}$, Lemma 11, the inverse estimate \eqref{8.159}, \eqref{eqn195} and the lower bound on $\tau$ in \eqref{8.223} to obtain
\begin{equation}
\begin{aligned}
&\| \delta_\tau \bm{\mathcal{E}}_h^{m+1} \|^2 + \sum_{n=1}^m \| \delta_\tau \bm{\mathcal{E}}_h^{n+1} - \delta_\tau \bm{\mathcal{E}}_h^n \|^2 + \frac{1}{8} \sum_{n=1}^m \tau^2 | \delta_\tau v_h^{n+1} |^2_\text{dG} + \frac{\mu}{8} \sum_{n=1}^m \tau \| \delta_\tau \tilde{\bm{\mathcal{E}}}_h^{n+1} \|^2_\text{dG} \\
&+ \tau \sum_{n=1}^m \mathcal{A}_d\left( q_r^{n+1}(\delta_\tau\tilde{\bm{\mathcal{E}}}_h), \delta_\tau \tilde{\bm{\mathcal{E}}}_h^{n+1}\right)\leq \tilde{C}_T + \| \delta_\tau \bm{\mathcal{E}}_h^1 \|^2 + \frac{\tau}{2 \delta \mu} \|\hat{S}_h^1 \|^2 + C \tau^2 \left( | \hat{\xi}_h^1 |^2_\text{dG} + | \delta_\tau v_h^1 |^2_\text{dG} \right) \\
&+\tilde{C} \tau^3 \sum_{n=1}^m \int_0^{t_n}e^{-2\eta(t_n-t)}\left(\|\mathbf{u}(t)\|_{H^1(\mathcal{O})}^2+h^2\|\mathbf{u}(t)\|_{H^2(\mathcal{O})}^2+\|\mathbf{u}_t(t)\|_{H^1(\mathcal{O})}^2+h^2\|\mathbf{u}_t(t)\|_{H^2(\mathcal{O})}^2\right)dt\\
&+\tilde{C} \tau^3 \sum_{n=1}^m \int_0^{t_{n+1}}e^{-2\eta(t_{n+1}-t)}\left(\|\mathbf{u}(t)\|_{H^1(\mathcal{O})}^2+h^2\|\mathbf{u}(t)\|_{H^2(\mathcal{O})}^2+\|\mathbf{u}_t(t)\|_{H^1(\mathcal{O})}^2+h^2\|\mathbf{u}_t(t)\|_{H^2(\mathcal{O})}^2\right)dt \\
&+ \tilde{C} \tau \sum_{n=1}^m h^{-d/3} \| \delta_\tau \bm{\mathcal{E}}_h^n \|^2 \norm{\tilde{\bm{\mathcal{E}}}_h^{n+1}}^2_\text{dG} +\frac{\mu}{4}\tau^2 \sum_{n=1}^m\sum_{i=1}^n \|\delta_\tau \tilde{\bm{\mathcal{E}}}_h^{i+1}\|_\text{dG}^2+ |\mathcal{b}(\bm{\mathcal{E}}_h^0, \delta_\tau v_h^2)|. \label{eq:8.255}
\end{aligned}
\end{equation}
Under the assumption ($\mathbf{A2}$), we have
\begin{equation}
\begin{aligned}
\tilde{C} \tau^3 \sum_{n=1}^m \int_0^{t_n}e^{-2\eta(t_n-t)}&\left(\|\mathbf{u}(t)\|_{H^1(\mathcal{O})}^2+h^2\|\mathbf{u}(t)\|_{H^2(\mathcal{O})}^2+\|\mathbf{u}_t(t)\|_{H^1(\mathcal{O})}^2+h^2\|\mathbf{u}_t(t)\|_{H^2(\mathcal{O})}^2\right)dt \\
&\leq \tilde{C} \tau^2 e^{2 \delta t_m}, \label{269}
\end{aligned}
\end{equation}
and similarly,
\begin{equation}
\begin{aligned}
\tilde{C} \tau^3 \sum_{n=1}^m \int_0^{t_{n+1}}e^{-2\eta(t_{n+1}-t)}&\left(\|\mathbf{u}(t)\|_{H^1(\mathcal{O})}^2+h^2\|\mathbf{u}(t)\|_{H^2(\mathcal{O})}^2+\|\mathbf{u}_t(t)\|_{H^1(\mathcal{O})}^2+h^2\|\mathbf{u}_t(t)\|_{H^2(\mathcal{O})}^2\right)dt \\
&\leq \tilde{C} \tau^2 e^{2 \delta t_{m+1}}. \label{270}
\end{aligned}
\end{equation}
We use the positivity property \eqref{5.38} to the last term in the LHS of \eqref{eq:8.255}. Then applying the bounds \eqref{269} and \eqref{270} and using the discrete Gronwall's lemma, \eqref{eq:8.255} yields
\begin{equation}
\begin{aligned}
&\| \delta_\tau \bm{\mathcal{E}}_h^{m+1} \|^2 + \sum_{n=1}^m \| \delta_\tau \bm{\mathcal{E}}_h^{n+1} - \delta_\tau \bm{\mathcal{E}}_h^n \|^2 + \frac{1}{8} \sum_{n=1}^m \tau^2 | \delta_\tau v_h^{n+1} |^2_\text{dG} + \frac{\mu}{8} \sum_{n=1}^m \tau \| \delta_\tau \tilde{\bm{\mathcal{E}}}_h^{n+1} \|^2_\text{dG} \\
&\leq \tilde{C}_T +\tilde{C}_T \| \delta_\tau \bm{\mathcal{E}}_h^1 \|^2 + \tilde{C}_T\frac{\tau}{2 \delta \mu} \|\hat{S}_h^1 \|^2 + \tilde{C}_T \tau^2 \left( | \hat{\xi}_h^1 |^2_\text{dG} + | \delta_\tau v_h^1 |^2_\text{dG} \right) \\
&\hspace{2em} + \tilde{C}_T \tau \sum_{n=1}^m h^{-d/3} \| \delta_\tau \bm{\mathcal{E}}_h^n \|^2 \norm{\tilde{\bm{\mathcal{E}}}_h^{n+1}}^2_\text{dG} 
+ \tilde{C}_T|\mathcal{b}(\bm{\mathcal{E}}_h^0, \delta_\tau v_h^2)|. \label{eqn199}
\end{aligned}
\end{equation}
To bound the last term, we use \eqref{eq4.37}, the approximation properties, and \eqref{8.223} to obtain 
\begin{equation}
|\mathcal{b}(\bm{\mathcal{E}}_h^0, \delta_\tau v_h^2)| \leq C \| \bm{\mathcal{E}}_h^0 \| | \delta_\tau v_h^2 |_\text{dG} \leq C \tau | \mathbf{u}^0 |_{H^2(\mathcal{O})} | \delta_\tau v_h^2 |_\text{dG} \leq C + \frac{\tau^2}{32} | \delta_\tau v_h^2 |^2_\text{dG}.
\label{8.256}
\end{equation}
We now follow similar argument as in \cite{masri2023improved} to derive a bound for the term $\| \bm{\mathcal{E}}_h^1 - \bm{\mathcal{E}}_h^0 \|$. We choose $n=1$ in the equation \eqref{eq:6.64}. All the terms in the RHS of \eqref{eq:6.64} are bounded exactly as discussed earlier, except for the term $R_t(\tilde{\bm{\mathcal{E}}}_h^n)$. Under the assumption (\textbf{A2}), we obtain
\begin{align*}
|R_t(\tilde{\bm{\mathcal{E}}}_h^n)| \leq \frac{C}{\mu} \tau^2 \int_{t_{n-1}}^{t_n} \| \partial_{tt} \mathbf{u} \|^2 dt+ \frac{C}{\mu} \tau^4 \int_{t_{n-1}}^{t_n} | \partial_t \mathbf{u} |^2_{H^2(\mathcal{O})} dt+ \frac{\tau \mu}{32} \| \tilde{\bm{\mathcal{E}}}_h^n \|^2_\text{dG}.
\end{align*}
This leads to the following bound for the initial error
\begin{equation}
\begin{aligned}
\frac{1}{2} \| \bm{\mathcal{E}}_h^1 - \bm{\mathcal{E}}_h^0 \|^2 + \frac{\tau}{2 \delta \mu} \| S_h^1 \|^2+ \frac{\tau^2}{16} \left(| v_h^1 |^2_\text{dG} + | p_h^1 + S_h^1 |^2_\text{dG}\right) + \frac{\tau \mu}{8} \norm{\tilde{\bm{\mathcal{E}}}_h^1}^2_\text{dG}
\\
\leq \tilde{C} \tau^2 + \tilde{C} \tau^2 h^{2r}+ \tau |\mathcal{b}(\bm{\mathcal{E}}_h^0, v_h^1)| + \frac{1}{2} \tau |\mathcal{b}(\bm{\mathcal{E}}_h^0, \Pi_h \mathbf{u}^1 \cdot \tilde{\bm{\mathcal{E}}}_h^1)|,
\label{eqn272}
\end{aligned}
\end{equation}
where $S_h^1 = \delta \mu (\nabla_h \cdot \widetilde{\mathbf{u}}_h^1 - R_h([\widetilde{\mathbf{u}}_h^1]))+ \delta \beta_0 (\nabla_h \cdot \widetilde{\mathbf{u}}_h^1 - R_h([\widetilde{\mathbf{u}}_h^1])) $. Observe that by \eqref{6.102} and \eqref{eq4.31}, we have
\begin{equation*}
\hat{S}_h^1 = \delta \mu (\nabla_h \cdot \delta_\tau \widetilde{\mathbf{u}}_h^1 - R_h([\delta_\tau \widetilde{\mathbf{u}}_h^1])) + \delta \beta_0 (\nabla_h \cdot \widetilde{\mathbf{u}}_h^1 - R_h([\widetilde{\mathbf{u}}_h^1]))
\end{equation*}
Since, $\widetilde{\mathbf{u}}_h^0 = \Pi_h \mathbf{u}^0$, thus with the above equality, we obtain
\[
\hat{S}_h^1 = \frac{1}{\tau} S_h^1, \quad \hat{\xi}_h^1 = \frac{1}{\tau} (p_h^1 + S_h^1).
\]
Using \eqref{eq4.37} and approximation property \eqref{eq:6.96}, we arrive at
\begin{equation}
|\mathcal{b}(\bm{\mathcal{E}}_h^0, v_h^1)| \leq C \| \bm{\mathcal{E}}_h^0 \| | v_h^1 |_\text{dG} \leq \frac{\tau}{32} | v_h^1 |^2_\text{dG} + C \tau^{-1} h^{2r+2} | \mathbf{u}^0 |^2_{H^{r+1}(\mathcal{O})}. \label{eqn211}
\end{equation}
We divide the final term in \eqref{eqn272} and apply \eqref{4.117}, \eqref{4.118}, \eqref{eq:6.96}, along with the assumption on regularity (\textbf{A2}) to obtain
\begin{equation}
\begin{aligned}
\left|\mathcal{b}(\bm{\mathcal{E}}_h^0, \Pi_h \mathbf{u}^1 \cdot \tilde{\bm{\mathcal{E}}}_h^1)\right| \leq \left|\mathcal{b}(\bm{\mathcal{E}}_h^0, \mathbf{u}^1 \cdot \tilde{\bm{\mathcal{E}}}_h^1)\right| + \left|\mathcal{b}(\bm{\mathcal{E}}_h^0, (\Pi_h \mathbf{u}^1 - \mathbf{u}^1) \cdot \tilde{\bm{\mathcal{E}}}_h^1\right| \\
\leq C \left(|\mathbf{u}^1|_{H^{r+1}(\mathcal{O})} + |||\mathbf{u}^1|||\right)\| \bm{\mathcal{E}}_h^0 \| \|\tilde{\bm{\mathcal{E}}}_h^1\|_\text{dG} 
\leq \tilde{C} h^{2r+2} + \frac{\mu}{16} \|\tilde{\bm{\mathcal{E}}}_h^1\|^2_\text{dG} . \label{eqn212}
\end{aligned}
\end{equation}
With bounds \eqref{eqn211}-\eqref{eqn212}, together with \eqref{8.223} and recalling that $v_h^0 = p_h^0 = 0$, \eqref{eqn272} yields
\begin{equation}
\| \delta_\tau \bm{\mathcal{E}}_h^1 \|^2 + \frac{\tau}{\delta \mu} \| \hat{S}_h^1 \|^2 + \frac{\tau^2}{16} \left( | \hat{\xi}_h^1 |^2_\text{dG} + | \delta_\tau v_h^1 |^2_\text{dG} \right) + \frac{\tau \mu}{16} \| \tilde{\bm{\mathcal{E}}}_h^1 \|^2_\text{dG}
\leq \tilde{C}. \label{8.255}
\end{equation}
Therefore, utilizing \eqref{8.256} and \eqref{8.255}, the bound \eqref{eqn199} reads
\begin{equation}
\begin{aligned}
\| \delta_\tau \bm{\mathcal{E}}_h^{m+1} \|^2 + \sum_{n=1}^m \| \delta_\tau \bm{\mathcal{E}}_h^{n+1} - \delta_\tau \bm{\mathcal{E}}_h^n \|^2 + \frac{\tau^2}{32} \sum_{n=1}^m | \delta_\tau v_h^{n+1} |^2_\text{dG} + \frac{\tau \mu}{4} \sum_{n=1}^m \| \delta_\tau \tilde{\bm{\mathcal{E}}}_h^{n+1} \|^2_\text{dG} \\
\leq \tilde{C}_T + \tilde{C}_T \tau \sum_{n=1}^m h^{-d/3} \| \delta_\tau \bm{\mathcal{E}}_h^n \|^2 \norm{\tilde{\bm{\mathcal{E}}}_h^{n+1}}^2_\text{dG}. \label{8.261}
\end{aligned}
\end{equation}
We still need to address the final term. We shall employ a method similar to the one outlined in \cite{nochetto2005gauge}. Using Lemma 11 and condition \eqref{8.223}, we obtain
\[
\| \delta_\tau \bm{\mathcal{E}}_h^n \|^2 = \frac{1}{\tau^2} \| \bm{\mathcal{E}}_h^n - \bm{\mathcal{E}}_h^{n-1} \|^2 \leq \tilde{C}_T \tau^{-1}, \quad \sum_{n=1}^m \| \tilde{\bm{\mathcal{E}}}_h^{n+1} \|^2_\text{dG} \leq \tilde{C}_T.
\]
By applying the above bounds in \eqref{8.261} and for very small $h$, we derive
\[
\| \delta_\tau \bm{\mathcal{E}}_h^{m+1} \|^2 \leq \tilde{C}_T + \tilde{C}_T^3 h^{-d/3} \leq \tilde{C}_T^3 h^{-d/3}, \quad m \geq 0.
\]
After applying this bound in \eqref{8.261}, and utilizing Lemma 11, we get
\[
\| \delta_\tau \bm{\mathcal{E}}_h^{m+1} \|^2 \leq \tilde{C}_T + \tilde{C}_T^4 \tau h^{-2d/3} \sum_{n=1}^m \| \tilde{\bm{\mathcal{E}}}_h^{n+1} \|^2_\text{dG} \leq \tilde{C}_T + \tilde{C}_T^5 \tau h^{-2d/3}.
\]
In view of \eqref{8.223}, we have $\tau h^{-2d/3} \leq c_2 h^{(\upsilon-1)d/3}$. Since $\upsilon < 1$, for sufficiently small $h$, we obtain
\[
\| \delta_\tau \bm{\mathcal{E}}_h^{m+1} \|^2 \leq \tilde{C}_T+ \tilde{C}_T^5 c_2 h^{(\upsilon-1)d/3} \leq \tilde{C}_T^5 c_2 h^{(\upsilon-1)d/3}.
\]
Choose the smallest integer $\varrho$ such that $\varrho \upsilon \geq 1$. Now iteratively applying the bound in \eqref{8.261} $\varrho$ times and by using \eqref{8.223}, we finally obtain
\[
\| \delta_\tau \bm{\mathcal{E}}_h^{m+1} \|^2 \leq \tilde{C}_T + (\tilde{C}_T)^{\varrho+4} c_2^\varrho h^{(\varrho \upsilon-1)d/3} \leq \tilde{C}_T + (\tilde{C}_T)^{\varrho+4} c_2^\varrho,
\]
and this completes the proof.
\end{proof}
\begin{lemma}
Assume that the hypotheses of Lemma 14 hold. Then, for $1 \leq m \leq N - 1$, the following error bound 
\begin{equation*}
\frac{\mu}{2} \tau \sum_{n=1}^m \| \delta_\tau \bm{\mathcal{E}}_h^{n+1} \|^2 \leq \bar{C}_T (\tau + h^{2r}),
\end{equation*}
holds, where $\bar{C}_T = \tilde{C}_T + \widehat{C}_T (1 + \tilde{C})$ is independent of $h$ and $\tau$.
\end{lemma}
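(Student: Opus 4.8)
The plan is to obtain the $L^2$-in-space control of the discrete time derivative by a second duality argument that parallels the proof of Theorem 3, now applied to the time-differenced error equation \eqref{8.226} rather than to \eqref{7.105}. A naive route fails: extracting $\tau\sum_n\|\delta_\tau\bm{\mathcal{E}}_h^{n+1}\|^2=\tau^{-1}\sum_n\|\bm{\mathcal{E}}_h^{n+1}-\bm{\mathcal{E}}_h^n\|^2$ from Lemma 11 loses a factor $\tau^{-1}$, and bounding the $L^2$ norm by the dG norm in Lemma 14 only yields an $O(1)$ bound. Recovering the optimal smallness $\tau+h^{2r}$ therefore forces us to exploit the elliptic regularity of the convex domain through an auxiliary Stokes problem.

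First I would introduce, for each $n\geq 1$, a dual Stokes problem of the form \eqref{8.164}--\eqref{8.166} whose source is the pointwise discrete time-derivative velocity error, with dG companion $(\mathbf{W}_h^n,R_h^n)\in\mathbf{M}_h\times P_h^0$ solving the analogue of \eqref{8.168}--\eqref{8.169}; convexity of $\mathcal{O}$ and \eqref{8.167} then furnish stability and error estimates of the same form as \eqref{8.170}--\eqref{8.172}. Testing \eqref{8.226} with $\mathbf{w}_h=\mathbf{W}_h^n$ and using the projection identity \eqref{8.232} to trade $\delta_\tau\tilde{\bm{\mathcal{E}}}_h^{n+1}$ for $\delta_\tau\bm{\mathcal{E}}_h^{n+1}$ modulo a pressure term that vanishes against the discretely divergence-free $\mathbf{W}_h^n$, I would reproduce the manipulation \eqref{8.183}--\eqref{8.184} to obtain on the left the telescoping dual energy $\tfrac12(\mathcal{A}_d(\mathbf{W}_h^n,\mathbf{W}_h^n)-\mathcal{A}_d(\mathbf{W}_h^{n-1},\mathbf{W}_h^{n-1}))$ together with the target term $\tau\mu\|\delta_\tau\bm{\mathcal{E}}_h^{n+1}\|^2$.

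The remaining right-hand side contributions are estimated as already rehearsed in the excerpt. The pressure differences $\mathcal{b}(\mathbf{W}_h^n,\delta_\tau p_h^n-\delta_\tau p^{n+1})$ are handled through the discrete divergence-freeness of $\mathbf{W}_h^n$ against $P_h^0$, reducing them to the projection error $\delta_\tau p^{n+1}-\pi_h\delta_\tau p^{n+1}$ controlled by \eqref{eq6.101} and the regularity (\textbf{A3}) on $\partial_t p$; the memory term $\mathcal{A}_d(q_r^{n+1}(\delta_\tau\tilde{\bm{\mathcal{E}}}_h),\mathbf{W}_h^n)$ is split and bounded as in \eqref{8.201}, and the consistency remainder $\hat R_S(\mathbf{W}_h^n)$ as in \eqref{eqn162}, both relying on \eqref{41}, Hölder's inequality and the positivity property \eqref{5.38}; and $\hat R_t(\mathbf{W}_h^n)$ is treated by Taylor expansion with \eqref{eq:6.96}--\eqref{eq:6.97} and the regularity of $\partial_{tt}\mathbf{u}$. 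Summing over $n$, invoking the coercivity \eqref{eq3.20} to absorb the accumulated $\|\mathbf{W}_h^n\|_\text{dG}^2$ terms via the dual error estimate, and then inserting Lemma 11 for the $\bm{\mathcal{E}}_h$-factors (contributing $\tilde C_T$) and Lemma 14 for the $\delta_\tau$-factors (contributing $\widehat C_T$), the discrete Gronwall inequality would deliver the constant $\bar C_T=\tilde C_T+\widehat C_T(1+\tilde C)$.

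I expect the main obstacle to be the two nonlinear remainders $\mathcal{L}_1(\mathbf{W}_h^n)$ and $\mathcal{L}_2(\mathbf{W}_h^n)$. Differencing the convective form in time generates products in which one factor is a discrete time derivative of an error; keeping these at the optimal order requires the $L^3$ inverse estimate \eqref{8.157} and a careful balance of the resulting negative power of $h$ against the lower bound on $\tau$ in (\textbf{A4}), precisely the mechanism already exploited (and iterated $\varrho$ times) in the proof of Lemma 14. The delicate point is that only the $O(1)$ control of Lemma 14 is available for the $\delta_\tau$-quantities, so these cross terms must be organised so that the whole $\tau+h^{2r}$ smallness is supplied by the $\bm{\mathcal{E}}_h$-factors through Lemma 11, with $\widehat C_T$ entering solely as a multiplicative constant --- exactly the structure mirrored in $\bar C_T$.
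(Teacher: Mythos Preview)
Your plan is essentially the paper's own route: the paper tests the time-differenced error equation \eqref{8.226} against $\delta_\tau\mathbf{V}_h^{n+1}$, which by linearity of the dual Stokes system is precisely your $\mathbf{W}_h^n$, and proceeds through the same telescoping in $\mathcal{A}_d$, the same use of \eqref{8.257} to extract $\tau\mu\|\delta_\tau\bm{\mathcal{E}}_h^{n+1}\|^2$, the same treatment of the memory term via the splitting in the spirit of \eqref{8.201}, and the same reliance on Lemma~11 and Lemma~14 for the nonlinear remainders $\mathcal{L}_1,\mathcal{L}_2$.

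One concrete point you underspecify, however, would actually fail as stated. Treating $\hat R_t(\mathbf{W}_h^n)$ ``by Taylor expansion and the regularity of $\partial_{tt}\mathbf{u}$'' gives, after Young's inequality, a contribution $\sum_n C\int_{t_{n-1}}^{t_{n+1}}\|\partial_{tt}\mathbf{u}\|^2\,dt=O(1)$ rather than $O(\tau)$, since only $\partial_{tt}\mathbf{u}\in L^2(0,T;L^2)$ is assumed in (\textbf{A2}). The paper repairs this by rewriting the time-consistency remainder via $\mathbf{J}^n=\tau(\partial_t\mathbf{u})^n-(\mathbf{u}^n-\mathbf{u}^{n-1})$ and performing an Abel summation,
\[
\bar R_t(\delta_\tau\mathbf{V}_h^{n+1})=\tfrac{1}{\tau}(\mathbf{J}^{n+1},\delta_\tau\mathbf{V}_h^{n+1})-\tfrac{1}{\tau}(\mathbf{J}^n,\delta_\tau\mathbf{V}_h^n)-\tfrac{1}{\tau}(\mathbf{J}^n,\delta_\tau\mathbf{V}_h^{n+1}-\delta_\tau\mathbf{V}_h^n),
\]
so that after summing over $n$ only two boundary terms and the cross term against $\delta_\tau\mathbf{V}_h^{n+1}-\delta_\tau\mathbf{V}_h^n$ survive; the latter is absorbed by the positive $\tfrac{1}{8}\|\delta_\tau\mathbf{V}_h^{n+1}-\delta_\tau\mathbf{V}_h^n\|_\text{dG}^2$ produced on the left from the $\mathcal{A}_d$-telescoping, and the boundary terms are $O(\tau)$ since $\|\mathbf{J}^n\|\le C\tau^{3/2}\bigl(\int_{t_{n-1}}^{t_n}\|\partial_{tt}\mathbf{u}\|^2\bigr)^{1/2}$. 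Without this device the claimed rate $\tau+h^{2r}$ is not reached.
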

\begin{proof}
By \eqref{8.168}, we derive $\forall n \geq 0$,
\begin{equation}
\mathcal{A}_d (\delta_\tau \mathbf{V}_h^{n+1}, \mathbf{w}_h) - \mathcal{b} (\mathbf{w}_h, \delta_\tau P_h^{n+1}) = (\delta_\tau \bm\psi^{n+1}, \mathbf{w}_h), \quad \forall \mathbf{w}_h \in \mathbf{M}_h. \label{8.257}
\end{equation}
Choose $\mathbf{w}_h = \delta_\tau \mathbf{V}_h^{n+1}$ in \eqref{8.226}. Then, use \eqref{8.169}, with a bit of rearranging, to have 
\begin{equation}
\begin{aligned}
(\delta_\tau (\widetilde{\mathbf{u}}_h^{n+1} - \mathbf{u}^{n+1}) - \delta_\tau \bm\psi^n, \delta_\tau \mathbf{V}_h^{n+1}) + \tau \mu \mathcal{A}_d (\delta_\tau \tilde{\bm{\mathcal{E}}}_h^{n+1}, \delta_\tau \mathbf{V}_h^{n+1})+ \tau \mathcal{A}_d\left( q_r^{n+1}(\delta_\tau \tilde{\bm{\mathcal{E}}}_h), \delta_\tau \mathbf{V}_h^{n+1}\right) \\
+ \tau \hat{R}_S(\delta_\tau \mathbf{V}_h^{n+1})
= -\tau \mathcal{b} (\delta_\tau \mathbf{V}_h^{n+1}, \delta_\tau p^{n+1})- \tau \mu \mathcal{A}_d (\delta_\tau \Pi_h \mathbf{u}^{n+1} - \delta_\tau \mathbf{u}^{n+1}, \delta_\tau \mathbf{V}_h^{n+1}) \\
+ \mathcal{L}_1 (\delta_\tau \mathbf{V}_h^{n+1}) + \mathcal{L}_2 (\delta_\tau \mathbf{V}_h^{n+1}) + \bar{R}_t (\delta_\tau \mathbf{V}_h^{n+1}). \label{8.258}
\end{aligned}
\end{equation}
Where,
\begin{align*}
\hat{R}_S(\delta_\tau \mathbf{V}_h^{n+1}) & = \mathcal{A}_d\left(q_r^{n+1}(\mathbf{u}), \delta_\tau \mathbf{V}_h^{n+1}\right) -\int_0^{t_{n+1}} \beta(t_{n+1}-s) \mathcal{A}_d(\mathbf{u},\delta_\tau \mathbf{V}_h^{n+1}) ds \notag \\
    &\hspace{2em} -\mathcal{A}_d\left(q_r^n(\mathbf{u}), \delta_\tau \mathbf{V}_h^{n+1}\right) +\int_0^{t_n} \beta(t_n-s) \mathcal{A}_d(\mathbf{u},\delta_\tau \mathbf{V}_h^{n+1}) ds, \\
\bar{R}_t (\delta_\tau \mathbf{V}_h^{n+1}) &= ((\partial_t \mathbf{u})^{n+1} - (\partial_t \mathbf{u})^n - (\delta_\tau \mathbf{u}^{n+1} - \delta_\tau \mathbf{u}^n), \delta_\tau \mathbf{V}_h^{n+1}).
\end{align*}
To begin with, we observe from \eqref{8.232}, \eqref{8.169}, and \eqref{8.257} that
\begin{equation}
\begin{aligned}
(\delta_\tau (\widetilde{\mathbf{u}}_h^{n+1} - \mathbf{u}^{n+1}) - \delta_\tau \bm\psi^n, \delta_\tau \mathbf{V}_h^{n+1}) &= \mathcal{A}_d (\delta_\tau \mathbf{V}_h^{n+1} - \delta_\tau \mathbf{V}_h^n, \delta_\tau \mathbf{V}_h^{n+1}) \\
&= \frac{1}{2} (\mathcal{A}_d (\delta_\tau \mathbf{V}_h^{n+1}, \delta_\tau \mathbf{V}_h^{n+1}) - \mathcal{A}_d (\delta_\tau \mathbf{V}_h^n, \delta_\tau \mathbf{V}_h^n)) \\
&\quad + \frac{1}{2} \mathcal{A}_d (\delta_\tau \mathbf{V}_h^{n+1} - \delta_\tau \mathbf{V}_h^n, \delta_\tau \mathbf{V}_h^{n+1} - \delta_\tau \mathbf{V}_h^n). \label{eqn206}
\end{aligned}
\end{equation}
Furthermore, by using \eqref{8.169}, and a bound similar to \eqref{8.171}, we deduce that
\begin{equation}
\begin{aligned}
&\mathcal{b}(\delta_\tau \mathbf{V}_h^{n+1}, \delta_\tau p^{n+1}) \\
&\leq \sum_{F \in \mathcal{F}_h} \left|\int_F \{ \delta_\tau p^{n+1} - \pi_h (\delta_\tau p^{n+1}) \} [\delta_\tau \mathbf{V}_h^{n+1}] \cdot \mathbf{n}_F \right|
\leq C h^2 \| \delta_\tau \bm\psi^{n+1} \| \| \delta_\tau p^{n+1} \|_{H^1(\mathcal{O})}  \\
&\leq \varepsilon \mu \| \delta_\tau \bm{\mathcal{E}}_h^{n+1} \|^2 + C \left( \frac{1}{\varepsilon \mu} + 1 \right) \tau^{-1} h^4 \int_{t_n}^{t_{n+1}} | \partial_t p |_{H^1(\mathcal{O})}^2 dt
+ C \tau^{-1} h^{2r+2} \int_{t_n}^{t_{n+1}} | \partial_t \mathbf{u} |_{H^{r+1}(\mathcal{O})}^2 dt.
\end{aligned}
\end{equation}
From \eqref{8.164}, it follows that
\begin{equation*}
-\Delta (\delta_\tau \mathbf{V}_h^{n+1}) + \nabla (\delta_\tau P^{n+1}) = \delta_\tau \bm\psi^{n+1}, \forall n \geq 1.
\end{equation*}
Since we have assumed the domain to be convex, similar to \eqref{8.167} and \eqref{8.188}, we obtain
\begin{equation}
\begin{aligned}
&\| \delta_\tau \mathbf{V}_h^{n+1} \|_{\text{dG}} + |\delta_\tau P_h^{n+1} |_{\text{dG}} \leq C (\| \delta_\tau \mathbf{V}_h^{n+1} \|_{H^2(\mathcal{O})} + | \delta_\tau P^{n+1} |_{H^1(\mathcal{O})}) \\
&\leq C \| \delta_\tau \bm\psi^{n+1} \| \leq C \| \delta_\tau \bm{\mathcal{E}}_h^{n+1} \| + C \tau^{-1/2} h^{r+1} \left( \int_{t_n}^{t_{n+1}} | \partial_t \mathbf{u} |_{H^{r+1}(\mathcal{O})}^2 dt \right)^{1/2}. \label{8.260}
\end{aligned}
\end{equation}
By setting $\mathbf{w}_h = \delta_\tau \Pi_h \mathbf{u}^{n+1} - \delta_\tau Q_h \mathbf{u}^{n+1}$ in \eqref{8.257}, using \eqref{eq4.37} and \eqref{8.260}, we obtain
\begin{equation}
\begin{aligned}
&|\mathcal{A}_d(\delta_\tau \Pi_h \mathbf{u}^{n+1} - \delta_\tau \mathbf{u}^{n+1}, \delta_\tau \mathbf{V}_h^{n+1})| \\
&\leq |\mathcal{b}(\delta_\tau \Pi_h \mathbf{u}^{n+1} - \delta_\tau Q_h \mathbf{u}^{n+1}, \delta_\tau P_h^{n+1})| + |(\delta_\tau \bm\psi^{n+1}, \delta_\tau \Pi_h \mathbf{u}^{n+1} - \delta_\tau Q_h u^{n+1})|  \\
&\leq C \| \delta_\tau \bm\psi^{n+1} \| \| \delta_\tau \Pi_h \mathbf{u}^{n+1} - \delta_\tau Q_h \mathbf{u}^{n+1} \|  \\
&\leq \varepsilon \| \delta_\tau \bm{\mathcal{E}}_h^{n+1} \|^2 + C \left( \frac{1}{\varepsilon} + 1 \right) \tau^{-1} h^{2r+2} \int_{t_n}^{t_{n+1}} | \partial_t \mathbf{u} |_{H^{r+1}(\mathcal{O})}^2 dt.
\end{aligned}
\end{equation}
Next, choosing $\mathbf{w}_h = \delta_\tau \tilde{\bm{\mathcal{E}}}_h^{n+1}$ in \eqref{8.257}, we get
\begin{equation}
\begin{aligned}
\mathcal{A}_d (\delta_\tau \tilde{\bm{\mathcal{E}}}_h^{n+1}, \delta_\tau \mathbf{V}_h^{n+1}) &=  (\delta_\tau \bm\psi^{n+1},\delta_\tau \tilde{\bm{\mathcal{E}}}_h^{n+1}) 
+ \mathcal{b} (\delta_\tau \tilde{\bm{\mathcal{E}}}_h^{n+1}, \delta_\tau P_h^{n+1}) \\ 
& = \| \delta_\tau \bm{\mathcal{E}}_h^{n+1} \|^2 +(\delta_\tau \bm\psi^{n+1},\delta_\tau \tilde{\bm{\mathcal{E}}}_h^{n+1} - \delta_\tau \bm{\mathcal{E}}_h^{n+1}) + \mathcal{b} (\delta_\tau \tilde{\bm{\mathcal{E}}}_h^{n+1} - \delta_\tau \bm{\mathcal{E}}_h^{n+1}, \delta_\tau P_h^{n+1})  \\
& \hspace{2em} + \mathcal{b} (\delta_\tau \bm{\mathcal{E}}_h^{n+1}, \delta_\tau P_h^{n+1})+ (\delta_\tau (\Pi_h \mathbf{u}^{n+1}) - \delta_\tau \mathbf{u}^{n+1}, \delta_\tau \bm{\mathcal{E}}_h^{n+1}).
\end{aligned}
\end{equation}
Use of \eqref{8.237}, \eqref{eq4.37}, and \eqref{8.260} now leads to
\begin{equation}
\begin{aligned}
|(\delta_\tau \bm\psi^{n+1}, \delta_\tau \tilde{\bm{\mathcal{E}}}_h^{n+1} - \delta_\tau \bm{\mathcal{E}}_h^{n+1})| \leq \varepsilon \| \delta_\tau \bm{\mathcal{E}}_h^{n+1} \|^2 + C \left( \frac{1}{\varepsilon} + 1 \right) \tau^2 | \delta_\tau v_h^{n+1} |_{\text{dG}}^2 \\
+ C \tau^{-1} h^{2r+2} \int_{t_n}^{t_{n+1}} | \partial_t \mathbf{u} |_{H^{r+1}(\mathcal{O})}^2 dt.
\end{aligned}
\end{equation}
Similarly, \eqref{eq4.37} together with \eqref{8.260} now yields
\begin{equation}
\begin{aligned}
|\mathcal{b}(\delta_\tau \tilde{\bm{\mathcal{E}}}_h^{n+1} - \delta_\tau \bm{\mathcal{E}}_h^{n+1}, \delta_\tau P_h^{n+1})| \leq \varepsilon \| \delta_\tau \bm{\mathcal{E}}_h^{n+1} \|^2 + C \left( \frac{1}{\varepsilon} + 1 \right) \tau^2 | \delta_\tau v_h^{n+1} |_{\text{dG}}^2 \\
+ C \tau^{-1} h^{2r+2} \int_{t_n}^{t_{n+1}} | \partial_t \mathbf{u} |_{H^{r+1}(\mathcal{O})}^2 dt.
\end{aligned}
\end{equation}
Additionally, from \eqref{8.233}, the CS inequality, and \eqref{eq4.30}, we have
\begin{equation}
\begin{aligned}
& |\mathcal{b}(\delta_\tau \bm{\mathcal{E}}_h^{n+1}, \delta_\tau P_h^{n+1})|+ |(\delta_\tau (\Pi_h \mathbf{u}^{n+1}) - \delta_\tau \mathbf{u}^{n+1}, \delta_\tau \bm{\mathcal{E}}_h^{n+1})|  \\
&\leq  C \tau | \delta_\tau v_h^{n+1} |_{\text{dG}} | \delta_\tau P_h^{n+1} |_{\text{dG}} + C h^{r+1} \| \delta_\tau \bm{\mathcal{E}}_h^{n+1} \| | \delta_\tau \mathbf{u}^{n+1} |_{H^{r+1}(\mathcal{O})}   \\
&\leq \varepsilon \| \delta_\tau \bm{\mathcal{E}}_h^{n+1} \|^2 + C \left( \frac{1}{\varepsilon} + 1 \right) \tau^2 | \delta_\tau v_h^{n+1} |_{\text{dG}}^2
+ C \left( \frac{1}{\varepsilon} + 1 \right) \tau^{-1} h^{2r+2} \int_{t_n}^{t_{n+1}} | \partial_t \mathbf{u} |_{H^{r+1}(\mathcal{O})}^2 dt.
\end{aligned}
\end{equation}
Next, choose $\mathbf{w}_h = q_r^{n+1}(\delta_\tau \tilde{\bm{\mathcal{E}}}_h)$ in \eqref{8.257} to have
\begin{equation}
\begin{aligned}
\mathcal{A}_d (q_r^{n+1}(\delta_\tau \tilde{\bm{\mathcal{E}}}_h), \delta_\tau \mathbf{V}_h^{n+1}) &= (\delta_\tau \bm\psi^{n+1},q_r^{n+1}(\delta_\tau \tilde{\bm{\mathcal{E}}}_h)) + \mathcal{b} (q_r^{n+1}(\delta_\tau \tilde{\bm{\mathcal{E}}}_h), \delta_\tau P_h^{n+1}) \\  &= (\delta_\tau \bm\psi^{n+1},q_r^{n+1}(\delta_\tau \tilde{\bm{\mathcal{E}}}_h-\delta_\tau \bm{\mathcal{E}}_h)) + (\delta_\tau (\Pi_h \mathbf{u}^{n+1}) - \delta_\tau \mathbf{u}^{n+1}, q_r^{n+1}(\delta_\tau \bm{\mathcal{E}}_h)) \\&\hspace{1em} + (\delta_\tau \bm{\mathcal{E}}_h^{n+1}, q_r^{n+1}(\delta_\tau \bm{\mathcal{E}}_h))
+ \mathcal{b} (q_r^{n+1}(\delta_\tau \tilde{\bm{\mathcal{E}}}_h - \delta_\tau \bm{\mathcal{E}}_h), \delta_\tau P_h^{n+1})\\&\hspace{1em} + \mathcal{b} (q_r^{n+1}(\delta_\tau \bm{\mathcal{E}}_h), \delta_\tau P_h^{n+1}).
\end{aligned}
\end{equation}
Combining \eqref{8.237}, \eqref{eq4.37}, and \eqref{8.260}, we obtain
\begin{equation}
\begin{aligned}
|(\delta_\tau \bm\psi^{n+1}, q_r^{n+1}(\delta_\tau \tilde{\bm{\mathcal{E}}}_h - \delta_\tau \bm{\mathcal{E}}_h)| \leq \varepsilon \mu \| \delta_\tau \bm{\mathcal{E}}_h^{n+1} \|^2 + \tilde{C} \left( \frac{1}{\varepsilon \mu} + 1 \right) \tau^3 \sum_{i=1}^{n+1} | \delta_\tau v_h^i |_{\text{dG}}^2 \\
+ \tilde{C} \tau^{-1} h^{2r+2} \int_{t_n}^{t_{n+1}} | \partial_t \mathbf{u} |_{H^{r+1}(\mathcal{O})}^2 dt.
\end{aligned}
\end{equation}
Analogously, \eqref{eq4.37} and \eqref{8.260} leads to
\begin{equation}
\begin{aligned}
|\mathcal{b}(q_r^{n+1}(\delta_\tau \tilde{\bm{\mathcal{E}}}_h - \delta_\tau \bm{\mathcal{E}}_h), \delta_\tau P_h^{n+1})| \leq \varepsilon \mu \| \delta_\tau \bm{\mathcal{E}}_h^{n+1} \|^2 + \tilde{C} \left( \frac{1}{\varepsilon \mu} + 1 \right) \tau^3 \sum_{i=1}^n | \delta_\tau v_h^i |_{\text{dG}}^2 \\
+ \tilde{C} \tau^{-1} h^{2r+2} \int_{t_n}^{t_{n+1}} | \partial_t \mathbf{u} |_{H^{r+1}(\mathcal{O})}^2 dt.
\end{aligned}
\end{equation}
By the CS inequality and Young's inequality, we obtain
\begin{equation}
\begin{aligned}
    \left|(\delta_\tau \bm{\mathcal{E}}_h^{n+1}, q_r^{n+1}(\delta_\tau \bm{\mathcal{E}}_h))\right| &\leq \tilde{C} \tau \|\delta_\tau \bm{\mathcal{E}}_h^{n+1}\| \sum_{i=1}^{n+1}\|\delta_\tau \bm{\mathcal{E}}_h^i\| \\
    &\leq \varepsilon \mu \|\delta_\tau \bm{\mathcal{E}}_h^{n+1}\|^2 + \frac{\tilde{C}}{\varepsilon \mu} \tau \sum_{i=1}^{n+1}\|\delta_\tau \bm{\mathcal{E}}_h^i\|^2. 
\end{aligned}
\end{equation}
Further, use of \eqref{8.233}, the CS inequality, and \eqref{eq4.30}, yields
\begin{equation}
\begin{aligned}
&|(\delta_\tau (\Pi_h \mathbf{u}^{n+1}) - \delta_\tau \mathbf{u}^{n+1}, q_r^{n+1}(\delta_\tau \bm{\mathcal{E}}_h)| + |\mathcal{b}(q_r^{n+1}(\delta_\tau \bm{\mathcal{E}}_h), \delta_\tau P_h^{n+1})| \\
&\leq \tilde{C} h^{r+1} | \delta_\tau \mathbf{u}^{n+1} |_{H^{r+1}(\mathcal{O})} \sum_{i=1}^{n+1}\| \delta_\tau \bm{\mathcal{E}}_h^i \| + \tilde{C} \tau\sum_{i=1}^{n+1} | \delta_\tau v_h^i |_{\text{dG}} | \delta_\tau P_h^{n+1} |_{\text{dG}} \\
&\leq \varepsilon \mu \| \delta_\tau \bm{\mathcal{E}}_h^{n+1} \|^2 + \tau \sum_{i=1}^{n+1}\| \delta_\tau \bm{\mathcal{E}}_h^i \|^2 + \tilde{C} \left( \frac{1}{\varepsilon \mu} + 1 \right) \tau^3\sum_{i=1}^{n+1} | \delta_\tau v_h^i |_{\text{dG}}^2 \\
&\hspace{2em} + \tilde{C} \tau^{-1} h^{2r+2} \int_{t_n}^{t_{n+1}} | \partial_t \mathbf{u} |_{H^{r+1}(\mathcal{O})}^2 dt.
\end{aligned}
\end{equation}
Next, to handle the term $\hat{R}_S(\delta_\tau \mathbf{V}_h^{n+1})$, we proceed similarly as previous lemma to arrive at
\begin{equation}
\begin{aligned}
&\left|\hat{R}_S(\delta_\tau \mathbf{V}_h^{n+1})\right|  = \left|\mathcal{A}_d\left(q_r^{n+1}(\mathbf{u}), \delta_\tau \mathbf{V}_h^{n+1}\right) -\int_0^{t_{n+1}} \beta(t_{n+1}-s) \mathcal{A}_d(\mathbf{u},\delta_\tau \mathbf{V}_h^{n+1}) ds\right| \\
    &\hspace{2em} +\left|\mathcal{A}_d\left(q_r^n(\mathbf{u}), \delta_\tau \mathbf{V}_h^{n+1}\right) -\int_0^{t_n} \beta(t_n-s) \mathcal{A}_d(\mathbf{u},\delta_\tau \mathbf{V}_h^{n+1}) ds\right| \\
    &\leq \tilde{C} \tau^2 \int_0^{t_{n+1}} e^{-2\eta(t_{n+1-}t)}\left(\|\mathbf{u}(t)\|_{H^1(\mathcal{O})}^2+h^2\|\mathbf{u}(t)\|_{H^2(\mathcal{O})}^2+\|\mathbf{u}_t(t)\|_{H^1(\mathcal{O})}^2+h^2\|\mathbf{u}_t(t)\|_{H^2(\mathcal{O})}^2\right)dt\\
    &\hspace{2em}+\tilde{C} \tau^2 \int_0^{t_n} e^{-2\eta(t_n-t)}\left(\|\mathbf{u}(t)\|_{H^1(\mathcal{O})}^2+h^2\|\mathbf{u}(t)\|_{H^2(\mathcal{O})}^2+\|\mathbf{u}_t(t)\|_{H^1(\mathcal{O})}^2+h^2\|\mathbf{u}_t(t)\|_{H^2(\mathcal{O})}^2\right)dt\\
    &\hspace{2em} + \tilde{C}\|\delta_\tau \mathbf{V}_h^{n+1}\|_{\text{dG}}^2.
\end{aligned}
\end{equation}
To handle the term $\bar{R}_t(\delta_\tau \mathbf{V}_h^{n+1})$, we define the function $\mathbf{J}^n = \tau (\partial_t \mathbf{u})^n - (\mathbf{u}^n - \mathbf{u}^{n-1})$, $\forall n \geq 1$. Note that, $\mathbf{J}^n \in \mathbf{M}$. Thus, we can write
\begin{equation}
\bar{R}_t(\delta_\tau \mathbf{V}_h^{n+1}) = \frac{1}{\tau} (\mathbf{J}^{n+1}, \delta_\tau \mathbf{V}_h^{n+1}) - \frac{1}{\tau} (\mathbf{J}^n, \delta_\tau \mathbf{V}_h^n) - \frac{1}{\tau} (\mathbf{J}^n, \delta_\tau \mathbf{V}_h^{n+1} - \delta_\tau \mathbf{V}_h^{n}).
\end{equation}
Using the CS inequality, \eqref{eq6.102}, and Taylor expansions, we obtain
\begin{equation}
\begin{aligned}
|(\mathbf{J}^n, \delta_\tau \mathbf{V}_h^{n+1} - \delta_\tau \mathbf{V}_h^{n})| &\leq C \| \mathbf{J}^n \| \| \delta_\tau \mathbf{V}_h^{n+1} - \delta_\tau \mathbf{V}_h^n \|_{\text{dG}} \\
&\leq C \tau^2 \int_{t_n}^{t_{n+1}} \| \partial_{tt} \mathbf{u} \|^2 dt+ \frac{1}{8} \tau \| \delta_\tau \mathbf{V}_h^{n+1} - \delta_\tau \mathbf{V}_h^{n} \|_{\text{dG}}^2. \label{eqn221}
\end{aligned}   
\end{equation}
Using the above bounds \eqref{eqn206}-\eqref{eqn221} and the coercivity of $\mathcal{A}_d$ \eqref{eq3.20}, inequality \eqref{8.258} becomes
\begin{equation}
\begin{aligned}
&\frac{1}{2} \left( \mathcal{A}_d (\delta_\tau \mathbf{V}_h^{n+1}, \delta_\tau \mathbf{V}_h^{n+1}) - \mathcal{A}_d (\delta_\tau \mathbf{V}_h^n, \delta_\tau \mathbf{V}_h^n) \right) + \frac{1}{8} \| \delta_\tau \mathbf{V}_h^{n+1} - \delta_\tau \mathbf{V}_h^n \|_{\text{dG}}^2 
+ (1 -9  \varepsilon) \tau \mu \| \delta_\tau \bm{\mathcal{E}}_h^{n+1} \|^2 \\
&\leq \tilde{C} \left( \frac{1}{\varepsilon} + 1 \right) \tau^3 | \delta_\tau v_h^{n+1} |_{\text{dG}}^2 + C \tau \int_{t_{n-1}}^{t_n} \| \partial_{tt} \mathbf{u} \|^2 dt+ \tilde{C} \left( \frac{1}{\varepsilon \mu} + 1 \right) \tau^4 \sum_{i=1}^{n+1} | \delta_\tau v_h^i |_{\text{dG}}^2 \\
&+ \tilde{C} \tau \|\delta_\tau \mathbf{V}_h^{n+1}\|_{\text{dG}}^2 
+ \tilde{C} \left( \frac{1}{\varepsilon} + 1 \right) h^{2r+2} \int_{t_n}^{t_{n+1}} | \partial_t \mathbf{u} |_{H^{r+1}(\mathcal{O})}^2 dt +\left(\frac{\tilde{C}}{\varepsilon}+1\right) \tau^2 \sum_{i=1}^{n+1} \|\delta_\tau \bm{\mathcal{E}}_h^i\|^2\\
&+ C \left( \frac{1}{\varepsilon \mu} + 1 \right) h^4 \int_{t_n}^{t_{n+1}} | \partial_t p |_{H^1(\mathcal{O})}^2 dt + \mathcal{L}_1 (\delta_\tau \mathbf{V}_h^{n+1}) + \mathcal{L}_2 (\delta_\tau \mathbf{V}_h^{n+1})\\
&+ \tilde{C} \tau^3 \int_0^{t_{n+1}} e^{-2\eta(t_{n+1-}t)}\left(\|\mathbf{u}(t)\|_{H^1(\mathcal{O})}^2+h^2\|\mathbf{u}(t)\|_{H^2(\mathcal{O})}^2+\|\mathbf{u}_t(t)\|_{H^1(\mathcal{O})}^2+h^2\|\mathbf{u}_t(t)\|_{H^2(\mathcal{O})}^2\right)dt\\
&+\tilde{C} \tau^3 \int_0^{t_n} e^{-2\eta(t_n-t)}\left(\|\mathbf{u}(t)\|_{H^1(\mathcal{O})}^2+h^2\|\mathbf{u}(t)\|_{H^2(\mathcal{O})}^2+\|\mathbf{u}_t(t)\|_{H^1(\mathcal{O})}^2+h^2\|\mathbf{u}_t(t)\|_{H^2(\mathcal{O})}^2\right)dt \\ 
& + \frac{1}{\tau} (\mathbf{J}^{n+1}, \delta_\tau \mathbf{V}_h^{n+1}) - \frac{1}{\tau} (\mathbf{J}^n, \delta_\tau \mathbf{V}_h^n). \label{8.266}
\end{aligned}
\end{equation}
For the nonlinear terms, we apply the same technique used in \cite{masri2023improved}. We choose $\varepsilon = \frac{1}{28}$, then use the estimates of Lemma 11, Lemma 14, and the condition \eqref{8.223} to obtain
\begin{equation}
\begin{aligned}
   &\sum_{n=1}^m|\mathcal{L}_1 (\delta_\tau \mathbf{V}_h^{n+1})| + \sum_{n=1}^m|\mathcal{L}_2 (\delta_\tau \mathbf{V}_h^{n+1})| \leq \frac{1}{7}\tau \mu \sum_{n=1}^m \|\delta_\tau \bm{\mathcal{E}}_h^{n+1}\|^2 \\&+C\left(1+\frac{1}{\mu}\right)\tau \sum_{n=1}^m\|\delta_\tau \mathbf{V}_h^{n+1}\|_\text{dG}^2+\left(\tilde{C}_T + \widehat{C}_T(1+\tilde{C})\right)(\tau+h^{2r}). \label{8.267}
\end{aligned}
\end{equation}
Next, combining the last two terms in \eqref{8.266} results in
\begin{equation}
\begin{aligned}
&\frac{1}{\tau} \left(\mathbf{J}^{m+1}, \delta_\tau \mathbf{V}_h^{m+1} \right) - \frac{1}{\tau} \left(\mathbf{J}^1, \delta_\tau \mathbf{V}_h^1 \right) \\
&\leq \frac{1}{8} \left( \|\delta_\tau \mathbf{V}_h^{m+1}\|^2_\text{dG} + \|\delta_\tau \mathbf{V}_h^1\|^2_\text{dG} \right) + C \tau \int_{t_m}^{t_{m+1}} \|\partial_{tt} \mathbf{u}\|^2 dt + C \tau \int_{t_0}^{t_1} \|\partial_{tt} \mathbf{u}\|^2 dt. \label{8.268}
\end{aligned}
\end{equation}
We sum \eqref{8.266} over $n=1$ to $m$ and use Lemma 11, Lemma 14, the bounds \eqref{8.267}, \eqref{8.268} the continuity and coercivity of $\mathcal{A}_d$, the assumption \eqref{8.223} to obtain
\begin{align*}
&\frac{1}{8} \|\delta_\tau \mathbf{V}_h^{m+1}\|^2_\text{dG} + \frac{\mu}{2} \tau \sum_{n=1}^m \|\delta_\tau \bm{\mathcal{E}}_h^{n+1}\|^2 \leq \left(\tilde{C}_T + \widehat{C}_T(1+\tilde{C})\right) (\tau + h^{2r}) \\
&+ C \|\delta_\tau \mathbf{V}_h^1\|^2_\text{dG} + \tilde{C} \tau \sum_{n=1}^m \|\delta_\tau \mathbf{V}_h^{n+1}\|^2_\text{dG} + \tilde{C}\tau^2 \sum_{n=1}^m \sum_{i=1}^{n+1} \|\delta_\tau \bm{\mathcal{E}}_h^i\|^2 \\
&+\tilde{C} \tau^3 \sum_{n=1}^m \int_0^{t_n}e^{-2\eta(t_n-t)}\left(\|\mathbf{u}(t)\|_{H^1(\mathcal{O})}^2+h^2\|\mathbf{u}(t)\|_{H^2(\mathcal{O})}^2+\|\mathbf{u}_t(t)\|_{H^1(\mathcal{O})}^2+h^2\|\mathbf{u}_t(t)\|_{H^2(\mathcal{O})}^2\right)dt\\
&+\tilde{C} \tau^3 \sum_{n=1}^m \int_0^{t_{n+1}}e^{-2\eta(t_{n+1}-t)}\left(\|\mathbf{u}(t)\|_{H^1(\mathcal{O})}^2+h^2\|\mathbf{u}(t)\|_{H^2(\mathcal{O})}^2+\|\mathbf{u}_t(t)\|_{H^1(\mathcal{O})}^2+h^2\|\mathbf{u}_t(t)\|_{H^2(\mathcal{O})}^2\right)dt.
\end{align*} 
Apply the bounds \eqref{269} and \eqref{270} and the discrete Gronwall's lemma to have
\begin{equation}
\begin{aligned}
\frac{1}{8} \|\delta_\tau \mathbf{V}_h^{m+1}\|^2_\text{dG} + \frac{\mu}{2} \tau \sum_{n=1}^m \|\delta_\tau \bm{\mathcal{E}}_h^{n+1}\|^2 \leq \left(\tilde{C}_T + \widehat{C}_T(1+\tilde{C})\right) (\tau + h^{2r}) \\
+ \tilde{C}_T \|\delta_\tau \mathbf{V}_h^1\|^2_\text{dG} . \label{290}
\end{aligned}
\end{equation}
Next, we consider the term $\|\delta_\tau \mathbf{V}_h^1\|^2_\text{dG}$. With $n=1$ in \eqref{8.175}, we use the following alternative bounds for \eqref{8.181} and the bound on $\tilde{R}_C(\mathbf{V}_h^n)$ \eqref{8.216}, derived by applying Young's inequality differently, to have
\begin{align*}
|\tilde{R}_C(\mathbf{V}_h^1)| &\leq 7 \varepsilon \mu \|\bm{\mathcal{E}}_h^1\|^2 + 2 \varepsilon \|\bm{\mathcal{E}}_h^0 - \bm{\mathcal{E}}_h^1\|^2 
+ C \left( \frac{h^2}{\varepsilon \mu} + 1 \right) \left( \tau^2 |v_h^1|_{\text{dG}}^2 + h^{2r+2} \right) \\
&\hspace{2em} + C \left( \frac{1}{\varepsilon \mu} + 1 \right) \left( h^2 (\|\bm{\mathcal{E}}_h^1\|^2 + \|\bm{\mathcal{E}}_h^0\|^2) + (\|\bm{\mathcal{E}}_h^0\|^2 + h^2) \|\tilde{\bm{\mathcal{E}}}_h^1\|_{\text{dG}}^2 \right) \\
&\hspace{2em} + C \tau^2 \int_{t_0}^{t_1} \|\partial_t \mathbf{u}\|^2 \, dt + \frac{C}{\varepsilon \mu}\|\mathbf{V}_h^1\|_{\text{dG}}^2+\frac{1}{16}\tau^{-1}\|\mathbf{V}_h^1\|_{\text{dG}}^2. \\
|(\tau(\partial_t \mathbf{u})^1 &- (\mathbf{u}^1 - \mathbf{u}^0), \mathbf{V}_h^1)| \leq C \tau^3 \int_{t_0}^{t_1} \|\partial_{tt} \mathbf{u}\|^2 dt+ \frac{1}{16} \|\mathbf{V}_h^1\|^2_\text{dG}.
\end{align*}
However, we retain other bounds and expressions in the same way as in the proof of Theorem 3. We use \eqref{8.223}, \eqref{8.255}, and the observation that $\mathbf{V}_h^0 = 0$ to obtain
\begin{align*}
\frac{1}{16} \|\mathbf{V}_h^1\|^2_\text{dG}
&\leq \tilde{C} \tau \left(h^{2r+2} + \tau^2 |v_h^1|^2_\text{dG} + \|\bm{\mathcal{E}}_h^1 - \bm{\mathcal{E}}_h^0\|^2 + h^2 (\|\tilde{\bm{\mathcal{E}}}_h^1\|^2_\text{dG} + \|\bm{\mathcal{E}}_h^1\|^2)\right) + \tilde{C} \tau \|\mathbf{V}_h^1\|^2_\text{dG}\\
&\leq \tilde{C} \tau^3.
\end{align*}
Hence, $\|\delta_\tau \mathbf{V}_h^1\|^2_\text{dG} \leq \tilde{C}_T \tau$. With this bound, \eqref{290} yields the result.
\end{proof}

\section{\textit{A priori} bound for the pressure}
\begin{theorem}
Under the same assumptions of Lemma 14, the following error estimate for the pressure holds. For $1 \leq m \leq N$:
\begin{equation*}
\tau \sum_{n=1}^{m} \| p^n - p_h^n \|^2 \leq \check{C}_T (\tau + h^{2r}),
\end{equation*}
where $\check{C}_T$ independent of $h$ and $\tau$ such that, $\check{C}_T = \tilde{C} \bar{C}_T + \tilde{C}$.
\end{theorem}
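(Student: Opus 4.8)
The plan is to recover the pressure error from the momentum error equation \eqref{7.105} by testing against discrete velocities and invoking the discrete inf-sup condition, which for the pair $\mathbf{M}_h\times P_h^0$ with $r_2=r_1-1$ provides a constant $C>0$, independent of $h$ and $\tau$, such that $\norm{q_h}\leq C\,\sup_{\mathbf{w}_h\in\mathbf{M}_h}\mathcal{b}(\mathbf{w}_h,q_h)/\norm{\mathbf{w}_h}_\text{dG}$ for every $q_h\in P_h^0$. First I would split $\norm{p^n-p_h^n}\leq\norm{p^n-\pi_h p^n}+\norm{\pi_h p^n-p_h^n}$; the first term is $O(h^r|p^n|_{H^r(\mathcal{O})})$ by \eqref{eq6.101}, contributing $O(h^{2r})$ after summing in time under (\textbf{A2}). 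Since constants lie in $P_h$, the projection $\pi_h p^n$ preserves the zero average \eqref{eq3.4}, so $\pi_h p^n-p_h^n\in P_h^0$ by Lemma 1, and the inf-sup condition reduces the task to bounding $\mathcal{b}(\mathbf{w}_h,\pi_h p^n-p_h^n)$ uniformly in $\mathbf{w}_h$.

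I would then decompose $\mathcal{b}(\mathbf{w}_h,\pi_h p^n-p_h^n)=\mathcal{b}(\mathbf{w}_h,\pi_h p^n-p^n)+\mathcal{b}(\mathbf{w}_h,p^n-p_h^{n-1})+\mathcal{b}(\mathbf{w}_h,p_h^{n-1}-p_h^n)$. The first piece reduces to face jump integrals, its volume part vanishing because $\nabla_h\cdot\mathbf{w}_h\in P_h$ is $L^2$-orthogonal to $\pi_h p^n-p^n$, and is controlled by $Ch^r|p^n|_{H^r(\mathcal{O})}\norm{\mathbf{w}_h}_\text{dG}$ exactly as in \eqref{eq:6.61}. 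For the last piece I would use $\xi_h^n-\xi_h^{n-1}=v_h^n$ (see \eqref{eq6.71}) together with \eqref{eq6.55} to write $p_h^{n-1}-p_h^n=(S_h^n-S_h^{n-1})-v_h^n$, bounding $\mathcal{b}(\mathbf{w}_h,v_h^n)$ by \eqref{eq4.37} and $\mathcal{b}(\mathbf{w}_h,S_h^n-S_h^{n-1})$ through \eqref{eq4.31}; since $S_h^n-S_h^{n-1}$ is expressed in $\tilde{\bm{\mathcal{E}}}_h^n$ exactly as in the derivation of \eqref{6.137}, the summed contributions $\tau\sum|v_h^n|_\text{dG}^2$ and $\tau\sum\norm{S_h^n-S_h^{n-1}}^2$ are $O(\tau+h^{2r})$ by Lemma 11. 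The central piece $\mathcal{b}(\mathbf{w}_h,p^n-p_h^{n-1})$ comes directly from \eqref{7.105}: solving for it expresses it as $-\tau^{-1}(\tilde{\bm{\mathcal{E}}}_h^n-\bm{\mathcal{E}}_h^{n-1},\mathbf{w}_h)$ plus the convection term, the consistency errors $R_C,R_S,R_t$, and the viscous and memory approximation terms $\mu\mathcal{A}_\epsilon(\tilde{\bm{\mathcal{E}}}_h^n,\mathbf{w}_h)$, $\mathcal{A}_\epsilon(q_r^n(\tilde{\bm{\mathcal{E}}}_h),\mathbf{w}_h)$, $\mu\mathcal{A}_\epsilon(\Pi_h\mathbf{u}^n-\mathbf{u}^n,\mathbf{w}_h)$ and $\mathcal{A}_\epsilon(q_r^n(\Pi_h\mathbf{u}-\mathbf{u}),\mathbf{w}_h)$. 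Each viscous and memory term is handled by the continuity \eqref{eq4.23}, the approximation \eqref{eq:6.97} and Lemma 11, while the consistency contributions reuse the bounds from the proof of Theorem 2 (such as \eqref{eqn125}) and Lemma 10, and are $O(\tau)+O(h^r)$ in the summed sense.

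The decisive term is $\tau^{-1}(\tilde{\bm{\mathcal{E}}}_h^n-\bm{\mathcal{E}}_h^{n-1},\mathbf{w}_h)$, and here the $1/\tau$ factor is the main obstacle: it is precisely the reason the discrete-time-derivative machinery of Section 7 was built. I would split $\tilde{\bm{\mathcal{E}}}_h^n-\bm{\mathcal{E}}_h^{n-1}=(\tilde{\bm{\mathcal{E}}}_h^n-\bm{\mathcal{E}}_h^n)+(\bm{\mathcal{E}}_h^n-\bm{\mathcal{E}}_h^{n-1})$, use \eqref{eq:6.110} to identify $\tau^{-1}(\tilde{\bm{\mathcal{E}}}_h^n-\bm{\mathcal{E}}_h^n,\mathbf{w}_h)=-\mathcal{b}(\mathbf{w}_h,v_h^n)$, and recognize $\tau^{-1}(\bm{\mathcal{E}}_h^n-\bm{\mathcal{E}}_h^{n-1},\mathbf{w}_h)=(\delta_\tau\bm{\mathcal{E}}_h^n,\mathbf{w}_h)$, which by \eqref{eq6.54} is at most $C\norm{\delta_\tau\bm{\mathcal{E}}_h^n}\norm{\mathbf{w}_h}_\text{dG}$. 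After squaring, multiplying by $\tau$ and summing, this produces $\tau\sum_n\norm{\delta_\tau\bm{\mathcal{E}}_h^n}^2$, which is exactly $O(\tau+h^{2r})$ by Lemma 15 (after an index shift, the missing $n=1$ contribution being controlled as in the proof of Lemma 14); this is why $\check{C}_T$ inherits the factor $\bar{C}_T$. Collecting all bounds, dividing by $\norm{\mathbf{w}_h}_\text{dG}$, squaring, multiplying by $\tau$ and summing over $n$, the inf-sup estimate yields $\tau\sum_n\norm{\pi_h p^n-p_h^n}^2\leq\check{C}_T(\tau+h^{2r})$, and the triangle inequality gives the claim. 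For the nonsymmetric form of $\mathcal{A}_\epsilon$ the memory terms generate the familiar convolution sums, which I would absorb by H\"older's inequality and the discrete Gronwall lemma exactly as in \eqref{eqn58} and \eqref{eq6.89}.
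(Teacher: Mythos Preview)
Your proposal is correct and follows essentially the same route as the paper. The paper streamlines your three-part decomposition by first substituting \eqref{eq:6.110} and \eqref{eq:6.111} into \eqref{7.105} so that $\mathcal{b}(\mathbf{w}_h,p_h^n-\pi_h p^n)$ appears directly on the right-hand side, and then tests against a specific $\bm\zeta_h^n\in\widetilde{\mathbf{M}}_h$ (the subspace with continuous normal components) furnished by the inf-sup condition; but algebraically this is exactly your cancellation of the two $\mathcal{b}(\mathbf{w}_h,v_h^n)$ contributions coming from $\tilde{\bm{\mathcal{E}}}_h^n-\bm{\mathcal{E}}_h^n$ and from $p_h^{n-1}-p_h^n$, together with the identification of $S_h^n-S_h^{n-1}$ with the divergence-correction terms. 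The decisive estimate---$\tau\sum_n\|\delta_\tau\bm{\mathcal{E}}_h^n\|^2\leq \bar{C}_T(\tau+h^{2r})$ from Lemma~15---is invoked in the same place and for the same reason in both arguments. Two small caveats: Lemma~10 as stated bounds $R_C(\tilde{\bm{\mathcal{E}}}_h^n)$ only, so for $R_C(\mathbf{w}_h)$ with arbitrary $\mathbf{w}_h$ you need the ingredients of Lemma~9 directly (as the paper does); and the nonsymmetric remark at the end is moot, since Lemmas~14--15 and hence this theorem are proved only for the symmetric form $\mathcal{A}_d$.
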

\begin{proof}
Let $\widetilde{\mathbf{M}}_h$ be a subspace of $\mathbf{M}_h$ such that
\begin{equation*}
\widetilde{\mathbf{M}}_h = \{ \mathbf{w}_h \in \mathbf{M}_h : \forall F \in \mathcal{F}_h, [\mathbf{w}_h] \cdot \mathbf{n}_F = 0 \}.
\end{equation*}
There is a positive constant $\beta^*$, independent of $h$, satisfying
\begin{equation*}
\inf_{g_h \in P_h} \sup_{\widetilde{\mathbf{u}}_h \in \widetilde{\mathbf{M}}_h} \frac{-\mathcal{b}(\widetilde{\mathbf{u}}_h, g_h)}{\| \widetilde{\mathbf{u}}_h \|_\text{dG} \| g_h \|} \geq \beta^*.
\end{equation*}
One can find the proof of this in Theorem 6.8 in \cite{riviere2008discontinuous}.
As a consequence of the above inf-sup condition, there exists a unique $\bm\zeta_h^n \in \widetilde{\mathbf{M}}_h$ such that
\begin{equation}
\mathcal{b}(\bm\zeta_h^n, p_h^n - \pi_h p^n) = \| p_h^n - \pi_h p^n \|^2, \quad \| \bm\zeta_h^n \|_\text{dG} \leq \frac{1}{\beta^*} \| p_h^n - \pi_h p^n \|, \quad \forall n \geq 0. \label{8.274}
\end{equation}
We substitute \eqref{eq:6.110} in \eqref{7.105}. Using \eqref{eq:6.111}, we obtain for all $\mathbf{w}_h \in \mathbf{M}_h$,
\begin{align*}
(\bm{\mathcal{E}}_h^n - \bm{\mathcal{E}}_h^{n-1}, \mathbf{w}_h) &+ \tau \mathcal{A}_C(\mathbf{V}_h^{n-1}; \mathbf{V}_h^{n-1}, \tilde{\bm{\mathcal{E}}}_h^n, \mathbf{w}_h) + \tau R_C(\mathbf{w}_h) + \tau \mu \mathcal{A}_d(\tilde{\bm{\mathcal{E}}}_h^n, \mathbf{w}_h) + \tau \mathcal{A}_d(q_r^n(\tilde{\bm{\mathcal{E}}}_h),\mathbf{w}_h) \\
&+ \tau R_S(\mathbf{w}_h) = \tau \mathcal{b}(\mathbf{w}_h, p_h^n - \pi_h p^n) - \tau \mathcal{b}(\mathbf{w}_h, p^n - \pi_h p^n) - \tau \mu \mathcal{A}_d(\Pi_h \mathbf{u}^n - \mathbf{u}^n, \mathbf{w}_h)\\
&- \tau \mathcal{A}_d(q_r^n(\Pi_h \mathbf{u} - \mathbf{u}), \mathbf{w}_h) 
+ \tau \delta \mu \mathcal{b}(\mathbf{w}_h, \nabla_h \cdot \tilde{\bm{\mathcal{E}}}_h^n - R_h([\tilde{\bm{\mathcal{E}}}_h^n])) + \tau \delta \mathcal{b}(\mathbf{w}_h, \nabla_h \cdot q_r^n(\tilde{\bm{\mathcal{E}}}_h) \\
&- R_h([q_r^n(\tilde{\bm{\mathcal{E}}}_h)]))  + R_t(\mathbf{w}_h).
\end{align*}
Choosing $\mathbf{w}_h = \bm\zeta_h^n$ in the above and using \eqref{8.274} results in
\begin{align}
\tau \| p_h^n - \pi_h p^n \|^2 = (\bm{\mathcal{E}}_h^n - \bm{\mathcal{E}}_h^{n-1}, \bm\zeta_h^n) + \tau \mathcal{A}_C(\mathbf{V}_h^{n-1}; \mathbf{V}_h^{n-1}, \tilde{\bm{\mathcal{E}}}_h^n, \bm\zeta_h^n) + \tau R_C(\bm\zeta_h^n) + \tau \mathcal{A}_d(q_r^n(\tilde{\bm{\mathcal{E}}}_h),\bm\zeta_h^n) \notag \\
 + \tau R_S(\bm\zeta_h^n) + \tau \mu \mathcal{A}_d(\tilde{\bm{\mathcal{E}}}_h^n, \bm\zeta_h^n) + \tau \mathcal{b}(\bm\zeta_h^n, p^n - \pi_h p^n) + \tau \mu \mathcal{A}_d(\Pi_h \mathbf{u}^n - \mathbf{u}^n, \bm\zeta_h^n)- \tau \mathcal{A}_d(q_r^n(\Pi_h \mathbf{u} - \mathbf{u}), \bm\zeta_h^n) \notag \\
- \tau \delta \mu \mathcal{b}(\bm\zeta_h^n, \nabla_h \cdot \tilde{\bm{\mathcal{E}}}_h^n - R_h([\tilde{\bm{\mathcal{E}}}_h^n])) - \tau \delta \mathcal{b}(\bm\zeta_h^n, \nabla_h \cdot q_r^n(\tilde{\bm{\mathcal{E}}}_h) - R_h([q_r^n(\tilde{\bm{\mathcal{E}}}_h)])) - R_t(\bm\zeta_h^n). \label{8.275}
\end{align}
We bound the first term in the RHS of the above equality by applying the CS inequality, \eqref{8.274}, and Young's inequality as
\begin{equation}
| (\bm{\mathcal{E}}_h^n - \bm{\mathcal{E}}_h^{n-1}, \bm\zeta_h^n) | \leq C\tau \| \delta_\tau \bm{\mathcal{E}}_h^n \|^2 + \frac{\tau}{16} \| p_h^n - \pi_h p^n \|^2. \label{eqn228}
\end{equation}
From Lemma 8 of \cite{masri2023improved}, we have
\begin{equation*}
|\mathcal{A}_C(\mathbf{V}_h^{n-1}; \mathbf{V}_h^{n-1}, \tilde{\bm{\mathcal{E}}}_h^n, \bm\zeta_h^n)| \leq C \| \mathbf{V}_h^{n-1} \|_{L^3(\mathcal{O})} \| \tilde{\bm{\mathcal{E}}}_h^n \|_\text{dG} \| \bm\zeta_h^n \|_\text{dG}.
\end{equation*}
Under the assumption on regularity (\textbf{A2}), using Lemma 11, \eqref{eq:6.96}, \eqref{8.157} and the assumption \eqref{8.223}, we have
\begin{align*}
\| \mathbf{V}_h^{n-1} \|_{L^3(\mathcal{O})}^2 &\leq 2 \| \bm{\mathcal{E}}_h^{n-1} \|_{L^3(\mathcal{O})}^2 + 2 \| \Pi_h \mathbf{u}^{n-1} \|_{L^3(\mathcal{O})}^2 \\
&\leq \tilde{C} h^{-d/3}(\tau + h^{2r}) + C \| \mathbf{u}^{n-1} \|_{W^{1,3}(\mathcal{O})}^2 \leq \tilde{C}.
\end{align*}
Thus, making use of Young's inequality and \eqref{8.274}, we obtain
\begin{equation}
|\mathcal{A}_c(\mathbf{V}_h^{n-1}; \mathbf{V}_h^{n-1}, \tilde{\bm{\mathcal{E}}}_h^n, \bm\zeta_h^n)| \leq \tilde{C} \| \tilde{\bm{\mathcal{E}}}_h^n \|_\text{dG}^2 + \frac{1}{16} \| p_h^n - \pi_h p^n \|^2.
\end{equation}
To obtain bound for the other nonlinear terms, we apply Lemma 8. For this, we note that
\begin{align*}
R_C(\bm\zeta_h^n) &= \mathcal{A}_c(\mathbf{V}_h^{n-1}; \mathbf{V}_h^{n-1}, \Pi_h \mathbf{u}^n - \mathbf{u}^n, \bm\zeta_h^n) + \mathcal{A}_c(\mathbf{V}_h^n; \bm{\mathcal{E}}_h^{n-1}, \mathbf{u}^n, \bm\zeta_h^n) \\
&\hspace{2em}+ \mathcal{A}_c(\mathbf{V}_h^n; \Pi_h \mathbf{u}^{n-1} - \mathbf{u}^{n-1}, \mathbf{u}^n, \bm\zeta_h^n) + \mathcal{A}_c(\mathbf{u}^n; \mathbf{u}^{n-1} - \mathbf{u}^n, \mathbf{u}^n, \bm\zeta_h^n) = \sum_{i=1}^4 L_i^n.
\end{align*}
Further, we split $L_1^n$ as follows:
\begin{align*}
L_1^n &= \mathscr{C} (\bm{\mathcal{E}}_h^{n-1}, \Pi_h \mathbf{u}^n - \mathbf{u}^n, \bm\zeta_h^n) + \mathscr{C} (\Pi_h \mathbf{u}^{n-1}, \Pi_h \mathbf{u}^n - \mathbf{u}^n, \bm\zeta_h^n) \\
&\hspace{2em} - \mathscr{U} (\mathbf{V}_h^{n-1}; \bm{\mathcal{E}}_h^{n-1}, \Pi_h \mathbf{u}^n - \mathbf{u}^n, \bm\zeta_h^n) - \mathscr{U} (\mathbf{V}_h^{n-1}; \Pi_h \mathbf{u}^{n-1}, \Pi_h \mathbf{u}^n - \mathbf{u}^n, \bm\zeta_h^n).
\end{align*}
We apply \eqref{4.118} for the first term, \eqref{4.120} for the second and fourth terms, and \eqref{4.121} for the third term, to obtain
\begin{equation}
|L_1^n| \leq C (\| \bm{\mathcal{E}}_h^{n-1} \| + h^r \| \mathbf{u}^n \|_{H^{r+1}(\mathcal{O})} ) \| \bm\zeta_h^n \|_\text{dG}. \notag
\end{equation}
We utilize \eqref{4.117} and \eqref{4.119} to bound the terms $L_2^n$ and $L_3^n$, respectively. For the term $L_4^n$ we proceed as follows:
\begin{equation}
L_4^n = \int_\mathcal{O} (\mathbf{u}^{n-1} - \mathbf{u}^n) \cdot \nabla u^n \cdot \bm\zeta_h^n \leq C \| \mathbf{u}^n - \mathbf{u}^{n-1} \| |\mathbf{u}^n|_{W^{1,3}(\mathcal{O})} \| \bm\zeta_h^n \|_\text{dG}. \notag
\end{equation}
Under the assumption on regularity (\textbf{A2}), we obtain
\begin{equation}
|R_C(\bm\zeta_h^n)| \leq C ( \| \bm{\mathcal{E}}_h^{n-1} \|^2 + h^{2r}) + C \tau \int_{t_{n-1}}^{t_n} \| \partial_t \mathbf{u} \|^2dt + \frac{1}{32} \| p_h^n - \pi_h p^n \|^2.
\end{equation}
Following arguments similar to the one used in \cite{masri2022discontinuous}, we arrive at
\begin{equation}
|R_t(\bm\zeta_h^n)| \leq \frac{1}{16} \tau \| p_h^n - \pi_h p^n \|^2 + C \tau^2 \int_{t_{n-1}}^{t_n} \| \partial_{tt} \mathbf{u} \|^2 dt + C h^{2r+2} \int_{t_{n-1}}^{t_n} \| \partial_t \mathbf{u} \|_{H^{r+1}(\mathcal{O})}^2 dt.
\end{equation}
Under the regularity assumption (\textbf{A2}), the bound for $R_S(\bm\zeta_h^n)$ can be obtained as
\begin{equation}
\begin{aligned}
&|R_C(\bm\zeta_h^n)| =\left|\mathcal{A}_\epsilon(q_r^n(\mathbf{u}),\tilde{\bm{\mathcal{E}}}_h^n) - \int_0^{t_n} \beta(t_n-s) \mathcal{A}_\epsilon(\mathbf{u}(s), \bm\zeta_h^n) ds\right| \\ 
& \leq \tilde{C} \tau^2 \int_0^{t_n} e^{-2\eta(t_n-t)}\left(\|\mathbf{u}(t)\|_{H^1(\mathcal{O})}^2+h^2\|\mathbf{u}(t)\|_{H^2(\mathcal{O})}^2+\|\mathbf{u}_t(t)\|_{H^1(\mathcal{O})}^2+h^2\|\mathbf{u}_t(t)\|_{H^2(\mathcal{O})}^2\right)dt\\
&\hspace{2em} + \frac{1}{32}\|\bm\zeta_h^n\|_{\text{dG}}^2. \label{eqn232}
\end{aligned}
\end{equation}
The remaining terms are dealt with using reasoning similar to before, the details are thus omitted. We obtain:
\begin{align*}
&|\mathcal{A}_d\textbf{}(\tilde{\bm{\mathcal{E}}}_h^n, \bm\zeta_h^n)| \leq C \|\tilde{\bm{\mathcal{E}}}_h^n\|_{\text{dG}} \|\bm\zeta_h^n\|_{\text{dG}} \leq C \|\tilde{\bm{\mathcal{E}}}_h^n\|_{\text{dG}} \|p_h^n - \pi_h p^n\|, \\
&|\mathcal{b}(\bm\zeta_h^n, p^n - \pi_h p^n)| \leq C h^r \|\bm\zeta_h^n\|_{\text{dG}} |p^n|_{H^r(\mathcal{O})} \leq C h^r |p^n|_{H^r(\mathcal{O})} \|p_h^n - \pi_h p^n\|, \\
&|\mathcal{b}(\bm\zeta_h^n, \nabla_h \cdot \tilde{\bm{\mathcal{E}}}_h^n - R_h([\tilde{\bm{\mathcal{E}}}_h^n]))| \leq C \|\tilde{\bm{\mathcal{E}}}_h^n\|_{\text{dG}} \|\bm\zeta_h^n\|_{\text{dG}} \leq C \|\tilde{\bm{\mathcal{E}}}_h^n\|_{\text{dG}} \|p_h^n - \pi_h p^n\|,\\
&|\mathcal{A}_d(\Pi_h \mathbf{u}^n - \mathbf{u}^n, \bm\zeta_h^n)| \leq C h^r |\mathbf{u}^n|_{H^{r+1}(\mathcal{O})} \|\bm\zeta_h^n\|_{\text{dG}} \leq C h^r |\mathbf{u}^n|_{H^{r+1}(\mathcal{O})} \|p_h^n - \pi_h p^n\|, \\
&|\mathcal{A}_d(q_r^n(\tilde{\bm{\mathcal{E}}}_h),\bm\zeta_h^n)|  \leq C \norm{q_r^n(\tilde{\bm{\mathcal{E}}}_h)}_\text{dG} \norm{\bm\zeta_h^n}_\text{dG} \leq \tilde{C} \tau \left(\sum_{i=1}^n \norm{\tilde{\bm{\mathcal{E}}}_h^i}_{\text{dG}}\right) \| p_h^n - \pi_h p^n \|, \\
&|\mathcal{b}(\bm\zeta_h^n, \nabla_h \cdot q_r^n(\tilde{\bm{\mathcal{E}}}_h) - R_h([q_r^n(\tilde{\bm{\mathcal{E}}}_h)]))| \leq C \norm{q_r^n(\tilde{\bm{\mathcal{E}}}_h)}_\text{dG} \norm{\bm\zeta_h^n}_\text{dG} \leq \tilde{C} \tau \left(\sum_{i=1}^n \norm{\tilde{\bm{\mathcal{E}}}_h^i}_{\text{dG}}\right) \| p_h^n - \pi_h p^n \|,\\
&|\mathcal{A}_d(q_r^n(\Pi_h \mathbf{u} - \mathbf{u}), \bm\zeta_h^n)| \leq \tilde{C}\tau \sum_{i=1}^n \|\Pi_h \mathbf{u}^i - \mathbf{u}^i\|_\text{dG} \|\bm\zeta_h^n\|_{\text{dG}} \leq \tilde{C} \tau h^r \sum_{i=1}^n |\mathbf{u}^i|_{H^{r+1}(\mathcal{O})} \|p_h^n - \pi_h p^n\|.
\end{align*}
We use Young's inequality in each of the above bounds, and then substitute these bounds along with the bounds \eqref{eqn228}-\eqref{eqn232} in \eqref{8.275}. Sum the resulting inequality over $n = 1$ to $m$ to have
\begin{align*}
&\frac{\tau}{2} \sum_{n=1}^m \|p_h^n - \pi_h p^n\|^2 \leq  \tilde{C} \tau \sum_{n=1}^m \|\tilde{\bm{\mathcal{E}}}_h^n\|^2_{\text{dG}} + C \tau^2 \int_0^T \left(\|\partial_{tt} \mathbf{u}\|^2 + \|\partial_t \mathbf{u}\|^2\right) dt \\
&+ C h^{2r+2} \int_0^T \|\partial_t \mathbf{u}\|^2_{H^{r+1}(\mathcal{O})} dt+ C \tau \sum_{n=1}^m \left(\|\bm{\mathcal{E}}_h^{n-1}\|^2 + h^{2r} + \|\delta_\tau \bm{\mathcal{E}}_h^n\|^2\right) \\
&+\tilde{C} \tau^3 \sum_{n=1}^m \int_0^{t_n}e^{-2\eta(t_n-t)}\left(\|\mathbf{u}(t)\|_{H^1(\mathcal{O})}^2+h^2\|\mathbf{u}(t)\|_{H^2(\mathcal{O})}^2+\|\mathbf{u}_t(t)\|_{H^1(\mathcal{O})}^2+h^2\|\mathbf{u}_t(t)\|_{H^2(\mathcal{O})}^2\right)dt.
\end{align*}
The final result is obtained by using Lemma 11, Lemma 15, the bound \eqref{166}, the bound \eqref{8.255}, and the triangle inequality.
\end{proof}

\section{Numerical validation}
This section provides numerical results that support the established theoretical results. To discretize the spatial domain, we employ the discontinuous piecewise polynomials of degree $r$ to approximate the discrete velocity and polynomials of degree $r-1$ to approximate the discrete pressure (i.e., $\mathbb{P}_r$-$\mathbb{P}_{r-1}$ mixed finite element spaces). Here, we have shown the results for $\mathbb{P}_2$–$\mathbb{P}_1$ and $\mathbb{P}_1$–$\mathbb{P}_0$ dG methods. For the temporal discretization, we apply the backward Euler method, which has first-order accuracy. In addition, we use the right rectangle rule to approximate the integral term, which also has first-order accuracy. The computational domain for space, denoted as $\mathcal{O}$, is chosen to be the square $(0, 1) \times (0, 1)$. The simulations for this example are carried out on the time interval $[0, 1]$, with a final time $T = 1$. \\
\textbf{Example: } In this example, we consider the following exact solution \\$(\mathbf{u}, p) = (\mathbf{U}_1(x, y, t), \mathbf{U}_2(x, y, t), p(x, y, t))$:
\begin{align*}
\mathbf{U}_1(x, y, t) &= x^3 (x - 1)^2 y^2 (y - 1)(5y - 3) (t+1), \\
\mathbf{U}_2(x, y, t) &= - x^2(x - 1)(5x - 3) y^3 (y - 1)^2 (t+1), \\
p(x, y, t) &= \sin{(\pi x)} \cos{(\pi y)} (t+1).
\end{align*}

We compute the forcing term $\mathbf{f}$ based on the exact solutions provided above. To begin with, we analyze the convergence rates in time by solving the problem with time step sizes $\tau \in \{1/2^3, 1/2^4, \dots, 1/2^7\}$. We fix the mesh resolution $h_F = 1/2^7$ for the $\mathbb{P}_2$–$\mathbb{P}_1$ dG scheme. We set $\epsilon = -1$, $\tilde{\sigma} =10$, $\sigma = 8$ on $\mathcal{F}_h^i$, and $\sigma = 16$ on $\mathcal{F}_h^b$. The other parameters are chosen to be $\mu=1$, $\gamma=0.1$, $\eta=0.1$. Let $\text{err}_{\tau}$ denote the error for a given time step size $\tau$, then the convergence rate is calculated as
$\ln(\text{err}_{\tau} / \text{err}_{\tau/2})/\ln(2)$. The errors and convergence rates in time are presented in Table 1. Our numerical results reveal a superconvergence in time for both velocity and pressure, which is higher than the expected first-order rate.

Next, we assess the convergence rates in space by solving the problem on a sequence of uniformly refined meshes. The time step sizes are fixed as follows: $\tau = 1/2^{10}$ for the $\mathbb{P}_1$–$\mathbb{P}_0$ dG method, $\tau = 1/2^{12}$ for the $\mathbb{P}_2$–$\mathbb{P}_1$ dG method. For the $\mathbb{P}_1$–$\mathbb{P}_0$ method, we set $\tilde{\sigma} = 10$, $\epsilon = -1$, $\sigma = 6$ on $\mathcal{F}_h^i$, and $\sigma = 12$ on $\mathcal{F}_h^b$. For the $\mathbb{P}_2$–$\mathbb{P}_1$ method, we set $\tilde{\sigma} = 10$, $\epsilon = -1$, $\sigma = 8$ on $\mathcal{F}_h^i$, and $\sigma = 16$ on $\mathcal{F}_h^b$.
The convergence rate is computed as
$\ln(\text{err}_{h_F} / \text{err}_{h_F/2})/\ln(2)$,
where $\text{err}_{h_F}$ represents the error associated with a mesh of resolution $h_F$. The errors and convergence rates in space are presented in Table 2, and we observe optimal convergence rates.
\begin{table}[ht]
    \centering
    \caption{Errors and order of convergence in time for velocity and pressure}
    \begin{tabular}{cccccc}
        \toprule
        $r$ & $\tau$ & $\|\mathbf{u}_h^{T} - \mathbf{u}(T)\|$ & rate & $\|p_h^{T} - p(T)\|$ & rate \\
        \midrule
        2 & $1/2^3$  & $7.629E-3$ & --    & $1.938E-1$ & -- \\
          & $1/2^4$  & $1.883E-3$ & 2.018 & $7.411E-2$ & 1.387 \\
          & $1/2^5$  & $4.825E-4$ & 1.965 & $2.380E-2$ & 1.640 \\
          & $1/2^6$  & $1.232E-4$ & 1.970 & $6.959E-3$ & 1.773 \\
          & $1/2^7$  & $3.111E-5$ & 1.985 & $1.953E-3$ & 1.833 \\
        \bottomrule
    \end{tabular}
\end{table}
\begin{table}[ht]
    \centering
    \caption{Errors and order of convergence in space for velocity and pressure}
    \begin{tabular}{cccccccc}
        \toprule
        $r$ & $h_F$ & $\|\mathbf{u}_h^{T} - \mathbf{u}(T)\|$ & rate & $\|\mathbf{u}_h^{T} - \mathbf{u}(T)\|_\text{dG}$ & rate & $\|p_h^{T} - p(T)\|$ & rate \\
        \midrule
        1 & $1/2^1$  & $2.224E-2$ & --    & $2.541E-1$& --   & $4.875E-1$ & --    \\
          & $1/2^2$  & $6.963E-3$ & 1.676 & $1.426E-1$& 0.833& $2.613E-1$ & 0.900 \\
          & $1/2^3$  & $1.717E-3$ & 2.020 & $7.087E-2$& 1.009& $1.322E-1$ & 0.983 \\
          & $1/2^4$  & $4.180E-4$ & 2.038 & $3.407E-2$& 1.056& $6.621E-2$ & 0.997 \\
          & $1/2^5$  & $1.103E-4$ & 1.922 & $1.694E-2$& 1.008& $3.312E-2$ & 0.999 \\
        \midrule
        2 & $1/2^1$  & $5.551E-3$ & --    & $1.565E-1$& --   & $1.622E-1$ & --    \\
          & $1/2^2$  & $9.590E-4$ & 2.533 & $4.554E-2$&1.781 & $4.090E-2$ & 1.988 \\
          & $1/2^3$  & $1.299E-4$ & 2.884 & $1.112E-2$&2.020 & $1.021E-2$ & 2.002 \\
          & $1/2^4$  & $1.681E-5$ & 2.950 & $2.787E-3$&2.011 & $2.548E-3$ & 2.003 \\
          & $1/2^5$  & $2.138E-6$ & 2.974 & $6.962E-4$&2.001 & $6.359E-4$ & 2.002 \\
        \bottomrule
    \end{tabular}
\end{table}

\section{Conclusion}
This article presents the error analysis for velocity and pressure resulting from a dG pressure correction splitting scheme for solving the Oldroyd model of order one. Existence, uniqueness of the discrete solutions, consistency, and stability of the schemes are established. We have derived the optimal \textit{a priori} bounds for the velocity in the dG norm and also in the $L^2$ norm by carefully designing the dual problem. To establish the error estimates for pressure, we need to derive the error bounds for the discrete time derivative of velocity. Theoretically, we observe that the estimate for the pressure is optimal in space but remains suboptimal in terms of time accuracy. However, our numerical results indicate  superconvergence in time for both velocity and pressure. Proving the superconvergence in time for velocity and pressure is highly non-trivial, which requires further investigation, and we leave this to future work.

\bibliographystyle{amsplain}
\bibliography{mybib}

\providecommand{\bysame}{\leavevmode\hbox to3em{\hrulefill}\thinspace}
\providecommand{\MR}{\relax\ifhmode\unskip\space\fi MR }
\providecommand{\MRhref}[2]{%
  \href{http://www.ams.org/mathscinet-getitem?mr=#1}{#2}
}
\providecommand{\href}[2]{#2}
\begin{thebibliography}{10}

\bibitem{adams2003sobolev}
R.~A. Adams and J.~J.~F. Fournier, \emph{Sobolev {Spaces} ({Pure and Applied Mathematics}; vol. 140)}, Elsevier, 2003.

\bibitem{bir2021three}
B.~Bir and D.~Goswami, \emph{On a three step two-grid finite element method for the {Oldroyd} model of order one}, ZAMM-Journal of Applied Mathematics and Mechanics/Zeitschrift f{\"u}r Angewandte Mathematik und Mechanik \textbf{101} (2021), no.~11, e202000373.

\bibitem{bir2022backward}
B.~Bir, D.~Goswami, and A.~K. Pani, \emph{Backward {Euler} method for the equations of motion arising in {Oldroyd} model of order one with nonsmooth initial data}, IMA Journal of Numerical Analysis \textbf{42} (2022), no.~4, 3529--3570.

\bibitem{bir2022finite}
\bysame, \emph{Finite element penalty method for the {Oldroyd} model of order one with non-smooth initial data}, Computational Methods in Applied Mathematics \textbf{22} (2022), no.~2, 297--325.

\bibitem{botti2011pressure}
L.~Botti and D.~A. Di~Pietro, \emph{A pressure-correction scheme for convection-dominated incompressible flows with discontinuous velocity and continuous pressure}, Journal of Computational Physics \textbf{230} (2011), no.~3, 572--585.

\bibitem{brenner2007mathematical}
S.~Brenner and R.~Scott, \emph{The mathematical theory of finite element methods}, Springer Science \& Business Media, 2007.

\bibitem{chaabane2017convergence}
N.~Chaabane, V.~Girault, C.~Puelz, and B.~Rivi{\`e}re, \emph{Convergence of {IPDG} for coupled time-dependent {Navier--Stokes and Darcy} equations}, Journal of Computational and Applied Mathematics \textbf{324} (2017), 25--48.

\bibitem{chorin1968numerical}
A.~J. Chorin, \emph{Numerical solution of the {Navier-Stokes} equations}, Mathematics of Computation \textbf{22} (1968), no.~104, 745--762.

\bibitem{ciarlet2002finite}
G.~Ciarlet~Philippe, \emph{The finite element method for elliptic problems}, SIAM, 2002.

\bibitem{di2010discrete}
D.~A. Di~Pietro and A.~Ern, \emph{Discrete functional analysis tools for discontinuous {Galerkin} methods with application to the incompressible {Navier--Stokes} equations}, Mathematics of Computation \textbf{79} (2010), no.~271, 1303--1330.

\bibitem{di2011mathematical}
\bysame, \emph{Mathematical aspects of discontinuous {Galerkin} methods}, vol.~69, Springer Science \& Business Media, 2011.

\bibitem{fehn2017stability}
N.~Fehn, W.~A. Wall, and M.~Kronbichler, \emph{On the stability of projection methods for the incompressible {Navier--Stokes} equations based on high-order discontinuous {Galerkin} discretizations}, Journal of Computational Physics \textbf{351} (2017), 392--421.

\bibitem{girault2016strong}
V.~Girault, J.~Li, and B.~Rivi{\`e}re, \emph{Strong convergence of discrete {DG} solutions of the heat equation}, Journal of Numerical Mathematics \textbf{24} (2016), no.~4, 235--252.

\bibitem{girault2005discontinuous}
V.~Girault, B.~Rivi{\`e}re, and M.~F. Wheeler, \emph{A discontinuous {Galerkin} method with nonoverlapping domain decomposition for the {Stokes and Navier-Stokes} problems}, Mathematics of Computation \textbf{74} (2005), no.~249, 53--84.

\bibitem{girault2005splitting}
\bysame, \emph{A splitting method using discontinuous {Galerkin} for the transient incompressible {Navier-Stokes} equations}, ESAIM: Mathematical Modelling and Numerical Analysis \textbf{39} (2005), no.~6, 1115--1147.

\bibitem{glowinski2003finite}
R.~Glowinski, \emph{Finite element methods for incompressible viscous flow}, Handbook of Numerical Analysis \textbf{9} (2003), 3--1176.

\bibitem{goda1979multistep}
K.~Goda, \emph{A multistep technique with implicit difference schemes for calculating two-or three-dimensional cavity flows}, Journal of Computational Physics \textbf{30} (1979), no.~1, 76--95.

\bibitem{goswami2011priori}
D.~Goswami and A.~K. Pani, \emph{A priori error estimates for semidiscrete finite element approximations to equations of motion arising in {Oldroyd} fluids of order one}, International Journal of Numerical Analysis and Modeling \textbf{8} (2011), no.~2, 324--352.

\bibitem{guermond2004error}
J.~Guermond and J.~Shen, \emph{On the error estimates for the rotational pressure-correction projection methods}, Mathematics of Computation \textbf{73} (2004), no.~248, 1719--1737.

\bibitem{guermond2006overview}
J.~L. Guermond, P.~Minev, and J.~Shen, \emph{An overview of projection methods for incompressible flows}, Computer Methods in Applied Mechanics and Engineering \textbf{195} (2006), no.~44-47, 6011--6045.

\bibitem{guermond1998stability}
J.~L. Guermond and L.~Quartapelle, \emph{On stability and convergence of projection methods based on pressure {Poisson} equation}, International Journal for Numerical Methods in Fluids \textbf{26} (1998), no.~9, 1039--1053.

\bibitem{guermond1998approximation}
\bysame, \emph{On the approximation of the unsteady {Navier--Stokes} equations by finite element projection methods}, Numerische Mathematik \textbf{80} (1998), 207--238.

\bibitem{guo2018euler}
Y.~Guo and Y.~He, \emph{On the {Euler} implicit/explicit iterative scheme for the stationary {Oldroyd} fluid}, Numerical Methods for Partial Differential Equations \textbf{34} (2018), no.~3, 906--937.

\bibitem{guo2022crank}
\bysame, \emph{Crank--{Nicolson} extrapolation and finite element method for the {Oldroyd} fluid with the midpoint rule}, Journal of Computational and Applied Mathematics \textbf{415} (2022), 114--453.

\bibitem{joseph1990fluid}
D.~D. Joseph, \emph{Fluid dynamics of viscoelastic liquids}, Applied Mathematical Sciences \textbf{84} (1990).

\bibitem{lasis2003poincare}
A.~Lasis and E.~Suli, \emph{Poincar{\'e}-type inequalities for broken {Sobolev} spaces}, Oxford University Research Archive (2003).

\bibitem{liu2019interior}
C.~Liu, F.~Frank, F.~O. Alpak, and B.~Rivi{\`e}re, \emph{An interior penalty discontinuous {Galerkin} approach for {3D} incompressible {Navier--Stokes} equation for permeability estimation of porous media}, Journal of Computational Physics \textbf{396} (2019), 669--686.

\bibitem{liu2019incremental}
C.~Liu and Z.~Si, \emph{An incremental pressure correction finite element method for the time-dependent {Oldroyd} flows}, Applied Mathematics and Computation \textbf{351} (2019), 99--115.

\bibitem{masri2022discontinuous}
R.~Masri, C.~Liu, and B.~Rivi{\`e}re, \emph{A discontinuous {Galerkin} pressure correction scheme for the incompressible {Navier--Stokes} equations: Stability and convergence}, Mathematics of Computation \textbf{91} (2022), no.~336, 1625--1654.

\bibitem{masri2023improved}
\bysame, \emph{Improved a priori error estimates for a discontinuous {Galerkin} pressure correction scheme for the {Navier--Stokes} equations}, Numerical Methods for Partial Differential Equations \textbf{39} (2023), no.~4, 3108--3144.

\bibitem{nochetto2005gauge}
R.~H. Nochetto and J.-H. Pyo, \emph{The gauge--uzawa finite element method. part i: The {Navier--Stokes} equations}, SIAM Journal on Numerical Analysis \textbf{43} (2005), no.~3, 1043--1068.

\bibitem{oldroyd1956non}
J.~G. Oldroyd, \emph{Non-{Newtonian} flow of liquids and solids}, Rheology (1956), 653--682.

\bibitem{pani2005semidiscrete}
A.~K. Pani and J.~Y. Yuan, \emph{Semidiscrete finite element {Galerkin} approximations to the equations of motion arising in the {Oldroyd} model}, IMA journal of Numerical Analysis \textbf{25} (2005), no.~4, 750--782.

\bibitem{pani2006linearized}
A.~K. Pani, J.~Y. Yuan, and P.~D. Dam{\'a}zio, \emph{On a linearized backward {Euler} method for the equations of motion of {Oldroyd} fluids of order one}, SIAM Journal on Numerical Analysis \textbf{44} (2006), no.~2, 804--825.

\bibitem{piatkowski2018stable}
M.~Piatkowski, S.~M{\"u}thing, and P.~Bastian, \emph{A stable and high-order accurate discontinuous {Galerkin} based splitting method for the incompressible {Navier--Stokes} equations}, Journal of Computational Physics \textbf{356} (2018), 220--239.

\bibitem{ray2024discontinuous}
K.~Ray, D.~Goswami, and S.~Bajpai, \emph{A discontinuous {Galerkin} finite element method for the {Oldroyd} model of order one}, Mathematical Methods in the Applied Sciences \textbf{47} (2024), no.~9, 7288--7328.

\bibitem{riviere2008discontinuous}
B.~Rivi{\`e}re, \emph{Discontinuous {Galerkin} methods for solving elliptic and parabolic equations: theory and implementation}, SIAM, 2008.

\bibitem{shen1996error}
J.~Shen, \emph{On error estimates of the projection methods for the {Navier-Stokes} equations: second-order schemes}, Mathematics of Computation \textbf{65} (1996), no.~215, 1039--1065.

\bibitem{temam1969approximation}
R.~Temam, \emph{Sur l'approximation de la solution des {\'e}quations de {Navier-Stokes} par la m{\'e}thode des pas fractionnaires (ii)}, Archive for Rational Mechanics and Analysis \textbf{33} (1969), 377--385.

\bibitem{timmermans1996approximate}
L.~J.~P. Timmermans, P.~D. Minev, and F.~N. Van De~Vosse, \emph{An approximate projection scheme for incompressible flow using spectral elements}, International Journal for Numerical Methods in Fluids \textbf{22} (1996), no.~7, 673--688.

\bibitem{van1986second}
J.~Van~Kan, \emph{A second-order accurate pressure-correction scheme for viscous incompressible flow}, SIAM journal on Scientific and Statistical Computing \textbf{7} (1986), no.~3, 870--891.

\bibitem{wang2012stabilized}
K.~Wang, Z.~Si, and Y.~Yang, \emph{Stabilized finite element method for the viscoelastic {Oldroyd} fluid flows}, Numerical Algorithms \textbf{60} (2012), 75--100.

\bibitem{yang2020unconditional}
Y.~Yang, Y.~Lei, and Z.~Si, \emph{Unconditional stability and error estimates of the modified characteristics {FEM} for the time-dependent viscoelastic {Oldroyd} flows}, Adv. Appl. Math. Mech. \textbf{13} (2020), no.~2, 311--332.

\bibitem{zhang2018stability}
T.~Zhang and Y.~Qian, \emph{Stability analysis of several first order schemes for the {Oldroyd} model with smooth and nonsmooth initial data}, Numerical Methods for Partial Differential Equations \textbf{34} (2018), no.~6, 2180--2216.

\bibitem{zhao2018stability}
J.~Zhao, T.~Zhang, and Y.~Qian, \emph{Stability and convergence of second order time discrete projection method for the linearized {Oldroyd} model}, Applied Mathematics and Computation \textbf{316} (2018), 342--356.

\end{thebibliography}

\end{document}